\theoremstyle{theorem}
\newtheorem{thm}{Theorem}[section]
\newtheorem{prop}[thm]{Proposition}
\newtheorem{lem}[thm]{Lemma}
\newtheorem{coro}[thm]{Corollary}
\theoremstyle{remark}
\newtheorem{rem}[thm]{Remark}
\newtheorem{ex}[thm]{Example}
\newtheorem{notation}[thm]{Notation}
\theoremstyle{definition}
\newtheorem{defi}[thm]{Definition}
\numberwithin{equation}{section}
\renewcommand{\P}{\mathbb{P}}
\newcommand{\Q}{\mathbb{Q}}
\newcommand{\C}{\mathbb{C}}
\newcommand{\s}{\mathscr{S}}
\newcommand{\gr}{\mathrm{gr}}
\newcommand{\sgn}{\mathrm{sgn}}
\newcommand{\td}[1]{\widetilde{#1}}
\newcommand{\qD}{{}^{(q)}D}
\newcommand{\kA}{{}^{(k)}A}
\newcommand{\A}{\mathscr{A}}
\newcommand{\B}{\mathscr{B}}
\renewcommand{\L}{\mathscr{L}}
\newcommand{\M}{\mathscr{M}}
\newcommand{\hookrightpar}{\overset{1}{\underset{\parallel}{\hookrightarrow}}}
\newcommand{\hookrightperp}{\overset{1}{\underset{\perp}{\hookrightarrow}}}
\renewcommand{\leq}{\leqslant}
\renewcommand{\geq}{\geqslant}
\definecolor{navyblue}{rgb}{0.000000,0.000000,0.501961}
\keywords{arrangements, relative cohomology, mixed Hodge structures}
\subjclass[2010]{14C30, 14F05, 14F25, 52C35}
\title{Relative cohomology of bi-arrangements}
\author{Cl{\'e}ment Dupont}
\address{Max-Planck-Institut f\"{u}r Mathematik\\Vivatsgasse, 7 \\53111 Bonn, Germany}
\email{cdupont@mpim-bonn.mpg.de}
\begin{document}
\maketitle

\begin{abstract}
A \textit{bi-arrangement of hyperplanes} in a complex affine space is the data of two sets of hyperplanes along with a coloring information on the strata. To such a bi-arrangement, one naturally associates a relative cohomology group, that we call its \textit{motive}. The main reason for studying such relative cohomology groups comes from the notion of \textit{motivic period}. More generally, we suggest the systematic study of the motive of a \textit{bi-arrangement of hypersurfaces} in a complex manifold. We provide combinatorial and cohomological tools to compute the structure of these motives. Our main object is the \textit{Orlik--Solomon bi-complex} of a bi-arrangement, which generalizes the Orlik--Solomon algebra of an arrangement. Loosely speaking, our main result states that \enquote{the motive of an exact bi-arrangement is computed by its Orlik--Solomon bi-complex}, which generalizes classical facts involving the Orlik--Solomon algebra of an arrangement. We show how this formalism allows us to explicitly compute motives arising from the study of multiple zeta values and sketch a more general application to periods of mixed Tate motives.
\end{abstract}

\setcounter{tocdepth}{1}
%\tableofcontents

\section{Introduction}

	Let us consider a set of hyperplanes in a complex affine or projective space, which we call an \textit{arrangement of hyperplanes}. A natural question, raised by Arnol'd~\cite{arnold}, is to understand the cohomology ring of the complement of the union of the hyperplanes in the arrangement. This question was settled in two steps by Brieskorn~\cite{brieskorn} and Orlik and Solomon~\cite{orliksolomon} and led to the introduction of the Orlik--Solomon algebra of an arrangement of hyperplanes, which has now become a classical tool in algebraic topology and combinatorics.
	
	In this article, we recast these classical results as part of a more general framework. We define a \textit{bi-arrangement of hyperplanes} in a complex affine or projective space to be the data of two sets~$\L$ and~$\M$ of hyperplanes, along with a coloring function~$\chi$ which associates to each stratum (intersection of some hyperplanes from~$\L$ and~$\M$) the color~$\lambda$ or the color~$\mu$. An arrangement of hyperplanes is then simply a bi-arrangement of hyperplanes for which~$\M=\varnothing$ and~$\chi$ only takes the color~$\lambda$.
	
	More generally, a bi-arrangement of hypersurfaces in a complex manifold~$X$ is the data of two sets of smooth hypersurfaces of~$X$ and a coloring function, which is a bi-arrangement of hyperplanes in every local chart on~$X$.
	
	The \textit{motive}\footnote{See \S\ref{parterminologymotive} for a discussion on this terminology.} of a bi-arrangement of hypersurfaces~$(\L,\M,\chi)$ in~$X$ is the collection of the relative cohomology groups (with coefficients in~$\Q$):
	\begin{equation}\label{eqmotiveintro}
	H^\bullet(\td{X}\setminus \td{\L},\td{\M}\setminus \td{\M}\cap\td{\L}).
	\end{equation}
	Here~$\pi:\td{X}\rightarrow X$ is a resolution of the singularities of~$\L\cup\M$ and~$\td{\L}\cup\td{\M}=\pi^{-1}(\L\cup\M)$ is a normal crossing divisor with a given partition of its irreducible components determined by the coloring function~$\chi$. In the case of an arrangement of hypersurfaces $(\L,\varnothing,\lambda)$, this is simply the cohomology of the complement:~$H^\bullet(X\setminus\L)$.
	
	Our motivation for studying the relative cohomology groups (\ref{eqmotiveintro}) mainly comes from the notion of motivic period, see \S\ref{parintroperiods} for more details. \\
	
	In this article, we introduce tools to compute the motive of a given bi-arrangement. 
	\begin{enumerate}[--]
	\item In the local context of hyperplanes in~$\C^n$, we define the \textit{Orlik--Solomon bi-complex} of a bi-arrangement of hyperplanes, generalizing the construction of the Orlik--Solomon algebra. This allows us to single out a natural class of bi-arrangements for which the Orlik--Solomon bi-complex is well-behaved, that we call \textit{exact}, and that includes all arrangements of hyperplanes.
	\item In the global context of hypersurfaces in a complex manifold~$X$, we define the \textit{geometric Orlik--Solomon bi-complex} of a bi-arrangement of hypersurfaces, which incorporates the combinatorial datum of the Orlik--Solomon bi-complexes and the cohomological datum of the geometric situation.
	\end{enumerate}
	
	Our main result can then be vaguely stated as follows.
	
	\begin{thm}
	The motive of an exact bi-arrangement is computed by its Orlik--Solomon bi-complex.
	\end{thm}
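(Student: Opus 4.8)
The plan is to reduce the global statement to a local, purely combinatorial one and then to globalize by a descent argument, in each step imitating and generalizing the classical route from the topology of a hyperplane-arrangement complement to its Orlik-Solomon algebra: a Brieskorn-type decomposition followed by a deletion-restriction induction.

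\emph{The local case.} First I would establish the theorem for a bi-arrangement of hyperplanes $(\L,\M,\chi)$ in $\C^n$, that is, that $H^\bullet(\C^n\setminus\L,\,\M\setminus\M\cap\L)$ is the cohomology of the total complex of the combinatorial Orlik-Solomon bi-complex. The case $\M=\varnothing$ is exactly the Orlik-Solomon theorem and serves as the base case. For the inductive step I would use a \emph{double deletion-restriction}: deleting one hyperplane of $\L$ yields a Gysin long exact sequence, while deleting one hyperplane of $\M$ (or re-coloring a stratum) yields a long exact sequence for the relative cohomology of a pair, and on the combinatorial side each of these should come from a short exact sequence of Orlik-Solomon bi-complexes, so that an induction on $|\L|+|\M|$ goes through. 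The exactness hypothesis should be used here both to ensure that deletion and restriction preserve exactness and to guarantee that the connecting maps of the two families of long exact sequences are compatible with the bi-complex differentials; concretely, ``exact'' should be precisely the acyclicity that turns the relevant rows or columns of the bi-complex into resolutions, so that the spectral sequence of the double filtration (by the numbers of $\lambda$- and $\mu$-hyperplanes through a stratum) degenerates onto the total complex. A Brieskorn-type decomposition of the relative cohomology indexed by colored flats should reduce everything to the central, essential case, where this induction is cleanest.

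\emph{Globalization.} For a bi-arrangement of hypersurfaces in $X$, I would pass to the resolution $\pi:\td X\to X$ and cover $\td X$ by charts in which $\td\L\cup\td\M$ is a hyperplane bi-arrangement. The \v{C}ech (Mayer-Vietoris) spectral sequence for this cover computes $H^\bullet(\td X\setminus\td\L,\,\td\M\setminus\td\M\cap\td\L)$ out of the local motives, which by the first step are the local combinatorial Orlik-Solomon bi-complexes. Sheafified, this says that the derived pushforward to $\td X$ of the relative constant sheaf of the pair is represented by a complex of sheaves with those bi-complexes as stalks; taking hypercohomology and feeding in the cohomology of the global strata of $X$ should produce exactly the \emph{geometric} Orlik-Solomon bi-complex. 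One then invokes exactness a second time to see that this local-to-global spectral sequence degenerates, so that the total complex of the geometric bi-complex computes the motive, and checks independence of the chosen resolution (by comparing two resolutions that dominate a common one, using weak factorization if needed).

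The step I expect to be the real obstacle is the local combinatorial one: pinning down the definition of the Orlik-Solomon bi-complex so that its differentials match, \emph{with the correct signs}, the alternating sums of Gysin and restriction maps appearing in the double deletion-restriction, and then proving that exactness is stable under deletion and restriction and implies the required acyclicity (equivalently, the degeneration of the double-filtration spectral sequence). Once that is in place, the globalization should be a fairly formal descent, its only genuine subtlety being that relative cohomology classes are not local, so one must check that the \v{C}ech spectral sequence is compatible with the pair structure $(\td X\setminus\td\L,\,\td\M\setminus\td\M\cap\td\L)$ and that the answer is independent of $\pi$.
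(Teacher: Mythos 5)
There is a genuine gap, and it begins with the formulation of your local statement. The theorem being proved is \emph{not} that the motive is isomorphic to the cohomology of the combinatorial Orlik--Solomon bi-complex; it is that there is a spectral sequence whose $E_1$ page is the \emph{geometric} Orlik--Solomon complex $\qD_\bullet(\B)$, with $\qD_{i,j}(\B)=\bigoplus_S H^{q-2i}(S)(-i)\otimes A_{i,j}^S$, converging to $H^\bullet(\B)$ and degenerating at $E_2$ in the projective case, so that only the weight-graded pieces $\gr^W_\bullet H^\bullet(\B)$ are combinatorial. The motive itself carries nontrivial extensions between weights (these are exactly the periods such as $\zeta(2)$ that motivate the construction), so it cannot literally be the cohomology of a combinatorial complex; your claim that $H^\bullet(\C^n\setminus\L,\M\setminus\M\cap\L)$ equals the homology of the total complex of $A_{\bullet,\bullet}$ is false as stated. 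Moreover the motive of a bi-arrangement is by definition the relative cohomology computed \emph{in the blow-up} $\td X$, not in $X$ itself ($\Delta$ does not even define a class in the unblown-up pair), so the local object you propose to compute is the wrong one.

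Your two mechanisms also do not match what is actually needed. A double deletion--restriction induction presupposes exact triangles relating the motives of $\B$, $\B\setminus\{K\}$ and the restriction to $K$; in the blown-up, colored setting no such triangles are set up (the wonderful compactification changes when a hyperplane is deleted, and the coloring of the exceptional divisors with it), and you give no argument that exactness is preserved under these operations. The paper's route is different: it proves the normal crossing case directly by a hypercohomology spectral sequence for an explicit complex of sheaves, and then reduces the general case to it by constructing, for a single blow-up along a good stratum $Z$, an explicit morphism $\Phi:\qD_{\bullet,\bullet}(\B)\to\qD_{\bullet,\bullet}(\td\B)$ and proving it is a quasi-isomorphism precisely when $Z$ is exact --- this is where exactness enters, via a filtration by codimension of strata reducing to the blow-up short exact sequence $0\to H^{k-2r}(Z)(-r)\to H^{k-2}(E)(-1)\oplus H^k(S)\to H^k(\td S)\to 0$, together with excess-intersection and Chern class identities. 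Your \v Cech globalization never compares the geometric Orlik--Solomon bi-complexes of $\B$ and $\td\B$, which is the actual content of the proof: upstairs the divisor is normal crossing and the local combinatorics is trivial, so all the information lies in how the cohomology of the exceptional divisors reassembles the cohomology of the original strata, and that step is entirely missing from your plan.
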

	
	In the special case of arrangements, we recover the classical Brieskorn--Orlik--Solomon theorem in the local context, and its global counterpart proved by Looijenga~\cite{looijenga} (see also~\cite{duponthypersurface}) in the global context.\\
	
	 Before we turn to a more detailed description of our results in \S\ref{intropar2} and \S\ref{intropar3}, we explain in \S\ref{parintroperiods} the motivation behind the study of bi-arrangements and their motives. Even though this motivation will not be apparent in most of this article, we think that it gives a good intuition on the objects that we are studying.
	 
	\subsection{Periods of bi-arrangements and relative cohomology}\label{parintroperiods}
	
		The value of the Riemann zeta function at an integer~$n\geq 2$ is defined by the series
		\begin{equation}\label{eqzetasum}
		\zeta(n)=\sum_{k\geq 1}\frac{1}{k^n}.
		\end{equation}
		These numbers have been first studied by Euler who showed that~$\zeta(2n)$ is a rational multiple of~$\pi^{2n}$, e.g.~$\zeta(2)=\frac{\pi^2}{6}$. Little is known about the arithmetic properties of the numbers~$\zeta(2n+1)$~\cite{apery,ballrivoal,zudilin}.
		An important fact about these numbers is that they have a representation as a multiple integral (expand~$\frac{1}{1-x_1}$ as a geometric series and integrate inductively with respect to~$x_1,\ldots,x_n$):
		\begin{equation}\label{eqzetaintegral}
		\zeta(n)=\int_{0<x_1< \cdots< x_n< 1}\frac{dx_1}{1-x_1}\frac{dx_2}{x_2}\cdots \frac{dx_n}{x_n}\cdot
		\end{equation}
		%What is remarkable about integral (\ref{eqzetaintegral}) is that we integrate is a rational function with rational coefficients on a domain of~$\mathbb{R}^n$ defined by inequalities involving rational functions with rational coefficients; in other words, the numbers~$\zeta(n)$ are \textit{periods} in the sense of Kontsevich and Zagier (\cite{kontsevichzagier}). In this case, all functions that are involved are (products of) linear functions. Figure~\ref{} pictures the geometric situation for~$\zeta(2)=\int_{0< x< y< 1}\frac{dx}{1-x}\frac{dy}{y}$.\\
		
		This representation allows us to view~$\zeta(n)$ as the \textit{period} of a certain cohomology group (motive). We now explain how this works for the case of~$\zeta(2)=\int_{0< x< y< 1}\frac{dx}{1-x}\frac{dy}{y}$. 
		 
		Let us consider the geometric situation pictured in the left-hand side of the figure below. In~$X=\P^2(\C)$ with affine coordinates~$(x,y)$, let~$\L$ (the dashed lines, in blue) be the divisor of poles of the form~$\omega=\frac{dx}{1-x}\frac{dy}{y}$. It is the union of the line at infinity and the lines~$\{x=1\}$,~$\{y=0\}$. Let now~$\M$ (the full lines, in red) be the Zariski closure of the boundary of the domain of integration~$\Delta=\{0< x< y< 1\}$ (the shaded triangle). It is the union of the lines~$\{x=0\}$,~$\{x=y\}$,~$\{y=1\}$.
		
		The divisor~$\L\cup\M$ is not normal crossing in~$X$. We let~$\pi:\td{X}\rightarrow X$ be the blow-up along the points~$P_1$,~$P_2$,~$Q_1$,~$Q_2$, and let~$E_1$,~$E_2$,~$F_1$,~$F_2$ be the corresponding exceptional divisors. We let~$\td{\L}$ be the union of~$ E_1$,~$E_2$, and the strict transforms of the three lines from~$\L$; we let~$\td{\M}$ be the union of~$F_1$,~$F_2$, and the strict transforms of the three lines from~$\M$. Now~$\td{\L}\cup\td{\M}=\pi^{-1}(\L\cup\M)$ is a normal crossing divisor in~$\td{X}$, pictured in the right-hand side of the figure below.
		
		\begin{figure}[h!!]
		\def\svgwidth{1\textwidth}
		\begin{center}
		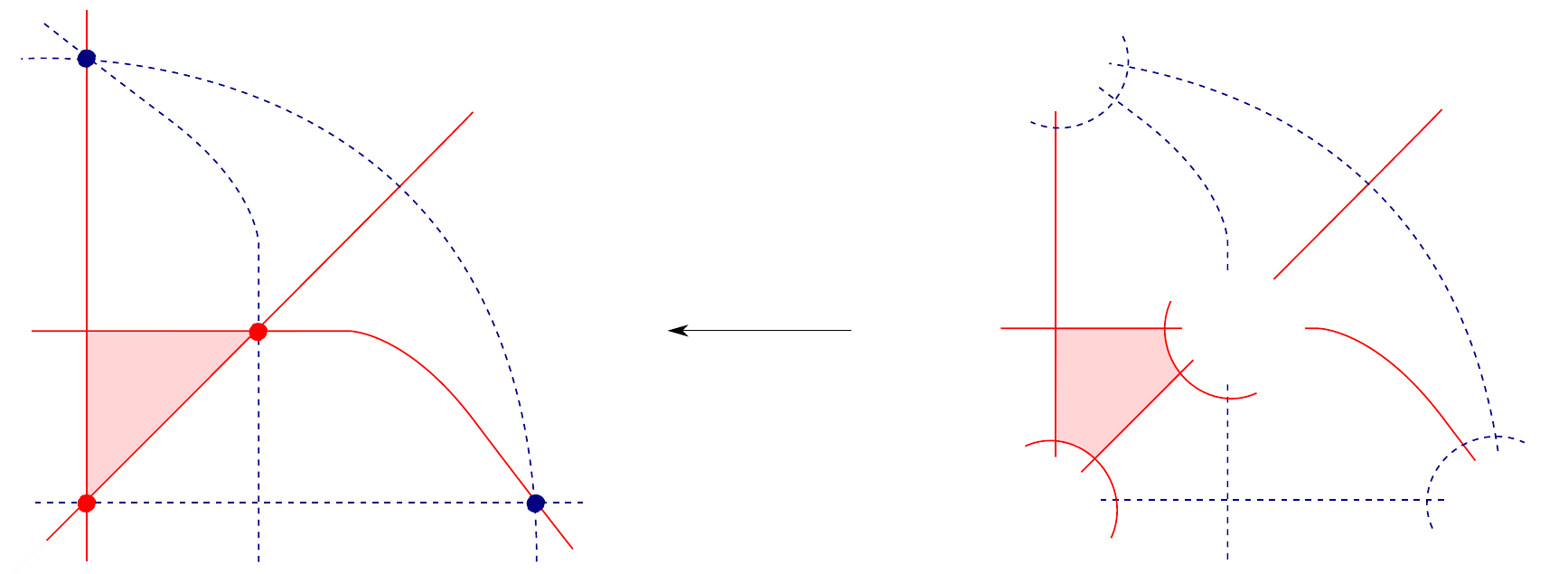
		\end{center}
		%\caption{The projective bi-arrangement corresponding to~$\zeta(2)$}}\label{figurezeta2}
		\end{figure}
		
		Let us introduce the relative cohomology group (with coefficients in~$\Q$)
		\begin{equation}\label{eqintromotivezeta2}
		H=H^2(\td{X}\setminus\td{\L},\td{\M}\setminus\td{\M}\cap\td{\L}).
		\end{equation}
		The differential form~$\pi^*(\omega)$ is closed and has poles along~$\td{\L}$, hence defines a cohomology class in~$H$. The domain~$\pi^{-1}(\Delta)$ (the shaded pentagon in the figure above) has its boundary on~$\td{\M}$, hence defines a homology class in~$H^\vee$. Hence,
		$$\zeta(2)=\int_\Delta\omega=\int_{\pi^{-1}(\Delta)}\pi^*(\omega)$$
		is a period of~$H$.\\ 
		More precisely,~$H$ is a mixed Tate motive over~$\mathbb{Z}$, the class of~$\pi^*(\omega)$ lives in the algebraic de Rham cohomology group~$H_{dR}$, the class of~$\pi^{-1}(\Delta)$ lives in the Betti (singular) homology group~$H_B^\vee$, and~$\zeta(2)$ appears as the pairing between these classes via the comparison isomorphism between de Rham and Betti cohomology. Note that the (equivalence class of the) triple 
		\begin{equation}\label{eqmotivicperiod}
		(H,[\pi^*(\omega)],[\pi^{-1}(\Delta)])
		\end{equation}
		is called the \textit{motivic period} corresponding to~$\zeta(2)$, and is an algebro-geometric avatar of the integral (\ref{eqzetaintegral}). The interested reader will find in the author's PhD thesis~\cite{dupontPhD} more details on the construction of more general motivic periods.\\
		
		At this point, we want to answer two natural questions.  \\
		
		\textit{Why work in the blow-up?} One could want to replace~$H$ with the (simpler) relative cohomology group~$H'=H^2(X\setminus\L,\M\setminus\M\cap\L)$. This is wrong because the boundary of~$\Delta$ intersects~$\L$, hence~$\Delta$ does not define a homology class in~$H'$. This is why we have to work in the blown-up situation. Furthermore, working with normal crossing divisors reveals a hidden (Poincar\'{e}--Verdier) duality: exchanging~$\td{\L}$ and~$\td{\M}$ corresponds to the linear duality between the cohomology groups (\ref{eqintromotivezeta2}). These two reasons should begin to convince the reader that the (\textit{a priori} tedious) blow-up process is the correct thing to do and that cohomology groups like~$H$ are more relevant than their simpler counterparts~$H'$. We hope that the results of this article will appear as another argument in favor of this point.\\
		
		\textit{How to deal with the exceptional divisors?} In the above example, it is crucial that~$E_1$ and~$E_2$ are part of the divisor~$\td{\L}$ since~$\pi^*(\omega)$ has poles along them; in the same fashion, it is crucial that~$F_1$ and~$F_2$ are part of the divisor~$\td{\M}$ since~$\pi^{-1}(\Delta)$ has boundary components on them. In higher dimensional situations, we may have a choice to make between~$\td{\L}$ and~$\td{\M}$ that is not imposed by the geometry. We keep track of these choices using a coloring function~$\chi$, which assigns the color~$\lambda$ (for~$\td{\L}$) or~$\mu$ (for~$\td{\M}$) to the strata that we blow up. Here we would then have~$\chi(P_1)=\chi(P_2)=\lambda$ and~$\chi(Q_1)=\chi(Q_2)=\mu$.\\
		
		To sum up, we have a triple~$(\L,\M,\chi)$ made of two sets of (projective) hyperplanes, and the coloring function. This triple is what we call a (projective) \textit{bi-arrangement}. The \textit{motive} of this bi-arrangement is the relative cohomology group (\ref{eqintromotivezeta2}).\\
			
		The idea that the study of cohomology groups like (\ref{eqintromotivezeta2}) and motivic periods like (\ref{eqmotivicperiod}) tells us something about integrals like (\ref{eqzetaintegral}) is (implicitly or explicitly) present at many places of the literature, including Deligne and Goncharov's theory of motivic fundamental groupoids~\cite{delignedroiteprojective,goncharovgaloissym,delignegoncharov}, Goncharov and Manin's description of the multiple zeta motives~\cite{goncharovmanin}, Brown's work on multiple zeta values~\cite{brownMTMZ}, the general theory of periods~\cite{kontsevichzagier,andregalois} and foundational work on the theory of motives~\cite{kontsevichoperadsmotives,hubermullerstach}. In physics, this point of view gives rises to the notion of Feynman motive, which is a precious tool in the study of the arithmetics and the analysis of Feyman integrals~\cite{blochesnaultkreimer,aluffimarcollibanana,marcollifeynmanintegralsmotives,doryngraphhypersurfaces,brownschnetzK3,mullerstachweinzierlzayadeh}.
		
		The question of computing the motives of projective bi-arrangements was implicitly asked by Beilinson \textit{et al.} in~\cite{bvgs} as part of the general programme of defining scissor congruence groups that compute the higher~$K$-theory of fields. As a special case of our result, we give a partial answer to their question by giving a combinatorial description of the weight-graded quotients of the motives of certain projective bi-arrangements, see Theorem \ref{thmprojective}. We will investigate further in that direction in a subsequent article.
	
	\subsection{From the Orlik--Solomon algebra to the Orlik--Solomon bi-complex}\label{intropar2}
	
		Following the pioneering work~\cite{arnold} of Arnol'd, the Orlik--Solomon algebra was introduced~\cite{orliksolomon} to understand the cohomology of the complement of a union of hyperplanes in an affine space~$\C^n$. Let~$\A=\{K_1,\ldots,K_k\}$ be an arrangement of hyperplanes in~$\C^n$, i.e. a finite set of hyperplanes of~$\C^n$ which pass through the origin. The Orlik--Solomon algebra~$A_\bullet=A_\bullet(\A)$ is a differential graded algebra (over $\mathbb{Q}$ and with differential of degree $-1$) which may be defined as an explicit quotient of the exterior algebra~$\Lambda^\bullet(e_1,\ldots,e_k)$ with~$d(e_i)=1$ for~$i=1,\ldots,k$. It has the remarkable property of admitting a direct sum decomposition
		$$A_r=\bigoplus_{S\in\s_r(\A)}A_r^S$$
		where~$\s_r(\A)$ denotes the set of strata of~$\A$ (intersections of hyperplanes from~$\A$) of codimension~$r$. There is a more geometric (but less explicit) way of defining the components~$A_r^S$ by induction on~$r$, starting with~$A_0=\Q$ and imposing exact sequences
		$$0\rightarrow A_r^\Sigma \stackrel{d}{\longrightarrow} \bigoplus_{\Sigma\stackrel{1}{\hookrightarrow}S}A_{r-1}^S \stackrel{d}{\longrightarrow} \bigoplus_{\Sigma\stackrel{2}{\hookrightarrow}T}A_{r-2}^T$$
		where~$\Sigma\stackrel{c}{\hookrightarrow} \Sigma'$ denotes an inclusion of strata of~$\A$ with~$\dim(\Sigma)=\dim(\Sigma')-c$. This means that~$A_r^\Sigma$ is defined as the kernel of a previously defined morphism.
	
		Now let us fix two arrangements of hyperplanes~$\L=\{L_1,\ldots,L_l\}$ and~$\M=\{M_1,\ldots,M_m\}$ in~$\C^n$. We add the datum of a \textit{coloring function}~$\chi$ which associates to each stratum~$S\neq\C^n$ of~$\L\cup\M$ a color~$\chi(S)\in\{\lambda,\mu\}$ such that~$\chi(L_i)=\lambda$ for each~$i$,~$\chi(M_j)=\mu$ for each~$j$, plus a technical condition (see Definition~\ref{defibiarrangement}). The triple~$(\L,\M,\chi)$ is called a \textit{bi-arrangement} of hyperplanes. 
		
		The \textit{Orlik--Solomon bi-complex} of~$(\L,\M,\chi)$ is a bi-complex~$A_{\bullet,\bullet}=A_{\bullet,\bullet}(\L,\M,\chi)$ with differentials~$d':A_{\bullet,\bullet}\rightarrow A_{\bullet-1,\bullet}$ and~$d'':A_{\bullet,\bullet-1}\rightarrow A_{\bullet,\bullet}$. By definition, there is a direct sum decomposition 
		$$A_{i,j}=\bigoplus_{S\in \s_{i+j}(\L\cup\M)}A_{i,j}^S$$
		and we impose exact sequences  
		\begin{equation}\label{exactlambda}
		0\rightarrow A_{i,j}^\Sigma \stackrel{d'}{\longrightarrow} \bigoplus_{\Sigma\stackrel{1}{\hookrightarrow}S}A_{i-1,j}^S \stackrel{d'}{\longrightarrow} \bigoplus_{\Sigma\stackrel{2}{\hookrightarrow}T}A_{i-2,j}^T \;\;\textnormal{ if~$\chi(\Sigma)=\lambda$;}
		\end{equation}
		\begin{equation}\label{exactmu}
		0\leftarrow A_{i,j}^\Sigma \stackrel{d''}{\longleftarrow} \bigoplus_{\Sigma\stackrel{1}{\hookrightarrow}S}A_{i,j-1}^S \stackrel{d''}{\longleftarrow} \bigoplus_{\Sigma\stackrel{2}{\hookrightarrow}T}A_{i,j-2}^T \;\;\textnormal{ if~$\chi(\Sigma)=\mu$.}
		\end{equation}
		
		Starting with~$A_{0,0}=\Q$, this is enough to define the components~$A_{i,j}^S$ by induction on~$S$, as kernels or cokernels of previously defined morphisms. If~$\M=\varnothing$ and~$\chi$ takes only the value~$\lambda$, we recover the inductive definition of the Orlik--Solomon algebra:~$A_{\bullet,0}(\L,\varnothing,\lambda)=A_\bullet(\L)$. In the world of bi-arrangements there is a duality that exchanges the roles of~$\L$ and~$\lambda$ on the one hand, and~$\M$ and~$\mu$ on the other hand. This duality translates as the linear duality of the Orlik--Solomon bi-complexes.\\
		
		We are mostly interested in the bi-arrangements~$(\L,\M,\chi)$ such that the exact sequences (\ref{exactlambda}) and (\ref{exactmu}) may be extended to exact sequences 
		
		$$0\rightarrow A_{i,j}^\Sigma \stackrel{d'}{\longrightarrow} \bigoplus_{\Sigma\stackrel{1}{\hookrightarrow}S}A_{i-1,j}^S \stackrel{d'}{\longrightarrow} \bigoplus_{\Sigma\stackrel{2}{\hookrightarrow}T}A_{i-2,j}^T \stackrel{d'}{\longrightarrow}\cdots\stackrel{d'}{\longrightarrow} \bigoplus_{\Sigma\stackrel{i}{\hookrightarrow}Z}A_{0,j}^Z \rightarrow 0 \;\;\textnormal{ if~$\chi(\Sigma)=\lambda$;}$$ 
		
		$$0\leftarrow A_{i,j}^\Sigma \stackrel{d''}{\longleftarrow} \bigoplus_{\Sigma\stackrel{1}{\hookrightarrow}S}A_{i,j-1}^S \stackrel{d''}{\longleftarrow} \bigoplus_{\Sigma\stackrel{2}{\hookrightarrow}T}A_{i,j-2}^T \stackrel{d''}{\longleftarrow}\cdots\stackrel{d''}{\longleftarrow} \bigoplus_{\Sigma\stackrel{j}{\hookrightarrow}Z}A_{i,0}^Z \leftarrow 0 \;\;\textnormal{ if~$\chi(\Sigma)=\mu$.}$$
		
		These bi-arrangements are called \textit{exact}, and form a natural class of bi-arrangements that includes the arrangements~$(\L,\varnothing,\lambda)$ - this is because the Orlik--Solomon algebras are exact as complexes. \\ %Inside the class of exact bi-arrangements, we prove the existence of a deletion-restriction short exact sequence that generalizes the classical deletion-restriction exact sequence for arrangements.
		
		%\begin{thm}[see Theorem~\ref{thmdelres} for a precise statement]\label{thmintrodelres}
		%If the deletion and the restriction of~$(\L,\M,\chi)$ with respect to some hyperplane~$K\in\L\cup\M$ are exact, then~$(\L,\M,\chi)$ is exact and there is a deletion-restriction short exact sequence which produces the Orlik--Solomon bi-complex of~$(\L,\M,\chi)$ as an extension of that of its restriction and that of its deletion.
		%\end{thm}
		
		The drawback of the inductive definition of the Orlik--Solomon bi-complexes is that we lack an explicit description as in the case of the Orlik--Solomon algebra. We solve this problem for a subclass of exact bi-arrangements that we call \textit{tame}. The tameness condition (see Definition~\ref{defitame}) is a simple combinatorial condition on the coloring which ensures that the colors~$\lambda$ and~$\mu$ \textit{do not interfere too much}.
		
		\begin{thm}[see Theorem~\ref{thmtameOS} for a precise statement]\label{thmintrotame}
		All tame bi-arrangements are exact. Furthermore, we may describe the Orlik--Solomon algebra of a tame bi-arrangement~$(\{L_1,\ldots,L_l\},\{M_1,\ldots,M_m\},\chi)$ as an explicit subquotient of the tensor product~$\Lambda^\bullet(e_1,\ldots,e_l)\otimes \Lambda^\bullet(f_1^\vee,\ldots,f_m^\vee)$ of two exterior algebras.
		\end{thm}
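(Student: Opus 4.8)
The plan is to establish both assertions at once, by constructing an \emph{explicit} candidate for the Orlik--Solomon bi-complex of $(\L,\M,\chi)$ and then identifying it with $A_{\bullet,\bullet}(\L,\M,\chi)$ via the uniqueness built into the inductive definition. Recall that the Orlik--Solomon algebra $A_\bullet(\L)$ is a quotient of $\Lambda^\bullet(e_1,\ldots,e_l)$ and that, by the duality exchanging $(\L,\lambda)$ and $(\M,\mu)$, the $\mu$-part $A_\bullet(\M)^\vee$ is a subspace of $\Lambda^\bullet(f_1^\vee,\ldots,f_m^\vee)$. Accordingly, I would build a bi-graded vector space $\widehat{A}_{\bullet,\bullet}$ which is a \emph{quotient in the $e$-variables and a sub in the $f^\vee$-variables} of $\Lambda^\bullet(e_1,\ldots,e_l)\otimes\Lambda^\bullet(f_1^\vee,\ldots,f_m^\vee)$, endowed with the differential $d'$ induced by the Orlik--Solomon contraction on the $e$-factor ($e_a\mapsto 1$) and the differential $d''$ induced by its transpose on the $f^\vee$-factor (left multiplication by $\sum_b f_b^\vee$). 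Grouping the monomials $e_I\otimes f_J^\vee$ according to the stratum $\bigcap_{a\in I}L_a\cap\bigcap_{b\in J}M_b$ produces the required Brieskorn-type direct sum decomposition $\widehat{A}_{i,j}=\bigoplus_{S\in\s_{i+j}(\L\cup\M)}\widehat{A}_{i,j}^S$, with $\widehat{A}_{0,0}=\Q$.

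First I would use the tameness hypothesis (Definition~\ref{defitame}) to make the subquotient precise, one stratum at a time. Tameness says exactly that, along any stratum $S$, the $\lambda$-coloured and the $\mu$-coloured hyperplanes through $S$ do not interfere: the Orlik--Solomon relations imposed on the $e$-variables and the transposed Orlik--Solomon relations imposed on the $f^\vee$-variables involve disjoint families of generators. This has two consequences. On the one hand $\widehat{A}_{i,j}^S$ is unambiguously defined, since forming the quotient in the $e$-direction and the sub in the $f^\vee$-direction commute. On the other hand $d'$ and $d''$ descend to the subquotient and, being operations on different tensor factors, automatically (anti)commute there; hence $\widehat{A}_{\bullet,\bullet}$ is a genuine bi-complex.

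The core of the argument is then to verify the exactness statements. Using the Brieskorn decomposition once more, I would show that each row $\widehat{A}_{\bullet,j}$ --- and more precisely each of its stratum summands, with the $f^\vee$-part held fixed --- splits as a direct sum of classical Orlik--Solomon \emph{complexes} of the restrictions of $\L$ to strata of $\L\cup\M$; symmetrically for the columns and $\M$. It is again tameness that permits this decoupling of the two colours and the identification of honest Orlik--Solomon complexes rather than deformations of them. Invoking the classical acyclicity of the Orlik--Solomon complex of an arrangement~\cite{orliksolomon}, one obtains not just the three-term exact sequences \eqref{exactlambda} and \eqref{exactmu}, but their prolongations into full exact sequences of length $i$, resp.\ $j$ --- which is precisely the exactness condition of the theorem. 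Finally, since $A_{\bullet,\bullet}(\L,\M,\chi)$ is characterised up to canonical isomorphism by having the Brieskorn decomposition, satisfying \eqref{exactlambda} and \eqref{exactmu}, and starting from $A_{0,0}=\Q$, the candidate $\widehat{A}_{\bullet,\bullet}$ must be isomorphic to it; this proves both that $(\L,\M,\chi)$ is exact and that $A_{\bullet,\bullet}(\L,\M,\chi)$ is the advertised explicit subquotient of $\Lambda^\bullet(e_1,\ldots,e_l)\otimes\Lambda^\bullet(f_1^\vee,\ldots,f_m^\vee)$.

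I expect the main obstacle to be precisely this decoupling step: proving that tameness forces the $\lambda$- and $\mu$-relations along every stratum to be independent in the strong sense needed, so that kernels on the $\lambda$-side and cokernels on the $\mu$-side may be formed in either order, and so that the rows and columns of $\widehat{A}_{\bullet,\bullet}$ literally are Orlik--Solomon complexes. A more routine but nontrivial bookkeeping concerns the signs and the compatibility of the Brieskorn decomposition with \emph{both} differentials simultaneously; this I would organise by induction on $l+m$, reducing to the classical deletion--restriction arguments for Orlik--Solomon algebras. The precise statement and the remaining details are carried out in Theorem~\ref{thmtameOS}.
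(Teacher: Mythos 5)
Your overall architecture coincides with the paper's: one realises $A_{\bullet,\bullet}(\L,\M,\chi)$ as an explicit subquotient of $\Lambda^\bullet(e_1,\ldots,e_l)\otimes\Lambda^\bullet(f_1^\vee,\ldots,f_m^\vee)$ (quotient by relations $(d(e_I))\otimes f_{J'}^\vee$ attached to $\lambda$-coloured circuits, sub cut out by co-relations $e_{I'}^\vee\otimes(d(f_J))$ attached to $\mu$-coloured circuits), checks the Brieskorn decomposition and the descent of $d',d''$, and concludes by the uniqueness in Lemma~\ref{defiOSbicomplex}. However, your exactness step contains a genuine gap, and it is located exactly where you suspected. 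First, tameness (Definition~\ref{defitame}) does \emph{not} say that the $\lambda$-relations and the $\mu$-co-relations \enquote{involve disjoint families of generators}: a given $L_i$ may perfectly well occur both in a $\lambda$-coloured and in a $\mu$-coloured circuit through $S$; tameness only guarantees the existence of \emph{one} hyperplane $L_i\supset S$ avoiding all $\mu$-coloured circuits through $S$. Second, and consequently, the rows of the subquotient are \emph{not} direct sums of classical Orlik--Solomon complexes with the $f^\vee$-part as a spectator: each co-relation $e_{I'}^\vee\otimes(d(f_J))$ is imposed only for $I'\supset I$, so the constraints on the $f^\vee$-coordinates vary with the $e$-monomial, and the row is not a tensor product $(\text{OS complex})\otimes(\text{fixed }f^\vee\text{-space})$. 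Your plan of \enquote{invoking the classical acyclicity of the Orlik--Solomon complex} therefore does not apply as stated.

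The paper's resolution is more economical and is the precise way tameness enters: after reducing (by locality and the K\"unneth formula) to the case where $\Sigma$ is the intersection of all hyperplanes with $\chi(\Sigma)=\lambda$, pick the hyperplane $L_i$ furnished by tameness and use $h(x\otimes y)=(e_i\wedge x)\otimes y$ as a contracting homotopy for the rows of $E_{\bullet,\bullet}$; the Leibniz rule gives $d'h+hd'=\mathrm{id}$, $h$ trivially preserves the relations, and it preserves the co-relations precisely because $i\notin I$ for every $\mu$-coloured circuit $(I,J)$, so that $h^\vee(e_{I'}\otimes(f_K\wedge d(f_J)))=\pm e_{I''}\otimes(f_K\wedge d(f_J))$ with $I''\supset I$ again indexing a co-relation. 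One further point you gloss over: to get the Brieskorn decomposition of the subquotient one must show that every dependent monomial $e_I\otimes f_J^\vee$ is killed either by a relation or by a co-relation; the case where the underlying circuit has $I'=\varnothing$ again requires the tameness hypothesis to produce a hyperplane $L_i$ and a circuit $(\{i\},J'')$ of colour $\lambda$. With these two repairs your argument becomes the paper's proof of Theorem~\ref{thmtameOS}.
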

	
	\subsection{Bi-arrangements of hypersurfaces}\label{intropar3}
	
		We now turn to a global geometric situation. Let~$X$ be a complex manifold and~$(\L,\M,\chi)$ be a bi-arrangement of hypersurfaces in~$X$. This means that~$\L=\{L_1,\ldots,L_l\}$ and~$\M=\{M_1,\ldots,M_m\}$ are sets of smooth hypersurfaces of~$X$ such that locally around every point of~$X$,~$(\L,\M,\chi)$ is a bi-arrangement of hyperplanes. In particular, every stratum (connected component of an intersection of hypersurfaces)~$S$ of~$\L\cup\M$ is given a color~$\chi(S)\in\{\lambda,\mu\}$.
		
		The formalism of the Orlik--Solomon bi-complexes immediately extends from bi-arrangements of hyperplanes to bi-arrangements of hypersurfaces, using the same inductive definition.\\
		
		Using repeated blow-ups along strata, we may produce an explicit resolution of singularities (\enquote{wonderful compactification})~$\pi:\td{X}\rightarrow X$ such that~$\pi^{-1}(\L\cup\M)$ is a normal crossing divisor inside~$\td{X}$. The strata that we have blown up give rise to exceptional divisors in~$\td{X}$. We define 
		\begin{enumerate}[--]
		\item~$\td{\L}\subset\td{X}$ to be the union of the strict transforms of the hypersurfaces~$L_i$ along with the exceptional divisors corresponding to strata~$S$ such that~$\chi(S)=\lambda$;
		\item~$\td{\M}\subset \td{X}$ to be the union of the strict transforms of the hypersurfaces~$M_j$ along with the exceptional divisors corresponding to strata~$S$ such that~$\chi(S)=\mu$.
		\end{enumerate}
		We then have a normal crossing divisor~$\pi^{-1}(\L\cup\M)=\td{\L}\cup\td{\M}\subset \td{X}$. We define the \textit{motive} of the bi-arrangement of hypersurfaces~$(\L,\M,\chi)$ to be the collection of relative cohomology groups (with coefficients in~$\Q$) 
		\begin{equation}\label{eqintromotive}
		H^\bullet(\L,\M,\chi)=H^\bullet(\td{X}\setminus\td{\L},\td{\M}\setminus\td{\M}\cap\td{\L}).
		\end{equation}
		In the case of an arrangement of hypersurfaces~$(\L,\varnothing,\lambda)$ we simply have~$H^\bullet(\L,\varnothing,\lambda)=H^\bullet(X\setminus \L)$ the cohomology of the complement.
		
		The main result of this article is the following (see Theorem~\ref{maintheorem}). It states that for exact bi-arrangements of hypersurfaces, we may compute the corresponding motive 	via a spectral sequence that involves the cohomology of the strata and the Orlik--Solomon bi-complex of the bi-arrangement.
		
		\begin{thm}\label{intromaintheorem}
		Let~$(\L,\M,\chi)$ be an exact bi-arrangement of hypersurfaces in a complex manifold~$X$, with its Orlik--Solomon bi-complex~$A_{\bullet,\bullet}$.
		\begin{enumerate}
		\item There is a spectral sequence \footnote{Here,~$(-i)$ denotes the Tate twist of weight~$2i$. It is important in the algebraic case; otherwise it should be ignored.}
		\begin{equation}\label{eqspectralsequenceOS}
		E_1^{-p,q}=\bigoplus_{\substack{i-j=p\\ S\in\s_{i+j}(\L\cup\M)}} H^{q-2i}(S)(-i)\otimes A_{i,j}^S \;\Longrightarrow \; H^{-p+q}(\L,\M,\chi).
		\end{equation}
		\item If~$X$ is a smooth complex variety and all hypersurfaces of~$\L$ and~$\M$ are divisors in~$X$, then this is a spectral sequence in the category of mixed Hodge structures.
		\item If~$X$ is a smooth and projective complex variety, then this spectral sequence degenerates at the~$E_2$ term and we have
		$$E_\infty^{-p,q}\cong E_2^{-p+q} \cong \gr_q^W H^{-p+q}(\L,\M,\chi).$$
		\end{enumerate}
		\end{thm}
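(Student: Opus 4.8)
The plan is to produce a bounded double complex of mixed Hodge structures --- the \emph{geometric Orlik--Solomon bi-complex} $\G_{\bullet,\bullet}$ of $(\L,\M,\chi)$ --- whose total complex computes the motive, and then to obtain \eqref{eqspectralsequenceOS} as the spectral sequence attached to one of its two canonical filtrations. Concretely, $\G_{i,j}$ is the direct sum over strata $S\in\s_{i+j}(\L\cup\M)$ of the local contribution $H^\bullet(S)(-i)\otimes A_{i,j}^S$; the differential $d'\colon\G_{i,j}\to\G_{i-1,j}$ is assembled from the Gysin maps $H^\bullet(S)(-i)\to H^{\bullet+2}(S')(-i+1)$ attached to inclusions $S\stackrel{1}{\hookrightarrow}S'$ (the geometric avatar of the $\td\L$-direction), twisted by the Orlik--Solomon structure maps $A_{i,j}^S\to A_{i-1,j}^{S'}$; dually, $d''\colon\G_{i,j-1}\to\G_{i,j}$ is assembled from the restriction maps $H^\bullet(S')\to H^\bullet(S)$ for $S\stackrel{1}{\hookrightarrow}S'$ (the $\td\M$-direction), twisted by the dual Orlik--Solomon maps $A_{i,j-1}^{S'}\to A_{i,j}^S$. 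In the algebraic setting all of these --- Gysin maps, restriction maps, and the $\Q$-linear combinations of them cut out by the Orlik--Solomon combinatorics --- are morphisms of mixed Hodge structures, so $\G_{\bullet,\bullet}$ is a double complex of mixed Hodge structures; this will give part~(2) at once. Granting that $\mathrm{Tot}(\G_{\bullet,\bullet})$ computes $H^\bullet(\L,\M,\chi)$, part~(1) is the spectral sequence of the filtration of $\mathrm{Tot}(\G_{\bullet,\bullet})$ by the index $p=i-j$: its $E_1$-page is $\G_{\bullet,\bullet}$ regraded exactly as in \eqref{eqspectralsequenceOS}, with $d_1=d'\pm d''$, and it converges since $\G_{\bullet,\bullet}$ is bounded.

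The real work is the identification of $\mathrm{Tot}(\G_{\bullet,\bullet})$ with the relative cohomology of the pair $(\td X\setminus\td\L,\,\td\M\setminus\td\M\cap\td\L)$, which I would carry out by a dévissage. First, resolve the relative cohomology geometrically: combining the Gysin (residue) resolution along the normal crossing divisor $\td\L$ with a \v{C}ech (tube) resolution along the normal crossing divisor $\td\M$ produces a bounded double complex whose entries are the cohomology groups $H^\bullet(L_A\cap M_B)(-|A|)$ of the multi-intersections --- smooth, and projective once $X$ is --- with differentials the alternating sums of Gysin maps along $\td\L$ and of restriction maps along $\td\M$. Reorganizing this ``large'' double complex by the connected components $S$ of the various $L_A\cap M_B$, and then pushing the bookkeeping of the exceptional divisors of the wonderful compactification $\pi$ down to the strata of $\L\cup\M$ in $X$ --- exactly as Looijenga~\cite{looijenga} (see also~\cite{duponthypersurface}) does in the arrangement case --- one finds that the contribution of each stratum $S$ is $H^\bullet(S)$ tensored with a complex which is precisely the local geometric Orlik--Solomon bi-complex at $S$. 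The extended exactness of \eqref{exactlambda} and \eqref{exactmu}, which by definition holds for an \emph{exact} bi-arrangement, collapses each such local complex onto $A_{i,j}^S$ placed in the single bidegree $(i,j)$, producing $\G_{\bullet,\bullet}$. The inductive step of the dévissage --- used both to build $\td X$ one blow-up at a time and to identify the collapsed local pieces with the inductively defined $A_{i,j}^S$ --- amounts to peeling off one component of $\td\L$, resp.\ $\td\M$, via the associated Gysin, resp.\ localization, long exact sequence and matching it with the defining kernel, resp.\ cokernel, presentation \eqref{exactlambda}, resp.\ \eqref{exactmu}, of the Orlik--Solomon bi-complex; the base case $\M=\varnothing$ is the Brieskorn--Orlik--Solomon theorem and its global counterpart recalled in the introduction.

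Part~(3) is then formal. If $X$ is smooth and projective then so is $\td X$, hence every stratum $S$ is smooth projective and $H^{q-2i}(S)(-i)$ is pure of weight $(q-2i)+2i=q$; thus $E_1^{-p,q}$ is pure of weight $q$. For $r\geq 2$ the differential $d_r$ is a morphism of mixed Hodge structures from a subquotient of a pure structure of weight $q$ to a subquotient of one of weight $q-r+1<q$, hence vanishes, so the spectral sequence degenerates at $E_2$. Finally, the resulting filtration on $H^n(\L,\M,\chi)$ is a finite filtration by sub-mixed-Hodge-structures whose $(-p)$-th graded piece $E_\infty^{-p,\,n+p}$ is pure of weight $n+p$; by the unicity of the weight filtration among filtrations by sub-mixed-Hodge-structures with pure graded pieces, it coincides up to the obvious reindexing with the weight filtration, so $E_\infty^{-p,q}\cong E_2^{-p,q}\cong\gr_q^W H^{-p+q}(\L,\M,\chi)$.

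The main obstacle is the first half of the middle paragraph: setting up the Gysin--\v{C}ech resolution of the relative cohomology in a way that is manifestly compatible with mixed Hodge structures, and then carrying out the combinatorial reorganization so that the exceptional divisors of $\pi$ are correctly absorbed into the groups $A_{i,j}^S$. This is the point where the wonderful compactification, Looijenga's arrangement computation, and the exactness hypothesis must all be brought to bear together; everything else is bookkeeping or, as in part~(3), a formal consequence of weight purity.
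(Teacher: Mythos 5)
Your overall architecture matches the paper's: define the geometric Orlik--Solomon bi-complex (the paper's $\qD_{\bullet,\bullet}$, \S\ref{sectiongeometric}), establish the normal-crossing case by a sheaf-theoretic Gysin/\v{C}ech resolution (this is exactly Proposition~\ref{appspectralsequenceNCD}), and deduce part~(3) from weight purity. But the step you flag as ``the main obstacle'' is not just the hard part --- as sketched, it fails. The $E_1$-page of the normal-crossing spectral sequence on $\td X$ is built from the cohomology of the strata of $\td\L\cup\td\M$, and these are \emph{not} the strata $S$ of $\L\cup\M$: they are strict transforms $\td S$ (iterated blow-ups of $S$) and intersections $E\cap\td T$ with exceptional divisors (iterated projective bundles over strata of $Z\cap T$). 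So ``the contribution of each stratum $S$ is $H^\bullet(S)$ tensored with a local complex'' is false on its face: $H^\bullet(\td S)$ and $H^\bullet(E\cap\td T)$ differ from $H^\bullet(S)$ by the blow-up and projective-bundle correction terms. Consequently the exactness hypothesis cannot act by ``collapsing'' a resolution onto $A_{i,j}^S$; the entries themselves must first be matched, and no amount of regrouping by connected components does that.

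What the paper does instead (Theorem~\ref{maintheoremtechnical}, proved in \S\ref{parproof}) is to compare the geometric Orlik--Solomon complexes \emph{one blow-up at a time}: for each good stratum $Z$ it writes down an explicit map $\Phi:\qD_{\bullet,\bullet}(\B)\to\qD_{\bullet,\bullet}(\td\B)$, formula (\ref{eqdefPhi}), whose first term is the naive pull-back $(\pi_S^{\td S})^*(s)\otimes X$ and whose second term is a correction $\sum_{S\hookrightpar T}(\pi^{E\cap\td T}_{Z\cap S})^*(\iota^S_{Z\cap S})^*(s)\otimes d'_{S,T}(X)$ landing on the exceptional-divisor strata. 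Verifying that $\Phi$ commutes with $d'$ and $d''$ requires the excess-class and Chern-class identities of Appendix~\ref{parappB} ((\ref{appidentitypar}), (\ref{apptransChern}), (\ref{appGysingammapar})); verifying that it is a quasi-isomorphism uses a filtration by codimension of strata, the short exact sequence (\ref{appses}) for the cohomology of a blow-up, and --- this is precisely where exactness of $Z$ enters --- the exactness of the auxiliary complex $B_{\bullet,j}=H^{q-2r+2j}(Z)(r-j)\otimes A_{\bullet,j}^{\leq Z}(\B)$ that measures the cone of $\Phi$. Your proposal contains no analogue of the correction term in $\Phi$ nor of the excess-intersection bookkeeping, and these are not optional: without them the naive pull-back is not even a chain map. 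So there is a genuine gap, concentrated exactly where you locate it, and closing it requires either the paper's inductive construction or an equivalent device.
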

		
		The differential of the~$E_1$ page of the above spectral sequence is explicit. It is induced by the differentials of the Orlik--Solomon bi-complex and the Gysin and pullback morphisms corresponding to inclusions of strata.
	
		In the case of an arrangement of hypersurfaces~$(\L,\varnothing,\lambda)$, this gives a spectral sequence
		$$E_1^{-p,q}=\bigoplus_{S\in\s_p(\L)}H^{q-2p}(S)(-p)\otimes A_p^S(\L)  \;\Longrightarrow \; H^{-p+q}(X\setminus \L)~$$
		which was first defined in~\cite{looijenga} and studied in~\cite{duponthypersurface} in the context of logarithmic differential forms and mixed Hodge theory.
		This is a global generalization of the Brieskorn--Orlik--Solomon theorem~\cite{brieskorn,orliksolomon}, which corresponds to an arrangement of hyperplanes~$\A$ in~$X= \C^n$ and states that there is an isomorphism~$H^\bullet(\C^n\setminus \A)\cong A_\bullet(\A)$.\\
		
		Coming back to hyperplanes, we may apply Theorem~\ref{intromaintheorem} to the case of projective bi-arrangements of hyperplanes in~$X=\P^n(\C)$ (we could also apply it to affine bi-arrangements of hyperplanes in~$X=\C^n$, but this would give a less symmetric statement). As a corollary of Theorem~\ref{intromaintheorem}, we get the following.
		
		\begin{thm}[see Theorem~\ref{thmprojective} for a more precise statement]\label{introtheoremprojective}
		Let~$(\L,\M,\chi)$ be an exact projective bi-arrangement of hyperplanes in~$\P^n(\C)$. For~$k=0,\ldots,n$, let~$\kA_{\bullet,\bullet}$ be the bi-complex obtained by only keeping the rows~$0\leq i\leq k$ and the columns~$0\leq j\leq n-k$ of the Orlik--Solomon bi-complex of~$(\L,\M,\chi)$, and let~$\kA_\bullet$ be its total complex. We then have isomorphisms 
		$$\gr_{2k}^WH^r(\L,\M,\chi)\cong H_{2k-r}(\kA_\bullet)(-k).$$
		\end{thm}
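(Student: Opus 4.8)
The plan is to deduce Theorem~\ref{introtheoremprojective} from Theorem~\ref{intromaintheorem} by making the $E_1$ page of the Orlik--Solomon spectral sequence completely explicit when $X=\P^n(\C)$. Since $\P^n(\C)$ is smooth and projective and all hyperplanes of $\L$ and $\M$ are divisors, parts (1)--(3) of Theorem~\ref{intromaintheorem} apply, and we obtain a spectral sequence of mixed Hodge structures
\begin{equation*}
E_1^{-p,q}=\bigoplus_{\substack{i-j=p\\ S\in\s_{i+j}(\L\cup\M)}} H^{q-2i}(S)(-i)\otimes A_{i,j}^S \;\Longrightarrow\; H^{-p+q}(\L,\M,\chi)
\end{equation*}
which degenerates at $E_2$ and satisfies $E_\infty^{-p,q}\cong E_2^{-p,q}\cong \gr_q^W H^{-p+q}(\L,\M,\chi)$. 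Everything then reduces to computing the $E_2$ page.

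First I would analyse the strata. Every stratum $S\in\s_{i+j}(\L\cup\M)$ is a projective linear subspace of $\P^n(\C)$, hence $S\cong\P^{n-i-j}(\C)$; in particular $H^\bullet(S)$ is concentrated in even degrees, with $H^{2a}(S)\cong\Q(-a)$ for $0\leq a\leq n-i-j$ and $H^{2a}(S)=0$ otherwise. Fixing compatibly the powers of the hyperplane class as generators of all these cohomology groups, the pullback maps $H^{2a}(S')\to H^{2a}(S)$ and the Gysin maps $H^{2a}(S)(-1)\to H^{2a+2}(S')$ attached to a codimension-one inclusion $S\subset S'$ of strata send generator to generator, hence are isomorphisms whenever source and target are both nonzero.

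Next I would reorganise the $E_1$ page by weight. Substituting $S\cong\P^{n-i-j}(\C)$, the summand $H^{q-2i}(S)(-i)$ is nonzero only for $q=2k$ even, in which case it equals $\Q(-k)$, and its nonvanishing is equivalent to $0\leq i\leq k$ and $0\leq j\leq n-k$ --- precisely the truncation defining $\kA_{\bullet,\bullet}$. Therefore the odd rows of $E_1$ vanish, and for $q=2k$ we get $E_1^{-p,2k}\cong\Q(-k)\otimes\bigoplus_{i-j=p}\kA_{i,j}$, i.e. $\Q(-k)$ tensored with the degree-$p$ part of the total complex $\kA_\bullet$. The key step is to check that, under this identification, the $d_1$ differential becomes $\Q(-k)$ tensored with the total differential of $\kA_\bullet$: by the explicit description of $d_1$ recalled after Theorem~\ref{intromaintheorem}, its two components are $d'$ on the Orlik--Solomon factor together with a Gysin map on the cohomology factor, and $d''$ on the Orlik--Solomon factor together with a pullback map on the cohomology factor; by the previous paragraph these cohomological maps are all the identity $\Q(-k)\to\Q(-k)$ on the truncated range, so only the combinatorial part survives.

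It follows that $E_2^{-p,2k}\cong H_p(\kA_\bullet)(-k)$ and $E_2^{-p,q}=0$ for $q$ odd, and combining this with the degeneration statement and the relation $-p+q=r$ (so $p=2k-r$) yields $\gr_{2k}^W H^r(\L,\M,\chi)\cong H_{2k-r}(\kA_\bullet)(-k)$, as desired. I expect the only genuinely delicate point to be the identification of $d_1$ in the third step: one must reconcile the sign convention used for the total complex of the bi-complex with that of the spectral-sequence differential, and check that the chosen generators of the cohomology of projective spaces normalise the Gysin and pullback maps simultaneously to identities (equivalently, that the relevant diagrams commute on the nose); the rest is bookkeeping with Theorem~\ref{intromaintheorem} and the cohomology ring of $\P^m(\C)$.
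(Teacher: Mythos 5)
Your proposal is correct and follows essentially the same route as the paper: the paper deduces Theorem~\ref{thmprojective}(1) from Theorem~\ref{maintheorem} together with Proposition~\ref{propOSBPB}, whose content is exactly your computation that the strata are projective spaces, that $H^{q-2i}(S)(-i)$ is nonzero precisely when $q=2k$ with $0\leq i\leq k$ and $0\leq j\leq n-k$ (where it is canonically $\Q(-k)$), and that the Gysin and pullback maps become identities under these identifications (Remark~\ref{remcohomologyproj}). The delicate point you flag about normalising the differentials is handled in the paper by the same canonical-generator argument, so nothing is missing.
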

		
		The above theorem implies that the weight-graded pieces of the motive of an exact bi-arrangement are \textit{combinatorial invariants} of the bi-arrangement. In general, the motive itself is not at all a combinatorial invariant. Indeed, the extension data between different weights are given by integrals like (\ref{eqzetaintegral}), which are sensitive to the equations of the hyperplanes. In the case of an arrangement of hyperplanes, this distinction does not appear, since the motive~$H^k(\L,\varnothing,\lambda)=H^k(\P^n(\C)\setminus \L)$ is concentrated in weight~$2k$.\\
		
		To prove Theorem~\ref{intromaintheorem}, our main object of study is the \textit{geometric Orlik--Solomon bi-complex}
		$$\qD_{i,j}=\bigoplus_{S\in \s_{i+j}(\L\cup\M)}H^{q-2i}(S)(-i)\otimes A_{i,j}^S.$$
		The key technical result (Theorem~\ref{maintheoremtechnical}) is thus the fact that there is a quasi-isomorphism between the geometric Orlik--Solomon bi-complex of a bi-arrangement and that of its blow-up. This allows us to reduce to the case where~$\L\cup\M$ is a normal crossing divisor in~$X$, for which Theorem~\ref{intromaintheorem} is a classical fact (Proposition~\ref{appspectralsequenceNCD}).
		
	\subsection{About the terminology}\label{parterminologymotive}
	
		Following~\cite{blochesnaultkreimer}, we use the word \textit{motive} in a non-technical sense, as a substitute for \enquote{relative cohomology group with some more structure}. We chose this homogeneous (non-standard) terminology because the objects~$H^\bullet(\L,\M,\chi)$ have incarnations in different categories, depending on the context.
		\begin{enumerate}[--]
		\item In the general case where~$X$ is a complex manifold,~$H^\bullet(\L,\M,\chi)$ is just a collection of vector spaces over~$\Q$.
		\item If~$X$ is a smooth complex variety and all the hypersurfaces in~$\L$ and~$\M$ are divisors in~$X$, then each of these vector spaces is endowed with a mixed Hodge structure.
		\item If~$X$ is a projective or affine space and the hypersurfaces in~$\L$ and~$\M$ are hyperplanes, then these mixed Hodge structures are of Tate type (all the weight-graded quotients are pure Tate structures).
		\item If furthermore all these hyperplanes are defined over a number field~$F\hookrightarrow \C$, then these mixed Hodge structures are the Hodge realizations~\cite{huberrealization, huberrealizationcorrigendum} of a mixed Tate motive over~$F$~\cite{levinetatemotives}. In this case, Theorem~\ref{introtheoremprojective} precisely describes the weight-graded pieces of these mixed Tate motives.
		\end{enumerate}
		
		It would be interesting to generalize our results to other settings, by working in Nori's tannakian category of motives, or the tannakian category of mixed Tate motives over any field for which the Beilinson--Soul\'e vanishing conjecture holds, etc.
		
	\subsection{Perspectives}
	
		The objects and techniques introduced in this article raise different questions for further research.
		\begin{enumerate}[--]
		\item Find an explicit combinatorial characterization of exact bi-arrangements.
		\item Find an explicit combinatorial presentation of the Orlik--Solomon bi-complex of a bi-arrangement, in the spirit of the presentation for tame bi-arrangements (Theorem~\ref{thmintrotame}). Find bases of the Orlik--Solomon bi-complex of a bi-arrangement in the spirit of nbc-bases of Orlik--Solomon algebras.
		\item Study the Orlik--Solomon bi-complex $A_{\bullet,\bullet}(\L,\M,\chi)$ as a module over the Orlik--Solomon algebras $A_\bullet(\L)$ and $A_\bullet(\M)$ (this is not  a general fact that the Orlik--Solomon bi-complex is a module over the Orlik--Solomon algebras, but it may happen in certain cases). In particular, relate homological properties of this module, such as Koszulness, to combinatorial properties of the bi-arrangement.
		\end{enumerate}
	
	\subsection{Connections with other articles}
	
		We are indebted to the work of A. B. Goncharov, in particular the ideas of~\cite{goncharovperiodsmm} which introduces the main objects of study of this article. One should be able to reconcile our strategy and Goncharov's strategy based on perverse sheaves using the Orlik--Solomon bi-complexes in the spirit of E. Looijenga's approach~\cite{looijenga} in the case of arrangements. 
		
		In~\cite{zhao}, J. Zhao introduces bi-complexes which should play the role of the Orlik--Solomon bi-complexes in the case of projective bi-arrangements (they cannot be compared to the Orlik--Solomon bi-complexes, since there is no coloring datum in \textit{loc. cit.}). Unfortunately, no connection is made between his combinatorial setting and the corresponding motives, except in the case of a generic bi-arrangement, i.e. a normal crossing divisor.
		
		In~\cite{dupontdissection}, we have already proved and used a very particular case of our main result in order to study a combinatorial family of periods.
	
	\subsection{Conventions and notations}
	
		\begin{enumerate}
		\item \textit{(Coefficients)} Unless otherwise stated, all vector spaces and algebras are defined over~$\Q$, as well as the tensor products of such objects. All (mixed) Hodge structures are defined over~$\Q$. All (relative) cohomology groups have coefficients in~$\Q$.
		\item \textit{(Tate twists)} We allow ourselves an abuse of notations with the Tate twists, writing like~$H^k(X)(-r)$ for~$X$ a complex manifold which is not necessarily a smooth algebraic variety. This is because Tate twists are important in the algebraic case; otherwise they should be ignored, and $H^k(X)(-r)$ should simply be interpreted as $H^k(X)$.
		\item \textit{(Homological algebra)} Our convention on bi-complexes is not standard since we mix the homological and the cohomological convention. A bi-complex is a collection of vector spaces~$C_{i,j}$ with differentials~$d':C_{i,j}\rightarrow C_{i-1,j}$ and~$d'':C_{i,j-1}\rightarrow C_{i,j}$ such that~$d'\circ d'=0$,~$d''\circ d''=0$ and~$d'\circ d''=d''\circ d'$. Our convention is to view the total complex~$C_n=\bigoplus_{i-j=n}C_{i,j}$ as a complex in the homological convention.
		%\item \textit{(Koszul sign rule)} We use the Koszul sign rule for graded objects: exchanging two consecutive homogeneous symbols~$a$ and~$b$ create a sign~$(-1)^{|a|.|b|}$.
		\end{enumerate}
		
	\subsection{Outline of the paper}
	
		In \S\ref{sectionOS} we introduce the formalism of bi-arrangements and Orlik--Solomon bi-complexes as a generalization of the Orlik--Solomon algebra of an arrangement.
		
		In \S\ref{sectionhypersurfaces} we introduce bi-arrangements of hypersurfaces in a complex manifold. We define the motive of a bi-arrangement of hypersurfaces and study the behaviour of the Orlik--Solomon bi-complexes with respect to blow-up.
		
		In \S\ref{sectiongeometric} we define the geometric Orlik--Solomon bi-complex of a bi-arrangement of hypersurfaces, study its behaviour with respect to blow-up, and state the main theorem.
		
		In \S\ref{sectionprojective} we study the particular case of projective bi-arrangements of hyperplanes, with an application to multizeta bi-arrangements.
		
		In \S\ref{parproof}, which is the most technical part of this article, we prove the main theorem.
		
		%In \S\ref{sectioncomplements} we study the dependence of our results with respect to choices of wonderful compactifications. We also show the compatibility of our results with (global) deletion and restriction.
		
		Appendix~\ref{parappA} recalls some (more or less) classical facts on relative cohomology  in the case of normal crossing divisors.
		
		Appendix~\ref{parappB} is a collection of cohomological identities related to Chern classes and blow-ups. They are used in the proof of the main theorem.
	
	\subsection{Acknowledgements}
		This work was begun at the Institut de Math\'{e}matiques de Jussieu - Paris rive gauche (IMJ-PRG), Paris, France, during the author's PhD, and completed at the Max-Planck-Institut f\"{u}r Mathematik (MPIM), Bonn, Germany. We thank these institutes for their hospitality. Many thanks to Francis Brown for his many valuable comments on a preliminary version of this article. This work was partially supported by ERC grant 257638 ``Periods in algebraic geometry and physics".

\section{The Orlik--Solomon bi-complex of a bi-arrangement of hyperplanes}\label{sectionOS}

	\subsection{The Orlik--Solomon algebra of an arrangement of hyperplanes}
	
		Here we recall a few definitions and notations from the theory of arrangements of hyperplanes. We refer the reader to the classical book~\cite{orlikterao} for more details.
	
		\subsubsection{Definitions and notations}
	
			An \textit{arrangement of hyperplanes} (or simply an \textit{arrangement})~$\A$ in~$\C^n$ is a finite set of hyperplanes of~$\C^n$ that pass through the origin. Let us write~$\A=\{K_1,\ldots,K_k\}$. For~$i=1,\ldots,k$, we may write~$K_i=\{f_i=0\}$ where~$f_i$ is a non-zero linear form on~$\C^n$.
			
			If~$\A'$ is an arrangement in~$\C^{n'}$ and~$\A''$ is an arrangement in~$\C^{n''}$, then we may define their \textit{product}~$\A=\A'\times\A''$, which is the arrangement in~$\C^{n'+n''}$ consisting of the hyperplanes~$K'\times\C^{n''}$, for~$K'\in\A'$, and~$\C^{n'}\times K''$, for~$K''\in\A''$.  
			
			A \textit{stratum} of~$\A$ is an intersection~$K_I=\bigcap_{i\in I}K_i$ of some of the~$K_i$'s, for~$I\subset\{1,\ldots,k\}$. By convention, we have~$K_\varnothing=\C^n$, and all other strata are called \textit{strict}. We write~$\s_m(\A)$ for the set of strata of~$\A$ of codimension~$m$,~$\s(\A)=\bigsqcup_{m\geq 0}\s_m(\A)$ for the set of all strata of~$\A$ and~$\s_{+}(\A)=\bigsqcup_{m>0}\s_m(\A)$ for the set of strict strata of~$\A$.
			
			It is classical to view the set of strata as a poset ordered via reverse inclusion. For~$S$ a stratum of~$\A$, we write~$\A^{\leq S}$ for the arrangement consisting of the hyperplanes that contain~$S$. 
			
			Let us write~$S^\perp\subset (\C^n)^\vee$ for the space of linear forms on~$\C^n$ that vanish on a stratum~$S$; it is spanned by the~$f_i$'s for~$i$ such that~$S\subset K_i$. We say that a family of strata~$S_1,\ldots,S_r$ \textit{intersect transversely} and write~$S_1\pitchfork\cdots\pitchfork S_r$ if~$S_1^\perp,\ldots,S_r^\perp$ are in direct sum in~$\C^n$.
			
			If~$S$ is a stratum of~$\A$, a \textit{decomposition} of~$S$ is an equality~$S=S_1\pitchfork\cdots\pitchfork S_r$ with the~$S_j$'s strata of~$\A$, and such that for every hyperplane~$K_i$ that contains~$S$,~$K_i$ contains some~$S_j$. Dually, this amounts to saying that we may write~$S^\perp=(S_1)^\perp\oplus \cdots\oplus (S_r)^\perp$ such that every~$f_i\in S^\perp$ is in some~$(S_j)^\perp$. Equivalently, we have a product decomposition~$\A^{\leq S}\cong \A^{\leq S_1}\times\cdots\times\A^{\leq S_r}$. We say that~$S$ is \textit{reducible} if it has a non-trivial decomposition, i.e. with all~$S_j$'s strict strata, and \textit{irreducible} otherwise. Every~$K\in\A$ is irreducible. A stratum~$S$ has a unique decomposition~$S=S_1\pitchfork \cdots\pitchfork S_r$ with the~$S_j$'s irreducible.

		\subsubsection{The Orlik--Solomon algebra}
		
			Let~$\A=\{K_1,\ldots,K_k\}$ be an arrangement of hyperplanes in~$\C^n$. We let~$E_\bullet(\A)=\Lambda^\bullet(e_1,\ldots,e_k)$ be the exterior algebra on generators~$e_i$,~$i=1,\ldots,k$ in degree~$1$. For~$I=\{i_1<\cdots<i_r\}\subset\{1,\ldots,k\}$ we write~$e_I=e_{i_1}\wedge\cdots\wedge e_{i_r}$ for the corresponding basis element of~$E_r(\A)$, with the convention~$e_\varnothing=1$. Let~$d:E_\bullet(\A)\rightarrow E_{\bullet-1}(\A)$ be the unique derivation of~$E_\bullet(\A)$ such that~$d(e_i)=1$ for all~$i$. It is given by 
			$$d(e_{i_1}\wedge\cdots\wedge e_{i_r})=\sum_{j=1}^r (-1)^{j-1}e_{i_1}\wedge\cdots\wedge\widehat{e_{i_j}}\wedge\cdots\wedge e_{i_r}.$$
			A subset~$I\subset\{1,\ldots,k\}$ is said to be \textit{dependent} if the hyperplanes~$K_i$, for~$i\in I$, are linearly dependent, and \textit{independent} otherwise. A \textit{circuit} of~$\A$ is a minimally dependent subset. Let~$R_\bullet(\A)$ be the homogeneous ideal of~$E_\bullet(\A)$ generated by the elements~$d(e_I)$ for~$I$ dependent. The Leibniz rule implies that it is generated by the elements~$d(e_I)$ for~$I$ a circuit.
			
			The \textit{Orlik--Solomon algebra} of~$\A$ is the quotient~$A_\bullet(\A)=E_\bullet(\A)/R_\bullet(\A)$. It is a differential graded algebra which is easily seen to be exact if~$\A$ is non-empty, a contracting homotopy~$h:A_\bullet(\A)\rightarrow A_{\bullet+1}(\A)$ being given by~$h(x)=e_1\wedge x$. An important feature of the Orlik--Solomon algebra is the following direct sum decomposition with respect to the set of strata:
			$$A_r(\A)=\bigoplus_{S\in\s_r(\A)}A_r^S(\A)$$
			where~$A_r^S(\A)$ is spanned by the classes of the elements~$e_I$ for~$I$ such that~$K_I=S$. 
			
			We will write~$S\stackrel{m}{\hookrightarrow}T$ for an inclusion of strata of codimension~$m$; for an inclusion~$S\stackrel{1}{\hookrightarrow}T$, we then have a component~$d_{S,T}:A_r^S(\A)\rightarrow A_{r-1}^T(\A)$ for the differential~$d$.
			
			If~$\Sigma$ is a strict stratum of~$\A$ of codimension~$r$, then the complex
			$$0\rightarrow A_r^\Sigma(\A) \stackrel{d}{\longrightarrow} \bigoplus_{\Sigma\stackrel{1}{\hookrightarrow}S}A_{r-1}^S(\A) \stackrel{d}{\longrightarrow} \bigoplus_{\Sigma\stackrel{2}{\hookrightarrow}T}A_{r-2}^T(\A) \stackrel{d}{\longrightarrow}\cdots\stackrel{d}{\longrightarrow} A_0^{\C^n}(\A)\rightarrow 0$$
			is the Orlik--Solomon algebra of the arrangement~$\A^{\leq\Sigma}$, hence is exact. This property allows one to uniquely define (as in~\cite[Lemma 2.2]{looijenga}) the groups~$A_r^S(\A)$ and the differentials~$d_{S,T}$ by induction on the codimension, starting with~$A_0^{\C^n}(\A)=\Q$. We will use this inductive point of view to generalize this construction to bi-arrangements.
		
	\subsection{Bi-arrangements of hyperplanes}
	
		%\subsubsection{Definitions}
		
			\begin{defi}\label{defibiarrangement}
			A \textit{bi-arrangement of hyperplanes} (or simply a \textit{bi-arrangement})~$\B=(\A,\chi)$ in~$\C^n$ is the data of an arrangement of hyperplanes~$\A$ in~$\C^n$ along with a \textit{coloring function}
			$$\chi:\s_+(\A)\rightarrow \{\lambda,\mu\}$$
			on the strict strata of~$\A$, such that the \textit{K\"{u}nneth condition} is satisfied:
			\begin{equation}\label{kunnethcondition}
			\textnormal{for any non-trivial decomposition~$S=S'\pitchfork S''$,~$\chi(S)=\chi(S')$ or~$\chi(S)=\chi(S'')$.}
			\end{equation}
			\end{defi}
		
			\begin{rem}
			The K\"{u}nneth condition is trivially satisfied if~$\chi(S')\neq\chi(S'')$. More generally, let~$S=S_1\pitchfork\ldots\pitchfork S_r$ be the decomposition of a strict stratum~$S$ into irreducible strata~$S_k$. If~$\chi(S_1)=\cdots=\chi(S_r)$, then the K\"{u}nneth condition forces~$\chi(S)=\chi(S_1)=\cdots=\chi(S_r)$. Otherwise,~$\chi(S)$ is not constrained by the definition of a bi-arrangement of hyperplanes. To sum up, a coloring function that satisfies the K\"{u}nneth condition is uniquely determined by
			\begin{enumerate}[--]
			\item the colors of the irreducible strata;
			\item the colors of the strata~$S=S_1\pitchfork\cdots\pitchfork S_r$ with the~$S_k$'s irreducible which do not all have the same color.
			\end{enumerate}
			For all our purposes, only the colors of the irreducible strata will matter, thus we make the following definition.
			
			\begin{defi}\label{defiequivalence}
			Two bi-arrangements are \textit{equivalent} if their underlying arrangements are the same and if their coloring functions agree on the irreducible strata. 
			\end{defi}
			In most of the article, we will implicitly consider bi-arrangements up to this equivalence relation. In particular, we will allow ourselves to define a bi-arrangement by only specifying the colors of the irreducible strata.
			\end{rem}
			
			\begin{rem}			
			The hyperplanes~$L\in\A$ such that~$\chi(L)=\lambda$ (resp. the hyperplanes~$M\in\A$ such that~$\chi(M)=\mu$) form an arrangement denoted by~$\mathscr{L}$ (resp.~$\mathscr{M}$). In most geometric situations (see \S\ref{parintroperiods}) these two arrangements play very different roles, hence the union~$\A=\L\sqcup\M$ is an artificial object. In other words, one should not view a bi-arrangement as an arrangement with some coloring datum, but as \textit{two} arrangements with some coloring datum. To emphasize this point, we will use the following notational conventions.
			\end{rem}
			
			\begin{notation}\label{notationconvention1}
			We will sometimes denote a bi-arrangement~$\B$ in~$\C^n$ by a triple~$(\L,\M,\chi)$, where~$\L$ and~$\M$ are two disjoint arrangements in~$\C^n$, and~$\chi:\s_+(\L\sqcup\M)\rightarrow\{\lambda,\mu\}$ is a function that satisfies~$\chi(L)=\lambda$ for~$L\in\L$,~$\chi(M)=\mu$ for~$M\in\M$, and the K\"{u}nneth condition (\ref{kunnethcondition}).
			\end{notation}
		
			\begin{notation}\label{notationconvention2}
			For~$\B=(\A,\chi)$ a bi-arrangement, we will often forget the underlying arrangement~$\A$ and simply denote it by~$\B$ instead. We will then write~$K\in\B$ for~$K\in\A$,~$S\in\s(\B)$ for~$S\in\s(\A)$, and so on.
			\end{notation}
			
			We will make great use of a natural involution on bi-arrangements.
					
			\begin{defi}
			The \textit{dual} of a bi-arrangement~$\B=(\A,\chi)$ is the bi-arrangement~$\B^\vee=(\A,\chi^\vee)$ where~$\chi^\vee$ is the composition of~$\chi$ with the involution~$\lambda\leftrightarrow\mu$. Equivalently, the dual of~$\B=(\L,\M,\chi)$ is~$\B^\vee=(\M,\L,\chi^\vee)$. We have~$\left(\B^\vee\right)^\vee=\B$.
			\end{defi}
			
			We may also take product of bi-arrangements. This operation is only well-defined if we work up to equivalence (Definition~\ref{defiequivalence}).
			
			\begin{defi}
			If~$\B'=(\A',\chi')$ is a bi-arrangement of hyperplanes in~$\C^{n'}$ and~$\B''=(\A'',\chi'')$ is a bi-arrangement of hyperplanes in~$\C^{n''}$, then we define their \textit{product}~$\B=\B'\times\B''=(\A,\chi)$, whose underlying arrangement of hyperplanes is~$\A=\A'\times \A''$. Its irreducible strata have the form~$S'\times\C^{n''}$ or~$\C^{n'}\times S''$ for~$S'$ (resp.~$S''$) an irreducible stratum of~$\A'$ (resp.~$\A''$). We thus define the coloring by~$\chi(S'\times\C^{n''})=\chi'(S')$ and~$\chi(\C^{n'}\times S'')=\chi''(S'')$.
			\end{defi}

		%\subsubsection{Examples}	
		
			\begin{ex} 
			There are two (dual) ways in which an arrangement~$\A$ may be viewed as a bi-arrangement: by defining the coloring~$\chi$ to be constant equal to~$\lambda$ or~$\mu$. We will simply denote these bi-arrangements by~$(\A,\lambda)$ and~$(\A,\mu)$.
			\end{ex}
			
			\begin{ex}
			 By taking products, we may define bi-arrangements~$(\L,\lambda)\times(\M,\mu)$. They are somewhat trivial examples since the arrangements~$\L$ and~$\M$ \enquote{do not mix}. 
			 \end{ex}
			
			%\item Let~$\L$ and~$\M$ be two disjoint arrangements of hyperplanes in~$\C^n$. We define the \textit{$\lambda$-canonical} coloring~$c_\lambda$ and the \textit{$\mu$-canonical} coloring~$c_\mu$, so that~$(\L,\M,c_\lambda)$ and~$(\L,\M,c_\mu)$ are bi-arrangements of hyperplanes.
			%\begin{eqnarray*}
			%c_\lambda(S)=\begin{cases} 
			%\mu & \textnormal{ if } S\in\s_+(\M) \\
			%\lambda & \textnormal{ otherwise.}
			%\end{cases} & \hspace{2cm} &
			%c_\mu(S)=\begin{cases} 
			%\lambda & \textnormal{ if } S\in\s_+(\L)\\
			%\mu & \textnormal{ otherwise.}
			%\end{cases}
			%\end{eqnarray*}
			%The terminology is best understood if one assumes that~$\s_+(\L)$ and~$\s_+(\M)$ are disjoint. Usually, one wants to attribute the color~$\lambda$ for the~$\L$-strata  (the strata in~$\s_+(\L)$) and the color~$\mu$ for the~$\M$-strata (the strata in~$\s_+(\M)$), but there is no canonical coloring rule for the mixed strata (the rest). In the~$\lambda$-canonical case, one gives the color~$\lambda$ to all the mixed strata, whereas in the~$\mu$-canonical case, one gives the color~$\mu$ to all the mixed strata.
			
			\begin{ex}\label{exextremecoloring} Let~$\L$ and~$\M$ be two disjoint arrangements in~$\C^n$. We define the \textit{$\lambda$-extreme} coloring~$e_\lambda$ and the \textit{$\mu$-extreme} coloring~$e_\mu$ so that~$(\L,\M,e_\lambda)$ and~$(\L,\M,e_\mu)$ are bi-arrangements.
			\begin{eqnarray*}
			e_\lambda(S)=\begin{cases} 
			\lambda & \textnormal{ if } S\subset L \textnormal{ for some } L\in\L  \\
			\mu & \textnormal{ otherwise.}
			\end{cases} & \hspace{.5cm} &
			e_\mu(S)=\begin{cases} 
			\mu & \textnormal{ if } S\subset M \textnormal{ for some } M\in\M\\
			\lambda & \textnormal{ otherwise.}
			\end{cases}
			\end{eqnarray*}
			To understand the terminology, let us anticipate and note (see for instance Lemma~\ref{lemfirstobstruction} below) that we will be interested mostly in the bi-arrangements such that for every stratum~$S$, there exists a hyperplane~$K\supset S$ with the same color as~$S$. The~$\lambda$-extreme coloring (resp. the~$\mu$-extreme coloring) is extreme in the sense that we give the color~$\lambda$ (resp. the color~$\mu$) to as many strata as possible while staying in that class of bi-arrangements.
			\end{ex}
	
	\subsection{The formalism of Orlik--Solomon bi-complexes}
	
		\subsubsection{The definition}	
	
		\begin{lem}\label{defiOSbicomplex}
		Let~$\B$ be a bi-arrangement in~$\C^n$. There exists a unique datum of
		\begin{enumerate}[--]
		\item for all~$i, j\geq 0$, for every stratum~$S\in\s_{i+j}(\B)$, a finite-dimensional~$\Q$-vector space~$A_{i,j}^S$;
		\item for every inclusion~$S\stackrel{1}{\hookrightarrow}T$ of strata of codimension~$1$, linear maps
		$$d'_{S,T}:A_{i,j}^S\rightarrow A_{i-1,j}^T \;\;\textnormal{ and }\;\; d''_{S,T}:A_{i,j-1}^T\rightarrow A_{i,j}^S;$$
		\end{enumerate}		 
		such that the following conditions are satisfied:
		\begin{enumerate}[--]
		\item~$A_{0,0}^{\C^n}=\Q$;
		\item for every stratum~$\Sigma$,~$$A_{\bullet,\bullet}^{\leq\Sigma}=\left(\bigoplus_{S\supset\Sigma}A_{\bullet,\bullet}^S,d',d''\right)$$ is a bi-complex, where~$d'$ and~$d''$ respectively denote the collection of the maps~$d'_{S,T}$ and~$d''_{S,T}$ for~$S\supset\Sigma$;
		\item for every strict stratum~$\Sigma\in\s_{i+j}(\B)$ such that~$\chi(\Sigma)=\lambda$, we have exact sequences
		$$0\rightarrow A_{i,j}^\Sigma \stackrel{d'}{\longrightarrow} \bigoplus_{\Sigma\stackrel{1}{\hookrightarrow}S}A_{i-1,j}^S \stackrel{d'}{\longrightarrow} \bigoplus_{\Sigma\stackrel{2}{\hookrightarrow}T}A_{i-2,j}^T;$$
		\item for every strict stratum~$\Sigma\in\s_{i+j}(\B)$ such that~$\chi(\Sigma)=\mu$, we have exact sequences
		$$0\leftarrow A_{i,j}^\Sigma \stackrel{d''}{\longleftarrow} \bigoplus_{\Sigma\stackrel{1}{\hookrightarrow}S}A_{i,j-1}^S \stackrel{d''}{\longleftarrow} \bigoplus_{\Sigma\stackrel{2}{\hookrightarrow}T}A_{i,j-2}^T.$$
		\end{enumerate}
		\end{lem}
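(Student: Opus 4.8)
The plan is to construct the whole datum by a single induction on the codimension $N$ of strata; the key point is that at each stage the space $A_{i,j}^\Sigma$ and every differential attached to $\Sigma$ is \emph{forced} by the listed requirements, so existence and uniqueness will come out together.

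For $N=0$ the only stratum is $\C^n$, which is not strict, so the exact-sequence conditions are vacuous and the bi-complex condition for $A_{\bullet,\bullet}^{\leq\C^n}$ is trivial; one is forced to set $A_{0,0}^{\C^n}=\Q$. For the inductive step, assume the spaces $A_{i,j}^S$ and the maps $d'_{S,T},d''_{S,T}$ have been built for all strata of codimension $<N$ and satisfy all the stated conditions insofar as they involve only such strata. Fix $\Sigma\in\s_N(\B)$. The maps $d'_{\Sigma,S}\colon A_{i,j}^\Sigma\to A_{i-1,j}^S$ and $d''_{\Sigma,S}\colon A_{i,j-1}^S\to A_{i,j}^\Sigma$ still to be defined concern only strata $S$ with $\Sigma\stackrel{1}{\hookrightarrow}S$, which have codimension $N-1$; and the two composites $\bigoplus_{\Sigma\stackrel{1}{\hookrightarrow}S}A_{i-1,j}^S\stackrel{d'}{\to}\bigoplus_{\Sigma\stackrel{2}{\hookrightarrow}T}A_{i-2,j}^T$ and $\bigoplus_{\Sigma\stackrel{2}{\hookrightarrow}T}A_{i,j-2}^T\stackrel{d''}{\to}\bigoplus_{\Sigma\stackrel{1}{\hookrightarrow}S}A_{i,j-1}^S$ occurring in the exact-sequence conditions for $\Sigma$ involve only strata of codimension $\leq N-1$, hence are already available.

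If $\chi(\Sigma)=\lambda$, the $\lambda$ exact-sequence condition forces $A_{i,j}^\Sigma:=\ker\bigl(d'\colon\bigoplus_{\Sigma\stackrel{1}{\hookrightarrow}S}A_{i-1,j}^S\to\bigoplus_{\Sigma\stackrel{2}{\hookrightarrow}T}A_{i-2,j}^T\bigr)$ and fixes $d'_{\Sigma,S}$ to be the components of the canonical inclusion $\iota\colon A_{i,j}^\Sigma\hookrightarrow\bigoplus_S A_{i-1,j}^S$. The maps $d''_{\Sigma,S}$ are then forced as well: by the commutation relation $d'\circ d''=d''\circ d'$ available in codimension $<N$ together with the injectivity of $\iota$, the map $d''_{\Sigma,S}$ must be the unique one whose composite with $\iota$ is the relevant component of $d''\circ d'$, and such a map exists precisely because $d'\circ d''\circ d'=0$ (hence $d''\circ d'$ takes values in $\ker(d')=A_{i,j}^\Sigma$). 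The case $\chi(\Sigma)=\mu$ is handled by the same argument after exchanging the roles of $d'$ and $d''$, of the indices $i$ and $j$, and of kernels and cokernels — this is the manifestation within the construction of the duality $\B\mapsto\B^\vee$ — so that $A_{i,j}^\Sigma$ becomes the cokernel of $d''\colon\bigoplus_{\Sigma\stackrel{2}{\hookrightarrow}T}A_{i,j-2}^T\to\bigoplus_{\Sigma\stackrel{1}{\hookrightarrow}S}A_{i,j-1}^S$, with $d''_{\Sigma,S}$ the components of the projection and $d'_{\Sigma,S}$ the map induced on this cokernel by $d''\circ d'$.

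It remains to verify that the enlarged datum satisfies every condition, and this is where the only real work lies. The normalization $A_{0,0}^{\C^n}=\Q$ and the exact-sequence condition that applies to $\Sigma$ hold by construction, and no bi-complex $A_{\bullet,\bullet}^{\leq\Sigma'}$ with $\Sigma'$ of codimension $<N$ has been altered; so the only new relations are those inside the bi-complexes $A_{\bullet,\bullet}^{\leq\Sigma}$ ($\Sigma\in\s_N(\B)$) that involve a newly-defined space $A_{i,j}^\Sigma$. Since these spaces sit in top total degree $N$ there, each can only occur as the source of a $d'$ or the target of a $d''$, so one has to check: $d'\circ d'=0$ out of $A_{i,j}^\Sigma$; $d''\circ d''=0$ into $A_{i,j}^\Sigma$; and commutativity of the square with corner $A_{i,j}^\Sigma$. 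The last holds by the very definition of the transverse differential; one of the first two is immediate from the kernel (resp. cokernel) description, and the other follows by composing with $\iota$ (resp. with the projection onto the cokernel) and invoking $d'\circ d''\circ d'=0$ and $d''\circ d'\circ d''=0$ — identities that follow formally from $d'd'=0$, $d''d''=0$ and $d'd''=d''d'$ in codimension $<N$. Uniqueness is then automatic, since at every stage the space and all its differentials were the unique admissible choices. In short, the argument is a bookkeeping induction whose only delicate point is the well-definedness of the transverse differential; the K\"unneth condition on $\chi$ is not needed here (it enters later, for the product decomposition $A_{\bullet,\bullet}^{\leq\Sigma}\cong A_{\bullet,\bullet}^{\leq\Sigma'}\otimes A_{\bullet,\bullet}^{\leq\Sigma''}$ along reducible strata).
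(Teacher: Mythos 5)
Your proposal is correct and follows essentially the same route as the paper: an induction on codimension in which $A_{i,j}^\Sigma$ is forced to be the kernel (for $\lambda$) or cokernel (for $\mu$) of the already-defined differential, with the transverse differential obtained by filling in the unique arrow making the relevant square commute. You simply spell out more explicitly the well-definedness of that dotted arrow and the verification of the bi-complex identities, which the paper leaves implicit.
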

		
		\begin{proof}
		We define the bi-complexes~$A_{\bullet,\bullet}^{\leq\Sigma}$ by induction on the codimension of~$\Sigma$. The case of codimension~$0$ is given by definition. If~$\Sigma$ is a strict stratum and~$\chi(\Sigma)=\lambda$ then one is forced to define
		$$A_{i,j}^\Sigma=\mathrm{ker}\left(\bigoplus_{\Sigma\stackrel{1}{\hookrightarrow}S}A_{i-1,j}^S \stackrel{ d'}{\rightarrow} \bigoplus_{\Sigma\stackrel{2}{\hookrightarrow}T}A_{i-2,j}^T\right)$$
		and the differentials~$ d'_{\Sigma,S}:A_{i,j}^\Sigma\rightarrow A_{i-1,j}^S$ to be the components of the natural inclusion.
		This uniquely defines the differentials~$ d''_{\Sigma,S}:A_{i,j-1}^S\rightarrow A_{i,j}^\Sigma$ by filling the dotted arrow in the following commutative diagram.
		$$\xymatrix{
		\displaystyle\bigoplus_{\Sigma\stackrel{1}{\hookrightarrow}S} A_{i,j-1}^S \ar@{.>}[d] \ar[r] & \displaystyle\bigoplus_{\Sigma\stackrel{2}{\hookrightarrow}T} A_{i-1,j-1}^T \ar[d]\ar[r] & \displaystyle\bigoplus_{\Sigma\stackrel{3}{\hookrightarrow}U} A_{i-2,j-1}^U\ar[d]\\
		A_{i,j}^\Sigma \ar[r]& \displaystyle\bigoplus_{\Sigma\stackrel{1}{\hookrightarrow}S} A_{i-1,j}^S \ar[r]& \displaystyle\bigoplus_{\Sigma\stackrel{2}{\hookrightarrow}T} A_{i-2,j}^T}$$
		The case~$\chi(\Sigma)=\mu$ is dual, with the definition
		$$A_{i,j}^{\Sigma}=\mathrm{coker}\left( \bigoplus_{\Sigma\stackrel{2}{\hookrightarrow}T}A_{i,j-2}^T \stackrel{ d''}{\rightarrow} \bigoplus_{\Sigma\stackrel{1}{\hookrightarrow}S}A_{i,j-1}^S\right).$$
		\end{proof}

		\begin{defi}
		The above datum is called the \textit{Orlik--Solomon bi-complex} of the bi-arrangement~$\B$ and denoted by~$A_{\bullet,\bullet}(\B)$, or simply~$A_{\bullet,\bullet}$ when the situation is clear.
		\end{defi}
		
		Visually, we get a bi-complex that is defined inductively, starting in the top right corner and going in the bottom left direction.
		
		$$\xymatrix{
		 \ar@{.>}[r] & A_ {3,0}\ar@{.>}[d] \ar[r] & A_{2,0} \ar[r]\ar[d] & A_{1,0} \ar[r]\ar[d] & A_{0,0} \ar[d] \\
		 &\ar@{.>}[r]&  A_{2,1} \ar@{.>}[d] \ar[r] & A_{1,1}\ar[d] \ar[r] & A_{0,1}\ar[d]  \\
		& & \ar@{.>}[r] & A_{1,2} \ar@{.>}[d]\ar[r] & A_{0,2} \ar[d] \\
		 &&& \ar@{.>}[r] & A_{0,3}\ar@{.>}[d] \\
		 &&&&
		}$$
		
		\begin{rem}
		The Orlik--Solomon bi-complex is a local object: the bi-complex~$A_{\bullet,\bullet}^{\leq\Sigma}(\B)$ is the Orlik--Solomon bi-complex of the bi-arrangement~$\B^{\leq\Sigma}$ consisting of the hyperplanes that contain~$\Sigma$.
		\end{rem}
		
		\begin{lem}\label{lemAbicomplex}
		Let~$\B$ be a bi-arrangement and~$A_{\bullet,\bullet}$ be its Orlik--Solomon bi-complex. The fact that all~$A_{\bullet,\bullet}^{\leq \Sigma}$ are bi-complexes may be translated explicitly into the following identities.
		\begin{enumerate}
		\item For an inclusion~$S\stackrel{2}{\hookrightarrow}U$ we have
		$$\sum_{S\stackrel{1}{\hookrightarrow}T\stackrel{1}{\hookrightarrow}U}d'_{T,U}\circ d'_{S,T}=0 \;\;\textit{ and }\;\, \sum_{S\stackrel{1}{\hookrightarrow}T\stackrel{1}{\hookrightarrow}U}d''_{S,T}\circ d''_{T,U}=0.$$
		\item \begin{enumerate} 
			\item Let~$S\neq U$ be two strata of the same codimension such that there is no diagram~$S\stackrel{1}{\hookrightarrow} T\stackrel{1}{\hookleftarrow} U$. Then for every diagram~$S\stackrel{1}{\hookleftarrow} R\stackrel{1}{\hookrightarrow} U$ we have 
			$$d'_{R,U}\circ d''_{R,S}=0.$$
			\item Let~$S\neq U$ be two strata of the same codimension such that there is a diagram~$S\stackrel{1}{\hookrightarrow} T\stackrel{1}{\hookleftarrow} U$. Then we necessarily have~$T=S+U$ and there is a unique diagram~$S\stackrel{1}{\hookleftarrow} R\stackrel{1}{\hookrightarrow} U$ , which is~$R=S\cap U$. We then have 
			$$d'_{R,U}\circ d''_{R,S}=d''_{U,T}\circ d'_{S,T}.$$
			\item For every stratum~$S$, we have 
			$$\sum_{S\stackrel{1}{\hookrightarrow}T}d''_{S,T}\circ d'_{S,T}=0.$$
			For every inclusion~$R\stackrel{1}{\hookrightarrow} S$ we have
			$$ d'_{R,S}\circ d''_{R,S}=0.$$
			\end{enumerate}
		\end{enumerate}
		\end{lem}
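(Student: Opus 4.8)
The idea is that all of these identities are nothing but explicit transcriptions of the three defining conditions of a bi-complex, namely $d'\circ d'=0$, $d''\circ d''=0$ and $d'\circ d''=d''\circ d'$, applied to the bi-complexes $A_{\bullet,\bullet}^{\leq\Sigma}$ for suitable strata $\Sigma$. So the strategy is, for each identity, to identify the relevant stratum $\Sigma$, write out the corresponding component of the relevant equation of the bi-complex, and observe that the sums that appear are exactly the sums over intermediate strata $S\stackrel{1}{\hookrightarrow}T\stackrel{1}{\hookrightarrow}U$ inside the poset $\s(\B^{\leq\Sigma})$. The only nontrivial combinatorial input is a fact about the poset of strata of an arrangement: between two strata $S,U$ of the same codimension, there is at most one stratum $T$ with $S\stackrel{1}{\hookrightarrow}T\stackrel{1}{\hookleftarrow}U$ (necessarily $T=S+U$, i.e.\ $T^\perp=S^\perp\cap U^\perp$), and at most one stratum $R$ with $S\stackrel{1}{\hookleftarrow}R\stackrel{1}{\hookrightarrow}U$ (necessarily $R=S\cap U$), and these exist simultaneously; moreover $T$ exists iff $\dim(S^\perp+U^\perp)=\dim S^\perp+1$.

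First I would isolate and prove this poset statement about arrangements: given $S\neq U$ with $\mathrm{codim}(S)=\mathrm{codim}(U)=r$, the linear spans $S^\perp,U^\perp$ are $r$-dimensional subspaces of $(\C^n)^\vee$, their intersection $S^\perp\cap U^\perp$ has dimension $r-1$ or less, and $S\stackrel{1}{\hookrightarrow}T$ with $U\stackrel{1}{\hookrightarrow}T$ forces $T^\perp\subset S^\perp\cap U^\perp$ of dimension $r-1$, so this can happen only when $\dim(S^\perp\cap U^\perp)=r-1$, in which case $T$ is unique and equals the stratum with $T^\perp=S^\perp\cap U^\perp$; dually $R$ with $R\stackrel{1}{\hookrightarrow}S$, $R\stackrel{1}{\hookrightarrow}U$ forces $R^\perp\supset S^\perp+U^\perp$ of dimension $r+1$, forcing $\dim(S^\perp+U^\perp)=r+1$, equivalently $\dim(S^\perp\cap U^\perp)=r-1$, so $R$ exists exactly when $T$ does, and $R$ is unique with $R^\perp=S^\perp+U^\perp$, i.e.\ $R=S\cap U$. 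This settles the dichotomy in part 2 and gives the shapes asserted there.

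Then I would run through the identities one at a time. For (1): apply $d'\circ d'=0$ (resp.\ $d''\circ d''=0$) to the bi-complex $A_{\bullet,\bullet}^{\leq\Sigma}$ for $\Sigma$ a stratum of codimension $\mathrm{codim}(U)$ contained in $U$ — in fact just take $\Sigma$ to be any codimension-$(i+j)$ stratum whose $\perp$ contains $S^\perp$; the $(S,U)$-component of $d'\circ d'$ is precisely $\sum_{S\stackrel{1}{\hookrightarrow}T\stackrel{1}{\hookrightarrow}U}d'_{T,U}\circ d'_{S,T}$, and similarly for $d''$. For (2a) and (2b): the identity $d'\circ d''=d''\circ d'$ on $A_{\bullet,\bullet}^{\leq\Sigma}$ has, as its $(S,U)$-component (with $S\neq U$ of equal codimension), the equation $\sum_R d'_{R,U}\circ d''_{R,S} = \sum_T d''_{U,T}\circ d'_{S,T}$, where the left sum runs over $R$ with $S\stackrel{1}{\hookleftarrow}R\stackrel{1}{\hookrightarrow}U$ and the right sum over $T$ with $S\stackrel{1}{\hookrightarrow}T\stackrel{1}{\hookleftarrow}U$; by the poset statement each sum has at most one term, and either both are empty (giving 2a: every such composite $d'_{R,U}\circ d''_{R,S}$ vanishes because the right-hand side is zero and there is at most one $R$) or both have exactly one term (giving 2b). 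For (2c): the first identity is the $(S,S)$-component of $d'\circ d'' = d''\circ d'$ — on the right-hand side $d''_{S,T}\circ d'_{S,T}$ summed over $S\stackrel{1}{\hookrightarrow}T$, on the left-hand side the single term $d'_{S,S}\circ d''_{S,S}=\mathrm{id}$-type contribution which, with the conventions, is $0$ on the relevant graded piece — more carefully, it is the statement that the degree-$(i-1,j)$ to degree-$(i-1,j)$ component must vanish; the second identity, $d'_{R,S}\circ d''_{R,S}=0$ for $R\stackrel{1}{\hookrightarrow}S$, is again the $(S,S)$-or-$(R,R)$ component of $d'd''=d''d'$ after accounting that there is no $T$ with $R\stackrel{1}{\hookrightarrow}T\stackrel{1}{\hookleftarrow}R$ other than degenerate ones.

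The main obstacle will be bookkeeping the signs and matching the index shifts in the bi-complex conventions of the paper (which mix homological $d'$ and cohomological $d''$), and in particular being careful in (2c) about which graded component of $d'\circ d''=d''\circ d'$ produces each stated equation and why the ``diagonal'' terms drop out; once the poset lemma is in hand, everything else is a direct, if fiddly, transcription.
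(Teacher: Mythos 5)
Your overall strategy is exactly the paper's: each identity is a component of one of the three bi-complex axioms for~$A^{\leq\Sigma}_{\bullet,\bullet}$ with a suitable choice of~$\Sigma$ ($\Sigma=S$ for part 1,~$\Sigma=R$ for 2(a)--(b)), and parts 1, 2(a), 2(b) go through essentially as you describe. But your auxiliary poset lemma contains a false claim: you assert that~$R=S\cap U$ and~$T=S+U$ ``exist simultaneously''. The implication ($T$ exists~$\Rightarrow$~$R$ exists) is correct, since the existence of~$T$ forces~$\dim(S^\perp+U^\perp)=r+1$, so that~$S\cap U=K_{I\cup J}$ has codimension~$r+1$. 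The converse fails:~$R$ can exist while~$S+U$ is not a stratum. Take~$f_1=x$,~$f_2=y$,~$f_3=z$,~$f_4=x+y+z$ in~$\C^3$,~$S=K_{12}$,~$U=K_{34}$; then~$R=\{0\}=K_{1234}$ satisfies~$R\stackrel{1}{\hookrightarrow}S$ and~$R\stackrel{1}{\hookrightarrow}U$, but no hyperplane contains both~$S$ and~$U$, so no~$T$ exists. Consequently your dichotomy ``either both sums are empty or both have exactly one term'' omits precisely the non-vacuous case of 2(a) (one~$R$, no~$T$). Your parenthetical argument for 2(a) --- at most one term on the left, an empty sum on the right, hence that term vanishes --- is the correct one, but it belongs to this third case, not to ``both empty''.

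More seriously, 2(c) is not actually proved. There are no maps~$d'_{S,S}$ or~$d''_{S,S}$, so the ``single term~$d'_{S,S}\circ d''_{S,S}$'' you invoke does not exist. The diagonal~$(S,S)$-component of~$d'\circ d''=d''\circ d'$ reads~$\sum_{R\stackrel{1}{\hookrightarrow}S}d'_{R,S}\circ d''_{R,S}=\sum_{S\stackrel{1}{\hookrightarrow}T}d''_{S,T}\circ d'_{S,T}$, and the missing idea is again the choice of~$\Sigma$: taking~$\Sigma=S$, the arrangement~$\B^{\leq S}$ has no strata properly contained in~$S$, so the left-hand sum is empty and the first identity of 2(c) follows. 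The second identity then needs two inputs: the diagonal component in~$A^{\leq R}_{\bullet,\bullet}$ for the given~$R\stackrel{1}{\hookrightarrow}S$, where the left-hand sum reduces to the single term~$d'_{R,S}\circ d''_{R,S}$, and the already-established first identity to kill the right-hand sum. Your sketch (``no~$T$ with~$R\stackrel{1}{\hookrightarrow}T\stackrel{1}{\hookleftarrow}R$ other than degenerate ones'') supplies neither step.
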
		
		
		\begin{proof}
		\begin{enumerate}
		\item It expresses the fact that~$d'\circ d'=0$ and~$ d''\circ d''=0$ in~$A^{\leq S}_{\bullet,\bullet}$.
		\item \begin{enumerate} 
			\item It expresses the fact that the components~$A_{i,j-1}^S\rightarrow A_{i-1,j}^U$ of~$ d'\circ d''$ and~$ d''\circ d'$ in~$A^{\leq R}_{\bullet,\bullet}$ are equal.
			\item Same.
			\item There is no~$ d''_{R,S}$ in~$A^{\leq S}_{\bullet,\bullet}$, hence the component~$A_{i,j-1}^S\rightarrow A_{i-1,j}^S$ of~$ d''\circ d'$ is zero. This gives the first equality. Now for some~$R\stackrel{1}{\hookrightarrow} S$, the second equality follows from the first equality and the fact that the components~$A_{i,j-1}^S\rightarrow A_{i-1,j}^S$ of~$ d'\circ d''$ and~$ d''\circ d'$ in~$A_{\bullet,\bullet}^{\leq R}$ are equal.
			\end{enumerate}
		\end{enumerate}
		\end{proof}
		
		\begin{defi}\label{defiexact}
		Let~$\B$ be an arrangement and~$A_{\bullet,\bullet}$ be its Orlik--Solomon bi-complex. We say that a strict stratum~$\Sigma$ of~$\B$ is \textit{exact} if the following condition, depending on the color of~$\Sigma$, is satisfied:
		\begin{enumerate}[--]
		\item~$\chi(\Sigma)=\lambda$ and all the rows 
		$$0\rightarrow A_{i,j}^\Sigma \stackrel{d'}{\longrightarrow} \bigoplus_{\Sigma\stackrel{1}{\hookrightarrow}S}A_{i-1,j}^S \stackrel{d'}{\longrightarrow} \bigoplus_{\Sigma\stackrel{2}{\hookrightarrow}T}A_{i-2,j}^T \stackrel{d'}{\longrightarrow}\cdots\stackrel{d'}{\longrightarrow} \bigoplus_{\Sigma\stackrel{i}{\hookrightarrow}Z}A_{0,j}^Z \rightarrow 0$$
		of the bi-complex~$A_{\bullet,\bullet}^{\leq\Sigma}$ are exact;
		\item~$\chi(\Sigma)=\mu$ and all the columns 
		$$0\leftarrow A_{i,j}^\Sigma \stackrel{d''}{\longleftarrow} \bigoplus_{\Sigma\stackrel{1}{\hookrightarrow}S}A_{i,j-1}^S \stackrel{d''}{\longleftarrow} \bigoplus_{\Sigma\stackrel{2}{\hookrightarrow}T}A_{i,j-2}^T \stackrel{d''}{\longleftarrow}\cdots\stackrel{d''}{\longleftarrow} \bigoplus_{\Sigma\stackrel{j}{\hookrightarrow}Z}A_{i,0}^Z \leftarrow 0$$
		of the bi-complex~$A_{\bullet,\bullet}^{\leq\Sigma}$ are exact.
		\end{enumerate}
		We say that~$\B$ is \textit{exact} if all its strict strata are exact.
		\end{defi}
		
		The next easy lemma expresses the fact that the definition of the Orlik--Solomon bi-complex is self-dual.
		
		\begin{lem}
		The Orlik--Solomon bi-complexes of~$\B$ and~$\B^\vee$ are dual to each other: we have~$A_{i,j}^S(\B^\vee)=\left(A_{j,i}^S(\B)\right)^\vee$,~$ d'$ being the transpose of~$ d''$ and~$ d''$ the transpose of~$ d'$.~$\B$ is exact if and only if~$\B^\vee$ is exact.
		\end{lem}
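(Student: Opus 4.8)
The plan is to prove both assertions at once by induction on the codimension of a stratum, running the inductive construction of Lemma~\ref{defiOSbicomplex} simultaneously for~$\B$ and for~$\B^\vee$ and checking that at each step the two constructions are exchanged by linear duality. Precisely, I would show that for every stratum~$S$ there is a canonical identification~$A_{i,j}^S(\B^\vee)=\left(A_{j,i}^S(\B)\right)^\vee$ under which, for every inclusion~$S\stackrel{1}{\hookrightarrow}T$, the map~$d'_{S,T}(\B^\vee)$ is the transpose of~$d''_{S,T}(\B)$ and~$d''_{S,T}(\B^\vee)$ is the transpose of~$d'_{S,T}(\B)$. The base case is immediate:~$A_{0,0}^{\C^n}(\B^\vee)=\Q=\Q^\vee=\left(A_{0,0}^{\C^n}(\B)\right)^\vee$.

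For the inductive step, fix a strict stratum~$\Sigma$ and assume the identification for all strata of strictly smaller codimension. Suppose first that~$\chi(\Sigma)=\lambda$, so that~$\chi^\vee(\Sigma)=\mu$. On the~$\B$ side,~$A_{i,j}^\Sigma(\B)$ is by construction the kernel of~$d':\bigoplus_{\Sigma\stackrel{1}{\hookrightarrow}S}A_{i-1,j}^S(\B)\to\bigoplus_{\Sigma\stackrel{2}{\hookrightarrow}T}A_{i-2,j}^T(\B)$, with~$d'_{\Sigma,S}(\B)$ the (injective) inclusion. On the~$\B^\vee$ side,~$A_{j,i}^\Sigma(\B^\vee)$ is by construction the cokernel of~$d'':\bigoplus_{\Sigma\stackrel{2}{\hookrightarrow}T}A_{j,i-2}^T(\B^\vee)\to\bigoplus_{\Sigma\stackrel{1}{\hookrightarrow}S}A_{j,i-1}^S(\B^\vee)$, with~$d''_{S,\Sigma}(\B^\vee)$ the (surjective) projection. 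By the inductive hypothesis the latter map is exactly the transpose of the former, and since~$\mathrm{coker}(f^\vee)=\left(\mathrm{ker}(f)\right)^\vee$ for a map~$f$ of finite-dimensional~$\Q$-vector spaces, this yields~$A_{j,i}^\Sigma(\B^\vee)=\left(A_{i,j}^\Sigma(\B)\right)^\vee$, with the projection onto the cokernel equal to the transpose of the inclusion of the kernel, i.e.~$d''_{S,\Sigma}(\B^\vee)=\left(d'_{\Sigma,S}(\B)\right)^\vee$. The remaining differential~$d'_{\Sigma,S}(\B^\vee)$ is the unique map filling the dotted arrow in the (dual of the) commutative diagram appearing in the proof of Lemma~\ref{defiOSbicomplex}, the uniqueness coming from surjectivity of the projection onto the cokernel; feeding the inductive hypothesis into every entry of that diagram shows it is precisely the transpose of the diagram defining~$d''_{\Sigma,S}(\B)$, so the transpose of~$d''_{\Sigma,S}(\B)$ solves the defining equations and by uniqueness equals~$d'_{\Sigma,S}(\B^\vee)$. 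The case~$\chi(\Sigma)=\mu$ is obtained from this by exchanging~$d'$ with~$d''$ and~$\mathrm{ker}$ with~$\mathrm{coker}$. Since dualizing a bi-complex gives a bi-complex and dualizing the short exact sequences of Lemma~\ref{defiOSbicomplex} produces exactly the short exact sequences required on the~$\B^\vee$ side, the datum~$\left\{\left(A_{j,i}^S(\B)\right)^\vee\right\}$ with the transposed differentials satisfies all the defining conditions of Lemma~\ref{defiOSbicomplex} for~$\B^\vee$, hence by the uniqueness part of that lemma it \emph{is} the Orlik--Solomon bi-complex of~$\B^\vee$. This proves the first assertion.

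For the statement about exactness, fix a strict stratum~$\Sigma$. If~$\chi(\Sigma)=\lambda$, then~$\Sigma$ is exact in~$\B$ iff for every~$j$ the row~$0\to A_{i,j}^\Sigma(\B)\stackrel{d'}{\to}\bigoplus_{\Sigma\stackrel{1}{\hookrightarrow}S}A_{i-1,j}^S(\B)\stackrel{d'}{\to}\cdots\stackrel{d'}{\to}\bigoplus_{\Sigma\stackrel{i}{\hookrightarrow}Z}A_{0,j}^Z(\B)\to 0$ (with~$i=\mathrm{codim}(\Sigma)-j$) is exact. Applying~$(-)^\vee$ and using the identification just established, the dual of this complex of finite-dimensional vector spaces is the column~$0\leftarrow A_{j,i}^\Sigma(\B^\vee)\stackrel{d''}{\leftarrow}\bigoplus_{\Sigma\stackrel{1}{\hookrightarrow}S}A_{j,i-1}^S(\B^\vee)\stackrel{d''}{\leftarrow}\cdots\stackrel{d''}{\leftarrow}\bigoplus_{\Sigma\stackrel{i}{\hookrightarrow}Z}A_{j,0}^Z(\B^\vee)\leftarrow 0$, and a complex of finite-dimensional vector spaces is exact if and only if its dual is. Since~$\chi^\vee(\Sigma)=\mu$, this column being exact for all~$j$ is precisely the condition of Definition~\ref{defiexact} for~$\Sigma$ to be exact in~$\B^\vee$; the argument is symmetric when~$\chi(\Sigma)=\mu$. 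As this equivalence holds stratum by stratum,~$\B$ is exact if and only if~$\B^\vee$ is exact.

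The only genuinely delicate point in this plan is the bookkeeping in the inductive step, namely verifying that the commutative diagram used to pin down the ``cross'' differential~$d'_{\Sigma,S}(\B^\vee)$ (respectively~$d''_{\Sigma,S}(\B^\vee)$) really is the transpose of the one used for~$\B$; this requires simultaneously invoking the inductive hypothesis for \emph{all} the lower-codimensional differentials entering that diagram, together with the identification of the top-codimensional term just obtained as a kernel versus a cokernel. Everything else — the base case, the~$\mathrm{ker}$/$\mathrm{coker}$ duality, and the self-duality of exactness for complexes of finite-dimensional vector spaces — is formal.
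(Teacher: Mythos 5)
Your argument is correct and is exactly the content the paper appeals to when it states this lemma without proof, namely that the inductive definition of Lemma~\ref{defiOSbicomplex} is self-dual: dualizing exchanges kernels with cokernels, inclusions with projections, and the two filling-in diagrams, so the uniqueness clause of that lemma identifies $\left(A_{j,i}^S(\B)\right)^\vee$ with $A_{i,j}^S(\B^\vee)$, and exactness of a complex of finite-dimensional $\Q$-vector spaces is preserved under duality. Your write-up just makes explicit the bookkeeping the paper deems easy enough to omit.
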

		
		\subsubsection{The K\"{u}nneth formula}
		
		%\begin{prop}
		%Let~$\B'$ and~$\B''$ be bi-arrangements of hyperplanes. 
		%\begin{enumerate}
		%\item The Orlik--Solomon complex of the product~$\B'\times \B''$ is the tensor product
		%$$A_{\bullet,\bullet}(\B'\times\B'')\cong A_{\bullet,\bullet}(\B')\otimes A_{\bullet,\bullet}(\B'').$$
		%More precisely, we have isomorphisms
		%$$A_{i,j}^{S'\times S''}(\B'\times\B'')\cong \bigoplus A_{i',j'}^{S'}(\B')\otimes A_{i'',j''}^{S''}(\B'')$$
		%that are compatible with the differentials (the above sum is restricted to the indices such that~$i'+i''=i$,~$ j'+j''=j$,~$i'+j'=\mathrm{codim}(S')$,~$i''+j''=\mathrm{codim}(S'')$).
		%\item If~$\B'$ and~$\B''$ are exact, then~$\B'\times\B''$ is exact.
		%\end{enumerate}
		%\end{prop}
		Up to now, we haven't used the K\"{u}nneth condition (\ref{kunnethcondition}). This condition is actually crucial since it implies that the Orlik--Solomon bi-complexes behave well with respect to decompositions.
		
		\begin{prop}\label{propkunneth}
		Let~$\B$ be a bi-arrangement and~$\Sigma$ a stratum of~$\B$. Let us assume that~$\Sigma$ has a decomposition~$\Sigma=\Sigma'\pitchfork\Sigma''$. Then we have an isomorphism of bi-complexes (\enquote{K\"{u}nneth formula})
		$$A_{\bullet,\bullet}^{\leq\Sigma}\cong A_{\bullet,\bullet}^{\leq\Sigma'}\otimes A_{\bullet,\bullet}^{\leq\Sigma''}.$$
		More precisely, a stratum~$S\supset\Sigma$ of codimension~$r$ has a unique decomposition~$S=S'\pitchfork S''$ with~$S'\supset\Sigma'$ of codimension~$r'$ and~$S''\supset\Sigma''$ of codimension~$r''$ with~$r=r'+r''$; we then have isomorphisms
		$$A_{i,j}^{S}\cong \bigoplus A^{S'}_{i',j'}\otimes A^{S''}_{i'',j''}$$
		where the sum is over the indices such that~$i'+i''=i$,~$ j'+j''=j$,~$i'+j'=r'$,~$i''+j''=r''$. These isomorphisms are compatible with differentials in the sense that the horizontal (resp. vertical) differential on the left-hand side equals $d'\otimes\mathrm{id}+(-1)^{i'}\mathrm{id}\otimes d'$ (resp. $d''\otimes\mathrm{id}+(-1)^{j'}\mathrm{id}\otimes d''$).
		\end{prop}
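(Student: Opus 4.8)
The plan is to reduce the statement to a product formula and then invoke the uniqueness part of Lemma~\ref{defiOSbicomplex}, so that no induction is needed. Since the Orlik--Solomon bi-complex $A_{\bullet,\bullet}^{\leq\Sigma}$ depends only on the sub-bi-arrangement $\B^{\leq\Sigma}$, and since a decomposition $\Sigma=\Sigma'\pitchfork\Sigma''$ is by definition the same as a product decomposition $\B^{\leq\Sigma}\cong\B^{\leq\Sigma'}\times\B^{\leq\Sigma''}$ of bi-arrangements (the coloring on mixed reducible strata being constrained by the K\"unneth condition~(\ref{kunnethcondition}), which is all we will use), it suffices to prove: for bi-arrangements $\B_1$ and $\B_2$ there is an isomorphism $A_{\bullet,\bullet}(\B_1\times\B_2)\cong A_{\bullet,\bullet}(\B_1)\otimes A_{\bullet,\bullet}(\B_2)$ of bi-complexes, compatible with the decompositions indexed by strata. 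As a preliminary I would record the combinatorics of a product arrangement: every stratum of $\B_1\times\B_2$ is uniquely $S_1\pitchfork S_2$ with $S_k$ a stratum of $\B_k$, codimensions add, and $S_1\pitchfork S_2\stackrel{1}{\hookrightarrow}T$ holds exactly when $T=T_1\pitchfork S_2$ with $S_1\stackrel{1}{\hookrightarrow}T_1$, or $T=S_1\pitchfork T_2$ with $S_2\stackrel{1}{\hookrightarrow}T_2$ --- in other words, the poset of strata of the product is the product of the two posets.

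The candidate is the tensor product bi-complex $C_{\bullet,\bullet}:=A_{\bullet,\bullet}(\B_1)\otimes A_{\bullet,\bullet}(\B_2)$, equipped with the Koszul-signed differentials $d'$, $d''$ compatible with the sign conventions of this article, so that it is again a bi-complex. Using the stratum decompositions of the two factors, $C_{\bullet,\bullet}$ inherits a direct sum decomposition indexed by the strata $S_1\pitchfork S_2$, with $(S_1\pitchfork S_2)$-component $\bigoplus A_{i_1,j_1}^{S_1}\otimes A_{i_2,j_2}^{S_2}$ summed over $i_1+i_2=i$, $j_1+j_2=j$ (the conditions $i_1+j_1=\mathrm{codim}(S_1)$, $i_2+j_2=\mathrm{codim}(S_2)$ being automatic) --- exactly the shape asserted --- and by the combinatorics above the components of $d'$, $d''$ respect it. By Lemma~\ref{defiOSbicomplex} it then remains only to check that $C_{\bullet,\bullet}$ satisfies the four defining conditions for $\B_1\times\B_2$. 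The normalization $\Q\otimes\Q=\Q$ at the top stratum and the bi-complex property are immediate; the content is the exactness, at its first two terms, of
$$0\rightarrow C_{i,j}^{\Sigma_0}\stackrel{d'}{\longrightarrow}\bigoplus_{\Sigma_0\stackrel{1}{\hookrightarrow}S}C_{i-1,j}^S\stackrel{d'}{\longrightarrow}\bigoplus_{\Sigma_0\stackrel{2}{\hookrightarrow}T}C_{i-2,j}^T$$
for every strict stratum $\Sigma_0=\Sigma_{0,1}\pitchfork\Sigma_{0,2}$ with $\chi(\Sigma_0)=\lambda$, together with the dual statement when $\chi(\Sigma_0)=\mu$.

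To verify this, put $r_k=\mathrm{codim}(\Sigma_{0,k})$. By the K\"unneth condition~(\ref{kunnethcondition}), $\chi(\Sigma_0)=\lambda$ forces $\chi(\Sigma_{0,1})=\lambda$ or $\chi(\Sigma_{0,2})=\lambda$; say the former. Using the product-of-posets combinatorics, the displayed three-term sequence is a direct sum, over splittings $j=j_1+j_2$, of the truncation to the top three terms of the tensor product $P^{\bullet}\otimes Q^{\bullet}$ (formed with $d'$, graded by codimension), where $P^\bullet$ is the $j_1$-row of $A_{\bullet,\bullet}^{\leq\Sigma_{0,1}}$ --- so $P^{r_1}=A_{r_1-j_1,j_1}^{\Sigma_{0,1}}$, $P^{r_1-1}=\bigoplus_{\Sigma_{0,1}\stackrel{1}{\hookrightarrow}S_1}A^{S_1}_{r_1-1-j_1,j_1}$, and so on --- and $Q^\bullet$ is the $j_2$-row of $A_{\bullet,\bullet}^{\leq\Sigma_{0,2}}$. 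Because $\chi(\Sigma_{0,1})=\lambda$, the very definition of the Orlik--Solomon bi-complex says $P^{r_1}\xrightarrow{d'}P^{r_1-1}\xrightarrow{d'}P^{r_1-2}$ is exact at $P^{r_1}$ and $P^{r_1-1}$; no hypothesis on $Q^\bullet$ is needed. A short diagram chase --- valid since $\Q$-vector spaces are flat, so $\ker$, $\mathrm{im}$ and exactness are preserved by $\otimes$ --- finishes it: the first component $d'\otimes1$ of the top differential is already injective, and if $(a,b)$ in the middle term lies in $\ker d'$, the vanishing of its $P^{r_1-2}\otimes Q^{r_2}$-component puts $a$ in $\ker(d'\otimes1)=\mathrm{im}(d'\otimes1)$, so after subtracting a suitable $d'(\omega)$ with $\omega\in P^{r_1}\otimes Q^{r_2}$ one reduces to $a=0$, and then injectivity of $d'\otimes1$ forces $b=0$; hence $\ker=\mathrm{im}$ there. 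The $\mu$-case is the same argument with $d'$, $d''$ and the colors exchanged (or follows from the self-duality of Orlik--Solomon bi-complexes). Thus $C_{\bullet,\bullet}$ is the Orlik--Solomon bi-complex of $\B_1\times\B_2$, and unwinding the identifications gives the isomorphism of the statement, respecting both the stratum decomposition and the differentials.

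I expect the main difficulty to be bookkeeping rather than conceptual: fixing the Koszul signs so that the tensor product of bi-complexes in the article's (commuting, mixed homological/cohomological) convention is again a bi-complex with the right differentials, and tracking the multi-indices $(i_1,j_1,i_2,j_2)$ cleanly through the product combinatorics. The one structural point is the observation --- precisely where the K\"unneth condition~(\ref{kunnethcondition}) enters --- that a $\lambda$-coloured stratum of a product always has a $\lambda$-coloured factor, whose built-in exactness at the top two terms of its $d'$-rows is exactly the input the diagram chase requires.
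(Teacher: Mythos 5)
Your proof is correct and rests on the same two ingredients as the paper's: the K\"{u}nneth condition forces one factor of the stratum under consideration to carry the colour $\lambda$ (resp.\ $\mu$), and tensoring the corresponding $2$-exact three-term truncation of a row (resp.\ column) with an arbitrary complex preserves exactness at the first two spots. The only difference is packaging: the paper runs an explicit induction on the codimension of $\Sigma$, identifying terms via the induction hypothesis, whereas you assemble the full tensor-product bi-complex at once and invoke the uniqueness clause of Lemma~\ref{defiOSbicomplex} (itself established by that same induction), so the two arguments are essentially identical.
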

		
		\begin{proof}
		We proceed by induction on the codimension of~$\Sigma$. The case of codimension~$0$ is just the isomorphism~$\Q\cong\Q\otimes\Q$. More generally, the result is trivial if~$\Sigma'$ or~$\Sigma''$ is the whole space~$\C^n$. We thus assume that~$\Sigma'$ and~$\Sigma''$ are strict strata. Let us assume that~$\chi(\Sigma)=\lambda$, the case~$\chi(\Sigma)=\mu$ being dual. Then by the K\"{u}nneth condition (\ref{kunnethcondition}), we necessarily have~$\chi(\Sigma')=\lambda$ or~$\chi(\Sigma'')=\lambda$. We consider the complexes
		\begin{equation}\label{eqtrunc1}
		0\rightarrow A_{i',j'}^{\Sigma'} \stackrel{d'}{\rightarrow} \bigoplus_{\Sigma'\stackrel{1}{\hookrightarrow}S'}A_{i'-1,j'}^{S'} \stackrel{d'}{\rightarrow} \bigoplus_{\Sigma'\stackrel{2}{\hookrightarrow}T'}A_{i'-2,j'}^{T'}
		\end{equation}
		and
		\begin{equation}\label{eqtrunc2}
		0\rightarrow A_{i'',j''}^{\Sigma''} \stackrel{d'}{\rightarrow} \bigoplus_{\Sigma''\stackrel{1}{\hookrightarrow}S''}A_{i''-1,j''}^{S''} \stackrel{d'}{\rightarrow} \bigoplus_{\Sigma''\stackrel{2}{\hookrightarrow}T''}A_{i''-2,j''}^{T''}.
		\end{equation}
		The tensor product of these two complexes is necessarily exact, since one of the two is exact. Summing over all possible indices~$(i', j', i'', j'')$ and using the induction hypothesis leads to an exact complex
		\begin{align*}
		0\rightarrow \bigoplus A^{\Sigma'}_{i',j'}\otimes A^{\Sigma''}_{i'',j''}  &\rightarrow \bigoplus_{\Sigma'\stackrel{1}{\hookrightarrow}S'}A_{i-1,j}^{S'\pitchfork\Sigma''}\oplus\bigoplus_{\Sigma''\stackrel{1}{\hookrightarrow}S''} A_{i-1,j}^{\Sigma'\pitchfork S''}\rightarrow  \\
		&\rightarrow \bigoplus_{\Sigma'\stackrel{2}{\hookrightarrow}T'}A_{i-2,j}^{T'\pitchfork\Sigma''}\oplus\bigoplus_{\substack{\Sigma'\stackrel{1}{\hookrightarrow}S'\\\Sigma''\stackrel{1}{\hookrightarrow} S''}} A_{i-2,j}^{S'\pitchfork S''}\oplus \bigoplus_{\Sigma''\stackrel{2}{\hookrightarrow}T''} A_{i-2,j}^{\Sigma'\pitchfork T''}.
		\end{align*}
		This gives the desired isomorphism. One easily checks the compatibilities with the differentials.
		\end{proof}

		%With the notation of the above Proposition, if~$\chi(\Sigma')\neq\chi(\Sigma'')$, then~$A_{\bullet,\bullet}^{\leq\Sigma}$ does not depend on the color~$\chi(\Sigma)$. Thus, we get the following corollary.
		
		\begin{coro}\label{coroOSequivalence}
		\begin{enumerate}
		\item The Orlik--Solomon bi-complex~$A_{\bullet,\bullet}(\B)$ of a bi-arrangement~$\B$ only depends on its equivalence class (Definition~\ref{defiequivalence}).
		\item A bi-arrangement~$\B$ is exact if and only if all its irreducible strata of codimension~$\geq 2$ are exact. Thus, the exactness of~$\B$ only depends on its equivalence class.
		\end{enumerate}
		\end{coro}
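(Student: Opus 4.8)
The plan is to deduce part~(1) from the K\"{u}nneth formula (Proposition~\ref{propkunneth}) by induction on the codimension of strata, and then to obtain part~(2) directly from the K\"{u}nneth formula together with part~(1) and a one-line computation for hyperplanes. The homological fact used throughout is that a tensor product of bounded complexes of $\Q$-vector spaces is acyclic as soon as one of its factors is (K\"{u}nneth for complexes over a field).

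\textbf{Part 1.} Let $\B$ and $\B'$ be equivalent (Definition~\ref{defiequivalence}): same underlying arrangement, same colors on irreducible strata. I would prove by induction on $c\geq 0$ that for every stratum $\Sigma$ of codimension $\leq c$ the bi-complex $A_{\bullet,\bullet}^{\leq\Sigma}$, with all its differentials $d'_{S,T}$ and $d''_{S,T}$ for $S\supseteq\Sigma$, coincides for $\B$ and $\B'$. The case $c=0$ is $A_{0,0}^{\C^n}=\Q$. In the inductive step, let $\Sigma$ have codimension $c\geq 1$. If $\Sigma$ is irreducible, then $\chi(\Sigma)=\chi'(\Sigma)$, and the construction in the proof of Lemma~\ref{defiOSbicomplex} produces $A_{\bullet,\bullet}^{\Sigma}$ (a kernel or a cokernel) and the new differentials $d'_{\Sigma,S}$, $d''_{\Sigma,S}$ purely out of the collection $\{A_{\bullet,\bullet}^{\leq S}:\Sigma\subsetneq S\}$, which consists of strata of codimension $<c$ and hence agrees for $\B$ and $\B'$ by the induction hypothesis. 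If $\Sigma$ is reducible, let $\Sigma=\Sigma_1\pitchfork\cdots\pitchfork\Sigma_r$ with $r\geq 2$ be its decomposition into irreducibles; each $\Sigma_k$ has codimension $<c$, so by Proposition~\ref{propkunneth} applied iteratively one gets an isomorphism of bi-complexes $A_{\bullet,\bullet}^{\leq\Sigma}\cong A_{\bullet,\bullet}^{\leq\Sigma_1}\otimes\cdots\otimes A_{\bullet,\bullet}^{\leq\Sigma_r}$, to which the induction hypothesis applies factorwise. This also makes transparent that $A_{\bullet,\bullet}^{\leq\Sigma}$ is insensitive to the value $\chi(\Sigma)$ at a reducible stratum, which is precisely why equivalence is the correct notion. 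Letting $\Sigma$ range over all strata gives the statement for $A_{\bullet,\bullet}(\B)$.

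\textbf{Part 2.} The implication ``$\B$ exact $\Rightarrow$ all irreducible strata of codimension $\geq 2$ exact'' is immediate from Definition~\ref{defiexact}. For the converse, assume every irreducible stratum of codimension $\geq 2$ is exact; it then remains to show that every strict stratum is exact. First, a codimension-$1$ stratum $L$ is automatically exact: from Lemma~\ref{defiOSbicomplex}, the only nonzero spaces of $A_{\bullet,\bullet}^{\leq L}$ are $A_{0,0}^{\C^n}=\Q$ and one further copy of $\Q$ (namely $A_{1,0}^L$ if $\chi(L)=\lambda$ and $A_{0,1}^L$ if $\chi(L)=\mu$), and the row or column appearing in Definition~\ref{defiexact} reads $0\to\Q\to\Q\to 0$ with the middle differential an isomorphism. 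Now let $\Sigma$ be reducible, with irreducible decomposition $\Sigma=\Sigma_1\pitchfork\cdots\pitchfork\Sigma_r$ ($r\geq 2$); all $\Sigma_k$ are exact --- those of codimension $1$ by the previous sentence, those of codimension $\geq 2$ by hypothesis. Suppose $\chi(\Sigma)=\lambda$ (the case $\chi(\Sigma)=\mu$ is dual). Iterating the K\"{u}nneth condition~(\ref{kunnethcondition}) --- writing $\Sigma=\Sigma_1\pitchfork(\Sigma_2\pitchfork\cdots\pitchfork\Sigma_r)$ and inducting on $r$ --- produces an index $k$ with $\chi(\Sigma_k)=\lambda$; since $\Sigma_k$ is exact and $\chi(\Sigma_k)=\lambda$, every $d'$-row of $A_{\bullet,\bullet}^{\leq\Sigma_k}$ is acyclic. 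Through the isomorphism $A_{\bullet,\bullet}^{\leq\Sigma}\cong\bigotimes_\ell A_{\bullet,\bullet}^{\leq\Sigma_\ell}$ of Proposition~\ref{propkunneth}, each $d'$-row of $A_{\bullet,\bullet}^{\leq\Sigma}$ is a direct sum of tensor products of $d'$-rows of the factors; in each such tensor product the $k$-th factor is acyclic, hence so is the tensor product, hence so is the whole $d'$-row. Thus every $d'$-row of $A_{\bullet,\bullet}^{\leq\Sigma}$ is acyclic, i.e. $\Sigma$ is exact, which establishes the criterion. Finally, by part~(1) and since an equivalence of bi-arrangements preserves the set of irreducible strata together with their colors, this criterion shows that the exactness of $\B$ depends only on its equivalence class.

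\textbf{Main obstacle.} The one genuine point is the step in Part~2 that couples the combinatorial K\"{u}nneth condition~(\ref{kunnethcondition}) --- which is exactly what allows one to locate inside a reducible stratum an irreducible component sharing its color --- with the homological fact that a tensor product of bounded complexes of $\Q$-vector spaces is acyclic whenever one factor is. Everything else, namely the inductive bookkeeping of Lemma~\ref{defiOSbicomplex}, the trivial hyperplane case, and the formal passage to equivalence classes, should be routine.
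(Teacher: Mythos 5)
Your proposal is correct and follows essentially the same route as the paper: both parts rest on the K\"{u}nneth formula (Proposition~\ref{propkunneth}), which shows that $A_{\bullet,\bullet}^{\leq S}$ for a reducible stratum is the tensor product of the bi-complexes of its irreducible factors (hence independent of $\chi(S)$), and on the K\"{u}nneth condition~(\ref{kunnethcondition}) to locate an irreducible factor of the same color whose exact rows (or columns) force acyclicity of the tensor product. You merely make explicit some steps the paper leaves implicit (the induction on codimension in Part~1, the verification that hyperplanes are exact, and the iteration of the K\"{u}nneth condition), so no substantive difference.
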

		
		\begin{proof}
		\begin{enumerate}
		\item Proposition~\ref{propkunneth} implies that for a decomposition into irreducibles~$S=S_1\pitchfork\cdots\pitchfork S_r$,~$A_{\bullet,\bullet}^{\leq S}$ is the tensor product of the bi-complexes~$A_{\bullet,\bullet}^{\leq S_k}$, hence it does not depend on the color~$\chi(S)$.
		\item Let us assume that all the~$S_k$'s are exact, and that~$\chi(S)=\lambda$ (the case~$\chi(S)=\mu$ being dual). By definition, we may then assume that~$\chi(S_1)=\lambda$, and hence the rows of~$A_{\bullet,\bullet}^{\leq S_1}$ are exact. The K\"{u}nneth formula implies that the rows of~$A_{\bullet,\bullet}^{\leq S}$ are exact, hence~$S$ is exact. The claim then follows from the fact that all hyperplanes~$K\in\B$ are exact.
		\end{enumerate}
		\end{proof}
		
		Another way of stating the K\"{u}nneth formula is the following.
		
		\begin{coro}\label{coroproductexact}
		The Orlik--Solomon bi-complex of a product~$\B'\times\B''$ is the tensor product
		$$A_{\bullet,\bullet}(\B'\times\B'')\cong A_{\bullet,\bullet}(\B')\otimes A_{\bullet,\bullet}(\B'').$$
		Furthermore,~$\B'\times \B''$ is exact if and only if~$\B'$ and~$\B''$ are exact.
		\end{coro}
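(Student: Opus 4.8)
The plan is to read both assertions off the Künneth formula of Proposition~\ref{propkunneth}. The key observation is that the strata of $\B=\B'\times\B''$ are exactly the products $S=S'\times S''$ with $S'\in\s(\B')$ and $S''\in\s(\B'')$, and that every such stratum carries the decomposition
$$S'\times S''=(S'\times\C^{n''})\pitchfork(\C^{n'}\times S''),$$
which is legitimate because every hyperplane of $\A'\times\A''$ containing $S'\times S''$ has the form $K'\times\C^{n''}$ (and then contains $S'\times\C^{n''}$) or $\C^{n'}\times K''$ (and then contains $\C^{n'}\times S''$). Applying Proposition~\ref{propkunneth} to this decomposition gives, compatibly with the stratum gradings and the differentials,
$$A_{\bullet,\bullet}^{\leq S'\times S''}(\B)\;\cong\;A_{\bullet,\bullet}^{\leq S'\times\C^{n''}}(\B)\otimes A_{\bullet,\bullet}^{\leq\C^{n'}\times S''}(\B).$$

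Next I would identify $A_{\bullet,\bullet}^{\leq S'\times\C^{n''}}(\B)$ with $A_{\bullet,\bullet}^{\leq S'}(\B')$ (and symmetrically the other factor). The strata of $\B$ containing $S'\times\C^{n''}$ are precisely the $T'\times\C^{n''}$ with $T'\supset S'$ a stratum of $\B'$; this assignment is a bijection preserving codimensions and codimension-one inclusions, it sends irreducible strata to irreducible strata, and $\chi(U'\times\C^{n''})=\chi'(U')$ holds on irreducibles by the definition of the product bi-arrangement. Since the inductive construction of Lemma~\ref{defiOSbicomplex} only uses this combinatorial data together with the colors of irreducible strata, its uniqueness yields $A_{\bullet,\bullet}^{\leq S'\times\C^{n''}}(\B)\cong A_{\bullet,\bullet}^{\leq S'}(\B')$. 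Plugging this (and its analogue) into the previous isomorphism and summing the resulting component-wise identities over all strata of codimension $i+j$ gives
$$A_{i,j}(\B)\;\cong\;\bigoplus_{\substack{i'+i''=i\\ j'+j''=j}}A_{i',j'}(\B')\otimes A_{i'',j''}(\B'')\;=\;\bigl(A_{\bullet,\bullet}(\B')\otimes A_{\bullet,\bullet}(\B'')\bigr)_{i,j};$$
the differentials match because a codimension-one inclusion $S'\times S''\stackrel{1}{\hookrightarrow}T$ forces either $T=T'\times S''$ with $S'\stackrel{1}{\hookrightarrow}T'$ or $T=S'\times T''$ with $S''\stackrel{1}{\hookrightarrow}T''$, which are exactly the two kinds of arrows occurring in the tensor product bi-complex. (Equivalently, one could check that the tensor product, graded by the strata of $\B'\times\B''$ as above, satisfies the characterizing properties of Lemma~\ref{defiOSbicomplex} and invoke uniqueness.) This proves the first assertion.

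For exactness I would use Corollary~\ref{coroOSequivalence}(2): $\B'\times\B''$ is exact if and only if all its irreducible strata of codimension $\geq 2$ are exact. A product $S'\times S''$ of two strict strata is reducible (it decomposes as above), so the irreducible strata of codimension $\geq 2$ of $\B'\times\B''$ are exactly the $U'\times\C^{n''}$ with $U'$ irreducible of codimension $\geq 2$ in $\B'$ together with the $\C^{n'}\times U''$ with $U''$ irreducible of codimension $\geq 2$ in $\B''$. By the identification $A_{\bullet,\bullet}^{\leq U'\times\C^{n''}}(\B)\cong A_{\bullet,\bullet}^{\leq U'}(\B')$ and the equality of colors $\chi(U'\times\C^{n''})=\chi'(U')$, the stratum $U'\times\C^{n''}$ is exact in $\B'\times\B''$ if and only if $U'$ is exact in $\B'$, and likewise on the $\B''$ side; hence $\B'\times\B''$ is exact if and only if all irreducible codimension-$\geq 2$ strata of $\B'$ and of $\B''$ are exact, which by Corollary~\ref{coroOSequivalence}(2) again means exactly that $\B'$ and $\B''$ are both exact.

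I do not expect a serious obstacle here, since the corollary is essentially a repackaging of Proposition~\ref{propkunneth}; the only points requiring care are the bookkeeping that turns the local isomorphisms $A_{\bullet,\bullet}^{\leq\Sigma}(\B)\cong A_{\bullet,\bullet}^{\leq\Sigma'}(\B')\otimes A_{\bullet,\bullet}^{\leq\Sigma''}(\B'')$ into the global statement with matching differentials, and the observation that adjoining an empty arrangement leaves the Orlik--Solomon bi-complex unchanged, which underlies the identification $A_{\bullet,\bullet}^{\leq S'\times\C^{n''}}(\B)\cong A_{\bullet,\bullet}^{\leq S'}(\B')$.
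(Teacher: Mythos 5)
Your proof is correct and follows the route the paper intends: the corollary is stated there as an immediate restatement of the K\"unneth formula (Proposition~\ref{propkunneth}) combined with Corollary~\ref{coroOSequivalence}, and your argument is exactly that, with the bookkeeping (the decomposition $S'\times S''=(S'\times\C^{n''})\pitchfork(\C^{n'}\times S'')$, the identification $A_{\bullet,\bullet}^{\leq S'\times\C^{n''}}(\B)\cong A_{\bullet,\bullet}^{\leq S'}(\B')$, and the reduction of exactness to irreducible strata) filled in. No gaps.
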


		\subsubsection{Examples}
		
		\begin{ex}\label{exex}
		 The notion of an Orlik--Solomon bi-complex generalizes the construction of the Orlik--Solomon algebra. Indeed, if~$\A$ is an arrangement then the Orlik--Solomon bi-complex of the bi-arrangement~$(\A,\lambda)$ is concentrated in bi-degrees~$(k,0)$ and agrees with the Orlik--Solomon algebra of~$\A$:~$A_{k,0}^S(\A,\lambda)=A_k^S(\A)$ for all~$S\in\s_k(\A)$, and~$d'_{S,T}=d_{S,T}$ the classical differential of the Orlik--Solomon algebra. Dually, the Orlik--Solomon bi-complex of~$(\A,\mu)$ is concentrated in bi-degrees~$(0,k)$ and is the linear dual of the Orlik--Solomon algebra of~$\A$:~$A_{0,k}^S(\A,\mu)=\left(A_k^S(\A)\right)^\vee$. The bi-arrangements~$(\A,\lambda)$ and~$(\A,\mu)$ are thus always exact.
		 \end{ex}

		\begin{ex}
		More generally, for a bi-arrangement~$\B=(\L,\M,\chi)$, if all strata of~$\L$ are colored~$\lambda$ then we have an isomorphism~$A_{\bullet,0}(\L,\M,\chi)\cong A_\bullet(\L)$. Dually, if all strata of~$\M$ are colored~$\mu$ then we have an isomorphism~$A_{0,\bullet}(\L,\M,\chi)\cong \left(A_\bullet(\M)\right)^\vee$.
		\end{ex}		 
		 
		\begin{ex} By Example~\ref{exex} and Corollary~\ref{coroproductexact}, a product~$(\L,\lambda)\times (\M,\mu)$ is always exact, with its Orlik--Solomon bi-complex
		$$A_{\bullet,\bullet}((\L,\lambda)\times(\M,\mu))\cong A_\bullet(\L)\otimes(A_\bullet(\M))^\vee.$$
		%As we will see, this is the prototypical example of an Orlik--Solomon bi-complex.
		\end{ex}
		
		\subsubsection{The first obstruction to exactness}		
		
		Let~$\B=(\L,\M,\chi)$ be a bi-arrangement. By the definition of an Orlik--Solomon bi-complex, we have for each~$L\in\L$ an isomorphism~$A_{1,0}^{L}\stackrel{\cong}{\rightarrow }\Q$, and~$A_{0,1}^{L}=0$. Dually, we get for each~$M\in\M$ an isomorphism~$\Q\stackrel{\cong}{\rightarrow} A_{0,1}^{M}$, and~$A_{1,0}^{M}=0$. This remark gives us the first obstruction to the exactness of a bi-arrangement.
		
		\begin{lem}\label{lemfirstobstruction}
		If a bi-arrangement~$\B=(\L,\M,\chi)$ is exact, then for every strict stratum~$S$, 
		\begin{enumerate}
		\item if~$\chi(S)=\lambda$ then~$S\subset L$ for some~$L\in\L$;
		\item if~$\chi(S)=\mu$ then~$S\subset M$ for some~$M\in\M$.
		\end{enumerate}
		\end{lem}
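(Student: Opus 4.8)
The plan is to argue by contradiction, reducing first to statement~(1). Indeed, the Orlik-Solomon bi-complexes of $\B$ and $\B^\vee$ are linearly dual, $\B$ is exact if and only if $\B^\vee$ is exact, and passing to $\B^\vee$ swaps $\lambda\leftrightarrow\mu$ as well as the roles of $\L$ and $\M$; so the case $\chi(S)=\mu$ for $\B$ is precisely the case $\chi(S)=\lambda$ for $\B^\vee$, and it suffices to prove~(1).

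So let $S$ be a strict stratum with $\chi(S)=\lambda$, set $r=\mathrm{codim}(S)\geq 1$, and suppose for contradiction that $S$ is contained in no $L\in\L$ — equivalently, every hyperplane $K\in\B$ with $K\supset S$ belongs to $\M$. Since $\B$ is exact, the stratum $S$ is exact (Definition~\ref{defiexact}), so the $j=0$ row of the bi-complex $A_{\bullet,\bullet}^{\leq S}$ is a long exact sequence
\[
0\rightarrow A_{r,0}^S \stackrel{d'}{\longrightarrow} \bigoplus_{S\stackrel{1}{\hookrightarrow}T}A_{r-1,0}^T \stackrel{d'}{\longrightarrow}\cdots\stackrel{d'}{\longrightarrow} \bigoplus_{S\stackrel{r-1}{\hookrightarrow}Y}A_{1,0}^Y \stackrel{d'}{\longrightarrow} \bigoplus_{S\stackrel{r}{\hookrightarrow}Z}A_{0,0}^Z \rightarrow 0.
\]
I would then read off its last two terms. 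The final term is $\bigoplus_{S\stackrel{r}{\hookrightarrow}Z}A_{0,0}^Z=A_{0,0}^{\C^n}=\Q$, since $\C^n$ is the only codimension-$0$ stratum. The second-to-last term is $\bigoplus_Y A_{1,0}^Y$, the sum running over the hyperplanes $Y\in\B$ containing $S$; by our assumption each such $Y$ lies in $\M$, and the normalization recalled just before the statement gives $A_{1,0}^Y=0$. Hence this term vanishes, the last differential of the row is the zero map, and the row cannot be exact at its final term $\Q\neq 0$ — a contradiction. Therefore $S\subset L$ for some $L\in\L$, which is~(1); then~(2) follows by the duality reduction.

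The only real content of the argument is picking the right exact sequence: the $j=0$ row is the one on which the $\lambda$-exactness of $S$ produces a genuine resolution, and its two ends are exactly the \enquote{ambient} slot $A_{0,0}^{\C^n}=\Q$ and the \enquote{codimension-one} slot $\bigoplus_Y A_{1,0}^Y$, which is where the hypothesis bites. I do not expect a genuine obstacle here — in particular no induction on the codimension of $S$ is needed, since the intermediate terms of the row play no role; the only inputs are the $\lambda$-exactness of $S$ together with the normalizations $A_{0,0}^{\C^n}=\Q$ and $A_{1,0}^M=0$ for $M\in\M$.
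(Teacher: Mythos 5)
Your proof is correct and is essentially the paper's argument: the paper likewise reduces to the $\lambda$ case by duality and uses exactness of the $j=0$ row of $A_{\bullet,\bullet}^{\leq S}$ at its final term $A_{0,0}^{\C^n}=\Q$, together with $A_{1,0}^M=0$ for $M\in\M$, phrased directly as the surjection $\bigoplus_{L\supset S}A_{1,0}^L\twoheadrightarrow\Q$ rather than by contradiction. The difference is purely cosmetic.
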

		
		\begin{proof}
		Let us assume that~$\chi(S)=\lambda$, the case~$\chi(S)=\mu$ being dual. Then the first row of the bi-complex~$A_{\bullet,\bullet}^{\leq S}$ is exact, which means that we have a surjection
		$$\bigoplus_{L\in\L \;|\;S\subset L}A_{1,0}^{L}\rightarrow \Q\rightarrow 0$$
		hence~$S\subset L$ for some~$L\in\L$.
		\end{proof}
		
		\begin{ex}
		The simplest bi-arrangement of hyperplanes that is not exact is made of three lines~$L_1,L_2,L_3$ in~$\C^2$ that meet at the origin~$Z$, with~$\chi(L_1)=\chi(L_2)=\chi(L_3)=\lambda$, and~$\chi(Z)=\mu$.
		\end{ex}
	
	\subsection{The Orlik--Solomon bi-complex of a tame bi-arrangement}\label{partame}
	
		\subsubsection{Tame bi-arrangements}
	
		Let~$\L=\{L_1,\ldots,L_l\}$ and~$\M=\{M_1,\ldots,M_m\}$ be two arrangements of hyperplanes in~$\C^n$. We say that a pair~$(I,J)$ formed by a subset~$I\subset\{1,\ldots,l\}$ and a subset~$J\subset\{1,\ldots,m\}$ is \textit{dependent} if the hyperplanes~$L_i$, for~$i\in I$, and~$M_j$, for~$j\in J$, are linearly dependent, and \textit{independent} otherwise. A \textit{circuit} is a minimally dependent pair~$(I,J)$ in the sense that if~$I'\subset I$ and~$J'\subset J$ are two subsets such that~$(I',J')$ is dependent, then~$I'=I$ and~$J'=J$. We note that if~$(I,J)$ is a circuit, then~$L_I\cap M_J$ is an irreducible stratum.
		
		\begin{defi}\label{defitame}
		Let~$\B=(\L,\M,\chi)$ be a bi-arrangement. A strict stratum~$S$ of~$\B$ is \textit{tame} if the following condition, depending on the color of~$S$, is satisfied:
		\begin{enumerate}
		\item~$\chi(S)=\lambda$ and there exists a hyperplane~$L_i$ that contains~$S$ and such that~$i$ does not belong to any circuit~$(I,J)$ with~$S\subset L_I\cap M_J$ and~$\chi(L_I\cap M_J)=\mu$;
		\item~$\chi(S)=\mu$ and there exists a hyperplane~$M_j$ that contains~$S$ and such that~$j$ does not belong to any circuit~$(I,J)$ with~$S\subset L_I\cap M_J$ and~$\chi(L_I\cap M_J)=\lambda$.
		\end{enumerate}
		A bi-arrangement of hyperplanes is \textit{tame} if all its strict strata are tame.
		\end{defi}
		
		\begin{rem}\label{remtamelocal}
		The tameness is a local condition in the sense that the tameness of a stratum~$S$ of~$\B$ only depends on the bi-arrangement~$\B^{\leq S}$ consisting of the hyperplanes that contain~$S$.
		\end{rem}
		
		\begin{lem}\label{lemtameequivalence}
		A bi-arrangement is tame if and only if all its irreducible strata of codimension~$\geq 2$ are tame. Thus, the tameness of a bi-arrangement only depends on its equivalence class.
		\end{lem}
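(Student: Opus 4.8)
The plan is to read off the equivalence from the way the tameness condition behaves under the decomposition of a stratum into irreducibles, the colour bookkeeping being supplied by the K\"{u}nneth condition~(\ref{kunnethcondition}). One implication is immediate: if~$\B$ is tame then, by definition, every strict stratum is tame, in particular every irreducible stratum of codimension~$\geq 2$. For the converse I would first note that every hyperplane~$K\in\B$ is automatically tame: when~$\chi(K)=\lambda$, say~$K=L_{i_0}$, the only possible witness in Definition~\ref{defitame} is~$K$ itself, and a circuit~$(I,J)$ with~$i_0\in I$ and~$K\subset L_I\cap M_J$ automatically satisfies~$K\subset L_I\cap M_J\subset L_{i_0}=K$, so~$L_I\cap M_J=K$ and~$\chi(L_I\cap M_J)=\chi(K)=\lambda\neq\mu$; thus no such circuit is of the forbidden kind, and~$K$ is tame (the case~$\chi(K)=\mu$ is dual). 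Since the irreducible strata of~$\B$ are the hyperplanes together with the irreducible strata of codimension~$\geq 2$, it remains to prove: \emph{if every irreducible strict stratum of~$\B$ is tame, then every reducible strict stratum of~$\B$ is tame.}

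So let~$S$ be a reducible strict stratum, with decomposition into irreducibles~$S=S_1\pitchfork\cdots\pitchfork S_r$, where~$r\geq 2$. By the K\"{u}nneth condition (see the remark following Definition~\ref{defibiarrangement}) there is an index~$k$ with~$\chi(S)=\chi(S_k)$; after relabelling the~$S_k$ and applying, if necessary, the~$\lambda\leftrightarrow\mu$ duality, I may assume~$\chi(S)=\chi(S_1)=\lambda$. As~$S_1$ is tame --- by hypothesis if it has codimension~$\geq 2$, by the previous paragraph if it is a hyperplane --- I may choose a hyperplane~$L_i\supset S_1$ such that~$i$ lies in no circuit~$(I,J)$ with~$S_1\subset L_I\cap M_J$ and~$\chi(L_I\cap M_J)=\mu$. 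I claim that this same~$L_i$ witnesses the tameness of~$S$: since~$L_i\supset S_1\supset S$ it suffices to check that~$i$ lies in no circuit~$(I,J)$ with~$S\subset L_I\cap M_J$ and~$\chi(L_I\cap M_J)=\mu$.

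Suppose such a circuit~$(I,J)$ exists, and set~$T=L_I\cap M_J$; recall that~$T$ is then an irreducible stratum. Write~$S^\perp=S_1^\perp\oplus\cdots\oplus S_r^\perp$; the decomposition of~$S$ says precisely that the defining form of any hyperplane containing~$S$ lies in a single summand~$S_k^\perp$. As~$T\supset S$, the space~$T^\perp$ is spanned by defining forms of hyperplanes containing~$T$, hence containing~$S$, so~$T^\perp=\bigoplus_k(T^\perp\cap S_k^\perp)$; this exhibits a decomposition of~$T$, and by irreducibility of~$T$ all but one of the summands must vanish, i.e.\ $T^\perp\subset S_{k_0}^\perp$ for a single~$k_0$. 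Now the defining form~$f_i$ of~$L_i$ lies in~$T^\perp$ (because~$i\in I$) and in~$S_1^\perp$ (because~$L_i\supset S_1$); since~$f_i\neq 0$ and the sum~$S_1^\perp\oplus\cdots\oplus S_r^\perp$ is direct, this forces~$k_0=1$. Hence~$T^\perp\subset S_1^\perp$, so~$S_1\subset T$ and, since the defining forms of all hyperplanes in the circuit lie in~$T^\perp\subset S_1^\perp$, every hyperplane occurring in the circuit contains~$S_1$. But then~$(I,J)$ is a circuit with~$i\in I$, $S_1\subset L_I\cap M_J=T$ and~$\chi(T)=\mu$, contradicting the choice of~$L_i$. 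Therefore~$S$ is tame, which proves the equivalence.

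The one step that is not pure bookkeeping is ``$T$ irreducible~$\Rightarrow T^\perp$ lies in a single summand~$S_{k_0}^\perp$''; this is where the product decomposition~$\B^{\leq S}\cong\B^{\leq S_1}\times\cdots\times\B^{\leq S_r}$ and the uniqueness of the decomposition of a stratum into irreducibles are really used, and I expect it to be the main point to write carefully. For the last assertion: by the equivalence just established, the tameness of~$\B$ is controlled by the tameness of its irreducible strata of codimension~$\geq 2$, and the tameness of such a stratum~$S$ (Definition~\ref{defitame}) only involves the colour~$\chi(S)$ and the colours~$\chi(L_I\cap M_J)$ indexed by circuits~$(I,J)$ --- all of the strata that appear being irreducible. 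Hence tameness depends only on the colours of the irreducible strata, that is, only on the equivalence class of~$\B$ in the sense of Definition~\ref{defiequivalence}.
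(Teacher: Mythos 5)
Your proof is correct and follows essentially the same route as the paper: hyperplanes are automatically tame, and for a reducible stratum $S=S_1\pitchfork\cdots\pitchfork S_r$ with $\chi(S)=\chi(S_1)$ the tameness witness $L_i$ for $S_1$ also works for $S$. The paper merely asserts the key point that a circuit $(I,J)$ containing $i$ with $S\subset L_I\cap M_J$ forces $S_1\subset L_I\cap M_J$, whereas you justify it carefully via the irreducibility of $L_I\cap M_J$ and the direct sum $S^\perp=\bigoplus_k S_k^\perp$ — a welcome elaboration, not a different argument.
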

		
		\begin{proof}
		We note that the hypersurfaces~$K\in\B$ are necessarily tame. Let us assume that all irreducible strata of~$\B$ are tame. Let~$S$ be a reducible stratum of~$\B$ with a decomposition~$S=S_1\pitchfork\cdots \pitchfork S_r$ into irreducibles~$S_j$. Let us assume that~$\chi(S)=\lambda$, the case~$\chi(S)=\mu$ being dual. Then by the K\"{u}nneth condition (\ref{kunnethcondition}) we may assume that~$\chi(S_1)=\lambda$. Thus, there is a hyperplane~$L_i\supset S_1$ such that~$i$ does not belong to any circuit~$(I,J)$ with~$S_1\subset L_I\cap M_J$ and~$\chi(L_I\cap M_J)=\mu$. Then~$L_i$ contains~$S$; furthermore, a circuit~$(I,J)$ containing~$i$ and such that~$S\subset L_I\cap M_J$ necessarily satisfies~$S\subset S_1\subset L_I\cap M_J$, hence~$S$ is tame.
		\end{proof}
		
		\begin{rem}\label{remhamiltonian}
		Let us say that a stratum~$S$ of~$\B$ is \textit{hamiltonian} if it may be written~$S=L_I\cap M_J$ with~$(I,J)$ a circuit. A hamiltonian stratum is irreducible, but the converse is false in general. If~$\B$ is tame, then the color of the hamiltonian strata determine the colors of all irreducible strata, using the following basic fact about connected (=irreducible) matroids \cite[Proposition 4.1.3]{oxleymatroidtheory}.
		\end{rem}

		\begin{lem}\label{lemmatroids}
		Let~$\A=\{K_1,\ldots,K_k\}$ be an arrangement of hyperplanes,~$S$ an irreducible stratum of~$\A$,~$K_i,K_j\in\A$ hyperplanes containing~$S$. Then there exists a circuit~$I$ containing~$i,j$ such that~$S\subset K_I$.
		\end{lem}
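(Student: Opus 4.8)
The plan is to reduce the statement to a classical fact of matroid theory — that in a connected matroid any two distinct elements lie in a common circuit — which is exactly the cited Proposition~4.1.3 of~\cite{oxleymatroidtheory}. Two short reductions make the translation possible, and (if one does not want to simply invoke Oxley) a standard strong-circuit-elimination argument supplies the matroid fact.

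\textbf{Step 1: localize.} First I would replace $\A$ by the sub-arrangement $\A^{\leq S}$ of hyperplanes containing $S$. This is harmless: a subset $I$ of indices $i$ with $S\subset K_i$ is a circuit of $\A$ if and only if it is a circuit of $\A^{\leq S}$, since linear (in)dependence of $\{f_i\}_{i\in I}$ and minimality thereof do not refer to the ambient arrangement; by definition $S$ is irreducible in $\A$ precisely when $\A^{\leq S}$ admits no non-trivial product decomposition; and once every hyperplane contains $S$ the requirement $S\subset K_I$ is automatic. We may also assume $K_i\neq K_j$ and that $\A$ has at least two hyperplanes, the excluded cases being trivial. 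So from now on $S=\bigcap_{K\in\A}K$ and $\A$ is irreducible.

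\textbf{Step 2: pass to matroids.} Attach to $\A=\{K_1,\dots,K_k\}$ the matroid $M$ on the ground set $\{1,\dots,k\}$ in which a subset is independent exactly when the corresponding forms $f_i$ are linearly independent; its circuits are the circuits of $\A$ in the sense above. A product decomposition $\A\cong\A_1\times\cdots\times\A_r$ corresponds bijectively to a writing of $M$ as a direct sum of matroids supported on a partition of $\{1,\dots,k\}$, that is, to the decomposition of $M$ into its connected components (this is the matroid counterpart of the decomposition theory of strata recalled in \S\ref{sectionOS}). Hence \enquote{$S$ is irreducible} is equivalent to \enquote{$M$ is a connected matroid}, and the lemma becomes: in a connected matroid on at least two elements, any two distinct elements lie in a common circuit.

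\textbf{Step 3: the matroid fact.} If one wants a self-contained argument instead of citing~\cite[Proposition~4.1.3]{oxleymatroidtheory}, define on $\{1,\dots,k\}$ the relation $a\sim b$ meaning $a=b$ or some circuit of $M$ contains both $a$ and $b$. Reflexivity and symmetry are immediate; the crux is transitivity, which follows from strong circuit elimination: given circuits $C_1\ni a,c$ and $C_2\ni c,b$ with $a\neq b$, choose such a pair with $\vert C_1\cup C_2\vert$ minimal; if $b\in C_1$ or $a\in C_2$ we are done, and otherwise eliminating $c$ and invoking minimality produces a circuit contained in $C_1\cup C_2$ through both $a$ and $b$. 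Once $\sim$ is an equivalence relation, its classes are exactly the connected components of $M$: each class is a separator and, conversely, a circuit cannot straddle two components. As $M$ is connected with at least two elements there is a single class, so $i\sim j$; undoing Steps~1 and~2 yields a circuit $I\ni i,j$ with $S\subset K_I$.

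\textbf{Main obstacle.} If Step~3 is handled by citing Oxley, essentially all the content is the dictionary of Step~2 between irreducible strata and connected matroids, which is straightforward. If instead one proves Step~3 by hand, the single hard point is the transitivity of $\sim$ — the strong circuit elimination argument — while the identification of the classes with the connected components of $M$ is routine bookkeeping.
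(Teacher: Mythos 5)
Your proposal is correct, and it matches the paper's treatment: the paper gives no proof of this lemma at all, simply citing \cite{oxleymatroidtheory}, Proposition 4.1.3, so your Steps 1--2 (localizing to $\A^{\leq S}$ and identifying irreducibility of $S$ with connectivity of the matroid of $\A^{\leq S}$) supply exactly the translation the paper leaves implicit, and Step 3 is the standard proof of the cited matroid fact. The only caveat is the degenerate case $i=j$ with $\A^{\leq S}=\{K_i\}$, where no circuit exists; this is irrelevant to the lemma's uses in the paper and you rightly set it aside.
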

		
		\begin{ex}
		\begin{enumerate}
		\item If~$\A$ is an arrangement, then the bi-arrangements~$(\A,\lambda)$ and~$(\A,\mu)$ are tame. 
		\item The class of tame bi-arrangements is closed under products (this is a consequence of Lemma~\ref{lemtameequivalence}).
		\item As a consequence, any product~$(\L,\lambda)\times (\M,\mu)$ is tame.
		\item The tameness condition implies the necessary condition of Lemma~\ref{lemfirstobstruction}. For bi-arrangements in~$\C^2$, these conditions are equivalent.
		\end{enumerate}
		\end{ex}
		
		\begin{lem}
		Let~$\L$ and~$\M$ be disjoint arrangements in~$\C^n$. Then the bi-arrangements~$(\L,\M,e_\lambda)$ and~$(\L,\M,e_\mu)$, equipped with the~$\lambda$-extreme and~$\mu$-extreme colorings (see Example~\ref{exextremecoloring}), are tame.
		\end{lem}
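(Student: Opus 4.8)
The plan is to verify the tameness condition of Definition~\ref{defitame} for every strict stratum, using that an extreme coloring is governed by a monotone condition along inclusions of strata. It suffices to treat $\B=(\L,\M,e_\lambda)$: the case of $e_\mu$ follows by the symmetric argument (swap $\lambda\leftrightarrow\mu$ and $\L\leftrightarrow\M$), or formally from the fact that the dual of $(\L,\M,e_\mu)$ is of the form $(\M,\L,e_\lambda)$ and that the tameness condition of Definition~\ref{defitame} is self-dual. Write $\L=\{L_1,\ldots,L_l\}$ and $\M=\{M_1,\ldots,M_m\}$, and fix a strict stratum $S$ of $\B$; I will exhibit a hyperplane witnessing its tameness, according to the color of $S$.

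If $e_\lambda(S)=\lambda$, then by definition of $e_\lambda$ there is an index $i_0$ with $S\subset L_{i_0}$, and I claim that $i_0$ witnesses the tameness of $S$: for any circuit $(I,J)$ with $i_0\in I$ one has $L_I\cap M_J\subset L_{i_0}$, so $L_I\cap M_J$ is contained in a hyperplane of $\L$ and hence $e_\lambda(L_I\cap M_J)=\lambda\neq\mu$; thus $i_0$ lies in no circuit $(I,J)$ with $\chi(L_I\cap M_J)=\mu$. If instead $e_\lambda(S)=\mu$, then no hyperplane of $\L$ contains $S$; since $S$ is strict some hyperplane contains it, so $S\subset M_{j_0}$ for some $j_0$, and I claim that $j_0$ witnesses the tameness of $S$. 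Indeed, let $(I,J)$ be a circuit with $j_0\in J$ and $S\subset L_I\cap M_J$; were $e_\lambda(L_I\cap M_J)=\lambda$, then $L_I\cap M_J\subset L$ for some $L\in\L$, whence $S\subset L_I\cap M_J\subset L$ and $e_\lambda(S)=\lambda$, a contradiction, so $e_\lambda(L_I\cap M_J)=\mu\neq\lambda$. In both cases $S$ is tame, so $\B$ is tame.

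I do not expect a real obstacle here: the argument reduces to two elementary observations, namely that any index occurring in a circuit $(I,J)$ indexes a hyperplane containing the stratum $L_I\cap M_J$ (which forces the color of $L_I\cap M_J$ in the first case), and that $e_\lambda$ cannot jump from the value $\mu$ to the value $\lambda$ when passing to a smaller stratum (which forces the color in the second case), so that the ``forbidden'' circuits of Definition~\ref{defitame} simply do not occur. The only minor point to record along the way is that the strata $L_I\cap M_J$ appearing above are strict, so that $e_\lambda$ is defined on them; this holds because a circuit is a nonempty pair, hence $L_I\cap M_J$ lies in at least one hyperplane. The case of $(\L,\M,e_\mu)$ then follows as indicated, completing the proof.
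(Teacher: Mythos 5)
Your proof is correct and follows essentially the same route as the paper's: reduce to $e_\lambda$ by duality, then in the $\lambda$ case observe that any circuit $(I,J)$ containing the witnessing index forces $L_I\cap M_J$ inside a hyperplane of $\L$ and hence has color $\lambda$, and in the $\mu$ case derive a contradiction from $S\subset L_I\cap M_J\subset L_i$. The extra remark that the strata $L_I\cap M_J$ arising from circuits are strict is a harmless (and valid) detail the paper leaves implicit.
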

		
		\begin{proof}
		By duality, it is enough to do the proof for~$(\L,\M,e_\lambda)$. 
		\begin{enumerate}[--]
		\item Let~$S$ be a stratum such that~$e_\lambda(S)=\lambda$, then there exists a hyperplane~$L_i$ such that~$S\subset L_i$. Let~$(I,J)$ be a circuit such that~$i\in I$,~$S\subset L_I\cap M_J$, and~$e_\lambda(L_I\cap M_J)=\mu$. Then by definition,~$I=\varnothing$, which is a contradiction.
		\item Let~$S$ be a stratum such that~$e_\lambda(S)=\mu$, then there exists a hyperplane~$M_j$ such that~$S\subset M_j$. Let~$(I,J)$ be a circuit such that~$j\in J$,~$S\subset L_I\cap M_J$, and~$e_\lambda(L_I\cap M_J)=\lambda$. Then there exists a hyperplane~$L_i$ such that~$L_I\cap M_J\subset L_i$. Then~$S\subset L_i$ and~$e_\lambda(S)=\lambda$, which is a contradiction.
		\end{enumerate}
		\end{proof}

		\subsubsection{The Orlik--Solomon bi-complex}
		
		The goal of this section is to give an explicit formula for the Orlik--Solomon bi-complex of a tame bi-arrangement, and to prove at the same time that tame bi-arrangements are exact. Let us fix a tame bi-arrangement~$\B=(\L,\M,\chi)$ with~$\L=\{L_1,\ldots,L_l\}$ and~$\M=\{M_1,\ldots,M_m\}$. We first set
		$$E_{\bullet,\bullet}(\B)=E_\bullet(\L)\otimes E_\bullet(\M)^\vee=\Lambda^\bullet(e_1,\ldots,e_l)\otimes\Lambda^\bullet(f_1^\vee,\ldots,f_m^\vee).$$
		Thus,~$E_{i,j}(\B)$ has a basis consisting of monomials~$e_I\otimes f_J^\vee$ for~$|I|=i$ and~$|J|=j$. We define 
		$$d'= d\otimes \mathrm{id}:E_{\bullet,\bullet}(\B)\rightarrow E_{\bullet-1,\bullet}(\B)$$ 
		and~$$ d''=\mathrm{id}\otimes  d^\vee:E_{\bullet,\bullet-1}(\B)\rightarrow E_{\bullet,\bullet}(\B)$$
		so that~$E_{\bullet,\bullet}(\B)$ is a bi-complex.\\

		We consider on~$E_{\bullet,\bullet}(\B)$ the following homogeneous relations (subspaces of~$E_{\bullet,\bullet}(\B)$) and co-relations (subspaces of the dual space~$E_{\bullet,\bullet}(\B)^\vee$):
		\begin{enumerate}[--]
		\item for a circuit~$(I,J)$ such that~$\chi(L_I\cap M_J)=\lambda$, for all~$J'\supset J$, we consider the relation
		$$( d(e_I))\otimes f_{J'}^\vee$$
		where~$( d(e_I))$ is the ideal of~$\Lambda^\bullet(e_1,\ldots,e_l)$ generated by~$ d(e_I)$.
		\item for a circuit~$(I,J)$ such that~$\chi(L_I\cap M_J)=\mu$, for all~$I'\supset I$, we consider the co-relation
		$$e_{I'}^\vee\otimes ( d(f_J))$$
		where~$( d(f_J))$ is the ideal of~$\Lambda^\bullet(f_1,\ldots,f_m)$ generated by~$ d(f_J)$.
		\end{enumerate}
		
		\begin{defi}
		Let~$A_{\bullet,\bullet}(\B)$ be the subquotient of~$E_{\bullet,\bullet}(\B)$ defined by the above relations and co-relations.
		\end{defi}
		
		The notation will be justified by the fact that~$A_{\bullet,\bullet}(\B)$ is the Orlik--Solomon bi-complex of~$\B$, see Theorem~\ref{thmtameOS} below. It is worth noting that the definition of~$A_{\bullet,\bullet}(\B)$ only uses the colors of the hamiltonian strata, which is not surprising in view of Remark~\ref{remhamiltonian}.
		
		\begin{lem}\label{lemtamerelcorel}
		The differentials~$d'$ and~$d''$ pass to the subquotient and give~$A_{\bullet,\bullet}(\B)$ the structure of a bi-complex.
		\end{lem}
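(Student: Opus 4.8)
The plan is to reduce everything to two stability statements: that the relation subspace and the annihilator of the co‑relation subspace inside $E_{\bullet,\bullet}(\B)$ are each preserved by $d'$ and $d''$. Once this is known, descent to the subquotient $A_{\bullet,\bullet}(\B)$ is formal, and the bi‑complex identities $d'd'=0$, $d''d''=0$, $d'd''=d''d'$ are inherited from $E_{\bullet,\bullet}(\B)$, where they hold by construction of $d'=d\otimes\mathrm{id}$ and $d''=\mathrm{id}\otimes d^\vee$. Concretely, let $R_{\bullet,\bullet}\subseteq E_{\bullet,\bullet}(\B)$ be the span of the relations $(d(e_I))\otimes f_{J'}^\vee$ over all circuits $(I,J)$ with $\chi(L_I\cap M_J)=\lambda$ and all $J'\supseteq J$, and let $C_{\bullet,\bullet}\subseteq E_{\bullet,\bullet}(\B)^\vee$ be the span of the co‑relations $e_{I'}^\vee\otimes(d(f_J))$ over all circuits $(I,J)$ with $\chi(L_I\cap M_J)=\mu$ and all $I'\supseteq I$; writing $C^\circ\subseteq E_{\bullet,\bullet}(\B)$ for the annihilator of $C_{\bullet,\bullet}$, we have $A_{\bullet,\bullet}(\B)=C^\circ/(R_{\bullet,\bullet}\cap C^\circ)$, and it suffices to check that $d'$ and $d''$ send $R_{\bullet,\bullet}$ into $R_{\bullet,\bullet}$ and $C^\circ$ into $C^\circ$ in the appropriate bidegrees.

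First I would record two elementary facts about the Orlik–Solomon differential $d$ on an exterior algebra $\Lambda^\bullet$: (i) for any homogeneous element $\xi$, the ideal $(d\xi)$ is stable under $d$, since $d(\alpha\wedge d\xi)=(d\alpha)\wedge d\xi$ by the Leibniz rule together with $d^2=0$; and (ii) the transpose differential $d^\vee$ on $(\Lambda^\bullet)^\vee$ sends a dual monomial $\xi_K^\vee$ to a linear combination of the $\xi_{K\cup\{a\}}^\vee$ with $a\notin K$, so it can only enlarge index sets. Applying $d'=d\otimes\mathrm{id}$ to a spanning element $\omega\otimes f_{J'}^\vee$ of $R_{\bullet,\bullet}$, with $\omega\in(d(e_I))$, produces $(d\omega)\otimes f_{J'}^\vee$, which lies in $R_{\bullet,\bullet}$ by (i); applying $d''=\mathrm{id}\otimes d^\vee$ produces a combination of terms $\omega\otimes f_{J'\cup\{a\}}^\vee$ by (ii), and since $J'\cup\{a\}\supseteq J'\supseteq J$ each of these is again a spanning element of $R_{\bullet,\bullet}$. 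Hence $d'(R_{\bullet,\bullet})\subseteq R_{\bullet,\bullet}$ and $d''(R_{\bullet,\bullet})\subseteq R_{\bullet,\bullet}$; note that it is exactly here that one uses the full ranges $J'\supseteq J$, and the whole ideal $(d(e_I))$ rather than just $d(e_I)\otimes f_J^\vee$, built into the definition of the relations.

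The stability of $C^\circ$ is the transpose statement: $d'(C^\circ)\subseteq C^\circ$ and $d''(C^\circ)\subseteq C^\circ$ are equivalent to $(d')^\vee(C_{\bullet,\bullet})\subseteq C_{\bullet,\bullet}$ and $(d'')^\vee(C_{\bullet,\bullet})\subseteq C_{\bullet,\bullet}$, and $(d')^\vee=d^\vee\otimes\mathrm{id}$, $(d'')^\vee=\mathrm{id}\otimes d$ acting on $(\Lambda^\bullet(e))^\vee\otimes\Lambda^\bullet(f)$; so the argument is word‑for‑word the one of the previous paragraph with the two tensor factors — and the roles of relations and co‑relations — exchanged, exactly as in the duality $\B\leftrightarrow\B^\vee$: $(d'')^\vee$ preserves each ideal $(d(f_J))$ by (i), and $(d')^\vee$ only enlarges the index set $I'$ by (ii). Combining the two steps, $d'$ and $d''$ preserve $R_{\bullet,\bullet}$, $C^\circ$, and hence $R_{\bullet,\bullet}\cap C^\circ$, so they induce well‑defined maps on $A_{\bullet,\bullet}(\B)$ of bidegrees $(-1,0)$ and $(0,+1)$, and the bi‑complex relations descend from $E_{\bullet,\bullet}(\B)$. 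I do not expect a genuine obstacle: the only things requiring care are the bookkeeping of bidegrees and the transpositions in the dual step, together with the observation — which is really the content of the lemma — that the ranges over $J'\supseteq J$ and $I'\supseteq I$ are precisely what make the relations and co‑relations stable under the differentials.
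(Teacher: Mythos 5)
Your proof is correct and follows essentially the same route as the paper: reduce the co-relation case to the relation case by transposition/duality, then observe that $d'$ preserves the ideals $(d(e_I))$ via the Leibniz rule and $d^2=0$, while $d''$ only enlarges the index set $J'$, so the constraint $J'\supseteq J$ is preserved. The paper's proof is a two-line version of exactly this computation, with the bi-complex identities likewise inherited from $E_{\bullet,\bullet}(\B)$.
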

		
		\begin{proof}
		By duality, it is enough to prove that~$ d'$ and~$ d''$ pass to the quotient by the relations. It follows easily from the definitions:
		$$ d'((e_K\wedge d(e_I))\otimes f_{J'}^\vee)=( d(e_K)\wedge  d(e_I))\otimes f_{J'}^\vee$$
		and
		$$ d''((e_K\wedge d(e_I))\otimes f_{J'}^\vee)=\sum_{j\notin J'}\pm (e_K\wedge d(e_I))\otimes f_{J'\cup\{j\}}^\vee.$$
		\end{proof}
		
		For integers~$i,j\geq 0$ and a stratum~$S\in\s_{i+j}(\B)$, let us denote by~$E_{i,j}^S(\B)$ the direct summand of~$E_{i,j}(\B)$ spanned by the~$e_I\otimes f_J^\vee$ such that~$L_I\cap M_J=S$. Note that this implies that~$(I,J)$ is independent. Then we have a direct sum decomposition
		\begin{equation}\label{eqdecompositionE}
		E_{i,j}(\B)=\bigoplus_{S\in\s_{i+j}(\B)}E_{i,j}^S(\B)\oplus\bigoplus_{\substack{|I|=i\\ |J|=j\\ (I,J)\textnormal{ dependent}}}\Q\, e_I\otimes f_J^\vee.
		\end{equation}
		
		\begin{lem}
		The direct sum decomposition (\ref{eqdecompositionE}) passes to the subquotient and induces
		$$A_{i,j}(\B)=\bigoplus_{S\in\s_{i+j}(\B)}A_{i,j}^S(\B).$$
		\end{lem}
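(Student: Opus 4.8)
The plan is to show that the relations and co-relations used to form $A_{i,j}(\B)$ are \emph{graded} with respect to the decomposition (\ref{eqdecompositionE}), so that $A_{i,j}(\B)$ automatically splits along it, and then to check that the summand indexed by dependent monomials vanishes. Write $D_{i,j}(\B)\subseteq E_{i,j}(\B)$ for the span of the monomials $e_I\otimes f_J^\vee$ with $(I,J)$ dependent, so that (\ref{eqdecompositionE}) reads $E_{i,j}(\B)=\bigoplus_{S}E_{i,j}^S(\B)\oplus D_{i,j}(\B)$; call the summands $E_{i,j}^S(\B)$ and $D_{i,j}(\B)$ the \emph{blocks}. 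Let $R\subseteq E_{i,j}(\B)$ be the span of all relations and $C\subseteq E_{i,j}(\B)$ the subspace cut out by all co-relations, so that $A_{i,j}(\B)=C/(C\cap R)$.

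The crucial step — which requires the combinatorial input about circuits — is that each generating relation lies inside a single block. For a $\lambda$-circuit $(I,J)$, an index set $K$ with $K\cap I=\varnothing$, and $J'\supseteq J$, the corresponding generator is $e_K\wedge d(e_I)\otimes f_{J'}^\vee=\sum_k\pm\, e_{K\cup(I\setminus i_k)}\otimes f_{J'}^\vee$. Since $(I,J)$ is a circuit, the hyperplane $L_{i_k}$ contains $L_{I\setminus i_k}\cap M_{J}$, hence contains $L_{I\setminus i_k}\cap M_{J'}$ as $M_{J'}\subseteq M_J$; this forces $L_{K\cup(I\setminus i_k)}\cap M_{J'}=L_{K\cup I}\cap M_{J'}=:Z$ for every $k$. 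So all the monomials occurring share the underlying intersection $Z$, and each lies in $E_{i,j}^Z(\B)$ or in $D_{i,j}(\B)$ according to whether $Z$ has codimension $|K|+|I|-1+|J'|$ or not — a condition that does not depend on $k$. Hence the generator lies entirely in $E_{i,j}^Z(\B)$ or entirely in $D_{i,j}(\B)$; the degenerate cases $K\cap I\neq\varnothing$ produce $0$ or a single monomial and are harmless. Therefore $R=\bigoplus_S\bigl(R\cap E_{i,j}^S(\B)\bigr)\oplus\bigl(R\cap D_{i,j}(\B)\bigr)$, and the dual computation shows that $R^\vee$ is graded with respect to the dual decomposition of $E_{i,j}(\B)^\vee$, so that $C$, being the annihilator of $R^\vee$, is graded as well. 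Passing to the subquotient then yields $A_{i,j}(\B)=\bigoplus_S A_{i,j}^S(\B)\oplus A_{i,j}^D(\B)$, where $A_{i,j}^S(\B)$ and $A_{i,j}^D(\B)$ are the corresponding blocks of $C/(C\cap R)$.

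It remains to see that $A_{i,j}^D(\B)=0$, i.e.\ that a dependent monomial $m=e_I\otimes f_J^\vee$ lying in $C$ already lies in $R$. The pair $(I,J)$ contains a circuit $(I_0,J_0)$. If $\chi(L_{I_0}\cap M_{J_0})=\mu$, then $e_I^\vee\otimes d(f_{J_0})$ is among the co-relations and, because $f_{J_0}$ belongs to the ideal generated by $d(f_{J_0})$ (indeed $f_{j_1}\wedge d(f_{J_0})=f_{J_0}$), the functional $e_I^\vee\otimes f_J$ lies in $R^\vee$; since it pairs to $1$ with $m$, this contradicts $m\in C$. Hence $(I_0,J_0)$ is a $\lambda$-circuit, and then $e_I$ belongs to the ideal generated by $d(e_{I_0})$, so $m\in(d(e_{I_0}))\otimes\Q f_J^\vee\subseteq R$. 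Thus $C\cap D_{i,j}(\B)\subseteq R$, so $A_{i,j}^D(\B)=0$ and $A_{i,j}(\B)=\bigoplus_S A_{i,j}^S(\B)$, as claimed. The main obstacle is precisely the gradedness of $R$ and $R^\vee$: everything turns on the circuit identity above, which forces all monomials in a single generator to have the same underlying intersection, with the same codimension count, so that membership in an $E_{i,j}^S(\B)$ (the independent case) versus in $D_{i,j}(\B)$ (the dependent case) is decided uniformly across the generator.
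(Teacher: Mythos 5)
Your overall strategy is the same as the paper's: show the relations and co-relations are homogeneous for the decomposition (\ref{eqdecompositionE}), then kill the dependent block. The homogeneity argument is correct, and in fact slightly more careful than the paper's since you track the factor $e_K$ explicitly. The gap is in the last step, where you argue $C\cap D_{i,j}(\B)\subseteq R$ by picking a circuit $(I_0,J_0)$ contained in $(I,J)$. Both branches of your case analysis silently assume the relevant half of the circuit is nonempty: if $\chi(L_{I_0}\cap M_{J_0})=\lambda$ but $I_0=\varnothing$, then $d(e_{I_0})=d(1)=0$, the ideal $(d(e_{I_0}))$ is zero, and $e_I$ does not lie in it; dually, a $\mu$-colored circuit with $J_0=\varnothing$ yields the zero co-relation and no contradiction with $m\in C$. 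These cases are not vacuous and cannot be fixed by choosing the circuit inside $(I,J)$ more cleverly. Concretely, take $\L=\{L_1\}$ and $\M=\{M_1,M_2,M_3\}$ four distinct lines through the origin $Z$ in $\C^2$ with $\chi(Z)=\lambda$ (this is a tame bi-arrangement, and $Z$ is irreducible so the K\"{u}nneth condition imposes nothing), and let $m=1\otimes f_{123}^\vee$. The only circuit contained in $(\varnothing,\{1,2,3\})$ is $(\varnothing,\{1,2,3\})$ itself; it is $\lambda$-colored with empty first component, so neither of your two mechanisms applies, yet $m$ must vanish in $A_{0,3}(\B)$.

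This is precisely the point where tameness enters, and your proof never invokes it. The paper's \enquote{second case} uses tameness of the stratum $M_{J_0}$ to produce a hyperplane $L_i\supset M_{J_0}$ and a circuit $(\{i\},J'')$ with $J''\subset J_0$ and $\chi(L_i\cap M_{J''})=\lambda$; since $d(e_{\{i\}})=1$, the associated relation $(d(e_{\{i\}}))\otimes f_J^\vee$ is all of $\Lambda^\bullet(e_1,\ldots,e_l)\otimes\Q\,f_J^\vee$ and in particular contains $m$. Note that this auxiliary circuit is in general \emph{not} contained in $(I,J)$ --- in the example above it is $(\{1\},\{j,k\})$ --- which is why an argument restricted to circuits inside $(I,J)$ cannot close. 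You need to add this case and its dual (a $\mu$-colored circuit with $J_0=\varnothing$, handled by the dual tameness condition) to complete the proof; the rest of your argument stands.
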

		
		\begin{proof}
		We first prove that if~$(I,J)$ is dependent then in the definition of~$A_{\bullet,\bullet}(\B)$ we either have the relation~$e_I\otimes f_J^\vee=0$ or the co-relation~$e_I^\vee\otimes f_J=0$, so that the second direct summand of (\ref{eqdecompositionE}) disappears.\\
		Let~$(I,J)$ be dependent. There exists~$I'\subset I$,~$J'\subset J$ such that~$(I',J')$ is a circuit. We assume that~$\chi(L_{I'}\cap M_{J'})=\lambda$, and show that the relation~$e_I\otimes f_J^\vee=0$ holds in~$A_{\bullet,\bullet}(\B)$ (dually, if~$\chi(L_{I'}\cap M_{J'})=\mu$ we would get the co-relation~$e_I^\vee\otimes f_J=0$). There are two cases to consider.\\
		\textit{First case}:~$I'\neq\varnothing$. For any~$i\in I'$, the Leibniz rule implies that~$e_{I'}=\pm e_i\wedge  d(e_{I'})$, hence~$e_{I'}$ and then~$e_I$ are in the ideal of~$\Lambda^\bullet(e_1,\cdots,e_l)$ generated by~$ d(e_{I'})$. Thus the relation~$( d(e_{I'}))\otimes f_J^\vee$ entails~$e_I\otimes f_J^\vee=0$ in~$A_{i,j}(\B)$.\\
		\textit{Second case:}~$I'=\varnothing$. Let~$L_i$ be a hyperplane containing~$M_{J'}$ and satisfying the condition given in the definition of a tame arrangement. Then one easily shows that there exists a subset~$J''\subset J'$ such that~$(\{i\},J'')$ is a circuit. Since~$M_{J'}\subset L_i\cap M_{J''}$, we necessarily have~$\chi(L_i\cap M_{J''})=\lambda$, and we are reduced to the first case.
		
		We next prove that the relations and co-relations are homogeneous with respect to the grading by~$\s(\B)$. Let~$(I,J)$ be a circuit such that~$\chi(L_I\cap M_J)=\lambda$, and let~$J'\supset J$. Then the corresponding relation reads
		$$\sum_{i\in I}\pm e_{I\setminus\{i\}}\otimes f_{J'}^\vee=0.$$
		For all~$i\in I$,~$(I\setminus \{i\},J)$ is independent, hence~$L_{I\setminus\{i\}}\cap M_J=L_I\cap M_J$ does not depend on~$i$, and~$L_{I\setminus\{i\}}\cap M_{J'}$  does not depend on~$i$. Hence the relations are homogeneous with respect to the grading by~$\s(\B)$. Dually, the same is true for the co-relations.
		\end{proof}
		
		\begin{rem}\label{remtameOSlocal}
		By definition, the component~$A_{\bullet,\bullet}^S(\B)$ only depends on the arrangement~$\B^{\leq S}$, which is tame according to Remark~\ref{remtamelocal}. For a strict stratum~$\Sigma$, we then have~$A_{\bullet,\bullet}^{\leq\Sigma}(\B)\cong A_{\bullet,\bullet}(\B^{\leq\Sigma})$.
		\end{rem}
		
		\begin{ex}
		Let~$\L=\{L_1,L_2\}$ and~$\M=\{M_1\}$ be three distinct lines in~$\C^2$. Let~$Z$ be the origin, we set~$\chi(Z)=\lambda$. This defines a tame bi-arrangement~$\B=(\L,\M,\chi)$. The only circuit is~$(\{1,2\},\{1\})$. Then~$A_{\bullet,\bullet}(\B)$ is the quotient of~$\Lambda^\bullet(e_1,e_2)\otimes \Lambda^\bullet(f_1^\vee)$ by the relations
		$(e_2-e_1) f_1^\vee=0$ and~$e_{12}f_1^\vee=0$. It may be pictured as 
		$$\xymatrix{
		 \Q\, e_{12} \ar[r]  & \Q\, e_1\oplus \Q\, e_2 \ar[r]\ar[d] & \Q\, 1 \ar[d]\\ 
		 & \left( \Q\, e_1f_1^\vee \oplus \Q\, e_2f_1^\vee \right) / (e_1f_1^\vee = e_2f_2^\vee ) \ar[r] & \Q\, f_1^\vee 
		}$$
		and its rows are exact.
		\end{ex}		
		
		\begin{thm}\label{thmtameOS}
		Let~$\B$ be a tame bi-arrangement. Then~$A_{\bullet,\bullet}(\B)$ is the Orlik--Solomon bi-complex of~$\B$, and~$\B$ is exact.
		\end{thm}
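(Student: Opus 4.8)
The plan is to check that the explicit subquotient $A_{\bullet,\bullet}(\B)$, with its decomposition over strata and its two differentials $d',d''$, satisfies the characterizing properties of Lemma~\ref{defiOSbicomplex} and, at the same time, that each of its strict strata is exact in the sense of Definition~\ref{defiexact}; by the uniqueness part of Lemma~\ref{defiOSbicomplex} this yields both assertions at once. Most ingredients are already available: $A_{\bullet,\bullet}(\B)$ is a bi-complex (Lemma~\ref{lemtamerelcorel}), it is graded by the strata with $A_{0,0}^{\C^n}(\B)=\Q$, and $A_{\bullet,\bullet}^{\leq\Sigma}(\B)\cong A_{\bullet,\bullet}(\B^{\leq\Sigma})$ for every stratum $\Sigma$ (Remark~\ref{remtameOSlocal}). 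Everything therefore reduces to the following statement, which I would prove directly: \emph{for every strict stratum $\Sigma$ with $\chi(\Sigma)=\lambda$, all rows of the bi-complex $A_{\bullet,\bullet}(\B^{\leq\Sigma})$ are exact}, together with the dual assertion for $\mu$-strata. Indeed, the $3$-term truncations of these exact rows are precisely the exact sequences demanded by Lemma~\ref{defiOSbicomplex}, while their full exactness is the very definition of exactness of $\Sigma$. Moreover the explicit subquotient $A_{\bullet,\bullet}(\B)$ is manifestly self-dual — exchanging $\lambda$ with $\mu$ identifies $A_{i,j}(\B^\vee)$ with $(A_{j,i}(\B))^\vee$, the relations and co-relations trading roles — and $\B^\vee$ is again tame (immediate from Definition~\ref{defitame}), so it suffices to treat the case $\chi(\Sigma)=\lambda$.

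So let $\Sigma$ be a strict stratum with $\chi(\Sigma)=\lambda$. Using Remark~\ref{remtamelocal} I would replace $\B$ by the tame bi-arrangement $\B^{\leq\Sigma}$, so that every hyperplane contains $\Sigma$; then $\Sigma$ is the center of $\B$, it is its unique stratum of maximal codimension, every circuit $(I,J)$ satisfies $\Sigma\subseteq L_I\cap M_J$, and the rows of $A_{\bullet,\bullet}(\B)$ are exactly the sequences appearing in Definition~\ref{defiexact} for $\Sigma$. By tameness of $\Sigma$ there is a hyperplane — which I relabel $L_1\in\L$ — with $L_1\supseteq\Sigma$ and such that the index $1$ belongs to no circuit $(I,J)$ with $\chi(L_I\cap M_J)=\mu$. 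The idea is to push the classical contracting homotopy of the Orlik--Solomon algebra through the subquotient: on $E_{\bullet,\bullet}(\B)=\Lambda^\bullet(e_1,\ldots,e_l)\otimes\Lambda^\bullet(f_1^\vee,\ldots,f_m^\vee)$ set $h=(e_1\wedge-)\otimes\mathrm{id}$; since $d(e_1\wedge x)+e_1\wedge d(x)=x$, one has $d'h+hd'=\mathrm{id}$ on every row $E_{\bullet,j}(\B)$. The point is then to check that $h$ descends to $A_{\bullet,\bullet}(\B)$, for then its image $\bar h$ satisfies $d'\bar h+\bar h d'=\mathrm{id}$ and all rows of $A_{\bullet,\bullet}(\B)$ are exact.

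This descent is the heart of the matter and the only place where tameness enters. First, $h$ always preserves the subspace of relations: it sends $(x\wedge d(e_I))\otimes f_{J'}^\vee$ to $((e_1\wedge x)\wedge d(e_I))\otimes f_{J'}^\vee$, still in the ideal generated by $d(e_I)$. Secondly — and this is where tameness is used — $h$ preserves the subspace cut out by the co-relations: the transpose $h^\vee$ sends $e_{I'}^\vee\otimes y$ to $0$ when $1\notin I'$ and to $\pm\,e_{I'\setminus\{1\}}^\vee\otimes y$ when $1\in I'$, and for a $\mu$-circuit $(I,J)$ with $I'\supseteq I$ and $y$ in the ideal generated by $d(f_J)$, the element $e_{I'\setminus\{1\}}^\vee\otimes y$ is again such a co-relation precisely because the tameness clause forces $1\notin I$, whence $I\subseteq I'\setminus\{1\}$. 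This compatibility check is the step to be most careful about; noticing that the clause \enquote{$1$ does not belong to any $\mu$-circuit} of the tameness condition is exactly what makes $h^\vee$ stabilize the co-relations is the key observation. Granting this, $h$ descends to $\bar h$, all rows of $A_{\bullet,\bullet}(\B)$ are exact, the displayed statement is proved, hence (with the dual argument for $\mu$-strata) $\B$ is exact, and $(A_{i,j}^S(\B),d',d'')$ satisfies every axiom of Lemma~\ref{defiOSbicomplex}, so it is the Orlik--Solomon bi-complex of $\B$. It remains only to make sure the reductions to $\B^{\leq\Sigma}$ and to $\B^\vee$ and the passage to the subquotient are carried out compatibly with the stratum grading and the differentials; these checks are routine.
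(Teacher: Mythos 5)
Your proposal is correct and follows essentially the same route as the paper's proof: verify the axioms of Lemma~\ref{defiOSbicomplex} via the local identification $A_{\bullet,\bullet}^{\leq\Sigma}(\B)\cong A_{\bullet,\bullet}(\B^{\leq\Sigma})$, reduce by duality to $\lambda$-strata, and exhibit the contracting homotopy $h=(e_i\wedge-)\otimes\mathrm{id}$ for the hyperplane provided by tameness, checking it descends to the subquotient (trivially on relations, and on co-relations precisely because $i$ lies in no $\mu$-circuit). No substantive difference from the paper's argument.
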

		
		\begin{proof}
		Firstly,~$A_{0,0}^{\C^n}(\B)$ is indeed one-dimensional with basis~$1\otimes 1$. Secondly, for every strict stratum~$\Sigma$,~$A_{\bullet,\bullet}^{\leq\Sigma}(\B)=A_{\bullet,\bullet}(\B^{\leq\Sigma})$ is a bi-complex by Remark~\ref{remtameOSlocal} and Lemma~\ref{lemtamerelcorel}.
		Thirdly, let~$\Sigma$ be a strict stratum of~$\B$ such that~$\chi(\Sigma)=\lambda$ (the case~$\chi(\Sigma)=\mu$ being dual). We want to show that all the rows of~$A^{\leq\Sigma}_{\bullet,\bullet}(\B)$ are exact. By the same remark as above, we may assume that~$\Sigma$ is the intersection of all the hyperplanes of~$\B$ and show that all the rows of~$A_{\bullet,\bullet}(\B)$ are exact.
		
		 By the definition of a tame bi-arrangement, there exists a hyperplane~$L_i$ such that~$i$ does not belong to any circuit~$(I,J)$ with~$\chi(L_I\cap M_J)=\mu$. We define~$h:E_{\bullet,\bullet}(\B)\rightarrow E_{\bullet+1,\bullet}(\B)$ by the formula~$h(x\otimes y)=(e_i\wedge x)\otimes y$. Then the Leibniz rule implies that~$ d'\circ h+ h\circ d'=\mathrm{id}$, hence~$h$ is a contracting homotopy for all the rows of~$E_{\bullet,\bullet}(\B)$. Hence we are done if we prove that~$h$ passes to the subquotient and induces~$h:A_{\bullet,\bullet}(\B)\rightarrow A_{\bullet+1,\bullet}(\B)$.\\
		 The fact that~$h$ respects the relations is trivial. Let~$(I,J)$ be a circuit such that~$\chi(L_I\cap M_J)=\mu$. Then by assumption~$i\notin I$. Thus, any subset~$I'\supset I$ that contains~$i$ is of the form~$I'=\{i\}\sqcup I''$ with~$I''\supset I$. Hence we have~$h^\vee(e_{I'}\otimes (f_K\wedge d(f_J)))=\pm e_{I''}\otimes (f_K\wedge  d(f_J))$ and~$h$ respects the co-relations.
		\end{proof}
		
		\begin{rem}
		The definition of~$A_{\bullet,\bullet}(\B)$ is automatically self-dual, viewing~$A_{\bullet,\bullet}(\B^\vee)$ as a subquotient of~$ \Lambda^\bullet(e_1^\vee,\ldots,e_l^\vee)\otimes\Lambda^\bullet(f_1,\ldots,f_m)\cong \Lambda^\bullet(f_1,\ldots,f_m)\otimes \Lambda^\bullet(e_1^\vee,\ldots,e_l^\vee)$.
		\end{rem}
		
		\begin{rem}
		There is a natural structure of graded module over~$E_\bullet(\L)$ on~$E_{\bullet,\bullet}(\L,\M)$. Let~$(\L,\M,e_\lambda)$ be a bi-arrangement equipped with the~$\lambda$-extreme coloring; then this structure passes to the subquotient and induces on~$A_{\bullet,\bullet}(\L,\M,e_\lambda)$ a structure of graded module over the Orlik--Solomon algebra~$A_\bullet(\L)$. Dually,~$A_{\bullet,\bullet}(\L,\M,e_\mu)$ is a graded comodule over~$\left(A_\bullet(\M)\right)^\vee$, which is the same as a graded module over~$A_\bullet(\M)$.
		\end{rem}
		
	\subsection{Examples}
	
		\subsubsection{A non-tame bi-arrangement which is not exact}
		
		To find a non-tame non-exact bi-arrangement, we may choose trivial examples that do not satisfy the necessary condition of Lemma~\ref{lemfirstobstruction}. Here we present a less trivial example.
		
		 Let us consider, in~$\C^3$, a bi-arrangement~$\B=(\L,\M,\chi)$ with~$\L=\{L_1,L_2,L_3\}$ and~$\M=\{M_1,M_2\}$ defined by the equations~$L_1=\{x_1=0\}$,~$L_2=\{x_2=0\}$,~$L_3=\{x_3=0\}$,~$M_1=\{x_1+x_3=0\}$,~$M_2=\{x_2+x_3=0\}$. Apart from the hyperplanes, the irreducible strata are the lines~$L_{13}=\{x_1=x_3=0\}$,~$L_{23}=\{x_2=x_3=0\}$ and the point~$P=\{x_1=x_2=x_3=0\}$. We define~$\chi(L_{13})=\chi(L_{23})=\mu$ and~$\chi(P)=\lambda$. The circuits are~$(\{1,3\},\{1\})$,~$(\{2,3\},\{2\})$ with color~$\mu$, and~$(\{1,2\},\{1,2\})$ with color~$\lambda$. The stratum~$P$ is not tame, thus~$\B$ is not tame.
		 
		 It is easy to check that~$\B$ is not exact. This follows from looking at the first row~$(\bullet,0)$ of its Orlik--Solomon bi-complex. The only non-zero terms are~$A_{1,0}^{L_i}=\Q$ for~$i=1,2,3$, and~$A_{2,0}^{L_{12}}=\ker\left(A_{1,0}^{L_1}\oplus A_{1,0}^{L_2} \rightarrow \Q\right) \cong \Q$. The first row is then 
		$$0\rightarrow 0\rightarrow \Q\rightarrow \Q\oplus\Q\oplus\Q \rightarrow \Q \rightarrow 0$$
		which is not exact.
	
		\subsubsection{A non-tame bi-arrangement which is exact}
			
		 Let us consider the same bi-arrangement as in the previous example, but with the coloring~$\chi(L_{13})=\chi(L_{23})=\lambda$  and~$\chi(P)=\mu$ (it is not its dual, since we have not exchanged~$\L$ and~$\M$). This bi-arrangement~$\B$ is not tame, but it may be checked that it is exact. %To find the Orlik--Solomon bi-complex of~$\B$, one should start with the bi-complex~$A_{\bullet,\bullet}(\B)$ defined in \S\ref{partame} and add the co-relation~$e_{123}^\vee=0$.

\section{Bi-arrangements of hypersurfaces}\label{sectionhypersurfaces}

	\subsection{Arrangements of hypersurfaces and resolution of singularities}
	
		We fix a complex manifold~$X$. An \textit{arrangement of hypersurfaces} in~$X$ is a finite set~$\A$ of smooth hypersurfaces of~$X$ which is locally an arrangement of hyperplanes. More precisely, it means that around every point~$p\in X$ we may find a system of local coordinates centered at~$p$ such that all hypersurfaces~$K\in\A$ are defined by a linear equation.
		
		\begin{ex}
		A (simple) normal crossing divisor in~$X$ is a special case of an arrangement of hypersurfaces. In this case, we may find local coordinates around every point such that all hypersurfaces are defined by the vanishing of a coordinate.
		\end{ex}
		
		\begin{ex}\label{exarrangementshypersurfaces}
		\begin{enumerate}
		\item An arrangement of hyperplanes in~$\C^n$ is an arrangement of hypersurfaces. More generally, a finite set of hyperplanes of~$\C^n$ that do not necessarily pass through the origin is an arrangement of hypersurfaces.
		\item A finite set of hyperplanes of~$\mathbb{P}^n(\C)$ is an arrangement of hypersurfaces.
		\item If~$Y$ is a Riemann surface and~$X=Y^n$ is the~$n$-fold cartesian power of~$Y$, then there are distinguished hypersurfaces in~$X$: the diagonals~$\{y_i=y_j\}$, and the hypersurfaces~$\{y_i=a\}$ where~$a\in Y$ is a point. Any finite set of such hypersurfaces is an arrangement of hypersurfaces. In the context of motivic periods, these arrangements of hypersurfaces have been studied by S. Bloch~\cite{blochtreeterated}.
		\end{enumerate}
		\end{ex}		
		
		A \textit{stratum} of~$\A$ is a connected component of a non-empty intersection~$K_I=\bigcap_{i\in I}K_i$ of some hypersurfaces~$K_i\in\A$. It is a submanifold of~$X$. For instance, the whole space~$X=K_\varnothing$ is always a stratum of~$\A$, and the other strata are called \textit{strict}. A stratum~$S$ is \textit{reducible} (resp. \textit{irreducible}) if it is reducible (resp. irreducible) locally around every point~$p\in S$. Every hypersurface~$K\in\A$ is irreducible; if they are the only irreducible strata, then~$\A$ is a normal crossing divisor.\\
		
		The class of arrangements of hypersurfaces is closed under blow-ups along a certain class of strata, that we now introduce.
		
		\begin{defi}
		Let~$\A$ be an arrangement of hyperplanes in~$\C^n$. A strict stratum~$Z$ of~$\A$ is \textit{good} if there exists a stratum~$U$ and a decomposition~$Z\pitchfork U$ such that for every hyperplane~$K\in\A$,~$K$ contains~$Z$ or~$U$.
		
		Let~$\A$ be an arrangement of hypersurfaces in~$X$. A strict stratum~$Z$ of~$\A$ is \textit{good} if it is good in the above sense locally around every point~$p\in Z$. 
		\end{defi}
	
		For instance, a stratum of dimension~$0$ (a point) is always good.
		
		\begin{lem}\label{lemminimalirreduciblegood}
		Let~$\A$ be an arrangement of hypersurfaces in~$X$, and~$S$ be a minimal \footnote{For the usual inclusion order.} irreducible stratum of~$\A$. Then~$S$ is good.
		\end{lem}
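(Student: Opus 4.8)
The plan is to use that goodness is a pointwise condition in order to reduce the statement to a combinatorial fact about central hyperplane arrangements, where the required transverse stratum is read off from the matroid structure.

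First I would fix a point $p\in S$ and pass to the local hyperplane arrangement $\A_p$ at $p$, that is, the (linearized) hyperplanes $\{K\in\A : p\in K\}$, with its linear matroid $M_p$ on ground set $E_p$. Strata of $\A_p$ through the origin correspond to flats of $M_p$; the flat attached to $S$ is $F_S=\{K\in E_p : S\subseteq K\}$, and $S^\perp=\langle f_K : K\in F_S\rangle$. Here I would use that, because $\A$ is locally a hyperplane arrangement, for any stratum $T$ the linear dependencies among the conormals of the hypersurfaces containing $T$ are the same at every point of $T$ (they are constant in the local linearizing coordinates); hence the matroid of $\A^{\leq T}$ is well defined independently of the point, so that a stratum is irreducible at one of its points if and only if it is irreducible at all of them. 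Since $S$ is irreducible at $p$, the flat $F_S$ is connected in $M_p$; and since $S$ is minimal among irreducible strata, $F_S$ must be maximal among connected flats of $M_p$: a connected flat $F\supsetneq F_S$ would be the flat in $M_p$ of the germ at $p$ of a stratum $\hat S$ of $\A$ with $\hat S\subsetneq S$, and $\hat S$ would then be irreducible, contradicting minimality. Since every connected flat of a matroid lies in a single connected component, and each connected component is a flat on which the matroid restricts to a connected one, maximality forces $F_S$ to be exactly one connected component of $M_p$.

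Next I would set $U:=\bigcap_{K\in\A_p,\ K\not\supseteq S}K$, with the convention $U=\C^n$ when there is no such $K$. By construction every $K\in\A_p$ contains $S$ or $U$. Since $F_S$ is a whole connected component of $M_p$, no circuit of $M_p$ meets both $F_S$ and its complement, equivalently $M_p$ is the direct sum of $M_p|F_S$ and its restriction to the complement; consequently the subspaces $S^\perp$ and $U^\perp=\langle f_K : K\not\supseteq S\rangle$ are in direct sum, i.e.\ $S\pitchfork U$. Moreover $S\cap U=\bigcap_{K\in E_p}K$ is the center of $\A_p$, so $(S\cap U)^\perp=S^\perp\oplus U^\perp$ and $S\cap U=S\pitchfork U$ is a genuine decomposition; together with the covering property this is exactly the assertion that $S$ is good at $p$. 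Since $p\in S$ was arbitrary, $S$ is good.

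The main obstacle is the second step, namely showing that $F_S$ is a connected component of $M_p$. It rests on two inputs: the pointwise-independence of the local matroid along a stratum, which is what lets the hypothesis "minimal irreducible stratum" descend to each local hyperplane model; and the matroid fact identifying a maximal connected flat with a connected component — geometrically, the statement that through a minimal irreducible stratum the ambient arrangement decomposes as a product having the localization $\A^{\leq S}$ as one entire factor, which is the combinatorial content behind the decomposition of arrangements into irreducibles recalled at the start of \S\ref{sectionOS}. Once this picture is in place, producing $U$ and verifying transversality and the covering condition are routine.
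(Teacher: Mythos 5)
Your proof is correct and follows essentially the same route as the paper's: reduce to the local central arrangement at a point of $S$ and take $U$ to be the intersection of the hyperplanes not containing $S$, the key point being that $\A^{\leq S}$ is an entire connected component of the local matroid (equivalently, a full irreducible factor in the decomposition of the minimal stratum, which is how the paper phrases it). The only substantive difference is that you spell out, via the constancy of the local matroid along a stratum, why minimality and irreducibility can be tested at a single point --- a step the paper compresses into ``the statement is local.''
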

		
		\begin{proof}
		The statement is local, so we may assume that~$X=\C^n$ and~$\A$ is an arrangement of hyperplanes. Let~$M=\bigcap_{K\in\A}K$ be the minimal stratum of~$\A$ and~$M=S_1\pitchfork \cdots \pitchfork S_r$ be its decomposition into irreducibles. Then the~$S_i$'s are exactly the minimal irreducible strata. We may then assume that~$S=S_1$. Let us define~$U=S_2\pitchfork\cdots\pitchfork S_r$. Then we have a decomposition~$S\pitchfork U$ and every hyperplane~$K\in\A$ contains~$S$ or~$U$, hence~$S$ is a good stratum.
		\end{proof}	
		
		\begin{lem}\label{lemblowuparrangement}
		Let~$\A$ be an arrangement of hypersurfaces in~$X$ and~$Z$ a good stratum of~$\A$ of codimension~$\geq 2$. Let~$\pi:\td{X}\rightarrow X$ be the blow-up of~$X$ along~$Z$ and~$E=\pi^{-1}(Z)$ the exceptional divisor. We write~$\td{Y}$ for the strict transform of a submanifold~$Y\subset X$. Then 
		\begin{enumerate}
		\item The set~$\td{\A}=\{E\}\cup\{\td{K}\,,\,K\in\A\}$ is an arrangement of hypersurfaces in~$\td{X}$.
		\item The strata of~$\td{\A}$ are of the form~$\td{S}$ or~$E\cap\td{S}$, for strata~$S$ of~$\A$ that are not contained in~$Z$.
		\item The irreducible strata of~$\A$ are~$E$ and the strict transforms~$\td{S}$ of the irreducible strata~$S$ of~$\A$ that are not contained in~$Z$.
		\end{enumerate}
		\end{lem}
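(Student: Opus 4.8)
The plan is to reduce all three assertions to the case of a central arrangement of hyperplanes in an affine space blown up at the origin, using the defining property of a good stratum. Since the statement is local on $X$, I may assume $X=\C^n$, that $\A$ is an arrangement of hyperplanes, and that $Z$ is good: there is a stratum $U$ with a decomposition $Z\pitchfork U$ such that every $K\in\A$ contains $Z$ or $U$. Because $Z^\perp\cap U^\perp=0$, no hyperplane can contain both, so $\A=\A_Z\sqcup\A_U$ according to which of $Z$, $U$ is contained in $K$. Choosing linear coordinates adapted to a direct sum $\C^n=\C^p\oplus\C^{n-p}$ with $Z^\perp=(\C^p)^\vee$ and $U^\perp\subseteq(\C^{n-p})^\vee$, we get $Z=\{0\}\times\C^{n-p}$, every $K\in\A_Z$ is pulled back from an \emph{essential} central arrangement $\A_0$ in $\C^p$ with $\bigcap\A_0=\{0\}$, and every $K\in\A_U$ is pulled back from a central arrangement in $\C^{n-p}$. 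Since blow-up commutes with products and is local on the base, $\pi\colon\td X\to X$ is (locally) the product of $\mathrm{Bl}_0(\C^p)\to\C^p$ with the identity of $\C^{n-p}$, where $p=\mathrm{codim}(Z)\geq 2$; correspondingly $\td\A$ consists of $E=E_0\times\C^{n-p}$, the sets $\td{K_0}\times\C^{n-p}$ for $K\in\A_Z$, and the sets $\mathrm{Bl}_0(\C^p)\times K''$ for $K\in\A_U$. In every standard affine chart of $\mathrm{Bl}_0(\C^p)$ this is again an arrangement of hyperplanes, and all members of $\td\A$ are smooth; this already proves (1).

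For (2) and (3) I first treat the base case: a central essential arrangement $\A_0$ in $\C^p$ ($p\geq 2$) blown up at the origin. The key computational input is that strict transform commutes with intersections of members of $\A_0$: since $\pi^{-1}(K)=E_0\cup\td K$ and $(E_0\cup A)\cap(E_0\cup B)=E_0\cup(A\cap B)$, one gets $\pi^{-1}(K_I)=E_0\cup\bigcap_{i\in I}\td{K_i}$; and in an affine chart of the blow-up, $\bigcap_{i\in I}\td{K_i}$ is the affine-linear space whose equations are obtained by dividing the equations of $K_I$ by the exceptional coordinate, so it is disjoint from $E_0$ exactly when $K_I\neq\{0\}$, in which case it equals $\td{K_I}$. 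Therefore every stratum of $\td{\A_0}$ is a component of $\td S$ or of $E_0\cap\td S$ for some $S=K_I$, which one may take with $S\not\subseteq Z$. This gives (2) in the base case.

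For (3) in the base case, the decisive observation is that the germ of $\td{\A_0}$ at a point $\tilde p\in\td S$ is isomorphic, as a germ of arrangement, to the germ of $\A_0$ at a point of $S$. Away from $E_0$ this is because $\pi$ is an isomorphism there. At a point $\tilde p\in E_0\cap\td S$ corresponding to a line $\ell\subseteq S$, one works in the blow-up chart centered at $\tilde p$: the equation of $E_0$ is a single coordinate that does not appear in the equations of any $\td K$ with $\ell\subseteq K$, and the arrangement $\{\td K:K\supseteq S\}$ becomes, in that chart, the arrangement $\{K/\ell:K\supseteq S\}$ together with one free extra coordinate, which has the same underlying matroid as $\A_0^{\leq S}$. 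It follows that $\td S$ is irreducible if and only if $S$ is; that $E_0$ is irreducible, being a single hypersurface; and that for strict $S$ the intersection $E_0\cap\td S$ decomposes as $E_0\pitchfork\td S$, hence is reducible. So the irreducible strata of $\td{\A_0}$ are exactly $E_0$ and the $\td S$ with $S$ irreducible strict and $S\neq\{0\}$.

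Finally I transfer back through the product structure. Locally a stratum of $\td\A$ is a product of a stratum of $\td{\A_0}$ (in a chart of $\mathrm{Bl}_0(\C^p)$) with a stratum of the arrangement $\A_U$, and an irreducible stratum of a product arrangement is concentrated in a single factor (as for arrangements of hyperplanes). Combining this with the base case gives (2) and (3) in general, after checking that $S_0\times\C^{n-p}$ (resp.\ $\C^p\times S''$) is an irreducible stratum of $\A$ not contained in $Z$ precisely when $S_0\neq\{0\}$ is irreducible in $\A_0$ (resp.\ $S''$ is irreducible in $\A_U$). The main obstacle is the bookkeeping around the exceptional divisor: identifying which intersections of strict transforms are themselves strict transforms of strata, and verifying that $E\cap\td S$ genuinely decomposes while $\td S$ retains the exact (ir)reducibility type of $S$; the rest is a routine coordinate computation in the standard charts of the blow-up.
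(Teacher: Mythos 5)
Your proof is correct and takes essentially the same route as the paper's: localize to adapted linear coordinates in which goodness forces the product structure $\A\cong\A_0\times\A_U$ with $Z$ the origin of the first factor, compute strict transforms in the standard charts of $\mathrm{Bl}_0(\C^p)$, and for irreducibility observe that $E\pitchfork\td{S}$ while the local combinatorics of $\td{S}$ match those of $S$ (the paper phrases this as a bijection between decompositions $S=A\pitchfork B$ and $\td{S}=\td{A}\pitchfork\td{B}$, you phrase it via matroids of germs — these are equivalent). One clause is garbled: $\bigcap_{i\in I}\td{K_i}$ is \emph{nonempty} exactly when $K_I\neq\{0\}$, and in that case it meets $E_0$ in $\P(K_I)$ rather than being disjoint from it (when $K_I=\{0\}$ it is empty outright); the conclusion you actually use, namely that it equals $\td{K_I}$ when $K_I\neq\{0\}$, is correct, so the slip does not affect the argument.
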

		
		\begin{defi}
		We call~$\td{\A}=\{E\}\cup\{\td{K}\,,\,K\in\A\}$ the \textit{blow-up} of~$\A$ along~$Z$.
		\end{defi}
		
		\begin{proof}
		The statement is local, so we assume that~$X=\C^n$ and~$\A$ is an arrangement of hyperplanes. Since~$Z$ is a good stratum, we may choose coordinates~$(z_1,\ldots,z_n)$ such that~$Z=\{z_1=\cdots=z_r=0\}$ for some integer~$r$, and such that the hyperplanes~$K\in\A$ are given by equations of the form~$\alpha_1z_1+\cdots+\alpha_rz_r=0$ or~$\alpha_{r+1}z_{r+1}+\cdots+\alpha_nz_n=0$.
		\begin{enumerate}
		\item We have~$r$ local charts for the blow-up~$\pi:\td{X}\rightarrow X$, given for~$k=1,\ldots,r$ by 
		$$\pi_k(z_1,\ldots,z_r)=(z_kz_1,\ldots,z_kz_{k-1},z_k,z_kz_{k+1},\ldots,z_kz_r,z_{z+1},\ldots,z_n).$$
		In such a chart, the exceptional divisor is~$E=\{z_k=0\}$; the strict transform of~$K=\{\alpha_1z_1+\cdots+\alpha_rz_r=0\}$ is~$\td{K}=\{\alpha_1z_1+\cdots+\alpha_{k-1}z_{k-1}+\alpha_k+\alpha_{k+1}z_{k+1}+\cdots+\alpha_rz_r=0\}$; the strict transform of~$K=\{\alpha_{r+1}z_{r+1}+\cdots+\alpha_nz_n=0\}$ is~$\td{K}=\{\alpha_{r+1}z_{r+1}+\cdots+\alpha_nz_n=0\}$. All these equations are linear, hence the result.
		\item For~$S$ a stratum of~$\A$, it is easy to show using the above local charts that we have
		$$\td{S}=\varnothing\,\,\Leftrightarrow\,\, E\cap \td{S}=\varnothing \,\,\Leftrightarrow\,\, S\subset Z$$
		hence the result.
		\item The exceptional divisor~$E$ is obviously irreducible. Now let us fix a stratum~$S$ of~$\A$ not contained in~$Z$. Then it is easy to see using the above local charts that~$E\pitchfork\td{S}$ and that for every~$K\in\A$,~$E\cap\td{S}\subset\td{K}\,\Rightarrow\, S\subset K$; thus,~$E\cap \td{S}$ is reducible if~$S$ is not the whole space~$\C^n$. We are left with proving that~$\td{S}$ is irreducible if and only if~$S$ is irreducible. It it easy to see that a decomposition~$S=A\pitchfork B$ gives a decomposition~$\td{S}=\td{A}\pitchfork\td{B}$ and vice versa, hence the result.
		\end{enumerate}
		\end{proof}
		
		Blow-ups along good strata are enough to resolve the singularities of hypersurface arrangements, as the next theorem shows.
		
		\begin{thm}\label{thmresolutionarrangements}
		Let~$\A$ be an arrangement of hypersurfaces in~$X$. We inductively define a sequence of complex manifolds~$X^{(k)}$ and arrangements of hypersurfaces~$\A^{(k)}$ inside~$X^{(k)}$, via the following process.
		\begin{enumerate}[(a)] 
		\item~$X^{(0)}=X$ and~$\A^{(0)}=\A$;
		\item for~$k\geq 0$, let~$Z^{(k)}$ be a minimal irreducible stratum of~$\A^{(k)}$ of codimension~$\geq 2$,~$X^{(k+1)}\rightarrow X^{(k)}$ the blow-up of~$X^{(k)}$ along~$Z^{(k)}$. We let~$\A^{(k+1)}=\td{\A^{(k)}}$ be the blow-up of~$\A^{(k)}$ along~$Z^{(k)}$.
		\end{enumerate}
		After a finite number of steps, we get a normal crossing divisor~$\A^{(\infty)}$ inside~$X^{(\infty)}$.
		\end{thm}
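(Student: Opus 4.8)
The plan is to show that the process terminates, since each step either leaves the set of strict strata unchanged in a way that reduces a natural complexity measure, or reduces the number of strata, and once no minimal irreducible stratum of codimension $\geq 2$ exists the arrangement is necessarily a normal crossing divisor. First I would verify the last assertion: if $\A^{(k)}$ has no irreducible stratum of codimension $\geq 2$, then by the remark preceding Lemma~\ref{lemblowuparrangement} (every hypersurface is irreducible, and if these are the only irreducible strata then $\A$ is a normal crossing divisor) we conclude that $\A^{(k)}$ is a normal crossing divisor; so it suffices to prove termination. The choice of a \emph{minimal} irreducible stratum $Z^{(k)}$ is legitimate because by Lemma~\ref{lemminimalirreduciblegood} such a stratum is good, hence Lemma~\ref{lemblowuparrangement} applies and $\A^{(k+1)}$ is again an arrangement of hypersurfaces in $X^{(k+1)}$.

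For termination I would introduce a complexity invariant of an arrangement of hypersurfaces $\A$ in $X$ measuring how far $\A$ is from being normal crossing. Since the statement is local (all the relevant combinatorics of strata, irreducibility, and blow-ups are local by Lemma~\ref{lemblowuparrangement}), it is enough to bound, at each point $p$, the complexity of the local hyperplane arrangement $\A_p^{\leq p}$. A natural candidate is a lexicographically ordered vector recording, for each codimension $r$ from $n$ down to $2$, the number of irreducible strata of $\A$ of codimension $r$ (or a weighted sum thereof, e.g. $\sum_{S \text{ irred.},\ \mathrm{codim}(S)\geq 2}\ c^{\,\mathrm{codim}(S)}$ for a suitably large constant $c$). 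The key step is then to check, using the explicit description in Lemma~\ref{lemblowuparrangement}(3), that blowing up a minimal irreducible stratum $Z$ of codimension $d\geq 2$ strictly decreases this invariant: the irreducible stratum $Z$ (of codimension $d$) disappears, the new exceptional divisor $E$ is a hypersurface (codimension $1$, so it does not contribute), the strict transforms $\td S$ of the old irreducible strata $S\not\subset Z$ remain irreducible of the same codimension, and — crucially — no \emph{new} irreducible strata of codimension $\geq 2$ are created, since by Lemma~\ref{lemblowuparrangement}(2)--(3) every stratum of $\td\A$ is either some $\td S$ or $E\cap\td S$, and the latter is shown there to be reducible whenever $S\neq X$. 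One must also check that irreducible strata $S\subset Z$ other than $Z$ itself do not occur: by minimality of $Z$, any irreducible stratum contained in $Z$ and of codimension $\geq 2$ would contradict the choice of $Z$, unless it equals $Z$; strata of codimension $\leq 1$ inside $Z$ (of codimension $d\geq 2$) are impossible. Hence the invariant drops strictly at each step, and since it takes values in a well-ordered set, the process stops.

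The main obstacle I anticipate is making the local-to-global bookkeeping of the complexity invariant rigorous: a priori the invariant must be controlled uniformly over $X$, and after a blow-up the local combinatorial type of $\A^{(k+1)}$ can change at \emph{every} point lying over (a neighbourhood of) $Z^{(k)}$, not just over $Z^{(k)}$ itself. One must argue that away from $Z^{(k)}$ the blow-up is an isomorphism (so the local invariants are unchanged there), that over $Z^{(k)}$ the invariant strictly decreases pointwise by the computation above, and that there are only finitely many distinct local combinatorial types occurring in $X$ at any stage (which follows because $\A$ is a finite set of hypersurfaces and $X$ is covered by charts in which $\A$ looks like a fixed hyperplane arrangement). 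Packaging this as, say, the maximum over $p\in X$ of the local invariant, together with the (finite) number of points realizing that maximum, gives a well-ordered quantity that strictly decreases, and this yields termination; the finitely many remaining verifications (that $\A^{(\infty)}$ is then normal crossing, and that the construction is well-defined, i.e. independent up to the asserted conclusion of the choices of minimal strata) are routine given the lemmas already established.
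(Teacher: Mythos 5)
Your proposal is correct and follows essentially the same route as the paper: both rest on Lemma~\ref{lemblowuparrangement}(3), which shows that the irreducible strata of the blow-up are exactly $E$ together with the strict transforms of the irreducible strata of $\A$ not contained in $Z$, so that no new irreducible strata of codimension $\geq 2$ are created while $Z$ itself disappears (and, by minimality, $Z$ is the only irreducible stratum of codimension $\geq 2$ contained in $Z$). The paper's bookkeeping is simpler than yours: it just counts the finite, global set $\mathscr{I}^{(k)}$ of irreducible strata of codimension $\geq 2$ and observes that $|\mathscr{I}^{(k+1)}|=|\mathscr{I}^{(k)}|-1$, so the lexicographic local invariant and the local-to-global uniformity argument you flag as the main obstacle are unnecessary.
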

		
		\begin{proof}
		The process is well-defined according to Lemma~\ref{lemminimalirreduciblegood} and Lemma~\ref{lemblowuparrangement}. For~$k\geq 0$, let~$\mathscr{I}^{(k)}$ be the set of irreducible strata of~$\A^{(k)}$ of codimension~$\geq 2$. Then~$Z^{(k)}$ is a minimal element of~$\mathscr{I}^{(k)}$, and~$\mathscr{I}^{(k+1)}$ consists of the strict transforms of the other elements of~$\mathscr{I}^{(k)}$. Thus, we get~$|\mathscr{I}^{(k+1)}|=|\mathscr{I}^{(k)}|-1$. After a finite number of steps, we end up with an arrangement~$\A^{(\infty)}$ inside~$X^{(\infty)}$ such that~$\mathscr{I}^{(\infty)}$ is empty, hence~$\A^{(\infty)}$ is a normal crossing divisor.
		\end{proof}
		
		\begin{rem}\label{remindependentorderblowups}
		At each step of the process described in Theorem~\ref{thmresolutionarrangements}, we choose a minimal irreducible stratum of codimension~$\geq 2$. The resulting pair~$(\A^{(\infty)},X^{(\infty)})$ is independent of these choices, as follows from the work of Li~\cite{li}. According to \textit{loc. cit.}, Definition 1.1 and Theorem 1.3, the morphism~$\pi:X^{(\infty)}\rightarrow X$ is the wonderful compactification of the arrangement~$\A$ with respect to the building set~$\mathscr{I}$ consisting of the irreducible strata; it is by definition independent of any choice.
		\end{rem}
	
	\subsection{The motive of a bi-arrangement of hypersurfaces}
		
		\begin{defi}
		Let~$X$ be a complex manifold. A \textit{bi-arrangement of hypersurfaces}~$\B=(\A,\chi)$ in~$X$ is the data of an arrangement of hypersurfaces~$\A$ in~$X$ along with a coloring function
		$$\chi:\s_+(\A)\rightarrow\{\lambda,\mu\}$$
		such that the K\"{u}nneth condition (\ref{kunnethcondition}) is satisfied locally around every point of~$X$.
		\end{defi}
		
		As for bi-arrangements of hyperplanes, only the colors of the irreducible strata will matter, and thus we will consider bi-arrangements of hypersurfaces up to equivalence (see Definition~\ref{defiequivalence}).
		
		We will also use the notational conventions~\ref{notationconvention1} and~\ref{notationconvention2} in the context of bi-arrangements of hypersurfaces. When the underlying arrangement of hypersurfaces is a normal crossing divisor, then~$\chi$ is only determined (up to equivalence) by the colors~$\chi(K)$ of the hypersurfaces~$K\in\B$, hence we may simply write~$\B=(\L,\M)$.
		
		We also define the dual~$\B^\vee$ of a bi-arrangement of hypersurfaces.\\
		
		Let~$\B=(\A,\chi)$ be a bi-arrangement of hypersurfaces in a complex manifold~$X$, and~$Z$ be a good stratum of~$\B$ of codimension~$\geq 2$. Let~$\pi:\td{X}\rightarrow X$ be the blow-up of~$X$ along~$Z$, and~$E=\pi^{-1}(Z)$ be the exceptional divisor. Let~$\td{\A}=\{E\}\cup\{\td{K}\,,\,K\in\A\}$ be the blow-up of~$\A$ along~$Z$. Then we define a bi-arrangement of hypersurfaces~$\td{\B}=(\td{\A},\td{\chi})$ in~$\td{X}$ whose underlying arrangement of hypersurfaces is~$\td{\A}=\{E\}\cup\{\td{K}\,,\,K\in\A\}$. We define the coloring~$\td{\chi}$ only on the irreducible strata: we set~$\td{\chi}(E)=\chi(Z)$, and for an irreducible stratum~$S$ not contained in~$Z$, we set~$\td{\chi}(\td{S})=\chi(S)$.
		
		\begin{defi}
		We call~$\td{\B}=(\td{\A},\td{\chi})$ the \textit{blow-up} of~$\B$ along~$Z$.
		\end{defi}
		
		If~$Z$ is irreducible (which will be our main case of interest) then the blow-up is a well-defined operation among equivalence classes of bi-arrangements of hypersurfaces.\\
		
		Let~$\B$ be a bi-arrangement of hypersurfaces in a complex manifold~$X$. We inductively define a sequence of complex manifolds~$X^{(k)}$ and bi-arrangements of hypersurfaces~$\B^{(k)}$ inside~$X^{(k)}$, via the following process.
		\begin{enumerate}[(a)] 
		\item~$X^{(0)}=X$ and~$\B^{(0)}=\B$;
		\item for~$k\geq 0$, let~$Z^{(k)}$ be a minimal irreducible stratum of~$\B^{(k)}$ of codimension~$\geq 2$,~$X^{(k+1)}\rightarrow X^{(k)}$ the blow-up of~$X^{(k)}$ along~$Z^{(k)}$. We let~$\B^{(k+1)}=\td{\B^{(k)}}$ be the blow-up of~$\B^{(k)}$ along~$Z^{(k)}$.
		\end{enumerate}
		As in the case of arrangements of hypersurfaces, we get after a finite number of steps a bi-arrangement of hypersurfaces~$\B^{(\infty)}$ inside~$X^{(\infty)}$, whose underlying arrangement of hypersurfaces is a normal crossing divisor. We write~$\B^{(\infty)}=(\L^{(\infty)},\M^{(\infty)})$, with~$\L^{(\infty)}\cup\M^{(\infty)}$ a normal crossing divisor. By an abuse of notation, we write~$\L^{(\infty)}$ (resp.~$\M^{(\infty)}$) for the union of all the hypersurfaces~$K\in\L^{(\infty)}$ (resp.~$K\in\M^{(\infty)}$).

		\begin{defi}\label{defimotive}
		The \textit{motive} of the bi-arrangement of hypersurfaces~$\B$ is the collection of relative cohomology groups (see (\ref{appmotiveNCD}))
		$$H^\bullet(\B)=H^\bullet(X^{(\infty)}\setminus \L^{(\infty)},\M^{(\infty)}\setminus \M^{(\infty)}\cap \L^{(\infty)}).$$
		If~$X$ is a smooth complex variety, then~$H^\bullet(\B)$ is endowed with a mixed Hodge structure. 
		\end{defi}
		
		\begin{rem}
		According to Remark~\ref{remindependentorderblowups}, the motive of a bi-arrangement of hypersurfaces is independent of the choices made during the blow-up process.
		\end{rem}
		
		%At each step of the blow-up, we choose a minimal irreducible stratum of codimension~$\geq 2$. One could worry that the resulting cohomology group~$H^\bullet(\B)$ depends on the resulting order of the blow-ups. As we will show in Proposition~\ref{propmotiveindependentofG}, it is not the case.
		
		\begin{ex}
		\begin{enumerate}
		\item If~$\A$ is a hypersurface arrangement in~$X$, we have
		$$H^\bullet(\A,\lambda)\cong H^\bullet(X\setminus \A) \;\;\textnormal{ and }\;\, H^\bullet(\A,\mu)\cong H^\bullet(X,\A).$$
		\item For~$\B=(\L,\M)$ a normal crossing divisor, then there is no blow-up and we simply have 
		$$H^\bullet(\L,\M)=H^\bullet(X\setminus \L, \M\setminus \M\cap\L).$$
		\end{enumerate}
		\end{ex}
		
		\begin{rem}
		There is also the compactly-supported version (see (\ref{appmotivecompactNCD}))
		$$H_c^\bullet(\B)=H_c^\bullet(X^{(\infty)}\setminus \L^{(\infty)},\M^{(\infty)}\setminus \M^{(\infty)}\cap \L^{(\infty)}).$$
		Putting~$n=\mathrm{dim}_\C(X)$, the duality of bi-arrangements is viewed as a Poincar\'{e}--Verdier duality isomorphism (Proposition~\ref{appproppoincareverdier})
		$$H^k(\B^\vee)\cong \left(H_c^{2n-k}(\B)\right)^\vee.$$
		\end{rem}

	\subsection{The Orlik--Solomon bi-complex, and blow-ups}
	
		Let~$\B$ be a bi-arrangement of hypersurfaces in a complex manifold~$X$. The definition of the \textit{Orlik--Solomon bi-complex} of~$\B$ may be repeated word for word from the local case: we start with~$A_{0,0}^X(\B)=\Q$ and define the bi-complexes~$A_{\bullet,\bullet}^{\leq\Sigma}(\B)$ by induction on the codimension of~$\Sigma$. Note that~$A^{\leq\Sigma}_{\bullet,\bullet}(\B)$ only depends on the hypersurfaces that contain~$\Sigma$ and may be computed in a local chart around any point of~$\Sigma$. We say that a bi-arrangement of hypersurfaces is \textit{exact} if all its strict strata are exact in the sense of Definition~\ref{defiexact}.\\
	
		It is worth noting that although every~$A^{\leq\Sigma}_{\bullet,\bullet}(\B)$ is a bi-complex, the direct sum~$\bigoplus_{S}A_{\bullet,\bullet}^S$ is not in general. For instance, if~$\B$ made of two non-intersecting hypersurfaces, one colored~$\lambda$ and the other colored~$\mu$, we get the following non-commutative square.
		$$\xymatrix{
		\Q \ar[r]^{\mathrm{id}}  \ar[d] & \Q \ar[d]^{\mathrm{id}}\\
		0 \ar[r] &\Q
		}$$
		
		Let now~$Z$ be a good stratum of~$\B$,~$\pi:\td{X}\rightarrow X$ be the blow-up along~$Z$,~$E=\pi^{-1}(Z)$ be the exceptional divisor, and~$\td{\B}$ be the blow-up of~$\B$ along~$Z$. The following proposition, which will be crucial in the sequel, expresses the Orlik--Solomon bi-complex of~$\td{\B}$ in terms of that of~$\B$.
		
		\begin{prop}\label{propOSbicomplexblowup}
		Let us assume that~$\chi(Z)=\lambda$. We have isomorphisms, for~$S$ a stratum of~$\B$ that is not contained in~$Z$:
		$$A_{i,j}^{\td{S}}(\td{\B})\cong A_{i,j}^S(\B) \;\;\textit{ and }\;\; A_{i,j}^{E\cap\td{S}}(\td{\B}) \cong A_{i-1,j}^S(\B).$$
		They are compatible with the differentials in that we have the following commutative diagrams.
		\begin{enumerate}
		\item For the inclusions~$\td{S}\stackrel{1}{\hookrightarrow}\td{T}$:\\
		\centerline{\xymatrixcolsep{3pc}\xymatrix{
		A_{i,j}^{\td{S}}(\td{\B}) \ar[r]^{ d'_{\td{S},\td{T}}} \ar@{<->}[d]_{\cong} & A_{i-1,j}^{\td{T}}(\td{\B})  \ar@{<->}[d]^{\cong} &
		 A_{i,j-1}^{\td{T}}(\td{\B}) \ar[r]^{ d''_{\td{S},\td{T}}} \ar@{<->}[d]_{\cong} & A_{i,j}^{\td{S}}(\td{\B})  \ar@{<->}[d]^{\cong}  \\ 
		A_{i,j}^S(\B) \ar[r]_{ d'_{S,T}} & A_{i-1,j}^T(\B) & A_{i,j-1}^T(\B) \ar[r]_{ d''_{S,T}} & A_{i,j}^S(\B)
		}}
		\item For the inclusions~$E\cap\td{S}\stackrel{1}{\hookrightarrow} E\cap\td{T}$:\\
		\centerline{\xymatrixcolsep{4pc}\xymatrix{
		A_{i,j}^{E\cap\td{S}}(\td{\B}) \ar[r]^{ d'_{E\cap\td{S},E\cap\td{T}}} \ar@{<->}[d]_{\cong} & A_{i-1,j}^{E\cap\td{T}}(\td{\B})  \ar@{<->}[d]^{\cong} &
		 A_{i,j-1}^{E\cap\td{T}}(\td{\B}) \ar[r]^{ d''_{E\cap\td{S},E\cap\td{T}}} \ar@{<->}[d]_{\cong} & A_{i,j}^{E\cap\td{S}}(\td{\B})  \ar@{<->}[d]^{\cong}  \\ 
		A_{i-1,j}^S(\B) \ar[r]_{- d'_{S,T}} & A_{i-2,j}^T(\B) & A_{i-1,j-1}^T(\B) \ar[r]_{ d''_{S,T}} & A_{i-1,j}^S(\B)
		}}
		\item For the inclusions~$E\cap\td{S}\stackrel{1}{\hookrightarrow}\td{S}$:\\
		\centerline{\xymatrixcolsep{4pc}\xymatrix{
		A_{i,j}^{E\cap\td{S}}(\td{\B}) \ar[r]^{ d'_{E\cap\td{S},\td{S}}} \ar@{<->}[d]_{\cong} & A_{i-1,j}^{\td{S}}(\td{\B})  \ar@{<->}[d]^{\cong}   &
		A_{i,j-1}^{\td{S}}(\td{\B}) \ar[r]^{ d''_{E\cap\td{S},\td{S}}} \ar@{<->}[d]_{\cong} & A_{i,j}^{E\cap\td{S}}(\td{\B})  \ar@{<->}[d]^{\cong} \\ 
		A_{i-1,j}^S(\B) \ar@{=}[r]_{\mathrm{id}} & A_{i-1,j}^S(\B) & A_{i,j-1}^S(\B) \ar[r]_{0} & A_{i-1,j}^S(\B)
		}}
		\end{enumerate}
		The case~$\chi(Z)=\mu$ is dual.
		\end{prop}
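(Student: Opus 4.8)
The plan is to reduce the entire statement to the K\"unneth formula for Orlik--Solomon bi-complexes (Proposition~\ref{propkunneth}) together with the local description of the blow-up given by Lemma~\ref{lemblowuparrangement}. Since each component $A_{\bullet,\bullet}^{\leq W}$ depends only on the hypersurfaces containing the stratum $W$ and may be computed in a local chart, I would work locally, so that $X=\C^n$ and $\B$ is a bi-arrangement of hyperplanes; by Corollary~\ref{coroOSequivalence} only the colors of the irreducible strata matter. The first step treats the strict transforms $\td{S}$, for $S$ a stratum of $\B$ not contained in $Z$. By Lemma~\ref{lemblowuparrangement}, the hypersurfaces of $\td{\B}$ containing $\td{S}$ are exactly the $\td{K}$ with $K\supset S$ (the exceptional divisor $E$ does not contain $\td{S}$), and $T\mapsto\td{T}$ is a poset isomorphism between the strata of $\B$ containing $S$ and the strata of $\td{\B}$ containing $\td{S}$, compatible with transverse decompositions and, by the definition of $\td{\chi}$, with the coloring. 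Hence $\td{\B}^{\leq\td{S}}$ is equivalent to $\B^{\leq S}$, and the uniqueness part of Lemma~\ref{defiOSbicomplex} yields a canonical isomorphism of bi-complexes $A_{\bullet,\bullet}^{\leq\td{S}}(\td{\B})\cong A_{\bullet,\bullet}^{\leq S}(\B)$, which gives $A_{i,j}^{\td{S}}(\td{\B})\cong A_{i,j}^{S}(\B)$ together with the commutative diagrams of item~(1) at once.

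The second step treats the strata $E\cap\td{S}$. The key point is that $E\cap\td{S}=E\pitchfork\td{S}$ is a decomposition in the sense of the Orlik--Solomon formalism: transversality $E\pitchfork\td{S}$ is part of Lemma~\ref{lemblowuparrangement}, and every hypersurface of $\td{\B}$ containing $E\cap\td{S}$ is either $E$ (which contains the factor $E$) or some $\td{K}$ with $K\supset S$ (which contains the factor $\td{S}$, since $\td{S}\subset\td{K}$). Proposition~\ref{propkunneth} then provides an isomorphism of bi-complexes
$$A_{\bullet,\bullet}^{\leq E\cap\td{S}}(\td{\B})\cong A_{\bullet,\bullet}^{\leq E}(\td{\B})\otimes A_{\bullet,\bullet}^{\leq\td{S}}(\td{\B}).$$
Since $E$ is a single hypersurface with $\td{\chi}(E)=\chi(Z)=\lambda$, the only strata of $\td{\B}$ containing $E$ are $E$ and $\td{X}$, and, as in the discussion preceding Lemma~\ref{lemfirstobstruction} (or by Example~\ref{exex}), $A_{\bullet,\bullet}^{\leq E}(\td{\B})$ is the two-term complex $\Q\xrightarrow{\sim}\Q$ concentrated in bidegrees $(1,0)$ and $(0,0)$, with $d'$ invertible and $d''=0$. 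Feeding this and the identification $A_{\bullet,\bullet}^{\leq\td{S}}(\td{\B})\cong A_{\bullet,\bullet}^{\leq S}(\B)$ from the first step into the K\"unneth decomposition, the summand indexed by $E\cap\td{T}$ (for $S\subset T$) becomes $A_{1,0}^{E}\otimes A_{\bullet,\bullet}^{T}(\B)$ and the summand indexed by $\td{S}$ becomes $A_{0,0}^{\td{X}}\otimes A_{\bullet,\bullet}^{S}(\B)$; in particular the $E$-factor of $E\cap\td{S}$ is forced to be $A_{1,0}^{E}=\Q$, whence $A_{i,j}^{E\cap\td{S}}(\td{\B})\cong A_{1,0}^{E}\otimes A_{i-1,j}^{S}(\B)\cong A_{i-1,j}^{S}(\B)$.

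It then remains to match the individual differentials with those of the tensor product $(\Q\xrightarrow{\sim}\Q)\otimes A_{\bullet,\bullet}^{\leq S}(\B)$, which is where the signs in items~(2) and~(3) come from. By Lemma~\ref{lemblowuparrangement}, the codimension-one inclusions out of $E\cap\td{S}$ are exactly $E\cap\td{S}\stackrel{1}{\hookrightarrow}E\cap\td{T}$ for $S\stackrel{1}{\hookrightarrow}T$, and $E\cap\td{S}\stackrel{1}{\hookrightarrow}\td{S}$. For the first kind, both $d'$ and $d''$ act through the second tensor factor, and the Koszul sign incurred when commuting $d'$ past the factor $A_{1,0}^{E}$ (which sits in homological degree $1$) produces $d'_{E\cap\td{S},E\cap\td{T}}=-d'_{S,T}$, while $d''$ passes without sign (that factor has second degree $0$), giving $d''_{E\cap\td{S},E\cap\td{T}}=d''_{S,T}$; for the second kind, $d'$ acts via $d'_{E}\otimes\mathrm{id}=\mathrm{id}$ and $d''$ via $d''_{E}\otimes\mathrm{id}=0$. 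Together with item~(1) this establishes items~(2) and~(3).

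The case $\chi(Z)=\mu$ I would then deduce by applying everything above to $\B^{\vee}$ and dualizing, using that blow-up commutes with the duality of bi-arrangements (the definition of $\td{\chi}$ swaps in the expected way) and that the Orlik--Solomon bi-complex formalism is self-dual. The main obstacle is not conceptual — the K\"unneth formula does essentially all the work — but lies in the bookkeeping: checking that $E\cap\td{S}=E\pitchfork\td{S}$ is an admissible decomposition, enumerating exactly the codimension-one inclusions of strata in $\td{\B}$, and tracking the Koszul signs through the K\"unneth isomorphism so that they come out precisely as in the three families of diagrams.
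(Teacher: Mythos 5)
Your proposal is correct and follows essentially the same route as the paper: locality of the Orlik--Solomon bi-complex for the strict transforms $\td{S}$, the decomposition $E\pitchfork\td{S}$ together with the K\"unneth formula (Proposition~\ref{propkunneth}) and the vanishing $A_{0,1}^{E}=0$ for the strata $E\cap\td{S}$, and the Koszul sign rule for the minus sign in item~(2). Your treatment is somewhat more explicit than the paper's (which dismisses the compatibility of differentials as ``easy''), but there is no substantive difference.
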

		
		\begin{proof}
		We have an isomorphism~$\pi:\td{X}\setminus E \stackrel{\cong}{\rightarrow} X\setminus Z$. Let us recall that the construction of the Orlik--Solomon bi-complex is local. Let~$S$ be a stratum of~$\B$ that is not contained in~$Z$,~$p\in S\setminus S\cap Z$,~$\td{p}=\pi^{-1}(p)\in\td{S}$. Around the point~$\td{p}$, the local situation is the same as the one around the point~$p$, hence the first isomorphism.\\
		For the second isomorphism, we see using local coordinates as in the Proof of Lemma~\ref{lemblowuparrangement} that the local situation around a point of~$E\cap \td{S}$ is that of a decomposition~$E\pitchfork\td{S}$. Thus, the K\"unneth formula (Proposition~\ref{propkunneth}) implies that we have 
		$$A_{i,j}^{E\cap\td{S}}(\td{\B})\cong \left(A_{1,0}^{E}(\td{\B})\otimes A_{i-1,j}^{\td{S}}(\td{\B})\right) \oplus \left(A_{0,1}^{E}(\td{\B})\otimes A_{i,j-1}^{\td{S}}(\td{\B})\right).~$$
		Since we have~$\chi(E)=\lambda$, we have~$A_{1,0}^E(\td{\B})=\Q$ and~$A_{0,1}^E(\td{\B})=0$. Hence the second isomorphism follows from the first isomorphism~$A_{i-1,j}^{\td{S}}(\td{\B})\cong A_{i-1,j}^S(\B)$.\\
		The compatibility with the differentials is easy. One only has to note the minus sign in front of~$ d'_{S,T}$ which follows from the Koszul sign rule in a tensor product of two (bi-)complexes.
		\end{proof}
		
		\begin{coro}\label{coroblowupexact}
		If~$\B$ is exact, then~$\td{\B}$ is exact.
		\end{coro}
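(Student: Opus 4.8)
The plan is to reduce everything to Proposition~\ref{propOSbicomplexblowup}, which already identifies the local Orlik--Solomon bi-complexes of $\td{\B}$ with those of $\B$. Recall that $\td{\B}$ is exact precisely when all of its strict strata are exact, and that by Lemma~\ref{lemblowuparrangement} the strict strata of $\td{\B}$ are the strict transforms $\td{S}$ (for $S$ a strict stratum of $\B$ not contained in $Z$) together with the strata $E\cap\td{S}$ (for $S$ a stratum of $\B$ not contained in $Z$, the case $S=X$ giving the divisor $E$ itself). By duality we may assume throughout that $\chi(Z)=\lambda$, so that $\td{\chi}(E)=\lambda$.

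First I would handle the strata $\td{S}$. Since $S\not\subset Z$, no stratum of the form $E\cap\td{T}$ can contain $\td{S}$, so the strata of $\td{\B}$ containing $\td{S}$ are exactly the $\td{T}$ with $T\supseteq S$ a stratum of $\B$. Proposition~\ref{propOSbicomplexblowup}(1) then provides isomorphisms $A_{i,j}^{\td{T}}(\td{\B})\cong A_{i,j}^{T}(\B)$ that are compatible with both $d'$ and $d''$, and assembling these over all $\td{T}\supseteq\td{S}$ gives an isomorphism of bi-complexes $A_{\bullet,\bullet}^{\leq\td{S}}(\td{\B})\cong A_{\bullet,\bullet}^{\leq S}(\B)$. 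Hence the rows (resp.\ the columns) of the former are exact if and only if those of the latter are; since $\B$ is exact and $\td{\chi}(\td{S})=\chi(S)$, this shows that $\td{S}$ is exact.

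Next I would handle the strata $E\cap\td{S}$. Locally around each of its points one has a decomposition $E\pitchfork\td{S}$, so the K\"unneth formula (Proposition~\ref{propkunneth}) yields $A_{\bullet,\bullet}^{\leq E\cap\td{S}}(\td{\B})\cong A_{\bullet,\bullet}^{\leq E}(\td{\B})\otimes A_{\bullet,\bullet}^{\leq\td{S}}(\td{\B})$. Since $E$ is a single hypersurface with $\td{\chi}(E)=\lambda$, the first factor is the Orlik--Solomon bi-complex of one hyperplane in the $\lambda$-direction; it is supported in bidegrees $(0,0)$ and $(1,0)$ with an isomorphism between them, so all of its rows are exact. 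The second factor is isomorphic to $A_{\bullet,\bullet}^{\leq S}(\B)$ (or to $\Q$ when $S=X$), which is exact by the previous paragraph. If $\td{\chi}(E\cap\td{S})=\lambda$, then exactness of the rows of the first factor forces exactness of the rows of the tensor product, because over a field the tensor product of any complex with an exact complex is exact. If $\td{\chi}(E\cap\td{S})=\mu$, then the K\"unneth condition~(\ref{kunnethcondition}) applied to the non-trivial decomposition $E\pitchfork\td{S}$, together with $\td{\chi}(E)=\lambda$, forces $\td{\chi}(\td{S})=\mu$; the columns of $A_{\bullet,\bullet}^{\leq\td{S}}(\td{\B})\cong A_{\bullet,\bullet}^{\leq S}(\B)$ are then exact, hence so are the columns of the tensor product. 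Either way $E\cap\td{S}$ is exact, and since we have exhausted the strict strata of $\td{\B}$, the blow-up $\td{\B}$ is exact.

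The substance of the argument sits entirely in Proposition~\ref{propOSbicomplexblowup} and the K\"unneth formula, and I expect the only genuinely delicate point to be the color bookkeeping in the second case, i.e.\ matching the relevant direction of exactness (rows versus columns) with $\td{\chi}$ via the K\"unneth condition. As a shortcut one could instead invoke the hypersurface analogue of Corollary~\ref{coroOSequivalence}(2), by which exactness only needs to be checked on irreducible strata of codimension~$\geq 2$: by Lemma~\ref{lemblowuparrangement}(3) these are exactly the $\td{S}$ with $S$ irreducible of codimension~$\geq 2$ in $\B$ and $S\not\subset Z$, the exceptional divisor $E$ having codimension~$1$, so that only the argument of the second paragraph is needed.
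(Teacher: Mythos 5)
Your proof is correct, and your closing ``shortcut'' is in fact exactly the paper's proof: the paper invokes Corollary~\ref{coroOSequivalence} to reduce to the irreducible strata of codimension $\geq 2$ of $\td{\B}$, which by Lemma~\ref{lemblowuparrangement} are precisely the $\td{S}$ for $S$ irreducible, not contained in $Z$, of codimension $\geq 2$ (the divisor $E$ being irreducible of codimension $1$), and then concludes from the isomorphism $A_{\bullet,\bullet}^{\td{S}}(\td{\B})\cong A_{\bullet,\bullet}^{S}(\B)$ of Proposition~\ref{propOSbicomplexblowup}. The main body of your argument instead checks \emph{all} strict strata directly, handling the $E\cap\td{S}$ via the local decomposition $E\pitchfork\td{S}$ and the K\"unneth formula; this is valid (it is essentially a re-run of the proof of Corollary~\ref{coroOSequivalence}(2) in this special case, and the color bookkeeping via the K\"unneth condition is done correctly), but it is redundant work that the reduction to irreducible strata makes unnecessary. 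The only small caveat in the long version is that $\td{\chi}$ is specified only on irreducible strata, so for a reducible $\td{S}$ the equality $\td{\chi}(\td{S})=\chi(S)$ is a choice within the equivalence class rather than a given; your shortcut sidesteps this entirely.
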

		
		\begin{proof}
		According to Corollary~\ref{coroOSequivalence} it is enough to check the exactness of the strata~$\td{S}$, for~$S$ an irreducible stratum of~$\B$ not contained in~$Z$. Proposition~\ref{propOSbicomplexblowup} implies that we have~$A_{\bullet,\bullet}^{\td{S}}(\td{\B})\cong A_{\bullet,\bullet}^S(\B)$, hence the result.
		\end{proof}

\section{The geometric Orlik--Solomon bi-complex and the main theorem}\label{sectiongeometric}

	\subsection{The geometric Orlik--Solomon bi-complex}
	
		We fix a complex manifold~$X$ and a bi-arrangement of hypersurfaces~$\B$ in~$X$. We fix an integer~$q$. Let us write, for~$S\in\s_{i+j}(\B)$,
		$$\qD^S_{i,j}(\B)=H^{q-2i}(S)(-i)\otimes A_{i,j}^S(\B).$$
		If~$X$ is a smooth complex variety and the hypersurfaces~$K\in\B$ are divisors (we call this the \enquote{algebraic case}), then this is endowed with a mixed Hodge structure. If furthermore~$X$ is projective, it is a pure Hodge structure of weight~$q$.  \\
		
		Let~$\iota_{S}^T:S\stackrel{1}{\hookrightarrow}T$ be an inclusion of strata of~$\B$, with~$S\in\s_{i+j}(\B)$ and~$T\in\s_{i+j-1}(\B)$. We refer the reader to Appendix~\ref{parappB} for details on Gysin morphisms and pull-backs.
		\begin{enumerate}[--]
		\item We have the Gysin morphism~$\left(\iota_S^T\right)_*:H^{q-2i}(S)(-i)\rightarrow H^{q-2i+2}(T)(-i+1)$. We then define a morphism\footnote{We make an abuse of notation by denoting by the same symbols~$d'_{S,T}$ and~$d''_{S,T}$ the differentials in the Orlik--Solomon bi-complex and in the geometric Orlik--Solomon bi-complex; no confusion should arise.}
		$$ d'_{S,T}:\qD^S_{i,j}(\B)\rightarrow \qD^T_{i-1,j}(\B)$$
		by the formula
		$$d'_{S,T}(s\otimes X)=(\iota_S^T)_*(s)\otimes d'_{S,T}(X)$$
		for~$s\in H^{q-2i}(S)(-i)$ and~$X\in A_{i,j}^S(\B)$.
		\item We have the restriction morphism~$\left(\iota_S^T\right)^*:H^{q-2i}(T)(-i)\rightarrow H^{q-2i}(S)(-i)$. We then define a morphism
		$$ d''_{S,T}:\qD^T_{i,j-1}(\B)\rightarrow \qD^S_{i,j}(\B)$$
		by the formula
		$$d''_{S,T}(t\otimes X)=(\iota_S^T)^*(t)\otimes d''_{S,T}(X)$$
		for~$t\in H^{q-2i}(T)(-i)$ and~$X\in A_{i,j-1}^T(\B)$.
		\end{enumerate}
	
		Let us now set~$$\qD_{i,j}(\B)=\bigoplus_{S\in\s_{i+j}(\B)}\qD_{i,j}^S(\B).$$ 
		
		The above morphisms induce 
		$$ d':\qD_{\bullet,\bullet}(\B)\rightarrow \qD_{\bullet-1,\bullet}(\B)$$ and 
		$$ d'':\qD_{\bullet,\bullet-1}(\B)\rightarrow \qD_{\bullet,\bullet}(\B).$$	
		
		If~$X$ is a smooth complex variety,~$ d'$ and~$ d''$ are morphisms of mixed Hodge structures.
		
		\begin{thm}\label{thmDbicomplex}
		The differentials~$ d'$ and~$ d''$ make~$\qD_{\bullet,\bullet}(\B)$ into a bi-complex.
		\end{thm}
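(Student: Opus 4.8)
The plan is to verify the three defining identities of a bi-complex, $d'\circ d'=0$, $d''\circ d''=0$ and $d'\circ d''=d''\circ d'$, by decomposing each composite into its components indexed by strata and separating the \emph{combinatorial} contribution (coming from the differentials of the Orlik--Solomon bi-complex $A_{\bullet,\bullet}(\B)$) from the \emph{geometric} one (coming from Gysin and restriction morphisms). The combinatorial identities we need are precisely those recorded in Lemma~\ref{lemAbicomplex}, and the geometric inputs are the functoriality of Gysin and pull-back morphisms, the projection and self-intersection formulas, and the base-change formula for a transverse Cartesian square, all of which are standard and are recalled in Appendix~\ref{parappB} together with the Chern-class identities needed below. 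Throughout, signs are dictated by the Koszul rule in tensor products of (bi-)complexes, exactly as in the proof of Proposition~\ref{propOSbicomplexblowup}, and in the algebraic case all maps involved are morphisms of mixed Hodge structures, so the argument takes place in that category.

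First I would treat $d'\circ d'=0$. Fixing strata $R\stackrel{2}{\hookrightarrow}U$, the corresponding component of $d'\circ d'$ sends $s\otimes a\in\qD_{i,j}^R(\B)$ to
$$\sum_{R\stackrel{1}{\hookrightarrow}T\stackrel{1}{\hookrightarrow}U}\bigl(\iota_T^U\bigr)_*\bigl(\iota_R^T\bigr)_*(s)\otimes d'_{T,U}\bigl(d'_{R,T}(a)\bigr).$$
By functoriality of Gysin morphisms, $\bigl(\iota_T^U\bigr)_*\bigl(\iota_R^T\bigr)_*=\bigl(\iota_R^U\bigr)_*$ is independent of $T$, so this equals $\bigl(\iota_R^U\bigr)_*(s)\otimes\bigl(\sum_T d'_{T,U}d'_{R,T}\bigr)(a)$, which vanishes by the first identity of Lemma~\ref{lemAbicomplex}(1). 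Dually, $d''\circ d''=0$ follows from the functoriality $\bigl(\iota_S^T\bigr)^*\bigl(\iota_T^U\bigr)^*=\bigl(\iota_S^U\bigr)^*$ of pull-backs together with the second identity of Lemma~\ref{lemAbicomplex}(1).

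The main point is the commutation $d'\circ d''=d''\circ d'$, which I would check componentwise over pairs of strata $R,U$ of the same codimension $i+j$; the two relevant components are
$$(d'd'')_{R,U}=\sum_{R\stackrel{1}{\hookleftarrow}S\stackrel{1}{\hookrightarrow}U}\bigl(\iota_S^U\bigr)_*\bigl(\iota_S^R\bigr)^*\otimes d'_{S,U}d''_{S,R},\qquad (d''d')_{R,U}=\sum_{R\stackrel{1}{\hookrightarrow}T\stackrel{1}{\hookleftarrow}U}\bigl(\iota_U^T\bigr)^*\bigl(\iota_R^T\bigr)_*\otimes d''_{U,T}d'_{R,T}.$$
Then I would run through the three cases of Lemma~\ref{lemAbicomplex}(2). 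If $R\neq U$ and there is no $T$ with $R\stackrel{1}{\hookrightarrow}T\stackrel{1}{\hookleftarrow}U$, then $(d''d')_{R,U}=0$, and by Lemma~\ref{lemAbicomplex}(2)(a) every $d'_{S,U}d''_{S,R}$ vanishes, so $(d'd'')_{R,U}=0$ as well. If $R\neq U$ and such a $T$ exists, then by Lemma~\ref{lemAbicomplex}(2)(b) both sums reduce to a single term, with $T=R+U$ and $S=R\cap U$, and the combinatorial factors agree, $d'_{S,U}d''_{S,R}=d''_{U,T}d'_{R,T}$; the square formed by $S\hookrightarrow R$, $S\hookrightarrow U$, $R\hookrightarrow T$, $U\hookrightarrow T$ is Cartesian and transverse (since $S=R\cap U$ and $R,U$ are transverse divisors in $T$), so the base-change identity $\bigl(\iota_S^U\bigr)_*\bigl(\iota_S^R\bigr)^*=\bigl(\iota_U^T\bigr)^*\bigl(\iota_R^T\bigr)_*$ of Appendix~\ref{parappB} concludes this case.

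The remaining, and hardest, case is $R=U$. On the one hand $(d'd'')_{R,R}=\sum_{S\stackrel{1}{\hookrightarrow}R}\bigl(\iota_S^R\bigr)_*\bigl(\iota_S^R\bigr)^*\otimes d'_{S,R}d''_{S,R}$ vanishes termwise, since $d'_{S,R}\circ d''_{S,R}=0$ for every $S\stackrel{1}{\hookrightarrow}R$ by Lemma~\ref{lemAbicomplex}(2)(c). On the other hand
$$(d''d')_{R,R}=\sum_{R\stackrel{1}{\hookrightarrow}T}\bigl(\iota_R^T\bigr)^*\bigl(\iota_R^T\bigr)_*\otimes d''_{R,T}d'_{R,T}$$
must also be shown to vanish, and here the geometric coefficient genuinely depends on $T$: by the self-intersection formula $\bigl(\iota_R^T\bigr)^*\bigl(\iota_R^T\bigr)_*$ is cup-product with $c_1(N_{R/T})$, and the normal bundle sequence $0\to N_{R/T}\to N_{R/X}\to N_{T/X}|_R\to 0$ gives $c_1(N_{R/T})=c_1(N_{R/X})-(\iota_R^T)^*c_1(N_{T/X})$. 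The $c_1(N_{R/X})$-part is independent of $T$ and is killed by $\sum_T d''_{R,T}d'_{R,T}=0$ (Lemma~\ref{lemAbicomplex}(2)(c) once more), and the remaining sum $\sum_T (\iota_R^T)^*c_1(N_{T/X})\otimes d''_{R,T}d'_{R,T}$ is to be handled by the Chern-class identities of Appendix~\ref{parappB}, which express $c_1(N_{T/X})|_R$ in terms of the hypersurfaces cutting $T$ out of $X$ near $R$ and thereby reduce it, once again, to a combinatorial identity in $A_{\bullet,\bullet}(\B)$. This diagonal case is the crux of the proof: it is the only place where one cannot pull the geometric weights out of the sum, so one must invoke the local geometry of the arrangement around $R$ rather than Lemma~\ref{lemAbicomplex} alone.
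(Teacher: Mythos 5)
Your treatment of $d'\circ d'=0$, $d''\circ d''=0$, the off-diagonal part of $d'\circ d''=d''\circ d'$, and the vanishing of the diagonal terms of $d'\circ d''$ coincides with the paper's proof (Lemmas~\ref{lemDbicomplex1} and~\ref{lemDbicomplex2}): factor the $T$-independent geometric coefficient out of the sum and invoke Lemma~\ref{lemAbicomplex}, resp.\ use the transverse base-change identity (\ref{appiitransverse}). One phrasing caveat in the off-diagonal case: for hypersurfaces (as opposed to central hyperplane arrangements) a common $T$ may exist while $R\cap U$ is empty or disconnected, so ``both sums reduce to a single term'' is not literally true; since (\ref{appiitransverse}) as stated already sums over connected components and covers the empty case, this is harmless, but the case split should be organized as in the paper (on $S\cap U$ rather than on the existence of $T$).

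The genuine gap is in the diagonal case of $d''\circ d'$, which you rightly single out as the crux but do not finish. After writing $(\iota_R^T)^*(\iota_R^T)_*=c_1(N_{R/T})\,.\,(-)$ and splitting $c_1(N_{R/T})=c_1(N_{R/X})-c_1(N_{T/X})_{|R}$, you are left with $\sum_T c_1(N_{T/X})_{|R}\otimes d''_{R,T}d'_{R,T}(X)$ and assert that the Chern-class identities of Appendix~\ref{parappB} reduce it to a combinatorial identity. They do not: $c_1(N_{T/X})_{|R}$ genuinely depends on $T$, no identity in $A_{\bullet,\bullet}(\B)$ kills a $T$-weighted sum of the terms $d''_{R,T}d'_{R,T}(X)$, and expressing $c_1(N_{T/X})$ via ``the hypersurfaces cutting $T$ out'' is non-canonical when $T$ is not a normal-crossing stratum. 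What actually closes the argument is (i) Lemma~\ref{lemChernirreducible}: for $R$ \emph{irreducible}, the line bundles $N_{K/X}{}_{|R}$ for all hypersurfaces $K\supset R$ are isomorphic (proved via the matroid circuit lemma, Lemma~\ref{lemmatroids}), so that by (\ref{appisonormalbundles}) the class $c_1(N_{R/T})=c$ is independent of $T$ and the whole diagonal sum factors as $(c\,.\,s)\otimes\sum_T d''_{R,T}d'_{R,T}(X)=0$; and (ii) for reducible $R$, a K\"unneth splitting (Proposition~\ref{propkunneth}) of the sum over $T$ according to the decomposition of $R$ into irreducibles, after which (i) applies to each piece. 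Without the reduction to irreducible strata the coefficient cannot be pulled out of the sum, so this step is not optional; your sketch is missing both the irreducibility input and the K\"unneth reduction.
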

	
		\begin{defi}
		We call~$\qD_{\bullet,\bullet}(\B)$ the \textit{geometric Orlik--Solomon bi-complex} of index~$q$ of~$\B$. We will denote by~$\qD_\bullet(\B)$ its total complex, and call it the \textit{geometric Orlik--Solomon complex} of index~$q$:
		$$\qD_n(\B)=\bigoplus_{i-j=n}\qD_{i,j}(\B).$$
		\end{defi}
		
		\begin{ex}
		\begin{enumerate}
		\item Let~$\A$ be an arrangement of hypersurfaces in~$X$. Then the geometric Orlik--Solomon bi-complexes for~$(\A,\lambda)$ are concentrated in bi-degrees~$(n,0)$ with
		$$\qD_{n,0}(\A,\lambda)=\bigoplus_{S\in\s_n(\A)}H^{q-2n}(S)(-n)\otimes A_n(\A).$$
		Up to a shift, it is the same as the Gysin complex defined in~\cite{duponthypersurface}.
		Dually, the geometric Orlik--Solomon bi-complexes for~$(\A,\mu)$ are concentrated in bi-degrees~$(0,n)$ with
		$$\qD_{0,n}=\bigoplus_{S\in\s_n(\A)}H^q(S)\otimes \left(A_n(\A)\right)^\vee.$$
		\item If~$\B=(\L,\M)$ is a normal crossing divisor with~$\L=\{L_1,\ldots,L_l\}$ and~$\M=\{M_1,\ldots,M_m\}$, then we get 
		$$\qD_{i,j}(\L,\M)=\bigoplus_{\substack{|I|=i\\|J|=j}} H^{q-2i}(L_I\cap M_J)(-i)$$
		and the Orlik--Solomon complexes~$\qD_\bullet(\L,\M)$ form the~$E_1$ page of the spectral sequence (\ref{appeqspectralsequence}) described in Appendix~\ref{parappA}.
		\end{enumerate}
		\end{ex}
		
		In the rest of this section, we prove Theorem~\ref{thmDbicomplex} by showing that in~$\qD_{\bullet,\bullet}(\B)$ we have the equalities~$d'\circ d'=0$,~$d''\circ d''=0$ (Lemma~\ref{lemDbicomplex1}) and~$d'\circ d''=d''\circ d'$ (Lemma~\ref{lemDbicomplex2}).
		
		\begin{lem}\label{lemDbicomplex1}
		We have~$ d'\circ d'=0$ and~$ d''\circ d''=0$ in~$\qD_{\bullet,\bullet}(\B)$.
		\end{lem}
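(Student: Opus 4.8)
The plan is to compute, for a fixed inclusion $S\stackrel{2}{\hookrightarrow}U$ of strata, the single nonzero component of $d'\circ d'$ (resp.\ $d''\circ d''$) reaching $\qD^U$ (resp.\ emanating from $\qD^U$), and to reduce its vanishing to the corresponding identity in the combinatorial Orlik--Solomon bi-complex by factoring out the Gysin (resp.\ pull-back) contribution.

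First I would treat $d'\circ d'=0$. Fix $S\in\s_{i+j}(\B)$ and $U\in\s_{i+j-2}(\B)$ with $S\stackrel{2}{\hookrightarrow}U$, and consider the component $\qD_{i,j}^S(\B)\to \qD_{i-2,j}^U(\B)$ of $d'\circ d'$. By the definition of $d'$, it sends $s\otimes X$, with $s\in H^{q-2i}(S)(-i)$ and $X\in A_{i,j}^S(\B)$, to
$$\sum_{S\stackrel{1}{\hookrightarrow}T\stackrel{1}{\hookrightarrow}U}(\iota_T^U)_*(\iota_S^T)_*(s)\otimes d'_{T,U}\bigl(d'_{S,T}(X)\bigr).$$
The key point is that Gysin morphisms are functorial for closed embeddings of complex submanifolds (Appendix~\ref{parappB}), so that $(\iota_T^U)_*\circ(\iota_S^T)_*=(\iota_S^U)_*$ \emph{independently of the intermediate stratum $T$}. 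Hence this common factor pulls out of the sum and the expression equals
$$(\iota_S^U)_*(s)\otimes\sum_{S\stackrel{1}{\hookrightarrow}T\stackrel{1}{\hookrightarrow}U} d'_{T,U}\bigl(d'_{S,T}(X)\bigr)=(\iota_S^U)_*(s)\otimes 0=0,$$
where the inner sum vanishes by the identity $\sum_{S\stackrel{1}{\hookrightarrow}T\stackrel{1}{\hookrightarrow}U}d'_{T,U}\circ d'_{S,T}=0$ of Lemma~\ref{lemAbicomplex}(1); this is a local statement about the Orlik--Solomon bi-complex and therefore applies verbatim to bi-arrangements of hypersurfaces. The identity $d''\circ d''=0$ is proved in the same way, using pull-backs in place of Gysin morphisms: the component $\qD_{i,j-2}^U(\B)\to\qD_{i,j}^S(\B)$ of $d''\circ d''$ sends $u\otimes X$ to
$$\sum_{S\stackrel{1}{\hookrightarrow}T\stackrel{1}{\hookrightarrow}U}(\iota_S^T)^*(\iota_T^U)^*(u)\otimes d''_{S,T}\bigl(d''_{T,U}(X)\bigr)=(\iota_S^U)^*(u)\otimes\sum_{S\stackrel{1}{\hookrightarrow}T\stackrel{1}{\hookrightarrow}U}d''_{S,T}\bigl(d''_{T,U}(X)\bigr),$$
using functoriality of pull-backs, and the inner sum is again zero by Lemma~\ref{lemAbicomplex}(1).

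I do not anticipate a serious obstacle. The only delicate point is sign bookkeeping, but this is automatically handled: the geometric differentials are obtained from the combinatorial ones by tensoring with Gysin morphisms (resp.\ pull-backs), which compose without signs, so all signs are inherited from $A_{\bullet,\bullet}(\B)$, where the two vanishing identities are already known.
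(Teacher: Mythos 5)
Your proposal is correct and follows essentially the same route as the paper: factor the composite Gysin (resp.\ pull-back) maps out of the sum over intermediate strata $T$ using functoriality, and invoke Lemma~\ref{lemAbicomplex}(1) for the vanishing of the combinatorial factor. The only cosmetic difference is that the paper handles $d''\circ d''=0$ by duality rather than repeating the argument with pull-backs.
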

		
		\begin{proof}
		We prove that~$d'\circ d'=0$. Let~$s\otimes X\in H^{q-2i}(S)(-i)\otimes A_{i,j}^S(\B)$, we get
		$$( d'\circ d')(s\otimes X)=\sum_{S\stackrel{1}{\hookrightarrow}T\stackrel{1}{\hookrightarrow}U}(\iota_T^U)_*(\iota_S^T)_*(s)\otimes  d'_{T,U} d'_{S,T}(X).$$
		Since~$(\iota_T^U)_*\circ(\iota_S^T)_*=(\iota_S^U)_*$, the above sum decomposes as
		$$\sum_{S\stackrel{2}{\hookrightarrow}U}\left(\iota_S^U\right)_*(s)\otimes\left(\sum_{S\stackrel{1}{\hookrightarrow}T\stackrel{1}{\hookrightarrow}U} d'_{T,U}d'_{S,T}(X)\right).$$
		For~$U$ fixed, the right-hand side of the tensor product is zero because~$A^{\leq S}_{\bullet,\bullet}(\B)$ is a bi-complex (Lemma~\ref{lemAbicomplex}). The result follows. We leave it to the reader to prove that $d''\circ d''=0$ by using exactly the same argument.
		\end{proof}
	
		The task of proving that~$ d'\circ d''= d''\circ d'$ is more intricate. We fix a stratum~$S$ and an element~$s\otimes X \in H^{q-2i}(S)(-i)\otimes A_{i,j}^S(\B)$. Let us write
		$$ d'\circ d''(s\otimes X)=\sum_{S\stackrel{1}{\hookleftarrow}R\stackrel{1}{\hookrightarrow} U} (\iota_R^U)_*(\iota_R^S)^*(s)\otimes  d'_{R,U} d''_{R,S}(X)=\sum_{U\neq S}\Sigma_1(U)+\Sigma_1'$$
		where~$\Sigma_1(U)$ is the sum over diagrams~$S\stackrel{1}{\hookleftarrow}R\stackrel{1}{\hookrightarrow} U$ and~$\Sigma_1'$ is the sum over diagrams~$S\stackrel{1}{\hookleftarrow}R\stackrel{1}{\hookrightarrow} S$. In the same fashion we write
		$$ d''\circ d'(s\otimes X)=\sum_{S\stackrel{1}{\hookrightarrow} T\stackrel{1}{\hookleftarrow} U} (\iota_U^T)^*(\iota_S^T)_*(s)\otimes  d''_{U,T} d'_{S,T}(X)=\sum_{U\neq S}\Sigma_2(U)+\Sigma'_2$$
		where~$\Sigma_2(U)$ is the sum over diagrams~$S\stackrel{1}{\hookrightarrow} T\stackrel{1}{\hookleftarrow} U$ and~$\Sigma_2'$ is the sum over diagrams~$S\stackrel{1}{\hookrightarrow} T\stackrel{1}{\hookleftarrow} S$.
		
		\begin{lem}\label{lemDbicomplex2} 
		We have the following equalities:
		\begin{enumerate}
		\item for every stratum $U\neq S$, $\Sigma_1(U)=\Sigma_2(U)$;
		\item~$\Sigma'_1=0$;
		\item~$\Sigma'_2=0$.
		\end{enumerate}
		Thus,~$ d'\circ d''= d''\circ d'$ in~$\qD_{\bullet,\bullet}(\B)$.
		\end{lem}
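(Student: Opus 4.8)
The plan is to establish the three equalities of Lemma~\ref{lemDbicomplex2} one after the other, each time reducing to a combination of the purely combinatorial identities for the Orlik-Solomon bi-complex gathered in Lemma~\ref{lemAbicomplex} and the cohomological identities for Gysin and pull-back morphisms gathered in Appendix~\ref{parappB}; the equality $ d'\circ d''= d''\circ d'$ is then simply $\Sigma_1+\Sigma_1'=\Sigma_2+\Sigma_2'$. The vanishing of $\Sigma_1'$ is immediate: it is a sum over diagrams $S\stackrel{1}{\hookleftarrow}R\stackrel{1}{\hookrightarrow}S$, so each of its terms carries, in the $A_{\bullet,\bullet}^S(\B)$-factor, the composite $ d'_{R,S}\circ d''_{R,S}$, which vanishes by Lemma~\ref{lemAbicomplex}; the cohomological factor $(\iota_R^S)_*(\iota_R^S)^*$ is then irrelevant.

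The vanishing of $\Sigma_2'$ is the step I expect to be the real obstacle. Here $\Sigma_2'$ is a sum over diagrams $S\stackrel{1}{\hookrightarrow}T\stackrel{1}{\hookleftarrow}S$, with combinatorial factor $ d''_{S,T}\circ d'_{S,T}$ and cohomological factor the self-intersection operator $(\iota_S^T)^*(\iota_S^T)_*$. I would first apply the self-intersection (excess intersection) formula of Appendix~\ref{parappB}: since $S\stackrel{1}{\hookrightarrow}T$ realizes $S$ as a smooth divisor in $T$, one has $(\iota_S^T)^*(\iota_S^T)_*(s)=c_1(N_{S/T})\cup s$, and $c_1(N_{S/T})$ is the restriction to $S$ of $c_1(\mathcal{O}_X(K))$ for any hypersurface $K\in\B$ with $S\subset K$ and $T\not\subset K$. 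Combined with the identity $\sum_{S\stackrel{1}{\hookrightarrow}T} d''_{S,T}\circ d'_{S,T}=0$ of Lemma~\ref{lemAbicomplex}, this should give $\Sigma_2'=0$; the delicate point is that the class $c_1(N_{S/T})$ varies with $T$ a priori, so one cannot simply pull it out of the sum. One has to use that only the hypersurfaces lying in $\B^{\leq S}\setminus\B^{\leq T}$ intervene in $ d'_{S,T}$ and that, $S$ being irreducible, the local arrangement $\B^{\leq S}$ is a connected matroid, so that the relevant Chern classes can be matched across the various $T$; the Chern-class identities of Appendix~\ref{parappB} are tailored for exactly this bookkeeping, and one has to keep track of the Koszul signs carefully.

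Finally, for $\Sigma_1=\Sigma_2$ I would argue that both sides are indexed by the same set. In $\Sigma_1$, a term attached to a diagram $S\stackrel{1}{\hookleftarrow}R\stackrel{1}{\hookrightarrow}U$ with $S\neq U$ for which no stratum $T$ with $S\stackrel{1}{\hookrightarrow}T\stackrel{1}{\hookleftarrow}U$ exists has combinatorial factor $ d'_{R,U}\circ d''_{R,S}=0$ by Lemma~\ref{lemAbicomplex}, hence vanishes; for the surviving pairs $(S,U)$, the same lemma forces the intermediate strata to be $T=S+U$ and $R=S\cap U$, both unique, and makes the square $R,S,U,T$ transverse. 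Dually, Lemma~\ref{lemAbicomplex} attaches to each diagram $S\stackrel{1}{\hookrightarrow}T\stackrel{1}{\hookleftarrow}U$ of $\Sigma_2$ the unique lower stratum $R=S\cap U$. Thus $\Sigma_1$ and $\Sigma_2$ both run over the pairs $(S,U)$ of distinct strata of the same codimension admitting a common upper neighbour $T=S+U$. For each such pair the combinatorial factors coincide by the identity $ d'_{R,U}\circ d''_{R,S}= d''_{U,T}\circ d'_{S,T}$ of Lemma~\ref{lemAbicomplex}, and the cohomological factors coincide by the base-change (clean intersection) formula $(\iota_R^U)_*(\iota_R^S)^*=(\iota_U^T)^*(\iota_S^T)_*$ for a transverse square, recorded in Appendix~\ref{parappB}. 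Matching the two sums term by term, with a final verification that the signs built into $ d'$ and $ d''$ agree with those in the cohomological identities, yields $\Sigma_1=\Sigma_2$, and hence Theorem~\ref{thmDbicomplex}. This last part is lengthy but mechanical once the two dictionaries are aligned.
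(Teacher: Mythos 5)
Your overall route coincides with the paper's: $\Sigma_1'=0$ and $\Sigma_1=\Sigma_2$ are handled exactly as in the text, and you correctly single out $\Sigma_2'=0$ as the step needing the geometric input $(\iota_S^T)^*(\iota_S^T)_*(s)=c_1(N_{S/T})\,.\,s$ together with the combinatorial identity $\sum_{S\stackrel{1}{\hookrightarrow}T}d''_{S,T}d'_{S,T}=0$ of Lemma~\ref{lemAbicomplex}. However, your treatment of $\Sigma_2'$ has a genuine gap: you invoke ``$S$ being irreducible'' to match the classes $c_1(N_{S/T})$ across the various $T$, but the strata $S$ occurring in $\Sigma_2'$ are arbitrary. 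When $S$ is reducible, say $S=S'\pitchfork S''$ locally, the upper neighbours $T$ split into those of the form $T'\pitchfork S''$ and those of the form $S'\pitchfork T''$, and the corresponding Chern classes restrict from hypersurfaces containing $S'$ resp.\ $S''$; they need not coincide, and no matroid-connectivity argument can identify them, since the matroid of $\B^{\leq S}$ is disconnected precisely in this case. The missing idea is the K\"unneth formula (Proposition~\ref{propkunneth}): it splits $\sum_T d''_{S,T}d'_{S,T}(X)$ into one sub-sum per irreducible factor of $S$, each of which vanishes separately, and within each sub-sum the Chern class is constant by Lemma~\ref{lemChernirreducible}. Without this splitting the argument does not close for reducible $S$.

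A smaller inaccuracy: in the matching $\Sigma_1=\Sigma_2$ you assert that $R=S\cap U$ is unique. In the global setting of hypersurfaces, $S\cap U$ may have several connected components, each giving a separate diagram $S\stackrel{1}{\hookleftarrow}R\stackrel{1}{\hookrightarrow}U$ and hence a separate term of $\Sigma_1$; the matching still works because all these terms carry the same combinatorial factor $d''_{U,T}d'_{S,T}(X)$ and because the right-hand side of (\ref{appiitransverse}) is by convention the sum over connected components, so that $\sum_R(\iota_R^U)_*(\iota_R^S)^*(s)=(\iota_U^T)^*(\iota_S^T)_*(s)$. One must also dispose of the pairs $(S,U)$ admitting a common upper neighbour $T$ but with $S\cap U=\varnothing$: there $\Sigma_1$ contributes nothing and the corresponding term of $\Sigma_2$ vanishes by the same transversality formula. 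These are bookkeeping points, but they are part of what the lemma asserts.
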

		
		\begin{proof}
		\begin{enumerate}
		\item We fix strata~$S\neq U$. There are three cases to consider.
		
		\textit{First case:}~$S\cap U=\varnothing$. Then~$\Sigma_1(U)=0$. For any diagram~$S\stackrel{1}{\hookrightarrow} T\stackrel{1}{\hookleftarrow} U$,~$S$ and~$U$ intersect transversely in~$T$, hence by (\ref{appiitransverse}) the composite~$\left(\iota_U^T\right)^*\circ\left(\iota_S^T\right)_*$ is zero, hence~$\Sigma_2(U)=0$.
		
		\textit{Second case:}~$S\cap U\neq \varnothing$, and there is no diagram~$S\stackrel{1}{\hookrightarrow} T\stackrel{1}{\hookleftarrow} U$. Then~$\Sigma_2(U)=0$. For every diagram~$S\stackrel{1}{\hookleftarrow}R\stackrel{1}{\hookrightarrow} U$ we have~$ d'_{R,U}d''_{S,R}(X)=0$ because~$A^{\leq R}_{\bullet,\bullet}(\B)$ is a bi-complex (Lemma~\ref{lemAbicomplex}), hence~$\Sigma_1(U)=0$.
		
		\textit{Third case:}~$S\cap U\neq \varnothing$, and there is a diagram~$S\stackrel{1}{\hookrightarrow} T\stackrel{1}{\hookleftarrow} U$. Then~$T$ is unique for dimension reasons (locally around a point of~$S\cap U$,~$T$ is the sum~$S+U$). The diagrams~$S\stackrel{1}{\hookleftarrow}R\stackrel{1}{\hookrightarrow} U$ correspond to the connected components of~$S\cap U$. For such a connected component~$R$ we have 
		$$d'_{R,U} d''_{R,S}(X)= d''_{U,T}d'_{S,T}(X)$$
		because~$A^{\leq R}_{\bullet,\bullet}(\B)$ is a bi-complex (Lemma~\ref{lemAbicomplex}). Thus
		$$\Sigma_1(U)=\left(\sum_{S\stackrel{1}{\hookleftarrow}R\stackrel{1}{\hookrightarrow} U} (\iota_R^U)_*(\iota_R^S)^*(s)\right)\otimes  d''_{U,T} d'_{S,T}(X).$$
		Using (\ref{appiitransverse}) we have 
		$$(\iota_U^T)^*(\iota_S^T)_*(s)=\sum_{S\stackrel{1}{\hookleftarrow}R\stackrel{1}{\hookrightarrow} U} (\iota_R^U)_*(\iota_R^S)^*(s)$$
		hence~$\Sigma_1(U)=(\iota_U^T)^*(\iota_S^T)_*(s)\otimes  d''_{U,T} d'_{S,T}(X)=\Sigma_2(U)$.
		
		\item For an inclusion~$R\stackrel{1}{\hookrightarrow}S$, the fact that~$A^{\leq R}_{\bullet,\bullet}(\B)$ is a bi-complex implies that we have~$ d'_{R,S}\circ d''_{R,S}=0$ (Lemma~\ref{lemAbicomplex}). The result then follows.
		
		\item We have~$$\Sigma'_2=\sum_{S\stackrel{1}{\hookrightarrow}T} \left(\iota_S^T\right)^*\left(\iota_S^T\right)_*(s)\otimes  d''_{S,T} d'_{S,T}(X).$$
		By (\ref{appiiChern}),~$\left(\iota_S^T\right)^*\left(\iota_S^T\right)_*(s)=$ is the cup-product~$c_1(N_{S/T})\,.\,s$ where~$c_1(N_{S/T})\in H^2(S)(-1)$ is the first Chern class of the normal bundle of the inclusion~$S\hookrightarrow T$. We first consider a special case.
		
		\textit{Special case:} We assume that the stratum~$S$ is irreducible. For an inclusion~$S\stackrel{1}{\hookrightarrow}T$, there exists a hypersurface~$K\in\B$ such that~$S$ is a connected component of the intersection~$T\cap K$. According to (\ref{appisonormalbundles}), we get~$c_1(N_{S/T})\cong c_1(N_{K/X})_{|S}$. Now Lemma~\ref{lemChernirreducible} below implies that~$c_1(N_{K/X})_{|S}=c$ is independent of~$K$, hence we may write
		$$\Sigma'_2=(c \,.\, s)\otimes \left(\sum_{S\stackrel{1}{\hookrightarrow}T} d''_{S,T} d'_{S,T}(X)\right).$$
		Now the fact that~$A^{\leq S}_{\bullet,\bullet}(L;M;\chi)$ is a bi-complex (Lemma~\ref{lemAbicomplex}) implies that the right-hand side of the tensor product is zero, hence the result .
		
		\textit{General case:} In general there is a (local) decomposition of~$S$ into irreducible strata. Let us assume for simplicity that this decomposition has two terms, i.e. we have a (local) decomposition into irreducibles~$S= S'\pitchfork S''$. Then an inclusion~$S\stackrel{1}{\hookrightarrow}T$ is (locally) either of the form~$T=S'\pitchfork T''$ for~$S''\stackrel{1}{\hookrightarrow}T''$ or of the form~$T=T'\pitchfork S''$ for~$S'\stackrel{1}{\hookrightarrow} T'$. Using the K\"unneth formula (Proposition~\ref{propkunneth}) for the Orlik--Solomon bi-complex, we may then split~$\Sigma_2$ into two sums. One gets the result by applying the same reasoning as in the first case to each of these two sums.
		\end{enumerate}
		\end{proof}
		
		We have used the following lemma.
		
		\begin{lem}\label{lemChernirreducible}
		Let~$\A$ be an arrangement of hypersurfaces in a complex manifold~$X$, and~$S$ an irreducible stratum of~$\A$. Then the line bundles~$\left(N_{K/X}\right)_{|S}$, for~$K\in\A$ such that~$K\supset S$, are all isomorphic.
		\end{lem}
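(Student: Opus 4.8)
The plan is to reduce to the local model of a hyperplane arrangement and then compare the normal bundles directly through their \v{C}ech cocycles. Since $\A$ is an arrangement of hypersurfaces, cover $S$ by open sets $W_\alpha\subset X$, each carrying a coordinate system in which every $K\in\A^{\leq S}:=\{K\in\A\,:\,K\supseteq S\}$ is a hyperplane $K=\{f_K^\alpha=0\}$ with $f_K^\alpha$ a linear form; set $V_\alpha=S\cap W_\alpha$. Near a point of $K$ the ideal of $K$ is generated by $f_K^\alpha$, so $N^*_{K/X}$ is generated over $W_\alpha\cap K$ by the class of $f_K^\alpha$; on overlaps $f_K^\alpha=u_K^{\alpha\beta}\,f_K^\beta$ with $u_K^{\alpha\beta}$ a nowhere-vanishing holomorphic function, so $\left(N_{K/X}\right)_{|S}$ is the line bundle on $S$ with transition cocycle $\big(u_K^{\alpha\beta}|_{V_\alpha\cap V_\beta}\big)^{-1}$ — and crucially the exponent $-1$ is the same for every $K$. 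Hence it suffices to prove that for $K,K'\in\A^{\leq S}$ the $1$-cocycle $\big(u_K^{\alpha\beta}/u_{K'}^{\alpha\beta}\big)$ on $S$ is a \v{C}ech coboundary.

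Next I would bring in the combinatorics. By Lemma~\ref{lemmatroids}, applied in any local chart (where $\A^{\leq S}$ is an irreducible hyperplane arrangement, since $S$ is an irreducible stratum), there is a circuit $I$ of $\A^{\leq S}$ with $K,K'\in\{K_k\}_{k\in I}$ and $S\subset K_I$; write $f_k^\alpha=f_{K_k}^\alpha$. In each chart $W_\alpha$ the forms $f_k^\alpha$, $k\in I$, are minimally linearly dependent, so they satisfy a unique-up-to-scalar linear relation $\sum_{k\in I}c_k^\alpha f_k^\alpha=0$ (with $c_k^\alpha$ constant on $W_\alpha$), and minimality of the circuit forces $c_k^\alpha\neq0$ for all $k\in I$; in particular the ratio $r^\alpha:=c_K^\alpha/c_{K'}^\alpha$ is a well-defined nonzero constant on $V_\alpha$. (The same argument shows the linear-dependence matroid of $\A^{\leq S}$ is locally constant along the connected set $S$, so $I$ is a circuit in every chart.)

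The heart of the proof is the following computation on an overlap $W_\alpha\cap W_\beta$. Substituting $f_k^\alpha=u_k^{\alpha\beta}f_k^\beta$ into the $\alpha$-relation gives $\sum_{k\in I}c_k^\alpha u_k^{\alpha\beta}f_k^\beta=0$. Fix $p\in V_\alpha\cap V_\beta$: each $f_k^\beta$ is linear and vanishes at $p\in S$, hence is purely linear in coordinates centered at $p$, whereas $u_k^{\alpha\beta}=u_k^{\alpha\beta}(p)+O(|z-p|)$; so the part of order one at $p$ of this identity reads $\sum_{k\in I}c_k^\alpha u_k^{\alpha\beta}(p)\,f_k^\beta=0$, a linear relation among the $f_k^\beta$. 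By uniqueness of the circuit relation, $\big(c_k^\alpha u_k^{\alpha\beta}(p)\big)_{k\in I}$ is proportional to $\big(c_k^\beta\big)_{k\in I}$; letting $p$ vary yields a nowhere-vanishing $\lambda^{\alpha\beta}$ with $u_k^{\alpha\beta}=\lambda^{\alpha\beta}\,c_k^\beta/c_k^\alpha$ on $V_\alpha\cap V_\beta$ for all $k\in I$. Therefore $u_K^{\alpha\beta}/u_{K'}^{\alpha\beta}=r^\beta/r^\alpha$, which is precisely the \v{C}ech coboundary of the $0$-cochain $(r^\alpha)$ on $S$; hence $\left(N_{K/X}\right)_{|S}\cong\left(N_{K'/X}\right)_{|S}$, and since $K,K'$ were arbitrary in $\A^{\leq S}$ the lemma follows.

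The only genuinely delicate step is the passage to the order-one part at $p\in S$: that is what discards the non-constant factors $u_k^{\alpha\beta}$ and lands one back on the rigid one-dimensional space of \emph{constant} circuit relations, where uniqueness can be exploited. Everything else — the local normal-form reduction, the identification of $N^*_{K/X}$ as generated by $f_K^\alpha$, the fact that the matroid of $\A^{\leq S}$ is locally constant along $S$, and the \v{C}ech bookkeeping identifying $(r^\beta/r^\alpha)$ as a coboundary — is routine.
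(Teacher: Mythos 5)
Your proof is correct, but it takes a genuinely different route from the paper's. Both arguments start identically, invoking Lemma~\ref{lemmatroids} to produce a circuit~$I$ containing the indices of~$K$ and~$K'$ with~$S\subset K_I$; they diverge in how the circuit is exploited. The paper stays at the level of normal bundles: it takes~$T$ to be the component containing~$S$ of the intersection of the circuit hyperplanes other than~$K$ and~$K'$, observes that~$S$ is then a connected component of both~$K\cap T$ and~$K'\cap T$ (transversally, since a circuit minus one element is independent), and concludes~$\left(N_{K/X}\right)_{|S}\cong N_{S/T}\cong\left(N_{K'/X}\right)_{|S}$ by two applications of the transverse-intersection isomorphism~(\ref{appisonormalbundles}). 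You instead work at the level of \v{C}ech cocycles and use the rigidity of the one-dimensional, everywhere-nonzero space of linear relations attached to a circuit: the passage to the order-one part at a point of~$S$ is precisely what lets the uniqueness of the \emph{constant} relation control the a priori non-constant transition functions, and the resulting identity~$u_K^{\alpha\beta}/u_{K'}^{\alpha\beta}=r^\beta/r^\alpha$ is correct. Your version is longer and needs the auxiliary observation (true, and you flag it) that the matroid of~$\A^{\leq S}$ is constant along the connected set~$S$, so that~$I$ is a circuit in every chart; what it buys is explicitness, since it exhibits the trivializing cochain and hence the isomorphism itself, whereas the paper's argument is shorter and relies only on the soft identities collected in Appendix~\ref{parappB}.
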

		
		\begin{proof}
		Let us write~$\A^{\leq S}=\{K_1,\ldots,K_r\}$ for the hypersurfaces of~$\A$ that contain~$S$. Let~$i, j\in\{1,\ldots,r\}$. We first consider a special case.
		
		\textit{Special case:} Let us first assume that~$\{1,\ldots,r\}$ is a circuit. Let~$T$ be the connected component of~$K_1\cap\cdots\cap\widehat{K_i}\cap\cdots\cap\widehat{K_j}\cap\cdots\cap K_r$ that contains~$S$. We then have an inclusion~$S\stackrel{1}{\hookrightarrow}T$,~$S$ being at the same time a connected component of~$K_i\cap T$ and~$K_j\cap T$. From (\ref{appisonormalbundles}) we deduce isomorphisms
		$$\left(N_{K_i/X}\right)_{|S}\cong N_{S/T} \cong \left(N_{K_j/X}\right)_{|S}.$$
		\textit{General case:} One may reduce to the special case above by using Lemma~\ref{lemmatroids}.
		\end{proof}
	
	\subsection{Blow-ups and the geometric Orlik--Solomon bi-complex}
	
		We now define a morphism between the geometric Orlik--Solomon bi-complex of a bi-arrangement of hypersurfaces~$\B$ and that of its blow-up~$\td{\B}$. For all the rest of this article, we make the following assumption on bi-arrangements of hypersurfaces: 
		\begin{equation}\label{connectednessassumption}
		\textit{any intersection of strata is connected} 
		\end{equation}
		 (this includes the empty case). Equivalently, this means that the intersection of any number of hypersurfaces~$K\in\B$ is connected. 
		 
		 This assumption is not necessary, and we will sketch in \S\ref{parwithoutconnectedness} how to deal with the general case. However, working under the assumption (\ref{connectednessassumption}) makes the discussion and the computations more accessible to the reader by keeping the notations light. One may note that (\ref{connectednessassumption}) is satisfied by all the examples of arrangements of hypersurfaces introduced in Example~\ref{exarrangementshypersurfaces}, and is stable by blow-up.
		
		\subsubsection{The framework}	
	
		We fix a bi-arrangement of hypersurfaces~$\B$ in a complex manifold~$X$, and a good stratum~$Z$ for~$\B$.
		
		\begin{defi}
		An inclusion~$S\stackrel{1}{\hookrightarrow}T$ of strata of~$\B$ has \textit{parallel type} with respect to~$Z$ if~$Z\cap S\neq\varnothing$ and~$Z\cap S=Z\cap T$. In this case we write~$S\hookrightpar T$.
		\end{defi}
		
		In view of assumption (\ref{connectednessassumption}),~$Z\cap S$ is connected and this may be checked locally. Around any point of~$Z\cap S$, there is a decomposition~$Z\pitchfork W$, hence one has a decomposition~$S=S_\parallel \pitchfork S_\perp$ with~$S_\parallel\supset Z$ and~$S_\perp\supset W$. For an inclusion~$S\stackrel{1}{\hookrightarrow}T$ of strata, we then have two mutually exclusive cases.
		\begin{enumerate}[--]
		\item ~$T=T_\parallel\pitchfork S_\perp$ with~$S_\parallel\stackrel{1}{\hookrightarrow}T_\parallel$;
		\item ~$T=S_\parallel\pitchfork T_\perp$ with~$S_\perp\stackrel{1}{\hookrightarrow}T_\perp$.
		\end{enumerate}
		The parallel type corresponds to the first case:~$Z\cap S=Z\cap T= Z\pitchfork S_\perp$.\\
		
		Let~$\pi:\td{X}\rightarrow X$ be the blow-up along~$Z$. For every stratum~$S$, it restricts to~$\pi_S^{\td{S}}:\td{S}\rightarrow S$ the blow-up along~$Z\cap S$.
		
		In the case~$S\hookrightpar T$, we have~$Z\cap S=Z\cap T$, hence~$\pi$ induces a morphism 
		$$\pi^{E\cap\td{T}}_{Z\cap S}:E\cap\td{T}\rightarrow Z\cap T=Z\cap S.$$
		
		\subsubsection{Definition of~$\Phi$}\label{pardefPhi}
		
		Let us assume that we have~$\chi(Z)=\lambda$. We recall that we have made explicit the Orlik--Solomon bi-complex of a blow-up in Proposition~\ref{propOSbicomplexblowup}. Having this in mind, we define a morphism
		\begin{equation}\label{eqPhi}
		\Phi:\qD_{i,j}(\B)\rightarrow \qD_{i,j}(\td{\B}).
		\end{equation}
		Let~$S\in\s_{i+j}(\B)$ be a stratum. We define, for~$s\otimes X\in H^{q-2i}(S)(-i)\otimes A_{i,j}^S(\B)=\qD_{i,j}^S(\B)$, 
		\begin{equation}\label{eqdefPhi}
		\Phi(s\otimes X)=(\pi_S^{\td{S}})^*(s)\otimes X +\sum_{S\hookrightpar T} (\pi^{E\cap\td{T}}_{Z\cap S})^*(\iota^S_{Z\cap S})^*(s)\otimes d'_{S,T}(X).
		\end{equation}
		
		Let us explain more precisely the meaning of this formula:
		\begin{enumerate}[--]
		\item the term~$(\pi_S^{\td{S}})^*(s)\otimes X$ lives in~$H^{q-2i}(\td{S})(-i)\otimes A_{i,j}^S=\qD_{i,j}^{\td{S}}(\td{\B})$; if~$S\subset Z$ then~$\td{S}=\varnothing$ and this is zero by convention;
		\item the term~$(\pi^{E\cap\td{T}}_{Z\cap S})^*(\iota^S_{Z\cap S})^*(s)\otimes d'_{S,T}(X)$ lives in~$H^{q-2i}(E\cap\td{T})(-i)\otimes A_{i-1,j}^T(\B)=\qD_{i,j}^{E\cap\td{T}}(\td{\B})$.
		\end{enumerate}
		
		In the algebraic case,~$\Phi$ is a morphism of mixed Hodge structures.\\
		
		The motivation for formula (\ref{eqdefPhi}) comes from the case of normal crossing divisors, as the next lemma shows.
		
		\begin{lem}
		If~$\L\cup\M$ is a normal crossing divisor, then the total complex~$\qD_\bullet$ is then~$E_1^{-\bullet,q}$ term of the natural spectral sequence (\ref{appeqspectralsequence}) that computes the relative cohomology groups~$H^\bullet(X\setminus \L,\M\setminus \M\cap\L)$. In this case, the morphism~$\qD_\bullet(\L,\M)\rightarrow \qD_\bullet(\td{\L},\td{\M})$ induced by~$\Phi$ is the natural morphism (\ref{appPhiNCD}) that expresses the isomorphism~$\pi^*:H^\bullet(X\setminus \L,\M\setminus \M\cap\L)\stackrel{\cong}{\longrightarrow} H^\bullet(\td{X}\setminus \td{\L},\td{\M}\setminus \td{\M}\cap\td{\L})$.
		\end{lem}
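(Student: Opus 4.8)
The plan is to reduce the statement to two term-by-term comparisons, both of which are essentially local computations in a blow-up chart. Recall first that a normal crossing divisor $\L\cup\M$ has no circuits, so by the tame case (Theorem~\ref{thmtameOS}) its Orlik-Solomon bi-complex is $A_{\bullet,\bullet}(\L,\M)=E_\bullet(\L)\otimes E_\bullet(\M)^\vee$: for $S=L_I\cap M_J$ with $|I|=i$ and $|J|=j$ the space $A_{i,j}^S(\L,\M)$ is one-dimensional with basis $e_I\otimes f_J^\vee$, the differential $d'_{S,T}$ corresponds to deleting one index from $I$ and equals $\pm 1$, and $d''_{S,T}$ corresponds to deleting one index from $J$ and equals $\pm 1$, the signs being the Koszul signs.

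For the first assertion, combining this description with the definition of $d'$ and $d''$ on $\qD_{\bullet,\bullet}$ in terms of Gysin and restriction morphisms, one reads off that $\qD_{i,j}(\L,\M)=\bigoplus_{|I|=i,\,|J|=j}H^{q-2i}(L_I\cap M_J)(-i)$, that $d'$ is the signed sum of the Gysin maps along the inclusions $L_I\cap M_J\hookrightarrow L_{I\setminus\{i\}}\cap M_J$, and that $d''$ is the signed sum of the restriction maps along the inclusions $L_I\cap M_J\hookrightarrow L_I\cap M_{J\setminus\{j\}}$. This is on the nose the double complex whose total complex is $E_1^{-\bullet,q}$ for the spectral sequence~(\ref{appeqspectralsequence}) of Appendix~\ref{parappA}, and the differentials match, which proves the first assertion.

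For the second assertion, I would unwind the definition~(\ref{eqdefPhi}) of $\Phi$ in the normal crossing case and compare it with the explicit morphism~(\ref{appPhiNCD}) of Appendix~\ref{parappA}. Since $\chi(Z)=\lambda$, the exceptional divisor $E$ belongs to $\td{\L}$, and Proposition~\ref{propOSbicomplexblowup} identifies $A_{i,j}^{\td{S}}(\td{\B})\cong A_{i,j}^S(\B)$ and $A_{i,j}^{E\cap\td{T}}(\td{\B})\cong A_{i-1,j}^T(\B)$, including the sign attached to $d'_{S,T}$ under the second identification. The first summand $(\pi_S^{\td{S}})^*(s)\otimes X$ of~(\ref{eqdefPhi}) then lies in the component of $\qD_{i,j}(\td{\B})$ indexed by the strict transform $\td{S}$ and is exactly the strict-transform part of $\pi^*$. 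Writing $Z=L_{I_0}\cap M_{J_0}$ and $S=L_I\cap M_J$, an inclusion $S\hookrightpar T$ contributing a nonzero $d'_{S,T}(X)$ must be of the form $T=L_{I\setminus\{i_0\}}\cap M_J$ with $i_0\in I\cap I_0$: indeed $d'$ deletes an index of $I$, and the parallel-type condition $Z\cap S=Z\cap T$ forces that index to lie in the part of $I$ defining $Z\cap S$. The matching summand of $\qD_{i,j}(\td{\B})$ is then indexed by $E\cap\td{T}$, and $(\pi^{E\cap\td{T}}_{Z\cap S})^*(\iota^S_{Z\cap S})^*(s)\otimes d'_{S,T}(X)$ is precisely the contribution attached to the index $i_0$ in the description of $\pi^*$ on the relative cohomology of normal crossing pairs given in Appendix~\ref{parappA}. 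Matching the Koszul signs carried by $d'_{S,T}$ with the signs of~(\ref{appPhiNCD}) finishes the identification $\Phi=\pi^*$; in particular this re-proves in this special case that $\Phi$ is compatible with the differentials.

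I expect the main obstacle to be the sign bookkeeping together with making precise the dictionary between parallel-type inclusions and the exceptional-divisor terms of~(\ref{appPhiNCD}): one must check that the Koszul signs built into the Orlik-Solomon bi-complex of the blow-up, as computed in Proposition~\ref{propOSbicomplexblowup} (note the minus sign in front of $d'_{S,T}$ there), agree with the signs in the explicit pullback formula of Appendix~\ref{parappA}. Conceptually, however, both morphisms are entirely determined by the local geometry of a single blow-up of a coordinate subspace in an arrangement of coordinate hyperplanes, so the comparison ultimately reduces to one local verification, which also settles the compatibility with the differentials.
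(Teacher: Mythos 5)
Your proposal is correct and follows the same route as the paper: the paper's proof is precisely a direct comparison of formula (\ref{eqdefPhi}) with (\ref{appPhiNCD}), using that for a normal crossing divisor $d'(e_I\otimes f_J^\vee)=\sum_{i\in I}\sgn(\{i\},I\setminus\{i\})\,e_{I\setminus\{i\}}\otimes f_J^\vee$, which is exactly your identification of the parallel-type inclusions with nonzero $d'_{S,T}$ as the deletions of indices $i_0\in I\cap I_0$. You have merely spelled out the details that the paper leaves to the reader.
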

		
		\begin{proof}
		It follows from a direct comparison of the formulas since by definition
		$$d'(e_I\otimes f_J^\vee)=\sum_{i\in I}\sgn(\{i\},I\setminus\{i\}) \, e_{I\setminus\{i\}}\otimes f_J^\vee.$$
		\end{proof}
		
		If we now assume that~$\chi(Z)=\mu$, then we are in the dual situation and we may define a morphism
		
		\begin{equation}\label{eqPsi}
		\Psi:\qD_{i,j}(\td{\B})\rightarrow \qD_{i,j}(\B).
		\end{equation}
		by the formulas
		$$\Psi(\td{s}\otimes X)=(\pi_S^{\td{S}})_*(\td{s})\otimes X \;\textnormal{ and }\; \Psi(\td{e}\otimes X)= \sum_{S\hookrightpar T} (\iota_{Z\cap S}^S)_*(\pi_{Z\cap S}^{E\cap\td{T}})_*(e)\otimes d''_{S,T}(X)$$
		for~$\td{s}\otimes X\in H^{q-2i}(\td{S})(-i)\otimes A_{i,j}^S(\B)$ and~$e\otimes X\in H^{q-2i}(E\cap\td{T})(-i)\otimes A_{i,j-1}^T(\B)$.

		\subsubsection{The essential case}
		
			\begin{defi}
			Let~$\B$ be an bi-arrangement of hypersurfaces in a complex manifold~$X$. We say that~$\B$ is \textit{essential} if the intersection~$\bigcap_{K\in\B}K$ of all hypersurfaces in~$\B$ is non-empty. 
			\end{defi}
			
			According to assumption (\ref{connectednessassumption}), the intersection~$Z=\bigcap_{K\in\B}K$ is the minimal stratum of~$\B$. It is necessarily a good stratum. In this case, formula (\ref{eqdefPhi}) takes a simpler form. Indeed, we always have~$Z\cap S=Z$, and all inclusions~$S\stackrel{1}{\hookrightarrow}T$ are of the form~$S\hookrightpar T$. Hence we get
			
			$$\Phi(s\otimes X)=(\pi_S^{\td{S}})^*(s)\otimes X +\sum_{S\stackrel{1}{\hookrightarrow} T} (\pi^{E\cap\td{T}}_{Z})^*(\iota^S_Z)^*(s)\otimes d'_{S,T}(X).$$
		
			If~$S=Z$ the formula simply reads, for~$z\otimes X\in H^{q-2i}(Z)(-i)\otimes A_{i,j}^Z(\B)=\qD_{i,j}^Z(\B)$:
			$$\Phi(z\otimes X)=\sum_{Z\stackrel{1}{\hookrightarrow}T}(\iota_Z^{E\cap\td{T}})^*(z)\otimes d'_{Z,T}(X).$$
	
	\subsection{The main theorem}
	
		The following theorem will be proved in \S\ref{parproof}.
	
		\begin{thm}\label{maintheoremtechnical}
		Let~$\B$ be a bi-arrangement of hypersurfaces in a complex manifold~$X$, let~$Z$ be a good stratum of~$\B$ such that~$\chi(Z)=\lambda$, and let~$\td{\B}$ be the blow-up of~$\B$ along~$Z$.
		\begin{enumerate}
		\item Formula (\ref{eqdefPhi}) defines a morphism of bi-complexes~$\Phi:\qD_{\bullet,\bullet}(\B)\rightarrow \qD_{\bullet,\bullet}(\td{\B})$.
		\item If~$Z$ is exact, then the morphism~$\Phi:\qD_\bullet(\B)\rightarrow \qD_\bullet(\td{\B})$ induced on the total complexes is a quasi-isomorphism.
		\end{enumerate}
		\end{thm}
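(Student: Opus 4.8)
The plan is to treat the two parts separately. For part (1) I would check, summand by summand, that $\Phi$ commutes with $d'$ and with $d''$. On $\qD_{i,j}^S(\B)$ one expands $\Phi\circ d'$, $d'\circ\Phi$, $\Phi\circ d''$ and $d''\circ\Phi$ using the explicit description of the Orlik--Solomon bi-complex of a blow-up (Proposition~\ref{propOSbicomplexblowup}). Sorting the resulting terms according to the three types of inclusions of strata occurring there (the inclusions $\td{S}\stackrel{1}{\hookrightarrow}\td{T}$, $E\cap\td{S}\stackrel{1}{\hookrightarrow}E\cap\td{T}$, and $E\cap\td{S}\stackrel{1}{\hookrightarrow}\td{S}$), the desired identities break up into a finite list, each entry of which follows from one of the following: the bi-complex identities for $A_{\bullet,\bullet}^{\leq\Sigma}(\B)$ (Lemma~\ref{lemAbicomplex}); the functoriality of pull-backs together with the compatibility of Gysin morphisms with the blow-up maps $\pi_S^{\td{S}}$ and $\pi^{E\cap\td{T}}_{Z\cap S}$; and the transverse-intersection, self-intersection and normal-bundle identities (\ref{appiitransverse}), (\ref{appiiChern}) and (\ref{appisonormalbundles}) of Appendix~\ref{parappB}. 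The only delicate point is the sign bookkeeping, dictated by the Koszul rule and already visible in the minus sign of Proposition~\ref{propOSbicomplexblowup}(2); no genuinely new idea is required.

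For part (2) I would prove that the mapping cone $\mathrm{Cone}(\Phi)$ is acyclic. The first step is to reduce to the \emph{essential} case, in which $Z=\bigcap_{K\in\B}K$ is the minimal stratum of $\B$: indeed $\Phi$ is the identity on the summands indexed by strata disjoint from $Z$, and around a stratum $S$ meeting $Z$ the K\"{u}nneth formula (Proposition~\ref{propkunneth}) provides a local splitting $A_{\bullet,\bullet}^{\leq S}\cong A_{\bullet,\bullet}^{\leq S_\parallel}\otimes A_{\bullet,\bullet}^{\leq S_\perp}$ separating the directions through $Z$ from the transverse ones, only the first factor being affected by the blow-up. In the essential case every stratum contains $Z$, every inclusion of strata is of parallel type, and $Z\cap S=Z$ for all $S$.

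In the essential case I would then resolve the cohomology of the blown-up strata via the blow-up formula and the projective bundle formula of Appendix~\ref{parappB},
$$H^\bullet(\td{S})\cong H^\bullet(S)\oplus\bigoplus_{t\geq 1}H^{\bullet-2t}(Z)(-t),\qquad H^\bullet(E\cap\td{S})\cong\bigoplus_{t\geq 0}H^{\bullet-2t}(Z)(-t),$$
and, combining this with the identification of $A_{\bullet,\bullet}(\td{\B})$ from Proposition~\ref{propOSbicomplexblowup}, rewrite $\qD_\bullet(\B)$, $\qD_\bullet(\td{\B})$ and the map $\Phi$ entirely in terms of $H^\bullet(Z)$ and the Orlik--Solomon bi-complex $A_{\bullet,\bullet}^{\leq Z}(\B)$. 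Filtering $\mathrm{Cone}(\Phi)$ by the Chern level $t$ and using the explicit formula (\ref{eqdefPhi}) together with the identities of Appendix~\ref{parappB}, the cohomology of the cone is then expressed through the cohomology of the $d'$-rows
$$0\rightarrow A_{i,j}^Z(\B) \stackrel{d'}{\longrightarrow} \bigoplus_{Z\stackrel{1}{\hookrightarrow}S}A_{i-1,j}^S(\B) \stackrel{d'}{\longrightarrow} \cdots \stackrel{d'}{\longrightarrow} \bigoplus_{Z\stackrel{i}{\hookrightarrow}W}A_{0,j}^W(\B)\rightarrow 0$$
of the bi-complex $A_{\bullet,\bullet}^{\leq Z}(\B)$. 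Since $\chi(Z)=\lambda$ and $Z$ is exact, these rows are exact by Definition~\ref{defiexact}, so $\mathrm{Cone}(\Phi)$ is acyclic and $\Phi$ is a quasi-isomorphism. The general (non-essential) case follows from the same computation, with $Z\cap S$ in place of $Z$ throughout, the K\"{u}nneth formula being used to factor out the transverse directions.

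I expect the main obstacle to be the bookkeeping in part (2): one must match, compatibly with \emph{both} differentials of $\qD_{\bullet,\bullet}(\td{\B})$, the Chern-class decompositions of $H^\bullet(\td{S})$ and $H^\bullet(E\cap\td{S})$ with the shifted Orlik--Solomon summands predicted by Proposition~\ref{propOSbicomplexblowup}, and verify that the contributions of $d''$ and of the cross-terms between the various summands are organized so that only the $d'$-rows of $A_{\bullet,\bullet}^{\leq Z}(\B)$ survive in the cohomology of the cone. The formula (\ref{eqdefPhi}) and the identities of Appendix~\ref{parappB} are calibrated precisely for this, but the signs --- and, in the non-essential case, the interplay with the K\"{u}nneth decomposition, whose splitting of the cohomology of strata transverse to $Z$ is only local --- require care.
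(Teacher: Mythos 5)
Your outline is correct, and part (1) follows the paper's proof essentially verbatim: the paper also expands $\Phi\circ d'-d'\circ\Phi$ and $\Phi\circ d''-d''\circ\Phi$ term by term, cancels the $(\cdots)\otimes d'_{S,T}(X)$ and $(\cdots)\otimes d''_{R,S}(X)$ terms using (\ref{apppullbackpar}), (\ref{apppullbackperp}) and functoriality, and reduces the remaining cancellations to Lemma~\ref{lemAbicomplex} together with (\ref{appidentitypar}), (\ref{apptransChern}) and (\ref{appiitransverse}).

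For part (2) your route differs from the paper's in how the acyclicity of the cone is established, though both reduce to the exactness of the $d'$-rows of $A^{\leq Z}_{\bullet,\bullet}(\B)$ and both perform the same reduction to the essential case via the transverse-direction/K\"unneth decomposition. The paper does \emph{not} expand $H^\bullet(\td{S})$ and $H^\bullet(E\cap\td{S})$ by the projective bundle formula. Instead it first filters by the rows $j$ (so that only $d'$ survives in the associated graded --- a step you will also need in some form, since the exactness hypothesis for $\chi(Z)=\lambda$ controls only the $d'$-rows, and a filtration by Chern level alone does not kill the $d''$-contributions to the cone of the total-complex map), then introduces an auxiliary exact complex $B_{\bullet,j}$ with $B_{i,j}^S=H^{q-2r+2j}(Z)(r-j)\otimes A_{i,j}^S(\B)$ and an explicit chain map $\alpha:B_{\bullet,j}\to\mathrm{Cone}(\Phi_{\bullet,j})$ built from the Gysin map $(\iota_Z^S)_*$ and the excess class $\gamma_S$ of (\ref{appdefgamma}). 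A second filtration, by codimension of strata, reduces the quasi-isomorphy of $\alpha$ stratum by stratum to the single short exact sequence (\ref{appses}); this avoids choosing splittings of $H^\bullet(\td{S})$ altogether, so the only cross-term identities needed are (\ref{appeqgamma}) and (\ref{appGysingammapar}). Your approach should work but trades this localization for a global bookkeeping of all Chern levels $t$ at once, where the non-canonical splittings interact with both Gysin and pull-back maps; that is exactly the "delicate" part you flag, and it is the part the paper's construction is designed to sidestep.
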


		\begin{rem}\label{remmaintechnicaldual}
		We will also use the dual counterpart of Theorem \ref{maintheoremtechnical}, whose proof is dual and left to the reader. If~$Z$ is a good stratum of~$\B$ such that~$\chi(Z)=\mu$, then we have a quasi-isomorphism~$\Psi: \qD_\bullet(\td{\B})\rightarrow \qD_\bullet(\B)$.
		\end{rem}	
		
%		\textcolor{blue}{We will also use its dual counterpart, whose proof is dual to the proof of Theorem \ref{maintheoremtechnical}.
%		\begin{thm}\label{maintheoremtechnicaldual}
%		Let~$\B$ be a bi-arrangement of hypersurfaces in a complex manifold~$X$, let~$Z$ be a good stratum of~$\B$ such that~$\chi(Z)=\mu$, and let~$\td{\B}$ be the blow-up of~$\B$ along~$Z$.
%		\begin{enumerate}
%		\item The dual of formula (\ref{eqdefPhi}) defines a morphism of bi-complexes~$\Psi:\qD_{\bullet,\bullet}(\td{\B})\rightarrow\qD_{\bullet,\bullet}(\B)~$.
%		\item If~$Z$ is exact, then the morphism~$\Psi: \qD_\bullet(\td{\B})\rightarrow \qD_\bullet(\B)$ induced on the total complexes is a quasi-isomorphism.
%		\end{enumerate}
%		\end{thm}
%		These two results imply the main theorem of this article.}
		
		\begin{thm}\label{maintheorem}
		Let~$\B$ be an exact bi-arrangement of hypersurfaces in a complex manifold~$X$.
		\begin{enumerate}
		\item There is a spectral sequence
		\begin{equation}\label{eqspectralsequenceOS}
		E_1^{-p,q}(\B)=\qD_p(\B) \;\Longrightarrow \; H^{-p+q}(\B).
		\end{equation}
		\item If~$X$ is a smooth complex variety and all hypersurfaces of~$\B$ are divisors in~$X$, then this is a spectral sequence in the category of mixed Hodge structures.
		\item If~$X$ is a smooth and projective complex variety, then this spectral sequence degenerates at the~$E_2$ term and we have
		$$E_\infty^{-p,q}\cong E_2^{-p+q} \cong \gr_q^W H^{-p+q}(\B).$$
		\end{enumerate}
		\end{thm}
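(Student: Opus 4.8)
The plan is to deduce Theorem~\ref{maintheorem} from the technical Theorem~\ref{maintheoremtechnical} and from the normal crossing case treated in Appendix~\ref{parappA}. First I would run the resolution process of \S\ref{sectionhypersurfaces}: iterated blow-ups along minimal irreducible strata yield a sequence $\B=\B^{(0)},\B^{(1)},\ldots,\B^{(\infty)}$ in which $\B^{(\infty)}$ is a normal crossing divisor, and by Definition~\ref{defimotive} one has $H^\bullet(\B)=H^\bullet(\B^{(\infty)})$. Using Corollary~\ref{coroblowupexact} and an immediate induction, every $\B^{(k)}$ is exact; in particular the stratum $Z^{(k)}$ blown up at step $k$ is an exact strict stratum, so Theorem~\ref{maintheoremtechnical} applies at each step. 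Depending on whether $\chi(Z^{(k)})=\lambda$ or $\chi(Z^{(k)})=\mu$, it furnishes a quasi-isomorphism $\Phi\colon\qD_\bullet(\B^{(k)})\to\qD_\bullet(\B^{(k+1)})$ or, by the dual statement (with $\Psi$ as defined in \S\ref{pardefPhi}), $\Psi\colon\qD_\bullet(\B^{(k+1)})\to\qD_\bullet(\B^{(k)})$, between the total complexes of the geometric Orlik--Solomon bi-complexes (for every fixed index $q$). Composing this zigzag produces a canonical isomorphism $H_\bullet(\qD_\bullet(\B))\cong H_\bullet(\qD_\bullet(\B^{(\infty)}))$.

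Next I would invoke Appendix~\ref{parappA}: since $\B^{(\infty)}$ is a normal crossing divisor, Proposition~\ref{appspectralsequenceNCD} provides a spectral sequence converging to $H^\bullet(\B^{(\infty)})$ whose $E_1$-page, row by row in $q$, is exactly the total complex $\qD_\bullet(\B^{(\infty)})$ with its natural differential (as already recorded among the examples following the definition of the geometric Orlik--Solomon bi-complex). I would then set $E_1^{-p,q}(\B)=\qD_p(\B)$ with $d_1$ the total differential of the geometric Orlik--Solomon bi-complex, and transport the spectral sequence of $\B^{(\infty)}$ from the $E_2$-page onward along the isomorphism of the previous paragraph. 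This gives the spectral sequence of part (1), abutting to $H^{-p+q}(\B^{(\infty)})=H^{-p+q}(\B)$.

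For part (2) one checks that every ingredient is compatible with mixed Hodge structures: the differentials $d'$, $d''$ of $\qD_{\bullet,\bullet}(\B)$ are morphisms of mixed Hodge structures (\S\ref{sectiongeometric}), the maps $\Phi$ and $\Psi$ are morphisms of mixed Hodge structures (Theorem~\ref{maintheoremtechnical} and \S\ref{pardefPhi}), and the spectral sequence of Appendix~\ref{parappA} for a normal crossing divisor is one of mixed Hodge structures; the transported spectral sequence is therefore one of mixed Hodge structures as well. For part (3), if $X$ is smooth and projective then every stratum $S$ is smooth and projective, so each summand $H^{q-2i}(S)(-i)$ of $E_1^{-p,q}(\B)$ is a pure Hodge structure of weight $q$; hence every row $E_1^{-\bullet,q}(\B)$, and thus every subquotient $E_r^{-p,q}$, is pure of weight $q$. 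For $r\geq 2$ the differential $d_r\colon E_r^{-p,q}\to E_r^{-p+r,q-r+1}$ is a morphism between pure Hodge structures of different weights, hence zero, which yields degeneration at $E_2$. Finally, since the spectral sequence is one of mixed Hodge structures with all $E_r^{-p,q}$ pure of weight $q$, strictness forces the abutment filtration to be the weight filtration, giving $\gr_q^W H^{-p+q}(\B)\cong E_\infty^{-p,q}\cong E_2^{-p,q}$.

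The genuinely hard part is Theorem~\ref{maintheoremtechnical}(2), namely that $\Phi$ (hence $\Psi$) is a quasi-isomorphism on total complexes; this occupies \S\ref{parproof} and uses the Chern class and blow-up identities collected in Appendix~\ref{parappB}, and once it is granted the deduction above is essentially bookkeeping. The one conceptual point to keep in mind is that the combinatorial complex $\qD_\bullet(\B)$ is accessible geometrically only after resolution: the zigzag of quasi-isomorphisms $\Phi$, $\Psi$ is precisely the bridge that lets one assert a spectral sequence with $E_1$-term $\qD_p(\B)$ rather than $\qD_p(\B^{(\infty)})$.
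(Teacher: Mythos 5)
Your proposal is correct and follows essentially the same route as the paper: resolve by iterated blow-ups along minimal irreducible strata, invoke Proposition~\ref{appspectralsequenceNCD} for the normal crossing model, and use Corollary~\ref{coroblowupexact} together with Theorem~\ref{maintheoremtechnical} to replace $\qD_\bullet(\B^{(\infty)})$ by $\qD_\bullet(\B)$ via the (zigzag of) quasi-isomorphisms, with parts (2) and (3) following from the Hodge-theoretic statements in the normal crossing case exactly as you describe.
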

		
		\begin{proof}
		\begin{enumerate}
		\item Let~$X^{(\infty)}=X^{(N)}\rightarrow X^{(N-1)} \rightarrow \cdots \rightarrow X^{(1)} \rightarrow X^{(0)}=X$ be the sequence of blow-ups used to define the motive of~$\B$ (Definition~\ref{defimotive}) and~$\B^{(\infty)}=\B^{(N)},\B^{(N-1)},\ldots, \B^{(1)},\B^{(0)}=\B$ be the corresponding bi-arrangements of hypersurfaces, with~$\B^{(\infty)}=(\L^{(\infty)},\M^{(\infty)})$ a normal crossing divisor. According to Proposition~\ref{appspectralsequenceNCD}, there is a spectral sequence
		\begin{equation}\label{eqssintheproof}
		E_1^{-p,q}=\qD_p(\B^{(\infty) })  \;\Longrightarrow \; H^{-p+q}(\B).
		\end{equation}
		Since~$\B$ is exact, Corollary~\ref{coroblowupexact} implies that for each~$k$,~$\B^{(k)}$ is exact. Then for each~$k$, Theorem~\ref{maintheoremtechnical} or its dual counterpart (see Remark \ref{remmaintechnicaldual}) implies that there is a quasi-isomorphism~$\qD_{\bullet}(\B^{(k)})\sim \qD_\bullet(\B^{(k+1)})$. Thus we have a quasi-isomorphism~$\qD_\bullet(\B)\sim \qD_\bullet(\B^{(\infty)})$; we can thus replace the~$E_1$ page of the spectral sequence (\ref{eqssintheproof}) by the collections of the complexes~$\qD_\bullet(\B)$, and the result follows (the other pages of the spectral sequence are unchanged).
		\item This follows from the analogous statement for normal crossing divisors (Proposition~\ref{appspectralsequenceNCD}) and the fact that the morphisms (\ref{eqPhi}) are morphisms of mixed Hodge structures.
		\item This follows from the analogous statement for normal crossing divisors (Proposition~\ref{appspectralsequenceNCD}).
		\end{enumerate}
		\end{proof}
		
		\begin{rem}
		In the case of an arrangement of hypersurfaces~$(\A,\lambda)$, Theorem~\ref{maintheorem} gives a spectral sequence
		$$E_1^{-p,q}=\bigoplus_{S\in\s_p(\A)}H^{q-2p}(S)(-p)\otimes A_p^S(\A)  \;\Longrightarrow \; H^{-p+q}(X\setminus \A)~$$
		which was first defined in~\cite{looijenga} and studied in~\cite{duponthypersurface} in the context of logarithmic differential forms and mixed Hodge theory.
		\end{rem}
		
		\begin{rem}
		The spectral sequence (\ref{eqspectralsequenceOS}) is independent of the choices made during the blow-up process. This will be proved in a subsequent article, as well as other functoriality properties of this spectral sequence with respect to the change of wonderful compactification and deletion/restriction.
		\end{rem}

	%\subsection{Independence of the choices}
		
	%	A priori, the spectral sequence (\ref{eqspectralsequenceOS}) depends on the choice, at each step of the resolution of singularities, of a minimal irreducible stratum of codimension~$\geq 2$. The following Proposition shows that it is not the case.
		
		%\begin{prop}\label{propindependenceorder}
		%The spectral sequence (\ref{eqspectralsequenceOS}) does not depend on the order of the blow-ups.
		%\end{prop}
		
		%In the same spirit, one may want to use another resolution of singularities attached to a building set~$\mathscr{G}$ (see \S\ref{parbuildingsets}). This gives a spectral sequence
		%\begin{equation}\label{eqspectralsequenceG}
		%E_1^{-p,q}=\qD_p(\B)  \;\Longrightarrow \; H^{-p+q}_{\mathscr{G}}(\B)\cong H^{-p+q}(\B).
		%\end{equation}
		%which depends a priori on~$\mathscr{G}$.
		
		%\begin{prop}\label{propindependenceG}
		%The spectral sequence (\ref{eqspectralsequenceG}) does not depend on the choice of a building set~$\mathscr{G}$.
		%\end{prop}
		
		%Both Proposition~\ref{propindependenceorder} and~\ref{propindependenceG} will be proved in \S ??.

\section{Application to projective bi-arrangements}\label{sectionprojective}

	\subsection{The setup}

		Let~$\A$ be an arrangement in~$\C^{n+1}$ with~$n\geq 1$. We let~$\P\A$ be the corresponding projective arrangement in~$\P^n(\C)$; it is an arrangement of hypersurfaces consisting of the images~$\P K$ of the hyperplanes~$K\in\A$ by the projection~$\C^{n+1}\setminus 0\rightarrow \P^n(\C)$. The strata of~$\P\A$ are the images~$\P S$ of the strata~$S\neq 0$ of~$\A$. We implicitly assume that~$0$ is a stratum of~$\A$.
		
		A partial coloring function~$\chi:\s_+(\A)\setminus\{0\}\rightarrow\{\lambda,\mu\}$ that satisfies the K\"{u}nneth condition (\ref{kunnethcondition}) gives rise to a \textit{projective bi-arrangement}~$\P\B=(\P\A,\chi)$ where we put~$\chi(\P S)=\chi(S)$. It is a bi-arrangement of hypersurfaces in~$X=\P^n(\C)$. 
		
		This projective bi-arrangement does not necessarily come from a bi-arrangement~$\B=(\A,\chi)$ since the color~$\chi(0)$ is not defined. We will write~$\B_\lambda$ (resp.~$\B_\mu$) for the bi-arrangements~$(\A,\chi)$ with~$\chi(0)=\lambda$ (resp.~$\chi(0)=\mu$), if they are well-defined (i.e. if they satisfy the K\"{u}nneth condition for the stratum~$0$).
		
		%We will make an abuse and write~$\B$ for the partial bi-arrangement. 
		There is a partial Orlik--Solomon bi-complex~$A_{\bullet,\bullet}(\B)$ where we have vector spaces~$A_{i,j}^S(\B)$ for strata~$S\neq 0$. If~$\B_\lambda$ (resp.~$\B_\mu$) are well-defined, then it can be completed to an Orlik--Solomon bi-complex~$A_{\bullet,\bullet}(\B_\lambda)$ (resp.~$A_{\bullet,\bullet}(\B_\mu)$).
		
		%\begin{defi}
		%We say that~$\B$ is \textit{weakly exact} if all its strata~$S\neq 0$ are exact. We say that~$\B$ is \textit{$\lambda$-exact} (resp. \textit{$\mu$-exact}) if~$\B_\lambda$ (resp.~$\B_\mu$) is exact. We say that~$\B$ is \textit{strongly exact} if it is~$\lambda$-exact and~$\mu$-exact.
		%\end{defi}

		\begin{rem}\label{remcohomologyproj}
		 For a projective space~$\P^r(\C)$ we have canonical isomorphisms~$H^{2k}(\P^r(\C))\cong \Q(-k)$ for~$k=0,\ldots,r$, and~$H^{2k+1}(\P^r(\C))=0$ for all~$k$. Furthermore, for the inclusion~$\iota:\P^{r-1}(\C)\hookrightarrow \P^r(\C)$ of a projective hyperplane:
		\begin{enumerate}[--]
		\item the Gysin morphism~$\iota_*:H^{2(k-1)}(\P^{r-1}(\C))(-1)\rightarrow H^{2k}(\P^r(\C))$ is the identity of~$\Q(-k)$ for~$k=1,\ldots,r$;
		\item the pull-back morphism~$\iota^*:H^{2k}(\P^r(\C))\rightarrow H^{2k}(\P^{r-1}(\C))$ is the identity of~$\Q(-k)$ for~$k=0,\ldots,r-1$.
		\end{enumerate}
		\end{rem}
		
		The next proposition expresses the (geometric) Orlik--Solomon bi-complex of~$\P\B$ in terms of that of~$\B$.
		
		\begin{prop}\label{propOSBPB}
		\begin{enumerate}
		\item We have isomorphisms~$$A^{\P S}_{i,j}(\P\B)\cong A^S_{i,j}(\B)$$
		for~$S\in\s_{i+j}(\B)$,~$S\neq 0$, which induce isomorphisms of bi-complexes 
		$$A^{\leq\P\Sigma}_{\bullet,\bullet}(\P\B)\cong A^{\leq\Sigma}_{\bullet,\bullet}(\B)$$
		for~$\Sigma\neq 0$ a strict stratum of~$\B$. 
		\item We have~${}^{(q)}D_{i,j}^{\P S}(\P\B) =0$ for~$q$ odd.
		For~$k=0,\ldots,n$ we have isomorphisms of pure Hodge structures of weight~$2k$:
		$${}^{(2k)}D_{i,j}^{\P S}(\P\B) \cong \begin{cases} A_{i,j}^S(\B)(-k) & \textnormal{ if } 0\leq i\leq k \textnormal{ and } 0\leq j\leq n-k; \\ 0 & \textnormal{ otherwise.} \end{cases}~$$
		Furthermore, these isomorphisms are compatible with the differentials~$d'$ and~$d''$.
		\end{enumerate}
		\end{prop}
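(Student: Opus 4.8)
The plan is to reduce part~(1) to the combinatorics of the colored stratification poset, and part~(2) to a direct computation with the cohomology of projective spaces.

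\emph{Part (1).} The construction of the Orlik--Solomon bi-complex in Lemma~\ref{defiOSbicomplex} uses nothing about an arrangement beyond the poset of its strata (with their codimensions and the relations $\stackrel{c}{\hookrightarrow}$) together with the coloring function: at each stage of the induction $A^\Sigma_{i,j}$ is \emph{forced} to be a kernel (if $\chi(\Sigma)=\lambda$) or a cokernel (if $\chi(\Sigma)=\mu$) of already-constructed maps, and the differentials $d'_{\Sigma,S},d''_{\Sigma,S}$ are forced as well. Now $S\mapsto\P S$ is an isomorphism of posets from $\{S\in\s(\A):S\supset\Sigma\}$ onto $\{T\in\s(\P\A):T\supset\P\Sigma\}$ sending $\C^{n+1}$ to $\P^n$: a linear subspace of $\C^{n+1}$ is an intersection of hyperplanes of $\A$ iff its projectivization is the corresponding intersection in $\P\A$, one has $\mathrm{codim}_{\C^{n+1}}(S)=\mathrm{codim}_{\P^n}(\P S)$, and by the definition $\chi(\P S):=\chi(S)$ this bijection preserves colors. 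Since $A^{\leq\Sigma}_{\bullet,\bullet}(\B)$ depends only on $\B^{\leq\Sigma}$ and $A^{\leq\P\Sigma}_{\bullet,\bullet}(\P\B)$ only on $(\P\B)^{\leq\P\Sigma}$, transporting the former across this colored poset isomorphism produces a datum satisfying the characterizing conditions of Lemma~\ref{defiOSbicomplex} for the latter, hence by uniqueness it \emph{is} $A^{\leq\P\Sigma}_{\bullet,\bullet}(\P\B)$. This yields the isomorphisms $A^S_{i,j}(\B)\cong A^{\P S}_{i,j}(\P\B)$ and $A^{\leq\Sigma}_{\bullet,\bullet}(\B)\cong A^{\leq\P\Sigma}_{\bullet,\bullet}(\P\B)$, compatibly with $d'$ and $d''$. (Alternatively, in an affine chart around any point of $\P\Sigma$, $(\P\B)^{\leq\P\Sigma}$ is a product of a trivial factor with a central bi-arrangement isomorphic to the one attached to $\B^{\leq\Sigma}$ in $\C^{n+1}/\Sigma$, so one may also invoke the K\"unneth formula of Proposition~\ref{propkunneth}.)

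\emph{Part (2).} If $S\in\s_{i+j}(\B)$ and $S\neq 0$, then $\P S\cong\P^{\,n-i-j}(\C)$. By Remark~\ref{remcohomologyproj}, $H^{q-2i}(\P S)=0$ whenever $q-2i$ is odd, which already gives ${}^{(q)}D^{\P S}_{i,j}(\P\B)=0$ for $q$ odd. For $q=2k$ even, $H^{2k-2i}(\P S)(-i)$ is non-zero precisely when $0\leq 2k-2i\leq 2(n-i-j)$, that is, when $0\leq i\leq k$ and $0\leq j\leq n-k$, and in that range Remark~\ref{remcohomologyproj} provides a canonical isomorphism of Hodge structures $H^{2k-2i}(\P S)(-i)\cong\Q(-(k-i))(-i)=\Q(-k)$; combined with part~(1) this gives
$$
{}^{(2k)}D^{\P S}_{i,j}(\P\B)=H^{2k-2i}(\P S)(-i)\otimes A^{\P S}_{i,j}(\P\B)\cong\Q(-k)\otimes A^S_{i,j}(\B)=A^S_{i,j}(\B)(-k),
$$
while outside that range one of the two tensor factors vanishes. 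It is an isomorphism of pure Hodge structures of weight $2k$ because $\P^n(\C)$ is smooth projective and the hypersurfaces of $\P\B$ are divisors.

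\emph{Differentials, and the main obstacle.} The differentials of ${}^{(2k)}D_{\bullet,\bullet}(\P\B)$ read $d'_{\P S,\P T}(s\otimes X)=(\iota^{\P T}_{\P S})_*(s)\otimes d'_{S,T}(X)$ and $d''_{\P S,\P T}(t\otimes X)=(\iota^{\P T}_{\P S})^*(t)\otimes d''_{S,T}(X)$. Every inclusion $\P S\stackrel{1}{\hookrightarrow}\P T$ appearing here is that of a projective hyperplane $\P^r\hookrightarrow\P^{r+1}$, so Remark~\ref{remcohomologyproj} tells us that both the Gysin map $(\iota^{\P T}_{\P S})_*$ and the pull-back $(\iota^{\P T}_{\P S})^*$ become, under the identifications of part~(2), the identity of $\Q(-k)$; as both geometric bi-complexes are supported on the rectangle $0\leq i\leq k$, $0\leq j\leq n-k$ and the sign conventions are carried entirely by the $A$-factors (already identified in part~(1)), the isomorphisms of part~(2) intertwine $d',d''$ on ${}^{(2k)}D_{\bullet,\bullet}(\P\B)$ with $d',d''$ on the truncation to that rectangle of $A_{\bullet,\bullet}(\B)(-k)$. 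I expect the only genuinely delicate point to be part~(1): one must be willing to say plainly that the Orlik--Solomon bi-complex depends only on the colored stratification poset — which is precisely what the inductive construction of Lemma~\ref{defiOSbicomplex} yields — and to keep the strata bijection $S\leftrightarrow\P S$ straight; part~(2) is then routine bookkeeping with Tate twists and index ranges.
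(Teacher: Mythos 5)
Your proposal is correct and follows the same route as the paper: the paper dismisses part (1) as trivial (your colored-poset/local-chart argument is exactly the reason why), and its proof of part (2) is the identical computation with Remark~\ref{remcohomologyproj}, the index range $0\leq k-i\leq n-i-j$, and the observation that the Gysin and pull-back maps become identities of $\Q(-k)$.
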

		
		\begin{proof}
		\begin{enumerate}
		\item It is trivial.
		\item The first statement comes from the fact (Remark~\ref{remcohomologyproj}) that the projective spaces do not have cohomology in odd degree. For~$k=0,\ldots,n$, we have~${}^{(2k)}D_{i,j}^{\P S}=H^{2(k-i)}(\P S)(-i)\otimes A_{i,j}^{\P S}$. The cohomology group~$H^{2(k-i)}(\P S)(-i)$ is non-zero if and only if~$0\leq k-i \leq n-\mathrm{codim}(S)=n-i-j$, which amounts to~$i\leq k$ and~$j\leq n-k$. In this range, we have a canonical isomorphism~$H^{2(k-i)}(\P S)(-i)\cong\Q(-k)$, hence the result. The compatibility with the differentials come from Remark~\ref{remcohomologyproj}.
		\end{enumerate}
		\end{proof}
		
		According to Proposition~\ref{propOSBPB},~$\P\B$ is exact if all strict strata~$\Sigma\neq 0$ of~$\B$ are exact. It is actually convenient to ask for more and make the following definition.
		
		\begin{defi}
		We say that~$\P\B$ is \textit{$\lambda$-exact} (resp. \textit{$\mu$-exact}) if~$\B_\lambda$ (resp.~$\B_\mu$) is well-defined and exact. We say that~$\P\B$ is \textit{strongly exact} if it is~$\lambda$-exact and~$\mu$-exact.
		\end{defi}
		
		We define in the same fashion the concepts of~$\lambda$-tame,~$\mu$-tame and strongly tame projective bi-arrangements; for such bi-arrangements, Theorem~\ref{thmtameOS} provides an explicit presentation of the Orlik--Solomon bi-complex.
		
		Let us then write~$\kA_{\bullet,\bullet}(\B)$ for the bi-complex obtained from~$A_{\bullet,\bullet}(\B)$ by keeping only the rectangle~$0\leq i\leq k$,~$0\leq j\leq n-k$. We write~$\kA_\bullet(\B)$ for its total complex, which means that we set 
		$$\kA_r(\B)=\bigoplus_{\substack{i-j=r\\ 0\leq i\leq k\\ 0\leq j\leq n-k}} A_{i,j}(\B).$$
		
		\begin{thm}\label{thmprojective}
		Let~$\P\B$ be a projective bi-arrangement in~$\P^n(\C)$.
		\begin{enumerate} 
		\item If~$\P\B$ is exact then we have isomorphisms, for~$r=0,\ldots,2n$:
		$$\gr_{2k}^W H^r(\P\B)\cong H_{2k-r}(\kA_\bullet(\B)).$$
		\item If~$\P\B$ is~$\lambda$-exact then we have~$H^r(\P\B)=0$ for~$r>n$. For~$r=0,\ldots,n$ we have isomorphisms, for~$k=0,\ldots,r$:
		$$\gr_{2k}^W H^r(\P\B) \cong \mathrm{coker}\left( A_{k+1,r-k-1}(\B_\lambda)\stackrel{d''}{\longrightarrow} A_{k+1,r-k}(\B_\lambda)\right).$$
		\item If~$\P\B$ is~$\mu$-exact then we have~$H^r(\P\B)=0$ for~$r<n$. For~$r=n,\ldots,2n$ we have isomorphisms, for~$k=n-r,\ldots,n$:
		$$\gr_{2k}^W H^r(\P\B) \cong \mathrm{ker}\left( A_{r-n+k,n-k+1}(\B_\mu)\stackrel{d'}{\longrightarrow} A_{r-n+k-1,n-k+1}(\B_\mu)\right).$$
		\item If~$\P\B$ is strongly exact then we have~$H^r(\P\B)=0$ for~$r\neq n$, and we have isomorphisms, for~$k=0,\ldots,n$:
		\begin{eqnarray*}
		\gr_{2k}^W H^n(\P\B)  & \cong  &\mathrm{coker}\left( A_{k+1,n-k-1}(\B_\lambda)\stackrel{d''}{\longrightarrow} A_{k+1,n-k}(\B_\lambda)\right) \\
		 & \cong  & \mathrm{ker}\left( A_{k,n-k+1}(\B_\mu)\stackrel{d'}{\longrightarrow} A_{k-1,n-k+1}(\B_\mu)\right).
		\end{eqnarray*}
		\end{enumerate}
		\end{thm}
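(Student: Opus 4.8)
The plan is to apply the main theorem (Theorem~\ref{maintheorem}) to the exact bi-arrangement of hypersurfaces~$\P\B$ in the smooth projective variety~$X=\P^n(\C)$, feeding in the explicit description of its geometric Orlik--Solomon bi-complex from Proposition~\ref{propOSBPB}. This gives~(1) at once; parts~(2)--(4) are then deduced by using the extra exactness hypotheses to compute the abutment explicitly.

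\textit{Part (1).} Note first that~$\P\B$ is exact: by Proposition~\ref{propOSBPB}, $\P\B$ is exact as soon as all strict strata~$\Sigma\neq 0$ of~$\B$ are exact, which holds in particular when~$\B_\lambda$ or~$\B_\mu$ is exact, and in the plain case it is assumed. Since~$X=\P^n(\C)$ is smooth and projective, Theorem~\ref{maintheorem}(3) yields a spectral sequence
$$E_1^{-p,q}(\P\B)=\qD_p(\P\B)\;\Longrightarrow\;H^{-p+q}(\P\B)$$
degenerating at~$E_2$, with~$E_\infty^{-p,q}\cong\gr_q^W H^{-p+q}(\P\B)$. By Proposition~\ref{propOSBPB}(2) the index-$q$ complex~$\qD_\bullet(\P\B)$ vanishes for~$q$ odd, and for~$q=2k$ the isomorphisms~$\qD^{\P S}_{i,j}(\P\B)\cong A^S_{i,j}(\B)(-k)$ --- which are zero outside the rectangle~$0\leq i\leq k$,~$0\leq j\leq n-k$ and are compatible with~$d'$ and~$d''$ --- identify~$\qD_\bullet(\P\B)$ with~$\kA_\bullet(\B)(-k)$. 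Hence~$E_2^{-p,2k}\cong H_p(\kA_\bullet(\B))(-k)$, and setting~$r=-p+2k$ gives~$\gr_{2k}^W H^r(\P\B)\cong H_{2k-r}(\kA_\bullet(\B))(-k)$, all odd-weight graded pieces being zero.

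\textit{Parts (2) and (3).} Assume~$\B_\lambda$ is well-defined and exact. The origin~$0$ is a strict stratum of codimension~$n+1$ with~$\chi(0)=\lambda$, and every hyperplane of~$\A$ contains~$0$, so~$A_{\bullet,\bullet}^{\leq 0}(\B_\lambda)=A_{\bullet,\bullet}(\B_\lambda)$; thus exactness of~$0$ says precisely that for every~$j$ the complex
$$0\to A_{n+1-j,j}(\B_\lambda)\xrightarrow{d'}A_{n-j,j}(\B_\lambda)\xrightarrow{d'}\cdots\xrightarrow{d'}A_{0,j}(\B_\lambda)\to 0$$
is exact (in codimension~$n+1$ only~$0$ contributes, so~$A_{n+1-j,j}(\B_\lambda)=A^0_{n+1-j,j}$). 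Since~$A_{i,j}(\B)=A_{i,j}(\B_\lambda)$ whenever~$i+j\leq n$, the truncation~$\kA_{\bullet,\bullet}(\B)$ is the sub-rectangle~$0\leq i\leq k$,~$0\leq j\leq n-k$ of~$A_{\bullet,\bullet}(\B_\lambda)$. I would then run the spectral sequence of this double complex taking the~$d'$-differential first: the row-exactness above forces the~${}'E_1$-page to be concentrated in the single column~$i=k$, with~${}'E_1^{k,j}=\mathrm{im}\bigl(d'\colon A_{k+1,j}(\B_\lambda)\to A_{k,j}(\B_\lambda)\bigr)$ for~$0\leq j\leq n-k$; being supported in one column it degenerates, and~$H_p(\kA_\bullet(\B))$ is the cohomology of the complex~$\bigl({}'E_1^{k,\bullet},d''\bigr)$ at~$j=k-p$. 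As this complex lives only in degrees~$0\leq j\leq n-k$, its cohomology at~$j=r-k$ vanishes for~$r>n$, whence~$H^r(\P\B)=0$ for~$r>n$; and for~$0\leq k\leq r\leq n$ one identifies the surviving cohomology group with the cokernel in the statement by a further application of the row-exactness of~$A_{\bullet,\bullet}(\B_\lambda)$ (rewriting a~$d''$-kernel inside an image of~$d'$ as the asserted cokernel of~$d''$ in the appropriate row). Part~(3) is proved by the dual argument: one replaces~$\B_\lambda$ by~$\B_\mu$, uses the column-exactness of~$A_{\bullet,\bullet}(\B_\mu)$ coming from exactness of the origin, and takes the~$d''$-differential first.

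\textit{Part (4).} If~$\P\B$ is strongly exact it is both~$\lambda$-exact and~$\mu$-exact, so (2) and (3) both apply: the vanishing ranges~$r>n$ (from~(2)) and~$r<n$ (from~(3)) combine to~$H^r(\P\B)=0$ for~$r\neq n$, and the two formulas furnish the two descriptions of~$\gr_{2k}^W H^n(\P\B)$. The delicate point throughout is the homological bookkeeping in~(2)--(3): the rectangle~$\kA_{\bullet,\bullet}(\B)$ is \emph{not} stable under~$d''$ (the column~$j=n-k$ maps out of it), so one must carefully isolate which boundary contributions genuinely survive in the total complex and then match the surviving homology with the precise cokernel (resp. kernel) in the statement, using the description of the Orlik--Solomon bi-complex on the origin stratum. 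This matching is exactly where the hypotheses~$\lambda$-exact, resp.~$\mu$-exact, are used --- not merely the exactness of~$\P\B$.
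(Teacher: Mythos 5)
Part (1) of your argument is correct and coincides with the paper's proof (Theorem~\ref{maintheorem} plus Proposition~\ref{propOSBPB}). For parts (2)--(3), your reduction via the first spectral sequence of the rectangle is also correct as far as it goes, and is a more explicit organization of what the paper dismisses as ``a diagram chase'': exactness of the origin stratum of~$\B_\lambda$ does give exactness of the full rows $0\to A_{n+1-j,j}(\B_\lambda)\to\cdots\to A_{0,j}(\B_\lambda)\to 0$, the rows of the truncation therefore have homology only at $i=k$, equal to $K_j=\mathrm{im}(d'\colon A_{k+1,j}\to A_{k,j})$, the sequence degenerates, and $H_{2k-r}(\kA_\bullet(\B))$ is the homology of $(K_\bullet,d'')$ at $j=r-k$; the vanishing for $r>n$ follows. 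All of that is fine.

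The gap is the step you defer to ``a further application of the row-exactness,'' and it cannot be closed as stated. What you have at that point is
$$\ker\bigl(d''\colon K_{r-k}\to K_{r-k+1}\bigr)\big/\mathrm{im}\bigl(d''\colon K_{r-k-1}\to K_{r-k}\bigr),\qquad K_j\cong \mathrm{coker}\bigl(d'\colon A_{k+2,j}(\B_\lambda)\to A_{k+1,j}(\B_\lambda)\bigr),$$
where the second identification is the one supplied by row-exactness. This is the homology of the complex of $d'$-cokernels of the $(k+1)$-st row under the induced $d''$; it is \emph{not} $\mathrm{coker}(d''\colon A_{k+1,r-k-1}\to A_{k+1,r-k})$ --- both the quotient by $\mathrm{im}(d'\colon A_{k+2,r-k}\to A_{k+1,r-k})$ and the kernel condition in the $d''$-direction are missing from that target. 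The discrepancy is real, not bookkeeping: take $n=1$, $\L$ two points of $\P^1(\C)$, $\M=\varnothing$, $r=k=0$; then $\gr_0^WH^0(\P\B)=\Q$, while $\mathrm{coker}(A_{1,-1}\to A_{1,0})=A_{1,0}=\Q^2$. (Consistently, the remark following the theorem computes $H^k(\P^n(\C)\setminus\A)$ as $\ker(A_k\to A_{k-1})\cong\mathrm{coker}(d'\colon A_{k+2,0}\to A_{k+1,0})$, a proper quotient of $A_{k+1,0}$ in general.) The paper's own two-line sketch suffers from the same defect --- for $r<n$ the element $d'(x)$ is not even a cycle of the total complex, since $d''d'(x)=d'd''(x)$ need not vanish in $A_{k,r-k+1}$ --- so the problem appears to lie in the displayed formula of parts (2)--(4) rather than only in your write-up; but in any case the identification you assert is the entire content of these parts, it is the one place the $\lambda$-exactness beyond part (1) is used, and your proof does not establish it.
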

		
		\begin{proof}
		\begin{enumerate}
		\item This is a consequence of Theorem~\ref{maintheorem} and Proposition~\ref{propOSBPB}.
		\item The differential~$d':A_{k+1,r-k}(\B_\lambda)\rightarrow A_{k,r-k}(\B_\lambda)=A_{k,i-k}(\B)$ induces a morphism~$A_{k+1,r-k}(\B_\lambda)\rightarrow H_{2k-r}(\kA_\bullet(\B))$. A diagram chase shows that it induces an isomorphism as in the statement.
		\item This is the dual of 3.
		\item This is a consequence of 2 and 3.
		\end{enumerate}
		\end{proof}
		
		\begin{rem}
		In the case of a projective arrangement of hyperplanes~$(\A,\lambda)$, the cohomology group~$H^k(\A,\lambda)=H^k(\P^n(\C)\setminus \A)$ is concentrated in weight~$2k$ and Theorem~\ref{thmprojective} gives the isomorphism
		$$H^k(\P^n(\C)\setminus \A)\cong \mathrm{ker}\left(A_k(\A)\stackrel{d}{\longrightarrow} A_{k-1}(\A)\right)$$
		which is the projective version of the Brieskorn--Orlik--Solomon theorem~\cite{brieskorn,orliksolomon}.
		\end{rem}
		
 	\subsection{Multizeta bi-arrangements}\label{parmultizetabiarrangements}
 	
 		Let~$r\geq 1$ and~$n_1,\ldots,n_r$ be integers with~$n_1,\ldots,n_{r-1}\geq 1$ and~$n_r\geq 2$. Generalizing the zeta values (\ref{eqzetasum}), one defines the multiple zeta values
 		$$\zeta(n_1,\ldots,n_r)=\sum_{1\leq k_1<\cdots <k_r}\dfrac{1}{k_1^{n_1}\cdots k_r^{n_r}}\cdot$$
		 We let~$n=n_1+\cdots+n_r$ and define an~$n$-tuple 
		$$(a_1,\ldots,a_n)=(\underbrace{1,0,\ldots,0}_{n_1},\ldots,\underbrace{1,0,\ldots,0}_{n_r}).$$
		It has been noticed by Kontsevich that the integral formula (\ref{eqzetaintegral}) has the following generalization:
		$$\zeta(n_1,\ldots,n_r)=(-1)^r\int_{0<x_1<\cdots<x_n<1}\frac{dx_1}{x_1-a_1}\cdots\dfrac{dx_n}{x_n-a_n}\cdot$$
		We may then generalize the discussion of \S\ref{parintroperiods} and produce motives that have a certain multiple zeta value as a period, as follow. In~$\P^n(\C)$ with projective coordinates~$(z_0,z_1,\ldots,z_n)$, we define two arrangements of hyperplanes~$\L=\{L_0,\ldots,L_n\}$ and~$\M=\{M_0,\ldots,M_n\}$:
		\begin{enumerate}[--]
		\item we let~$L_0=\{z_0=0\}$ be the hyperplane at infinity, and for~$k=1,\ldots,n$, let~$L_k=\{z_k=a_kz_0\}$; 
		\item we let~$M_0=\{z_1=0\}$,~$M_n=\{z_n=z_0\}$, and for~$k=1,\ldots,n-1$,~$M_k=\{z_k=z_{k+1}\}$.
		\end{enumerate}
		
		 \begin{defi}
		 The \textit{multizeta bi-arrangement}~$\mathscr{Z}(n_1,\ldots,n_r)$ is the projective bi-arrangement~$(\L,\M,\chi)$ where~$\chi$ is defined to be~$\mu$ on all~$\M$-strata, and~$\lambda$ on all other strata.
		 \end{defi}
		 
		 One can check that this indeed defines a projective bi-arrangement. Using the same argument as in \S\ref{parintroperiods}, we can show (see~\cite{dupontPhD}) that the multiple zeta value~$\zeta(n_1,\ldots,n_r)$ is a period of the motive~$H^n(\mathscr{Z}(n_1,\ldots,n_r))$, which is (the Hodge realization of) a mixed Tate motive over~$\mathbb{Z}$.\\% It may be shown that this motive is (the Hodge realization of) the multiple~$\zeta$-motive defined in~\cite{goncharovmanin}.
		 
		 The following proposition is easily proved by direct inspection.
		 
		 \begin{prop}
		 The multizeta bi-arrangements~$\mathscr{Z}(n_1,\ldots,n_r)$ are all~$\lambda$-tame, hence~$\lambda$-exact.
		 \end{prop}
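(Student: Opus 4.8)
The plan is to show that $\B_\lambda$ — the multizeta bi-arrangement $\mathscr{Z}(n_1,\ldots,n_r)$ together with the extra choice $\chi(0)=\lambda$ — is a well-defined, tame bi-arrangement; the \enquote{$\lambda$-exact} conclusion is then immediate from Theorem~\ref{thmtameOS}. First I would check well-definedness, i.e.\ the K\"unneth condition at the origin, which is vacuous once one observes that $0$ is an irreducible stratum (equivalently, the graph $\Gamma$ introduced below is $2$-connected, so its graphic matroid $\A=\A^{\leq 0}$ is connected). Then, by Lemma~\ref{lemtameequivalence}, it is enough to verify the condition of Definition~\ref{defitame} for every irreducible stratum of codimension $\geq 2$; by Remark~\ref{remtamelocal} this is a local matter.

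The main device is to realise $\A=\L\sqcup\M$ as a graphic matroid. After homogenising, the hyperplanes become the edge vectors of the graph $\Gamma$ on vertices $\{\star,0,1,\ldots,n\}$ with edges: the path $M_0=\{\star,1\}$, $M_k=\{k,k+1\}$ for $1\le k\le n-1$, $M_n=\{n,0\}$; the chord $L_0=\{\star,0\}$; and, for $1\le k\le n$, the chord $L_k=\{\star,k\}$ when $a_k=0$ and $L_k=\{0,k\}$ when $a_k=1$. In this dictionary: strata correspond to the vertex partitions obtained by contracting edges; $S\subseteq L_I\cap M_J$ iff $S$'s partition refines the contraction of those edges; the $\M$-strata are exactly the contractions of path-edges, i.e.\ the partitions of $(\star,1,\ldots,n,0)$ into consecutive intervals; circuits are the cycles of $\Gamma$; and $L_I\cap M_J$ is an $\M$-stratum iff, after contracting the $M_j$'s, adjoining the chords $L_i$ still leaves a consecutive-interval partition. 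Two facts follow at once: (i) every circuit with $|I|=1$ has $L_I\cap M_J$ an $\M$-stratum (a single chord just closes a path-segment); (ii) the only $\M$-stratum contained in $L_0$ is $0$, since forcing $z_0=0$ by contracting path-edges forces every coordinate to vanish — hence no circuit $(I,J)$ with $0\in I$ has $L_I\cap M_J$ coloured $\mu$.

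With this in hand I would run through an irreducible stratum $S$ of codimension $\geq 2$ according to its colour. If $\chi(S)=\lambda$ and $S\subseteq L_0$ — which covers $S=0$ — then $L_0$ itself witnesses tameness, by (ii). If $\chi(S)=\lambda$ and $S=L_I\cap M_J\not\subseteq L_0$, then $S$ is not an $\M$-stratum, so the smallest $\M$-stratum $M_{J^*}$ containing it (with $J^*=\{j:S\subseteq M_j\}$) strictly contains $S$; since $M_{J^*}\subseteq L_i$ for all $i\in I$ would give $M_{J^*}\subseteq L_I\cap M_J=S$, some $i\in I$ has $M_{J^*}\not\subseteq L_i$, and such an $i$ meets no $\mu$-coloured circuit through $S$ — for such a circuit $(I',K)$ the $\M$-stratum $L_{I'}\cap M_K\supseteq S$ would contain $M_{J^*}$ yet lie in $L_i$ (as $i\in I'$), a contradiction. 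If $\chi(S)=\mu$, write $S=M_J$ and let $V_\star,V_0$ be the components of the contracted path containing $\star$ and $0$. Any circuit $(I',K)$ with $S\subseteq L_{I'}\cap M_K$ uses only edges joining two vertices of one component of $S$, so it lies inside $V_\star$ or inside $V_0$; a cycle inside $V_\star$ uses path-edges of $V_\star$ and chords $\{\star,k\}$ with $a_k=0$ (so $k\ge2$, since $a_1=1$), making it $\mu$-coloured when $|I'|=1$ and, when $|I'|=2$, $\lambda$-coloured with $M$-part an interval of $\{2,3,\ldots\}$ not meeting the edge $M_0$; symmetrically inside $V_0$, with $M_n$ in place of $M_0$. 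Hence $M_0$ witnesses tameness of $S$ if $M_0\in J$, $M_n$ does if $M_n\in J$, and if neither lies in $J$ — so $V_\star=\{\star\}$, $V_0=\{0\}$ — there is no $\lambda$-coloured circuit through $S$ and any hyperplane of $\M$ containing $S$ works. All cases being covered, $\B_\lambda$ is tame.

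The step I expect to be the real work is the $\mu$-coloured case: circuits with $|I|\geq 2$ genuinely occur (for instance $(\{p,q\},\{p,p+1,\ldots,q-1\})$ whenever $a_p=a_q=0$), and for each one — in terms of the tuple $(a_1,\ldots,a_n)$ and the component structure of $S$ — one must decide whether $L_I\cap M_J$ is an $\M$-stratum, so as to be sure the index chosen to witness tameness is never caught in a $\lambda$-coloured circuit. The graphic-matroid picture is precisely what turns this bookkeeping into an inspection of cycles; the K\"unneth verification, the reduction to irreducible strata, facts (i)–(ii), and the whole $\lambda$-coloured case are routine by comparison.
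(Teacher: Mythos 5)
Your proof is correct. The paper itself offers nothing beyond ``easily proved by direct inspection,'' so there is no argument to compare against; what you have written is a complete and correct way of carrying out that inspection, and the graphic-matroid dictionary is exactly the right organizing device. The three structural facts that make everything click are all present in your write-up: (a) the matroid of $\L\sqcup\M$ is connected (the Hamiltonian cycle $\star\,1\,\cdots\,n\,0\,\star$), so the K\"unneth condition at $0$ is vacuous and $\B_\lambda$ is well-defined; (b) the only $\M$-flat inside $L_0$ is the origin, which is $\lambda$-coloured in $\B_\lambda$, so $L_0$ witnesses tameness of every $\lambda$-stratum it contains, and for the remaining $\lambda$-strata the minimality of $M_{J^*}$ gives a witness $L_i$; (c) every chord of $\Gamma$ is incident to $\star$ or to $0$, so a circuit through a $\mu$-stratum $S\neq 0$ lies entirely in $V_\star$ or in $V_0$, uses exactly two edges at that apex, and hence has $|I'|\leq 2$ — the one-chord circuits contract to intervals ($\mu$-coloured), and the two-chord circuits have $M$-part avoiding the indices $0$ and $n$, so $M_0$ or $M_n$ always serves as witness (and when neither contains $S$ there are no circuits through $S$ at all). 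Your closing worry that the $\mu$-coloured case still needs work is unfounded: the degree-two argument at the apex already bounds $|I'|$ by $2$ and pins down the $M$-part of every $\lambda$-coloured circuit, so no further case analysis on $(a_1,\ldots,a_n)$ is required.
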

		 
		 Thus, Theorem~\ref{thmprojective} gives explicit complexes that compute the motive of multizeta bi-arrangements.\\
		 
		 When studying multiple zeta values, the motives~$H^n(\mathscr{Z}(n_1,\ldots,n_r))$ are alternatives, in the spirit of~\cite{terasoma,goncharovmanin}, to the approach of~\cite{delignedroiteprojective, delignegoncharov} which uses the motivic fundamental group of~$\mathbb{P}^1\setminus\{\infty,0,1\}$. One advantage of such an alternative is that it generalizes to a larger family of integrals. More specifically, let us look at the periods of the moduli spaces~$\overline{\mathcal{M}}_{0,n}$ considered by Brown in~\cite{brownPhD}. They are integrals of a rational function over a simplex~$0<t_1<\cdots<t_n<1$, such as
		 \begin{equation}\label{eqintcellzeta}
		 \int_{0<x<y<z<1}\frac{dx\, dy\, dz}{(1-x)y(z-x)}\cdot
		 \end{equation}
		 The main result of~\cite{brownPhD} is that these integrals are all linear combinations (with rational coefficients) of multiple zeta values, although not in an explicit way. It so happens that the projective bi-arrangement of hyperplanes corresponding to the integral (\ref{eqintcellzeta}) is also~$\lambda$-exact, hence the corresponding motive may be computed explicitly via an Orlik--Solomon bi-complex. This will be studied in more detail in a subsequent article.
	
\section{Proof of the main theorem}\label{parproof}

		The goal of this section is to prove the two points of Theorem~\ref{maintheoremtechnical}. We first deal with the essential case (\S\ref{parPhimorphismessential} and \S\ref{parPhiqisessential}) then with the general case (\S\ref{parPhimorphismgeneral} and \S\ref{parPhiqisgeneral}). The reader is encouraged to focus on the essential case, since the general case reduces to the essential case at the cost of a few technical trivialities.

	\subsection{The essential case:~$\Phi$ is a morphism of bi-complexes}\label{parPhimorphismessential}
	
		In this paragraph, we assume that~$\B$ is essential and that~$Z$ is the minimal stratum. We prove the first point of Theorem~\ref{maintheoremtechnical} by showing that~$\Phi$ is compatible with~$d'$ (Proposition~\ref{propcompd'}) and with~$d''$ (Proposition~\ref{propcompd''}).
	
		\begin{prop}\label{propcompd'}
		We have~$\Phi\circ d'= d'\circ\Phi$.
		\end{prop}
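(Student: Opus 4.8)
The strategy is a direct computation: expand both $\Phi\circ d'$ and $d'\circ\Phi$ on a generator $s\otimes X\in\qD_{i,j}^S(\B)=H^{q-2i}(S)(-i)\otimes A_{i,j}^S(\B)$, and match the terms. Since we are in the essential case, every inclusion $S\stackrel{1}{\hookrightarrow}T$ has parallel type with respect to $Z$, so formula (\ref{eqdefPhi}) reads
$$\Phi(s\otimes X)=(\pi_S^{\td{S}})^*(s)\otimes X+\sum_{S\stackrel{1}{\hookrightarrow}T}(\pi^{E\cap\td{T}}_{Z})^*(\iota^S_Z)^*(s)\otimes d'_{S,T}(X),$$
landing in $\qD_{i,j}^{\td{S}}(\td{\B})\oplus\bigoplus_{S\stackrel{1}{\hookrightarrow}T}\qD_{i,j}^{E\cap\td{T}}(\td{\B})$, using the identifications $A_{i,j}^{\td{S}}(\td{\B})\cong A_{i,j}^S(\B)$ and $A_{i,j}^{E\cap\td{T}}(\td{\B})\cong A_{i-1,j}^{T}(\B)$ of Proposition~\ref{propOSbicomplexblowup}. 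The differential $d'$ on $\qD_{\bullet,\bullet}(\td{\B})$ decomposes along the three families of codimension-one inclusions in $\td{\B}$ listed in Lemma~\ref{lemblowuparrangement} and Proposition~\ref{propOSbicomplexblowup}: those of the form $\td{S}\stackrel{1}{\hookrightarrow}\td{T}$, those of the form $E\cap\td{S}\stackrel{1}{\hookrightarrow}E\cap\td{T}$ (with a Koszul sign $-d'_{S,T}$), and those of the form $E\cap\td{S}\stackrel{1}{\hookrightarrow}\td{S}$ (given by the identity on the Orlik--Solomon side).

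First I would compute $d'\circ\Phi(s\otimes X)$. Applying $d'$ to the first term $(\pi_S^{\td{S}})^*(s)\otimes X$ produces, via the inclusions $\td{S}\stackrel{1}{\hookrightarrow}\td{T}$, the terms $(\iota_{\td{S}}^{\td{T}})_*(\pi_S^{\td{S}})^*(s)\otimes d'_{S,T}(X)$; and via the inclusions $E\cap\td{S}\stackrel{1}{\hookrightarrow}\td{S}$ (read backwards — i.e.\ $E\cap\td{S}$ sits one codimension below $\td{S}$), a Gysin contribution with Orlik--Solomon component the identity, which reproduces terms of the shape $(\iota_{E\cap\td{S}}^{\td{S}})_*(\pi_S^{\td{S}})^*(s)\otimes X$. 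Applying $d'$ to the second term $\sum_{S\stackrel{1}{\hookrightarrow}T}(\pi^{E\cap\td{T}}_{Z})^*(\iota^S_Z)^*(s)\otimes d'_{S,T}(X)$ produces, via the $E\cap\td{T}\stackrel{1}{\hookrightarrow}E\cap\td{U}$ inclusions, terms involving $d'_{T,U}d'_{S,T}(X)$ which vanish after summing over $T$ because $A_{\bullet,\bullet}^{\leq S}(\B)$ is a bi-complex (Lemma~\ref{lemAbicomplex}(1)), and via the $E\cap\td{T}\stackrel{1}{\hookrightarrow}\td{T}$ inclusions, terms of the shape $(\iota_{E\cap\td{T}}^{\td{T}})_*(\pi^{E\cap\td{T}}_{Z})^*(\iota^S_Z)^*(s)\otimes d'_{S,T}(X)$.

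Then I would compute $\Phi\circ d'(s\otimes X)$. Here $d'(s\otimes X)=\sum_{S\stackrel{1}{\hookrightarrow}T}(\iota_S^T)_*(s)\otimes d'_{S,T}(X)$, and applying $\Phi$ to each summand (which lives in $\qD_{i-1,j}^T(\B)$) gives a ``$\td{T}$-part'' $(\pi_T^{\td{T}})^*(\iota_S^T)_*(s)\otimes d'_{S,T}(X)$ and, for each $T\stackrel{1}{\hookrightarrow}U$, an ``$E\cap\td{U}$-part'' $(\pi^{E\cap\td{U}}_Z)^*(\iota^T_Z)^*(\iota_S^T)_*(s)\otimes d'_{T,U}d'_{S,T}(X)$. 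The latter again dies after summing, by Lemma~\ref{lemAbicomplex}(1). So the match reduces to two purely geometric (cohomological) identities, for each fixed inclusion $S\stackrel{1}{\hookrightarrow}T$: the commutation of Gysin with blow-up pullback,
$$(\pi_T^{\td{T}})^*(\iota_S^T)_*(s)=(\iota_{\td{S}}^{\td{T}})_*(\pi_S^{\td{S}})^*(s)+(\iota_{E\cap\td{T}}^{\td{T}})_*(\pi^{E\cap\td{T}}_Z)^*(\iota^S_Z)^*(s),$$
which is precisely the blow-up compatibility of Gysin morphisms recorded in Appendix~\ref{parappB}; and the ``vertical'' identity $(\iota_{E\cap\td{S}}^{\td{S}})_*(\pi_S^{\td{S}})^*(s)=(\iota_{E\cap\td{S}}^{\td{S}})_*(\pi^{E\cap\td{S}}_Z)^*(\iota^S_Z)^*(s)$, i.e.\ that restricting $(\pi_S^{\td{S}})^*(s)$ to the exceptional divisor $E\cap\td{S}$ of the blow-up $\pi_S^{\td{S}}\colon\td{S}\to S$ agrees with pulling $(\iota^S_Z)^*(s)$ back along $E\cap\td{S}\to Z\cap S=Z$ — a standard fact about blow-ups. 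Bookkeeping the Koszul signs from the $E\cap\td{S}\stackrel{1}{\hookrightarrow}E\cap\td{T}$ row of Proposition~\ref{propOSbicomplexblowup} against the signs coming from $d'$ in the Orlik--Solomon bi-complex is the only delicate point; I expect the main obstacle to be exactly this sign reconciliation, together with keeping straight which copy of $A_{i-1,j}^T(\B)$ each term lands in under the identifications of Proposition~\ref{propOSbicomplexblowup}. Once all terms are expressed in a common basis indexed by the strata $\td{T}$ and $E\cap\td{U}$ of $\td{\B}$, the equality $\Phi\circ d'=d'\circ\Phi$ falls out term by term.
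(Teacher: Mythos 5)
Your overall bookkeeping of which strata of $\td{\B}$ receive which terms is essentially right, and you correctly identify that the terms tensored with $d'_{S,T}(X)$ cancel via the blow-up identity (\ref{apppullbackpar}). But there is a genuine gap at the heart of the argument: you claim that the terms tensored with $d'_{T,U}d'_{S,T}(X)$ \emph{vanish separately} in each of $\Phi\circ d'$ and $d'\circ\Phi$, ``after summing over $T$, by Lemma~\ref{lemAbicomplex}''. This is false. In $\Phi\circ d'$ these terms carry the coefficient $(\pi^{E\cap\td{U}}_{Z})^*(\iota^{T}_{Z})^*(\iota_S^T)_*(s)$, and in $d'\circ\Phi$ they carry $-(\iota^{E\cap\td{U}}_{E\cap\td{T}})_*(\pi^{E\cap\td{T}}_{Z})^*(\iota_{Z}^S)^*(s)$; both coefficients genuinely depend on $T$ (the first through $(\iota_S^T)^*(\iota_S^T)_*(s)=s\,.\,c_1(N_{S/T})$, the second through $c_1(N_{T/U})_{|Z}$ via (\ref{appidentitypar})). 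The identity $\sum_{S\stackrel{1}{\hookrightarrow}T\stackrel{1}{\hookrightarrow}U}d'_{T,U}d'_{S,T}(X)=0$ only kills a sum $\sum_T c_T\otimes d'_{T,U}d'_{S,T}(X)$ when $c_T$ is constant in $T$, which neither coefficient is on its own.

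The correct argument, and the actual content of the proposition, is to combine the two families into a single sum $\sum_{T}\Delta_T\otimes d'_{T,U}d'_{S,T}(X)$ with
$$\Delta_{T}=(\pi^{E\cap\td{U}}_{Z})^*(\iota^{T}_{Z})^*(\iota_S^T)_*(s)+(\iota^{E\cap\td{U}}_{E\cap\td{T}})_*(\pi^{E\cap\td{T}}_{Z})^*(\iota_{Z}^S)^*(s),$$
and to prove that $\Delta_T$ is \emph{independent of $T$}: using (\ref{appiiChern}) and (\ref{appidentitypar}) one writes $\Delta_T=(\pi_Z^{E\cap\td{U}})^*(\iota_Z^S)^*(s)\,.\,\bigl((\pi_Z^{E\cap\td{U}})^*(c_1(N_{S/T})_{|Z}+c_1(N_{T/U})_{|Z})-c_1(N_{E\cap\td{U}/\td{U}})\bigr)$, and the transitivity of Chern classes (\ref{apptransChern}) collapses $c_1(N_{S/T})+c_1(N_{T/U})_{|S}$ to $c_1(N_{S/U})$, which no longer depends on $T$. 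Only then does Lemma~\ref{lemAbicomplex} apply. Your proposal omits this Chern-class cancellation entirely, so as written the two double-differential families do not cancel. (A secondary, minor confusion: applying $d'$ to $(\pi_S^{\td{S}})^*(s)\otimes X\in\qD_{i,j}^{\td{S}}(\td{\B})$ produces no contribution in $\qD^{\td{S}}$ itself via $E\cap\td{S}\stackrel{1}{\hookrightarrow}\td{S}$; the term $(\iota_{E\cap\td{T}}^{\td{T}})_*(\cdots)\otimes d'_{S,T}(X)$ arises from applying $d'$ to the exceptional-divisor part of $\Phi(s\otimes X)$, as you do say later.)
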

		
		\begin{proof}
		For~$s\otimes X\in H^{q-2i}(S)(-i)\otimes A_{i,j}^S(\B)$, we compute
		\begin{align*}
		(\Phi d')(s\otimes X)= & \sum_{S\stackrel{1}{\hookrightarrow}T} (\pi^{\td{T}}_T)^*(\iota_S^T)_*(s)\otimes  d'_{S,T}(X) \\ 
		& +\sum_{S\stackrel{1}{\hookrightarrow}T\stackrel{1}{\hookrightarrow}U}(\pi^{E\cap\td{U}}_{Z})^*(\iota^{T}_{Z})^*(\iota_S^T)_*(s)\otimes  d'_{T,U} d'_{S,T}(X).
		\end{align*}
		and
		\begin{align*}
		( d'\Phi)(s\otimes X)= &\sum_{S\stackrel{1}{\hookrightarrow}T}(\iota_{\td{S}}^{\td{T}})_*(\pi^{\td{S}}_S)^*(s)\otimes  d'_{S,T}(X) \\
		& + \sum_{S \stackrel{1}{\hookrightarrow} T} (\iota_{E\cap\td{T}}^{\td{T}})_*(\pi_{Z}^{E\cap \td{T}})^*(\iota_{Z}^{S})^*(s)\otimes d'_{S,T}(X)\\
		& -\sum_{S \stackrel{1}{\hookrightarrow} T\stackrel{1}{\hookrightarrow} U} (\iota^{E\cap\td{U}}_{E\cap\td{T}})_*(\pi^{E\cap\td{T}}_{Z})^*(\iota_{Z}^S)^*(s)\otimes d'_{T,U} d'_{S,T}(X).
		\end{align*}
		The terms~$(\cdots)\otimes d'_{S,T}(X)$ cancel because of the equality
		$$(\pi^{\td{T}}_T)^*(\iota_S^T)_*(s)=(\iota_{\td{S}}^{\td{T}})_*(\pi^{\td{S}}_S)^*(s)+(\iota_{E\cap\td{T}}^{\td{T}})_*(\pi_{Z}^{E\cap \td{T}})^*(\iota_{Z}^{S})^*(s)$$
		which is a special case of (\ref{apppullbackpar}) (set~$L=S$ and~$X=T$).
		Thus, it remains to show that we have, for every~$U$ fixed,
		$$\sum_{S\stackrel{1}{\hookrightarrow}T\stackrel{1}{\hookrightarrow}U}\Delta_{T}\otimes  d'_{T,U} d'_{S,T}(X)=0$$
		where we have set 
		$$\Delta_{T}=(\pi^{E\cap\td{U}}_{Z})^*(\iota^{T}_{Z})^*(\iota_S^T)_*(s)+(\iota^{E\cap\td{U}}_{E\cap\td{T}})_*(\pi^{E\cap\td{T}}_{Z})^*(\iota_{Z}^S)^*(s).$$
		For~$S$ and~$U$ fixed, the fact that~$A^{\leq S}_{\bullet,\bullet}(\B)$ is a bi-complex (Lemma~\ref{lemAbicomplex}) implies that we have
		$$\sum_{S\stackrel{1}{\hookrightarrow}T\stackrel{1}{\hookrightarrow}U} d'_{T,U} d'_{S,T}(X)=0.$$
		Thus, we are done if we prove that~$\Delta_{T}$ is independent of~$T$.
		On the one hand we have
		$$(\iota^{T}_{Z})^*(\iota_S^T)_*(s)=(\iota_Z^S)^*(\iota_S^T)^*(\iota_S^T)_*(s)=(\iota_Z^S)^*(s\,.\,c_1(N_{S/T}))$$
		where we have used (\ref{appiiChern}). On the other hand, we may use (\ref{appidentitypar}) to get
		$$(\iota^{E\cap\td{U}}_{E\cap\td{T}})_*(\pi^{E\cap\td{T}}_{Z})^*(\iota_{Z}^S)^*(s) = (\pi_Z^{E\cap\td{U}})^*(\iota_Z^S)^*(s)\,.\,\left((\pi_Z^{E\cap\td{U}})^*(c_1(N_{T/U})_{|Z})-c_1(N_{E\cap\td{U}/\td{U}})\right).$$
		Thus, we may rewrite
		$$\Delta_{T}=(\pi_Z^{E\cap\td{U}})^*(\iota_Z^S)^*(s)\,.\,\Delta'_{T}$$
		with
		$$\Delta'_{T}=(\pi_Z^{E\cap\td{U}})^*(c_1(N_{S/T})_{|Z}+c_1(N_{T/U})_{|Z})-c_1(N_{E\cap\td{U}/\td{U}}).$$
		Using (\ref{apptransChern}) we get~$c_1(N_{S/T})+c_1(N_{T/U})_{|S}=c_1(N_{S/U})$ and hence
		$$\Delta'_{T}=(\pi_Z^{E\cap\td{U}})^*(c_1(N_{S/U})_{|Z})-c_1(N_{E\cap\td{U}/\td{U}})$$
		which is independent of~$T$, hence~$\Delta_{T}$ is independent of~$T$ and we are done.
		\end{proof}
		
		\begin{prop}\label{propcompd''}
		We have~$\Phi\circ d''=d''\circ\Phi$.
		\end{prop}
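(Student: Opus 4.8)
The plan is to mimic the proof of Proposition~\ref{propcompd'}: expand both $\Phi\circ d''$ and $d''\circ\Phi$ using the (essential-case) definition of $\Phi$ and the explicit description of the Orlik--Solomon bi-complex of a blow-up, then match the two expressions component by component over the strata of $\td{\B}$. Fix $t\otimes X\in H^{q-2i}(T)(-i)\otimes A_{i,j-1}^T(\B)=\qD^T_{i,j-1}(\B)$. By Lemma~\ref{lemblowuparrangement} the strata of $\td{\B}$ contained in some $\td{T}$ are of the form $\td{S}$ (for $S\stackrel{1}{\hookrightarrow}T$) or $E\cap\td{U}$ (for $U\stackrel{1}{\hookrightarrow}T$), and Proposition~\ref{propOSbicomplexblowup} provides the identifications $A^{\td{S}}_{\bullet,\bullet}(\td{\B})\cong A^S_{\bullet,\bullet}(\B)$ and $A^{E\cap\td{U}}_{\bullet,\bullet}(\td{\B})\cong A^U_{\bullet-1,\bullet}(\B)$ together with the differentials; in particular $d''_{\td{S},\td{T}}$ and $d''_{E\cap\td{S},E\cap\td{T}}$ both become $d''_{S,T}$, while $d''_{E\cap\td{T},\td{T}}=0$.

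First I would match the components landing in $\qD^{\td{S}}_{i,j}(\td{\B})\cong\qD^S_{i,j}(\B)$. On the $\Phi\circ d''$ side this is the leading term of $\Phi$ applied to the $S$-summand of $d''(t\otimes X)$, namely $(\pi^{\td{S}}_S)^*(\iota^T_S)^*(t)\otimes d''_{S,T}(X)$; on the $d''\circ\Phi$ side it comes from applying $d''$ to the leading term $(\pi^{\td{T}}_T)^*(t)\otimes X$ of $\Phi(t\otimes X)$, and equals $(\iota^{\td{T}}_{\td{S}})^*(\pi^{\td{T}}_T)^*(t)\otimes d''_{S,T}(X)$ --- the a priori extra term towards $E\cap\td{T}$ disappears because $d''_{E\cap\td{T},\td{T}}=0$. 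These agree by functoriality of the blow-up, which makes the square relating $\td{S}\hookrightarrow\td{T}$ to $S\hookrightarrow T$ commute and hence gives $(\iota^{\td{T}}_{\td{S}})^*(\pi^{\td{T}}_T)^*=(\pi^{\td{S}}_S)^*(\iota^T_S)^*$.

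Next I would match the components landing in $\qD^{E\cap\td{U}}_{i,j}(\td{\B})\cong H^{q-2i}(E\cap\td{U})(-i)\otimes A^U_{i-1,j}(\B)$ for each stratum $U$. Using $\iota^T_S\circ\iota^S_Z=\iota^T_Z$, the compatibility $\pi^{E\cap\td{V}}_Z\circ\iota^{E\cap\td{V}}_{E\cap\td{U}}=\pi^{E\cap\td{U}}_Z$ of the restricted projections, and the differential identifications above, both contributions reorganize so that the cohomology coefficient $(\pi^{E\cap\td{U}}_Z)^*(\iota^T_Z)^*(t)$ --- which depends only on $U$ and $T$, not on the intermediate stratum in the span or cospan --- factors out. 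One is then reduced to the purely combinatorial identity
$$\sum_{T\stackrel{1}{\hookleftarrow}S\stackrel{1}{\hookrightarrow}U}d'_{S,U}\,d''_{S,T}(X)\ =\ \sum_{T\stackrel{1}{\hookrightarrow}V\stackrel{1}{\hookleftarrow}U}d''_{U,V}\,d'_{T,V}(X),$$
to be verified for each pair $T,U$ of strata of the same codimension. This is precisely the content of Lemma~\ref{lemAbicomplex}(2): if $T\neq U$ and there is no cospan $T\stackrel{1}{\hookrightarrow}V\stackrel{1}{\hookleftarrow}U$, both sides vanish (part (2a)); if $T\neq U$ and such a $V$ exists it is unique, matches the unique span through $S=T\cap U$, and $d'_{S,U}d''_{S,T}=d''_{U,V}d'_{T,V}$ (part (2b)); and if $T=U$ both sides vanish, the left because $d'_{S,T}d''_{S,T}=0$ for each $S\stackrel{1}{\hookrightarrow}T$ and the right because $\sum_{T\stackrel{1}{\hookrightarrow}V}d''_{T,V}d'_{T,V}=0$ (part (2c)).

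The only real work is the bookkeeping: enumerating correctly which strata of $\td{\B}$ arise as targets and keeping the commuting squares involving $\pi$ and the inclusions $\iota$ straight. Unlike in the proof of Proposition~\ref{propcompd'}, no Chern-class manipulation is needed here, since $\Phi$ uses only pullbacks on the cohomological side and the differential appearing in its definition is the combinatorial $d'$ of the Orlik--Solomon bi-complex; in particular the step ``$\Delta_T$ is independent of $T$'' of that proof is replaced here by the automatic factoring-out of the coefficient. The case $\chi(Z)=\mu$ is dual, with $\Phi$ replaced by the morphism $\Psi$.
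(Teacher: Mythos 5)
Your proposal is correct and follows essentially the same route as the paper's proof: the $\widetilde S$-components cancel by functoriality of pullback along the square $\widetilde S\hookrightarrow\widetilde T\to T\hookleftarrow S$ (with the vanishing $d''_{E\cap\widetilde T,\widetilde T}=0$ ruling out stray terms), and the $E\cap\widetilde U$-components match because the coefficient $(\pi^{E\cap\widetilde U}_Z)^*(\iota_Z^T)^*(t)$ is independent of the intermediate stratum, reducing everything to the bi-complex identity for $A^{\leq Z}_{\bullet,\bullet}$. Your explicit case analysis via Lemma~\ref{lemAbicomplex}(2) is just a spelled-out version of the paper's one-line appeal to that bi-complex property, and your observation that no Chern-class computation is needed matches the paper exactly.
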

		
		\begin{proof}
		For~$s\otimes X\in H^{q-2i}(S)(-i)\otimes A_{i,j}^S(\B)$, we compute
		\begin{align*}
		(\Phi d'')(s\otimes X)= & \sum_{S\stackrel{1}{\hookleftarrow}R} (\pi^{\td{R}}_R)^*(\iota_R^S)^*(s)\otimes  d''_{R,S}(X) \\ 
		& +\sum_{S\stackrel{1}{\hookleftarrow}R\stackrel{1}{\hookrightarrow} U}(\pi^{E\cap\td{U}}_{Z})^*(\iota^{R}_{Z})^*(\iota_R^S)^*(s)\otimes  d'_{R,U} d''_{R,S}(X).
		\end{align*}
		and
		\begin{align*}
		( d''\Phi)(s\otimes X)= &\sum_{S\stackrel{1}{\hookleftarrow}R}(\iota_{\td{R}}^{\td{S}})^*(\pi^{\td{S}}_S)^*(s)\otimes  d''_{R,S}(X) \\
		& +\sum_{S \stackrel{1}{\hookrightarrow} T\stackrel{1}{\hookleftarrow} U} (\iota^{E\cap\td{T}}_{E\cap\td{U}})^*(\pi^{E\cap\td{T}}_{Z})^*(\iota_{Z}^S)^*(s)\otimes d''_{U,T} d'_{S,T}(X).
		\end{align*}
		The terms~$(\cdots)\otimes d''_{R,S}(X)$ cancel because of the equality~$(\pi^{\td{R}}_R)^*(\iota_R^S)^*(s)=(\iota_{\td{R}}^{\td{S}})^*(\pi^{\td{S}}_S)^*(s)$, which follows from~$(\iota_R^S)\circ (\pi^{\td{R}}_R)=(\pi^{\td{S}}_S)\circ (\iota_{\td{R}}^{\td{S}})$. Thus it remains to show that, for~$U$ fixed, we have 
		$$\sum_{S\stackrel{1}{\hookleftarrow}R\stackrel{1}{\hookrightarrow} U}(\pi^{E\cap\td{U}}_{Z})^*(\iota^{R}_{Z})^*(\iota_R^S)^*(s)\otimes  d'_{R,U} d''_{R,S}(X)=\sum_{S \stackrel{1}{\hookrightarrow} T\stackrel{1}{\hookleftarrow} U} (\iota^{E\cap\td{T}}_{E\cap\td{U}})^*(\pi^{E\cap\td{T}}_{Z})^*(\iota_{Z}^S)^*(s)\otimes d''_{U,T} d'_{S,T}(X).$$
		Now~$(\pi^{E\cap\td{U}}_{Z})^*(\iota^{R}_{Z})^*(\iota_R^S)^*(s)=(\pi^{E\cap\td{U}}_{Z})^*(\iota_Z^S)^*(s)=(\iota^{E\cap\td{T}}_{E\cap\td{U}})^*(\pi^{E\cap\td{T}}_{Z})^*(\iota_{Z}^S)^*(s)$ which is independent of~$R$ and~$T$. Thus, the claim follows from the equality
		$$\sum_{S\stackrel{1}{\hookleftarrow}R\stackrel{1}{\hookrightarrow} U} d'_{R,U} d''_{R,S}(X)=\sum_{S \stackrel{1}{\hookrightarrow} T\stackrel{1}{\hookleftarrow} U}  d''_{U,T} d'_{S,T}(X)$$
		which is a consequence of the fact that~$A_{\bullet,\bullet}^{\leq Z}(\B)$ is a bi-complex.
		\end{proof}
		
	\subsection{The essential case:~$\Phi$ is a quasi-isomorphism}\label{parPhiqisessential}
	
		In this paragraph, we still assume that~$\B$ is essential and that~$Z$ is the minimal stratum. We further assume that~$Z$ is exact and prove the second point of Theorem~\ref{maintheoremtechnical}.
		
		\subsubsection{The strategy}		
		
		We start with a basic fact of homological algebra.		
		
		\begin{lem}\label{lemqisfiltration}
		Let~$f:C_\bullet\rightarrow C'_\bullet$ be a morphism of complexes, let~$(F_pC_\bullet)_p$ and~$(F_pC'_\bullet)_p$ be finite increasing filtrations on~$C_\bullet$ and~$C'_\bullet$ such that~$f(F_pC_\bullet)\subset F_pC'_\bullet$. Then~$f$ is a quasi-isomorphism if for every~$p$, the induced morphism~$\mathrm{gr}_p^Ff:\mathrm{gr}_p^FC'_\bullet\rightarrow \gr_p^FC'_\bullet$ is a quasi-isomorphism.
		\end{lem}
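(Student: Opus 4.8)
The plan is to induct on the length of the filtration, using the long exact sequences in homology attached to the short exact sequences of subquotient complexes, together with the five lemma. Since both filtrations are finite and increasing, I would first fix integers $p_0 \leq p_1$ with $F_{p_0-1}C_\bullet = F_{p_0-1}C'_\bullet = 0$ and $F_{p_1}C_\bullet = C_\bullet$, $F_{p_1}C'_\bullet = C'_\bullet$, and then prove by induction on $p - p_0$ that the restriction $f\colon F_pC_\bullet \to F_pC'_\bullet$ is a quasi-isomorphism for every $p_0 \leq p \leq p_1$; the case $p = p_1$ is the desired statement.

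For the base case $p = p_0$ one has $F_{p_0}C_\bullet = \gr_{p_0}^F C_\bullet$ and likewise for $C'_\bullet$, and $f$ restricted to this subcomplex is precisely $\gr_{p_0}^F f$, a quasi-isomorphism by hypothesis. For the inductive step, assuming $f\colon F_{p-1}C_\bullet \to F_{p-1}C'_\bullet$ is a quasi-isomorphism, I would consider the morphism of short exact sequences of complexes
$$0 \longrightarrow F_{p-1}C_\bullet \longrightarrow F_pC_\bullet \longrightarrow \gr_p^F C_\bullet \longrightarrow 0$$
mapping via $f$ to the analogous sequence for $C'_\bullet$ (this is a morphism of complexes precisely because $f(F_pC_\bullet)\subset F_pC'_\bullet$). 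Passing to the long exact sequences in homology yields a commutative ladder in which the vertical maps on $H_\bullet(F_{p-1}C_\bullet)$ are isomorphisms by the inductive hypothesis and those on $H_\bullet(\gr_p^F C_\bullet)$ are isomorphisms by hypothesis; the five lemma then forces the maps $H_n(F_pC_\bullet)\to H_n(F_pC'_\bullet)$ to be isomorphisms for all $n$, closing the induction.

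Since this is entirely standard homological algebra, I do not expect a genuine obstacle. The only minor point that needs care is to set up the induction so that the inductive hypothesis gets applied to the pair $F_{p-1}C_\bullet \to F_{p-1}C'_\bullet$, whose graded pieces are among the $\gr_{p'}^F$ for which the quasi-isomorphism hypothesis is in force; it is also worth noting that the finiteness of the filtrations is exactly what makes this induction terminate.
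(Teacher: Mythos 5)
Your argument is correct and is exactly the paper's proof, which is stated in one line as "induction on the length of the filtration, using the long exact sequence in cohomology and the 5-lemma"; you have simply written out the standard details (the morphism of short exact sequences $0\to F_{p-1}C_\bullet\to F_pC_\bullet\to\gr_p^FC_\bullet\to 0$ and the resulting ladder of long exact sequences). No further comment is needed.
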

		
		\begin{proof}
		By induction on the length of the filtration, using the long exact sequence in cohomology and the~$5$-lemma.
		\end{proof}
		
		Let~$\Phi:\qD_{\bullet}(\B)\rightarrow \qD_{\bullet}(\td{\B})$ be the morphism of complexes induced on the total complexes. Using the filtration on the lines and the above lemma, one sees that~$\Phi$ is a quasi-isomorphism if for every~$j$, the morphism
		$$\Phi_{\bullet,j}:\qD_{\bullet,j}(\B) \rightarrow \qD_{\bullet,j}(\td{\B})$$
		induced on the~$j$-th lines is a quasi-isomorphism. In the rest of \S\ref{parPhiqisessential}, we fix an index~$j$. We are reduced to proving that the cone~$C_{\bullet,j}$ of~$\Phi_{\bullet,j}$ is exact.
		
		We have
		$$C_{i,j}=\qD_{i,j}(\B)\oplus \qD_{i+1,j}(\td{\B})$$
		and the differential~$d':C_{i,j}\rightarrow C_{i-1,j}$ is given by
		$$d'(x,\td{x})=(d'(x),\Phi(x)-d'(\td{x})).$$
		
		The strategy is as follows. We define an complex~$B_{\bullet,j}$ and morphisms~$\alpha:B_{i,j}\rightarrow C_{i,j}$; the second point of Theorem~\ref{maintheoremtechnical} then follows from the following facts:
		\begin{enumerate}[--]
		\item~$B_{\bullet,j}$ is exact (Lemma~\ref{lemBexact});
		\item~$\alpha$ is a morphism of complexes (Proposition~\ref{propPsimorphism});
		\item~$\alpha$ is a quasi-isomorphism (Proposition~\ref{propPsiqis}).
		\end{enumerate}
		
		\subsubsection{The exact complex~$B_{\bullet,j}$}
	
		Let~$r$ be the codimension of~$Z$ inside~$X$. For~$S\in\s_{i+j}(\B)$, let us set 
		$$B_{i,j}^S=H^{q-2r+2j}(Z)(r-j)\otimes A_{i,j}^S(\B).$$
		For an inclusion~$S\stackrel{1}{\hookrightarrow} T$, we define~$d'_{S,T}:B_{i,j}^S\rightarrow B_{i-1,j}^T$. For~$z\otimes X\in H^{q-2r+2j}(Z)(r-j)\otimes A_{i,j}^S(\B)$, it is given by
		$$d'_{S,T}(z\otimes X)=z\otimes d'_{S,T}(X).$$
		If we now set~$B_{i,j}=\bigoplus_{S\in\s_{i+j}(\B)} B_{i,j}^S$, we get a complex~$B_{\bullet,j}$.
		
		\begin{lem}\label{lemBexact}
		$B_{\bullet,j}$ is an exact complex.
		\end{lem}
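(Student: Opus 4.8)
The plan is to identify $B_{\bullet,j}$, up to tensoring with a fixed $\Q$-vector space, with the $j$-th row of the Orlik-Solomon bi-complex of $\B$, and then to quote the hypothesis that $Z$ is exact.

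First I would record the elementary observation that in the definition $B_{i,j}^S=H^{q-2r+2j}(Z)(r-j)\otimes A_{i,j}^S(\B)$ the left-hand factor $V:=H^{q-2r+2j}(Z)(r-j)$ is independent of $S$ and of $i$, and the differential $d'_{S,T}(z\otimes X)=z\otimes d'_{S,T}(X)$ acts only on the right-hand factor. Hence, setting as usual $A_{i,j}(\B)=\bigoplus_{S\in\s_{i+j}(\B)}A_{i,j}^S(\B)$, there is an isomorphism of complexes $B_{\bullet,j}\cong V\otimes_\Q(A_{\bullet,j}(\B),d')$. Since $\B$ is essential with minimal stratum $Z$, we have $A_{\bullet,\bullet}(\B)=A_{\bullet,\bullet}^{\leq Z}(\B)$, which is a genuine bi-complex, so $(A_{\bullet,j}(\B),d')$ is indeed a complex. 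As we work over the field $\Q$, tensoring with $V$ is exact, so it suffices to prove that $(A_{\bullet,j}(\B),d')$ is an exact complex.

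For this I would use essentiality together with the running connectedness assumption~(\ref{connectednessassumption}): every stratum of $\B$ contains $Z$, the stratum $Z$ is the unique stratum of codimension $r=\mathrm{codim}(Z)$, and for each $c\geq 0$ the set of strata $S$ with $Z\stackrel{c}{\hookrightarrow}S$ is exactly $\s_{r-c}(\B)$. Consequently $\bigoplus_{Z\stackrel{c}{\hookrightarrow}S}A_{r-j-c,j}^S(\B)=A_{r-j-c,j}(\B)$ for all $c$ (and $A_{i,j}(\B)=0$ as soon as $i+j>r$), so that the complex $(A_{\bullet,j}(\B),d')$ is literally the row
$$0\to A_{r-j,j}^{Z}\stackrel{d'}{\longrightarrow}\bigoplus_{Z\stackrel{1}{\hookrightarrow}S}A_{r-j-1,j}^{S}\stackrel{d'}{\longrightarrow}\cdots\stackrel{d'}{\longrightarrow}\bigoplus_{Z\stackrel{r-j}{\hookrightarrow}W}A_{0,j}^{W}\to 0$$
of $A_{\bullet,\bullet}^{\leq Z}(\B)$ appearing in Definition~\ref{defiexact}. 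Since $\chi(Z)=\lambda$ and $Z$ is assumed exact, this row is exact; hence $B_{\bullet,j}$ is exact.

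The argument is essentially formal; the only step deserving attention is the combinatorial bookkeeping of the previous paragraph, namely checking that the summation index sets $\s_{i+j}(\B)$ and $\{S:Z\stackrel{r-i-j}{\hookrightarrow}S\}$ coincide. This is exactly where essentiality and assumption~(\ref{connectednessassumption}) are used: they guarantee that $Z$ sits inside every stratum and is the unique stratum of top codimension, so that the direct sums defining $A_{\bullet,j}(\B)$ agree term by term with those in Definition~\ref{defiexact}.
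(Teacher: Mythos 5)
Your proposal is correct and is essentially the paper's own argument: the paper's proof likewise observes that $B_{\bullet,j}$ is the tensor product of the fixed vector space $H^{q-2r+2j}(Z)(r-j)$ with the $j$-th row $(A_{\bullet,j}^{\leq Z}(\B),d')$, which is exact because $Z$ is exact. Your extra bookkeeping identifying $A_{\bullet,j}(\B)$ with that row via essentiality and assumption~(\ref{connectednessassumption}) is a correct elaboration of what the paper leaves implicit.
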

		
		\begin{proof}
	 	$B_{\bullet,j}$ is nothing but the tensor product of~$H^{q-2r+2j}(Z)(r-j)$ with the complex~$(A_{\bullet,j}^{\leq Z}(\B),d')$, which is exact since~$Z$ is exact.
		\end{proof}
		
		\begin{rem}
		The complexes~$(B_{\bullet,j},d')$ are the lines of a bi-complex whose differentials~$d'_{S,T}$ are given by 
		$$d''_{S,T}(z\otimes X)=(z\,.\,c_1(N_{S/T})_{|Z})\otimes d''_{S,T}(X).$$
		\end{rem}
		
		\subsubsection{A quasi-isomorphism~$\alpha:B_{\bullet,j}\rightarrow C_{\bullet,j}$}
		
		A morphism~$\alpha:B_{\bullet,j}\rightarrow C_{\bullet,j}$ is determined by two morphisms~$f:B_{\bullet,j}\rightarrow \qD_{\bullet,j}(\B)$ and~$g:B_{\bullet,j}\rightarrow \qD_{\bullet+1,j}(\td{\B})$.% satisfying the identities
		
		%\begin{enumerate}[--]
		%\item~$f\circ d'=d'\circ f$;
		%\item~$g\circ d'+d'\circ g=\Phi\circ f$.
		%\end{enumerate}
		
		We define~$f:B_{i,j}^S\rightarrow \qD_{i,j}^S(\B)$ by the formula 
		$$f(z\otimes X)=(\iota_Z^S)_*(z)\otimes X$$
		 and~$g:B_{i,j}^S\rightarrow \qD_{i+1,j}^{E\cap\td{S}}(\td{\B})$ by the formula 
		~$$g(z\otimes X)=(\pi_Z^{E\cap\td{S}})^*(z)\,.\,\gamma_S\otimes X$$
		where~$\gamma_S$ is the excess class of the blow-up~$\pi^{\td{S}}_S:\td{S}\rightarrow S$ along~$Z$, defined in \S\ref{appexcessclass}.
		
		\begin{prop}\label{propPsimorphism}
		We have~$f\circ d'=d'\circ f$ and~$g\circ d'+d'\circ g=\Phi\circ f$. Thus,~$f$ and~$g$ define a morphism of complexes~$\alpha:B_{\bullet,j}\rightarrow C_{\bullet,j}$.
		\end{prop}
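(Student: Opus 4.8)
The plan is to expand both sides on a typical element $z\otimes X\in B^S_{i,j}=H^{q-2r+2j}(Z)(r-j)\otimes A^S_{i,j}(\B)$ and to match the resulting terms summand by summand in the decomposition $\qD_{i,j}(\td{\B})=\bigoplus_{\td{S}}\qD^{\td{S}}_{i,j}(\td{\B})\oplus\bigoplus_{E\cap\td{S}}\qD^{E\cap\td{S}}_{i,j}(\td{\B})$, using the essential-case formula for $\Phi$, the definitions of $f$ and $g$, and the explicit description of the differentials of the Orlik--Solomon bi-complex of the blow-up in Proposition~\ref{propOSbicomplexblowup}.

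The identity $f\circ d'=d'\circ f$ is immediate: on $B_{\bullet,j}$ the differential acts only on the $A^S_{i,j}(\B)$-factor, while on the geometric side the extra Gysin maps compose functorially, $(\iota_S^T)_*\circ(\iota_Z^S)_*=(\iota_Z^T)_*$ (recall that $Z\subset S$ for every stratum $S$, since $\B$ is essential and $Z$ is its minimal stratum), so both sides equal $\sum_{S\stackrel{1}{\hookrightarrow}T}(\iota_Z^T)_*(z)\otimes d'_{S,T}(X)$.

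For $g\circ d'+d'\circ g=\Phi\circ f$, note first that by Lemma~\ref{lemblowuparrangement} the only strata of $\td{\B}$ covering $E\cap\td{S}$ in codimension one are $\td{S}$ and the $E\cap\td{T}$ with $S\stackrel{1}{\hookrightarrow}T$, so all terms land in a summand $\qD^{\td{S}}_{i,j}(\td{\B})$ or $\qD^{E\cap\td{T}}_{i,j}(\td{\B})$. On the $\qD^{\td{S}}_{i,j}(\td{\B})$-component, only the $E\cap\td{S}\stackrel{1}{\hookrightarrow}\td{S}$ part of $d'\circ g$ contributes on the left (with $A$-component the identity, by Proposition~\ref{propOSbicomplexblowup}) and only the pullback term $(\pi^{\td{S}}_S)^*(\iota_Z^S)_*(z)\otimes X$ of $\Phi\circ f$ contributes on the right, so the claim reduces to the defining property of the excess class $\gamma_S$, namely $(\iota^{\td{S}}_{E\cap\td{S}})_*\big((\pi^{E\cap\td{S}}_Z)^*(z)\,.\,\gamma_S\big)=(\pi^{\td{S}}_S)^*(\iota_Z^S)_*(z)$, from \S\ref{appexcessclass}. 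On the $\qD^{E\cap\td{T}}_{i,j}(\td{\B})$-component, for a fixed $S\stackrel{1}{\hookrightarrow}T$: $g\circ d'$ contributes $(\pi^{E\cap\td{T}}_Z)^*(z)\,.\,\gamma_T\otimes d'_{S,T}(X)$; $d'\circ g$ contributes $-(\iota^{E\cap\td{T}}_{E\cap\td{S}})_*\big((\pi^{E\cap\td{S}}_Z)^*(z)\,.\,\gamma_S\big)\otimes d'_{S,T}(X)$, the minus sign being the Koszul sign $d'_{E\cap\td{S},E\cap\td{T}}=-d'_{S,T}$ of Proposition~\ref{propOSbicomplexblowup}; and $\Phi\circ f$ contributes $(\pi^{E\cap\td{T}}_Z)^*\!\big((\iota_Z^S)^*(\iota_Z^S)_*(z)\big)\otimes d'_{S,T}(X)=(\pi^{E\cap\td{T}}_Z)^*\!\big(z\,.\,e(N_{Z/S})\big)\otimes d'_{S,T}(X)$ by the self-intersection formula (with $e(N_{Z/S})$ the top Chern class of the normal bundle). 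Using the projection formula together with $\pi^{E\cap\td{S}}_Z=\pi^{E\cap\td{T}}_Z\circ\iota^{E\cap\td{T}}_{E\cap\td{S}}$ to factor out $(\pi^{E\cap\td{T}}_Z)^*(z)$, the whole matching reduces to the purely cohomological identity
$$\gamma_{T}-\big(\iota^{E\cap\td{T}}_{E\cap\td{S}}\big)_{*}(\gamma_{S})=\big(\pi^{E\cap\td{T}}_{Z}\big)^{*}\!\big(e(N_{Z/S})\big)$$
relating the excess classes of the two blow-ups of $T$ and of $S$ along $Z$ across the codimension-one inclusion $E\cap\td{S}\hookrightarrow E\cap\td{T}$ inside the exceptional divisor; this is one of the Chern-class/blow-up identities of Appendix~\ref{parappB}, proved in the same spirit as \ref{appidentitypar}, \ref{apptransChern} and \ref{apppullbackpar}.

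The main obstacle is precisely this last excess-class identity, together with the attendant sign bookkeeping: one must establish (or extract from Appendix~\ref{parappB}) the comparison above for the key blow-up formula $\pi^*\iota_*$ applied to the two nested centers, and keep careful track of the minus sign coming from the Koszul rule of Proposition~\ref{propOSbicomplexblowup} and of the orientation conventions for the Gysin maps inside $E$, so that the cancellations occur with the correct signs. Everything else is a routine collection of terms as above.
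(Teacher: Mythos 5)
Your proposal is correct and follows essentially the same route as the paper's proof: the same term-by-term matching on the $\qD^{\td{S}}$ and $\qD^{E\cap\td{T}}$ components, cancellation of the $(\cdots)\otimes X$ terms via the defining excess-class formula (\ref{appeqgamma}), and reduction of the $(\cdots)\otimes d'_{S,T}(X)$ terms via the projection formula and the self-intersection formula to the identity $(\iota_{E\cap\td{S}}^{E\cap\td{T}})_*(\gamma_S)=\gamma_T-(\pi_Z^{E\cap\td{T}})^*c_{r(T)-1}(N_{Z/S})$. The excess-class comparison you flag as the main obstacle is exactly (\ref{appGysingammapar}), which is stated and proved in Appendix~\ref{parappB}, so no gap remains.
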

		
		\begin{proof}
		The first equality is trivial. For the second equality, we compute
		\begin{align*}
		(\Phi\circ f)(z\otimes X)=&(\pi_S^{\td{S}})^*(\iota_{Z}^S)_*(z)\otimes X + \sum_{S\stackrel{1}{\hookrightarrow} T}(\pi_{Z}^{E\cap\td{T}})^*(\iota_{Z}^S)^*(\iota_{Z}^S)_*(z)\otimes d'_{S,T}(X);\\
		(g d'_{S,T})(z\otimes X)=&(\pi_{Z}^{E\cap\td{T}})^*(z)\,.\,\gamma_T\otimes d'_{S,T}(X);\\
		( d'_{E\cap\td{S},\td{S}}g)(z\otimes X)=&(\iota_{E\cap\td{S}}^{\td{S}})_*((\pi_{Z}^{E\cap\td{S}})(z)\,.\,\gamma_S)\otimes X ;\\
		( d'_{E\cap\td{S},E\cap\td{T}}g)(z\otimes X)=&-(\iota_{E\cap\td{S}}^{E\cap\td{T}})_*((\pi_{Z}^{E\cap\td{S}})^*(z)\,.\,\gamma_S)\otimes d'_{S,T}(X).\\
		\end{align*}
		The terms~$(\cdots)\otimes X$ cancel because of the equality
		$$(\pi_S^{\td{S}})^*(\iota_{Z}^S)_*(z)=(\iota_{E\cap\td{S}}^{\td{S}})_*((\pi_{Z}^{E\cap\td{S}})(z)\,.\,\gamma_S)$$
		which is a special case of (\ref{appeqgamma}). For the terms~$(\cdots)\otimes d'_{S,T}(X)$, we have to prove the equality
		$$(\iota_{E\cap\td{S}}^{E\cap\td{T}})_*((\pi_{Z}^{E\cap\td{S}})^*(z)\,.\,\gamma_S)=(\pi_{Z}^{E\cap\td{T}})^*(z)\,.\,\gamma_T-(\pi_{Z}^{E\cap\td{T}})^*(\iota_{Z}^S)^*(\iota_{Z}^S)_*(z).$$
		We have~$(\pi_{Z}^{E\cap\td{S}})^*=(\iota_{E\cap\td{S}}^{E\cap\td{T}})^*\circ (\pi^{E\cap\td{T}}_{Z})^*$, hence the projection formula (\ref{appprojectionformula}) gives 
		$$(\iota_{E\cap\td{S}}^{E\cap\td{T}})_*((\pi_{Z}^{E\cap\td{S}})^*(z)\,.\,\gamma_S)=(\pi^{E\cap\td{T}}_{Z})^*(z)\,.\,(\iota_{E\cap\td{S}}^{E\cap\td{T}})_*(\gamma_S).$$
		Let us write~$r(T)$ for the codimension of~$Z$ inside~$T$. Then~$(\pi_{Z}^{E\cap\td{T}})^*(\iota_{Z}^S)^*(\iota_{Z}^S)_*(z)=(\pi_{Z}^{E\cap\td{T}})^*(c_{r(T)-1}(N_{Z/S})\,.\,z)$. To sum up, we are reduced to proving the equality
		$$(\iota_{E\cap\td{S}}^{E\cap\td{T}})_*(\gamma_S)=\gamma_T-(\pi_{Z}^{E\cap\td{T}})^* (c_{r(T)-1}(N_{Z /S}))$$
		which is a special case of (\ref{appGysingammapar}).
		\end{proof}
		
		\begin{prop}\label{propPsiqis}
		$\alpha:B_{\bullet,j}\rightarrow C_{\bullet,j}$ is a quasi-isomorphism. Thus,~$C_{\bullet,j}$ is exact, and~$\Phi$ is a quasi-isomorphism.
		\end{prop}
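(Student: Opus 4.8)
The plan is to produce a finite filtration on $B_{\bullet,j}$ and $C_{\bullet,j}$ that $\alpha$ respects and to invoke Lemma~\ref{lemqisfiltration}: once $\gr\,\alpha$ is shown to be a quasi-isomorphism, so is $\alpha$; then, since $B_{\bullet,j}$ is exact (Lemma~\ref{lemBexact}), $C_{\bullet,j}$ is exact, i.e.\ $\Phi_{\bullet,j}$ is a quasi-isomorphism; and since this works for every $j$, $\Phi$ itself is a quasi-isomorphism by the reduction to the lines recalled at the beginning of \S\ref{parPhiqisessential}. The filtration to use is $F_p$, the span of all summands whose \emph{underlying stratum of $\B$} (namely $S$ for a piece attached to $S$, and $S$ again for pieces attached to $\td{S}$ or $E\cap\td{S}$) has codimension $\leq p$. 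One checks that each of $d'$, $\Phi$ and the three components $d'_{\td{S},\td{T}}$, $d'_{E\cap\td{S},E\cap\td{T}}$, $d'_{E\cap\td{S},\td{S}}$ of the blow-up bi-complex either preserves or strictly decreases this codimension, so that $F_\bullet$ is a filtration by subcomplexes; and $f$, $g$ leave the underlying stratum unchanged, so $\alpha$ is filtered. Finiteness is clear since codimensions lie in $\{0,\dots,\dim_\C X\}$.

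Next I would compute the associated graded. On $\gr_p$ only the codimension-\emph{preserving} part of the differential of $C_{\bullet,j}$ survives, and inspection shows these are exactly the ``$(\pi^{\td{S}}_S)^*$'' term of $\Phi$ and the component $d'_{E\cap\td{S},\td{S}}$, both diagonal in the underlying stratum. Hence $\gr_p C_{\bullet,j}$ is the direct sum, over strata $S$ of $\B$ with $\mathrm{codim}(S)=p$, of two-term complexes placed in homological degrees $p-j$ and $p-j-1$, and $\gr_p B_{\bullet,j}$ the corresponding sum of objects in degree $p-j$. Rewriting $A^{\td{S}}_{\bullet,\bullet}(\td{\B})$ and $A^{E\cap\td{S}}_{\bullet,\bullet}(\td{\B})$ in terms of $A^S_{\bullet,\bullet}(\B)$ via Proposition~\ref{propOSbicomplexblowup}, the Orlik--Solomon factor $A^S_{p-j,j}(\B)$ pulls out of each summand, because $f$, $g$, $(\pi^{\td{S}}_S)^*$, $(\iota_{E\cap\td{S}}^{\td{S}})_*$ and the (vanishing) $\gr_p$-differential of $\qD(\B)$ all act as the identity on it. So, with $i=p-j$ and $c=\mathrm{codim}_S(Z)$, the claim reduces to the purely cohomological statement that for every stratum $S\supset Z$ the sequence (Tate twists suppressed)
\[
H^{q-2i-2c}(Z)\xrightarrow{\,z\mapsto\big((\iota_Z^S)_*(z),\,(\pi^{E\cap\td{S}}_Z)^*(z)\cdot\gamma_S\big)\,}H^{q-2i}(S)\oplus H^{q-2i-2}(E\cap\td{S})\xrightarrow{\,(\pi^{\td{S}}_S)^*-(\iota_{E\cap\td{S}}^{\td{S}})_*\,}H^{q-2i}(\td{S})
\]
is exact, with left map injective and right map surjective (equivalently, that the cone of $\alpha_S$ is acyclic). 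For $S=Z$ this is the tautological isomorphism $H^{q-2i}(Z)\cong H^{q-2i}(Z)$; in general it is the blow-up exact sequence for $\pi^{\td{S}}_S\colon\td{S}=\mathrm{Bl}_Z S\to S$, whose exceptional divisor is $E\cap\td{S}=\P(N_{Z/S})$.

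To prove that exact sequence one feeds in the projective bundle formula for $H^\bullet(E\cap\td{S})$ and the blow-up decomposition $H^\bullet(\td{S})\cong H^\bullet(S)\oplus\bigoplus_{s=1}^{c-1}H^{\bullet-2s}(Z)(-s)$: the copies of $H^\bullet(S)$ cancel through $(\pi^{\td{S}}_S)^*$, and the residual complex is built entirely out of copies of $H^\bullet(Z)$, on which the two maps are, up to lower-order Chern-class corrections involving $N_{Z/X}$, $N_{E\cap\td{S}/\td{S}}$ and the excess class $\gamma_S$, the inclusion of, respectively the projection onto, a top summand; exactness follows. This last step is the real obstacle: it is the careful bookkeeping of Chern classes and excess classes of the relevant normal bundles, and is precisely the content of Appendix~\ref{parappB} (the same kind of identities, such as $(\ref{apppullbackpar})$ and $(\ref{appGysingammapar})$, already used in the proof of Proposition~\ref{propPsimorphism}). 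Everything else --- that $F_\bullet$ is a filtration by subcomplexes, that $\alpha$ is filtered, the shape of $\gr_p$, and the extraction of the Orlik--Solomon groups --- is a formal consequence of Proposition~\ref{propOSbicomplexblowup}.
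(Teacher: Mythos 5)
Your proposal is correct and follows essentially the same route as the paper: filter $B_{\bullet,j}$ and $C_{\bullet,j}$ by the codimension of the underlying stratum of $\B$, apply Lemma~\ref{lemqisfiltration}, observe via Proposition~\ref{propOSbicomplexblowup} that on the associated graded only the diagonal components $(\pi_S^{\td{S}})^*$ and $d'_{E\cap\td{S},\td{S}}$ survive so that the Orlik--Solomon factor $A^S_{\bullet,\bullet}(\B)$ pulls out, and reduce to the blow-up short exact sequence (\ref{appses}) for $\td{S}\to S$. The only difference is that you re-derive that exact sequence from the projective bundle formula, whereas the paper simply cites it from Appendix~\ref{parappB}.
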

		
		\begin{proof}
		We use Lemma~\ref{lemqisfiltration}, defining the filtration~$F_p\alpha:F_pB_{\bullet,j}\rightarrow F_pC_{\bullet,j}$ which corresponds to the terms involving strata~$S$,~$\td{S}$ and~$E\cap\td{S}$ with~$\mathrm{codim}(S)\leq p+j$. All we have to prove is that~$\mathrm{gr}^F_p\alpha:\mathrm{gr}^F_pB_{\bullet,j}\rightarrow \mathrm{gr}^F_pC_{\bullet,j}$ is a quasi-isomorphism for every~$p$. On the one hand,~$\mathrm{gr}_p^FB_{\bullet,j}$ is concentrated in degree~$p$ with
		$$\mathrm{gr}_p^F B_{p,j}=\bigoplus_{S\in\s_{p+j}(\B)}B_{p,j}^S$$
		and differential~$0$. On the other hand,~$\mathrm{gr}_p^FC_{\bullet,j}$ is concentrated in degrees~$\{p,p-1\}$ with
		\begin{align*}
		\mathrm{gr}_p^F C_{p,j}=&\bigoplus_{S\in\s_{p+j}(\B)}D_{p,j}^S(\B) \oplus D_{p+1,j}^{E\cap\td{S}}(\td{\B});\\
		\mathrm{gr}_p^F C_{p-1,j}=&\bigoplus_{S\in\s_{p+j}(\B)} D_{p,j}^{\td{S}}(\td{\B}).
		\end{align*}
		The differential~$D_{p,j}^S(\B)\rightarrow D_{p,j}^{\td{S}}(\td{\B})$ is~$s\otimes X\mapsto (\pi_S^{\td{S}})^*(s)\otimes X$; the differential~$D_{p+1,j}^{E\cap\td{S}}(\td{\B})\rightarrow D_{p,j}^{\td{S}}(\td{\B})$ is given by~$e\otimes X\mapsto -(\iota_{E\cap\td{S}}^{\td{S}})_*(e)\otimes X$.
			We are left with proving that for a fixed stratum~$S\in\s_{p+j}(\B)$ we have a quasi-isomorphism
			$$\xymatrix{
			0 \ar[r] & D_{p,j}^S(\B)\oplus D^{E\cap\td{S}}_{p+1,j}(\td{\B}) \ar[r] & D^{\td{S}} _{p,j}(\td{\B}) \ar[r] & 0\\
			0 \ar[r] & B_{p,j}^S \ar[r]\ar[u] & 0 \ar[r]\ar[u] & 0.}$$
			The above diagram is, up to a Tate twist, the tensor product of~$A_{i,j}^S$ with
			$$\xymatrix{
			0 \ar[r] & H^{q-2p}(S)\oplus H^{q-2p-2}(E\cap\td{S})(-1) \ar[r] & H^{q-2p}(\td{S}) \ar[r] & 0\\
			0 \ar[r] & H^{q-2r+2j}(Z)(p+j-r) \ar[r]\ar[u] & 0 \ar[r]\ar[u] & 0.}$$
			The fact that this is a quasi-isomorphism is a reformulation of the short exact sequence (\ref{appses}).
		\end{proof}
		
	\subsection{The general case:~$\Phi$ is a morphism of bi-complexes}\label{parPhimorphismgeneral}
	
		In this paragraph we prove the general case of the first point of Theorem~\ref{maintheoremtechnical}.
		
		\begin{prop}
		We have~$\Phi\circ d'=d'\circ\Phi$.
		\end{prop}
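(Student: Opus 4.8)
The plan is to deduce the general case from the essential case already treated in Proposition~\ref{propcompd'}, exploiting the locality of all the objects in play. Recall that $\qD_{\bullet,\bullet}$, the differentials $d'$ and $d''$, and the morphism $\Phi$ are assembled from contributions indexed by codimension-one inclusions of strata, each depending only on the germ of the geometric situation near the strata involved, and that $A^{\leq S}_{\bullet,\bullet}(\B)$ depends only on the hypersurfaces through $S$. Hence it suffices to verify $\Phi\circ d'=d'\circ\Phi$ summand by summand, fixing a stratum $S\in\s_{i+j}(\B)$ and an element $s\otimes X\in\qD^S_{i,j}(\B)$, and to argue in a neighbourhood of $Z\cap S$ in $X$ (resp. of a point of $S$ if $Z\cap S=\varnothing$). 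When $Z\cap S=\varnothing$ the blow-up $\pi$ is an isomorphism over $S$, no inclusion out of $S$ is of parallel type, so $\Phi(s\otimes X)=(\pi^{\td{S}}_S)^*(s)\otimes X$; the identity then reduces to the base-change property of Gysin morphisms along a blow-up whose centre is disjoint from $S$ — which in particular makes the \enquote{double-sum} terms $(\iota^T_{Z\cap T})^*(\iota_S^T)_*(s)$ vanish since $S\cap Z\cap T=\varnothing$ — together with the bi-complex relations for $A^{\leq S}_{\bullet,\bullet}(\B)$, exactly in the style of Lemma~\ref{lemDbicomplex1}.

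Now suppose $Z\cap S\neq\varnothing$. Near $Z\cap S$ there is a local decomposition into irreducibles $S=S_\parallel\pitchfork S_\perp$ with $Z\subset S_\parallel$, so that by the K\"unneth formula (Proposition~\ref{propkunneth}) and Corollary~\ref{coroproductexact} the local Orlik-Solomon bi-complex factors as $A^{\leq S}_{\bullet,\bullet}(\B)\cong A^{\leq Z}_{\bullet,\bullet}(\B)\otimes A^{\leq S_\perp}_{\bullet,\bullet}(\B)$; the codimension-one inclusions out of $S$ split accordingly into the parallel ones, coming from $S_\parallel\stackrel{1}{\hookrightarrow}T_\parallel$ with $S_\perp$ fixed, and the perpendicular ones, coming from $S_\perp\stackrel{1}{\hookrightarrow}T_\perp$ with $S_\parallel$ fixed; and $\pi$ is locally the product of the blow-up of a neighbourhood of $Z$ in $S_\parallel$ along $Z$ with the identity on $S_\perp$. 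Unwinding formula (\ref{eqdefPhi}), whose sum runs precisely over the parallel inclusions, one sees that under these local identifications $\Phi$ becomes $\Phi^{(Z)}\otimes\mathrm{id}$, where $\Phi^{(Z)}$ is the morphism of \S\ref{pardefPhi} attached to the \emph{essential} bi-arrangement $\B^{\leq Z}$ (whose minimal stratum is $Z$) and its blow-up along $Z$, the perpendicular inclusions contributing nothing but the identity on the second tensor factor; one uses here the explicit description of the Orlik-Solomon bi-complex of a blow-up in Proposition~\ref{propOSbicomplexblowup}. The differential $d'$ of $\qD_{\bullet,\bullet}(\B)$ decomposes compatibly, matching $d'$ of $\qD_{\bullet,\bullet}(\B^{\leq Z})$ on the first factor and $\pm\,\mathrm{id}\otimes d'_\perp$ on the second. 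Therefore $\Phi\circ d'=d'\circ\Phi$ follows on the first factor from Proposition~\ref{propcompd'} and on the second factor from the triviality that $\Phi^{(Z)}$ commutes with any operator acting on the perpendicular tensor factor, the Koszul signs agreeing on both sides.

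The step I expect to be the main obstacle is justifying rigorously the identification $\Phi=\Phi^{(Z)}\otimes\mathrm{id}$: one must check that the geometric operations entering (\ref{eqdefPhi}) and the differentials — the restrictions $(\iota^S_{Z\cap S})^*$, the pull-backs $(\pi^{\td{S}}_S)^*$ and $(\pi^{E\cap\td{T}}_{Z\cap S})^*$, and the Gysin morphisms — are compatible with the local product structures of $X$ near $Z\cap S$ and of $\td{X}$ near $E\cap\td{S}$, and to track carefully the Koszul signs produced by the tensor products of bi-complexes. This is purely formal once the local product picture is in place, and requires no geometric identity from Appendix~\ref{parappB} beyond those already used in \S\ref{parPhimorphismessential}; this is precisely what is meant by the general case reducing to the essential case \enquote{at the cost of a few technical trivialities}.
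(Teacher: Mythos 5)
Your overall strategy --- split the codimension-one inclusions out of $S$ into parallel and perpendicular ones relative to $Z$ and reduce to the essential case --- is the right one and is close in spirit to the paper's. But the pivotal step, the identification $\Phi=\Phi^{(Z)}\otimes\mathrm{id}$, does not hold at the level of the geometric Orlik--Solomon bi-complex, and the compatibilities you defer as ``purely formal'' are precisely the mathematical content of the general case. The K\"unneth decomposition $A^{\leq S}_{\bullet,\bullet}(\B)\cong A^{\leq S_\parallel}_{\bullet,\bullet}\otimes A^{\leq S_\perp}_{\bullet,\bullet}$ concerns only the combinatorial factor; the cohomological factor $H^{q-2i}(S)$ of $\qD^S_{i,j}(\B)$ does not factor, and the germ-level product structure of $X$ near $Z\cap S$ induces no tensor decomposition of these cohomology groups or of the Gysin and pull-back maps between them. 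Worse, the differential $d'$ of $\qD_{\bullet,\bullet}(\B)$ has components along perpendicular inclusions $S\hookrightperp T$ (with $T=S_\parallel\pitchfork T_\perp$), which strictly enlarge $Z\cap S$ and hence change the ``transverse direction''; so any decomposition indexed by the transverse direction is not preserved by $d'$ --- only the associated increasing filtration is. This is exactly why the paper uses such a filtration and passes to the associated graded in \S\ref{parPhiqisgeneral}, where it suffices for a quasi-isomorphism statement; for the on-the-nose identity $\Phi\circ d'=d'\circ\Phi$ one cannot pass to the associated graded, and the cross terms must be confronted directly.

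Concretely, three verifications are missing and each requires a cohomological identity that does not appear in \S\ref{parPhimorphismessential}: (i) for $S\hookrightperp T$ the single-sum terms cancel only because $(\pi_T^{\td{T}})^*(\iota_S^T)_*=(\iota_{\td{S}}^{\td{T}})_*(\pi^{\td{S}}_S)^*$, i.e.\ base change along a blow-up whose centre is transverse to $S$ inside $T$, which is (\ref{apppullbackperp}); (ii) the mixed double-sum terms over chains $S\hookrightperp Q\hookrightpar U$ must match those over chains $S\hookrightpar T\hookrightperp U$, which requires the clean-intersection formula (\ref{appiitransverse}) together with the base-change identity (\ref{appbasechangeperp}) (the combinatorial anticommutation $d'_{Q,U}d'_{S,Q}=-d'_{T,U}d'_{S,T}$ from the K\"unneth formula handles only the $A$-factor); and (iii) even in your case $Z\cap S=\varnothing$ the vanishing of $(\iota^T_{Z\cap T})^*(\iota_S^T)_*(s)$ already invokes (\ref{appiitransverse}). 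These are genuine statements about Gysin maps under transverse intersection and blow-up, not formal consequences of a local product chart, so the claim that no identities from Appendix~\ref{parappB} beyond those of the essential case are needed is where the argument breaks down.
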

		
		\begin{proof}
		Here are the details to add in the proof of Proposition~\ref{propcompd'}. We write~$S\hookrightperp T$ for an inclusion which is not of parallel type.
		\begin{enumerate}[--]
		\item The terms~$(\cdots)\otimes d'_{S,T}(X)$ still cancel, but there are two cases to consider. For the terms corresponding to an inclusion~$S\hookrightpar T$, the argument is the same as in the essential case, replacing~$Z$ by~$Z\cap S=Z\cap T$. For the terms corresponding to an inclusion~$S\hookrightperp T$, the cancelation follows from the formula
		$$(\pi_T^{\td{T}})^*(\iota_S^T)_*(s)=(\iota_{\td{S}}^{\td{T}})_*(\pi^{\td{S}}_S)^*(s)$$
		which is a special case of (\ref{apppullbackperp}).
		\item The terms corresponding to chains~$S\hookrightpar T\hookrightpar U$ cancel thanks to the same argument as in the essential case, replacing~$Z$ by~$Z\cap S=Z\cap T=Z\cap U$.
		\item We are left with proving the equality, for~$U$ fixed:
		$$\sum_{S\hookrightperp Q\hookrightpar U}(\pi_{Z\cap Q}^{E\cap\td{U}})^*(\iota_{Z\cap Q}^Q)^*(\iota_S^Q)^*(s)\otimes d'_{Q,U}d'_{S,Q}(X)= -\sum_{S\hookrightpar T\hookrightperp U} (\iota_{E\cap\td{T}}^{E\cap\td{U}})_*(\pi_{Z\cap S}^{E\cap\td{T}})^*(\iota_{Z\cap S}^S)^*(s)\otimes d'_{T,U}d'_{S,T}(X).$$
		
		Let us start with a local decomposition~$S=S_\parallel\pitchfork S_\perp$, and~$U=U_\parallel\pitchfork U_\perp$ with~$S_\parallel\stackrel{1}{\hookrightarrow} U_\parallel$ and~$S_\perp\stackrel{1}{\hookrightarrow} U_\perp$. There is thus a unique diagram~$S\hookrightperp Q\hookrightpar U$ and a unique diagram~$S\hookrightpar T\hookrightperp U$, i.e.~$Q=U_\parallel \pitchfork S_\perp$ and~$T=S_\parallel\pitchfork U_\perp$. Using the K\"{u}nneth formula (\ref{propkunneth}) for~$A_{\bullet,\bullet}^{\leq S}(\B)$ with respect to the decomposition~$S=S_\parallel\pitchfork S_\perp$, the fact that~$d'\circ d'=0$ implies that~$ d'_{Q,U}d'_{S,Q}(X)=-d'_{T,U}d'_{S,T}(X)$. Thus, we are left with proving the equality
		$$(\pi_{Z\cap Q}^{E\cap\td{U}})^*(\iota_{Z\cap Q}^Q)^*(\iota_S^Q)^*(s)=(\iota_{E\cap\td{T}}^{E\cap\td{U}})_*(\pi_{Z\cap S}^{E\cap\td{T}})^*(\iota_{Z\cap S}^S)^*(s).$$
		Since~$Z\cap Q$ and~$S$ are transverse in~$Q$, (\ref{appiitransverse}) implies the identity 
		$$(\iota_{Z\cap Q}^Q)^*(\iota_S^Q)^*(s)=(\iota_{Z\cap S}^{Z\cap Q})_*(\iota_{Z\cap S}^S)^*(s).$$
		Thus, writing~$z=(\iota_{Z\cap S}^S)^*(s)$ and remembering that~$Z\cap S=Z\cap T$, we only need to prove that
		$$(\pi_{Z\cap Q}^{E\cap\td{U}})^*(\iota_{Z\cap T}^{Z\cap Q})_*(z)=(\iota_{E\cap\td{T}}^{E\cap\td{U}})_*(\pi_{Z\cap\td{T}}^{E\cap\td{T}})^*(z)$$
		which is a special case of (\ref{appbasechangeperp}) since~$Z\cap U$ and~$T$ are transverse in~$U$. 
		\end{enumerate}
		\end{proof}
		
		\begin{prop}
		We have~$\Phi\circ d''=d''\circ\Phi$.
		\end{prop}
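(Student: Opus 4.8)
The plan is to carry over the proof of Proposition~\ref{propcompd''} (the essential case) and add the bookkeeping that distinguishes inclusions of parallel type $\hookrightpar$ from inclusions of perpendicular type $\hookrightperp$, exactly as was done in passing from the essential to the general case of $\Phi\circ d'=d'\circ\Phi$. First I would expand $(\Phi d'')(s\otimes X)$ and $(d''\Phi)(s\otimes X)$ from formula (\ref{eqdefPhi}), for $s\otimes X\in H^{q-2i}(S)(-i)\otimes A_{i,j}^S(\B)$. The terms of the form $(\cdots)\otimes d''_{R,S}(X)$ cancel exactly as in the essential case, since the identity $(\pi^{\td{R}}_R)^*(\iota_R^S)^*(s)=(\iota_{\td{R}}^{\td{S}})^*(\pi^{\td{S}}_S)^*(s)$ is purely functorial and does not involve the decomposition of $S$. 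What remains is to match, for each fixed $U$, the sum over diagrams $S\stackrel{1}{\hookleftarrow}R\stackrel{1}{\hookrightarrow}U$ with the sum over diagrams $S\stackrel{1}{\hookrightarrow}T\stackrel{1}{\hookleftarrow}U$, where now each of the two steps may independently be of parallel or perpendicular type.

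I would then split these sums according to the types of the two steps. For the chains in which the step adjacent to $R$ (resp.\ $T$) on the side of $U$ is of parallel type, the essential-case computation goes through verbatim after replacing the minimal stratum $Z$ by the relevant $Z\cap S=Z\cap R=Z\cap U$ (resp.\ $Z\cap S=Z\cap T=Z\cap U$): one only uses that $A_{\bullet,\bullet}^{\leq Z\cap S}(\B)$ is a bi-complex (Lemma~\ref{lemAbicomplex}) and that classes such as $(\pi^{E\cap\td{U}}_{Z\cap S})^*(\iota_{Z\cap S}^S)^*(s)$ do not depend on the intermediate stratum. The genuinely new contributions are the mixed chains. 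As in the perpendicular part of the proof of $\Phi\circ d'=d'\circ\Phi$, I would first reduce to a local decomposition $S=S_\parallel\pitchfork S_\perp$ and $U=U_\parallel\pitchfork U_\perp$ with $S_\parallel\stackrel{1}{\hookrightarrow}U_\parallel$ and $S_\perp\stackrel{1}{\hookrightarrow}U_\perp$, so that there is a unique relevant diagram of each mixed shape, with intermediate strata $Q=U_\parallel\pitchfork S_\perp$ and $T=S_\parallel\pitchfork U_\perp$; the K\"{u}nneth formula (Proposition~\ref{propkunneth}) for $A_{\bullet,\bullet}^{\leq S}(\B)$ together with the bi-complex relations matches the Orlik-Solomon factors. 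The cohomological factors are then reconciled using the identities of Appendix~\ref{parappB}: the transverse-intersection identity (\ref{appiitransverse}) to turn a restriction of a Gysin class into a Gysin class of a restriction, and the base-change identity (\ref{appbasechangeperp}) (with the help of (\ref{apppullbackperp})) to commute the pull-back $\pi^*$ past the Gysin map into $E\cap\td{U}$ over the transverse intersection $Z\cap U$ inside $U$.

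I expect the mixed chains to be the main obstacle. There, one side produces a class obtained by first restricting $s$ to $Z\cap Q$ and then pulling back to $E\cap\td{U}$, while the other side produces a class obtained by pulling back over $Z\cap S$ and then applying a Gysin pushforward into $E\cap\td{U}$; the equality of the two is precisely a base-change square for the restriction of the blow-up $\pi$ to a transverse pair of strata. Writing $z=(\iota_{Z\cap S}^S)^*(s)$ and using $Z\cap S=Z\cap T$, this should reduce to (\ref{appbasechangeperp}) just as in the previous proposition. The remaining care is only in keeping the Koszul signs and the (co)homological degrees consistent and in checking that the chosen local decomposition is compatible with the global sums over diagrams; no geometric input beyond Appendix~\ref{parappB} and the K\"{u}nneth formula should be required.
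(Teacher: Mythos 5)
Your combinatorial skeleton matches the paper's: cancel the terms $(\cdots)\otimes d''_{R,S}(X)$ by functoriality of pull-back, then match, for each fixed $U$, the sum over chains $S\stackrel{1}{\hookleftarrow}R\hookrightpar U$ against the sum over chains $S\hookrightpar T\stackrel{1}{\hookleftarrow}U$, with the K\"unneth formula (Proposition~\ref{propkunneth}) together with the bi-complex property of $A_{\bullet,\bullet}^{\leq S\cap U}(\B)$ supplying the identity
$$\sum_{S\stackrel{1}{\hookleftarrow}R\hookrightpar U} d'_{R,U}\, d''_{R,S}(X)=\sum_{S \hookrightpar T\stackrel{1}{\hookleftarrow} U}  d''_{U,T}\, d'_{S,T}(X)$$
between the two differently constrained Orlik--Solomon sums. (Note that the two sums are not obtained by letting ``each of the two steps independently be of parallel or perpendicular type'': each already carries exactly one parallelism constraint, on a \emph{different} step of the chain, and the K\"unneth argument is precisely what shows that the two restrictions of the bi-complex identity agree.) That part of your plan is sound.

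Where the proposal goes wrong is in the geometric bookkeeping: you have transplanted the difficulties of the $d'$ case into a setting where they do not occur. Since $\chi(Z)=\lambda$, the morphism $\Phi$ is built entirely out of pull-backs, and $d''$ acts on the cohomological factor by pull-backs as well, so every class occurring in $\Phi d''$ or $d''\Phi$ is an iterated pull-back of $s$. Concretely, writing $z=(\iota_{Z\cap S}^S)^*(s)$, both $(\pi^{E\cap\td{U}}_{Z\cap R})^*(\iota_{Z\cap R}^R)^*(\iota_R^S)^*(s)$ and $(\iota_{E\cap\td{U}}^{E\cap\td{T}})^*(\pi^{E\cap\td{T}}_{Z\cap S})^*(z)$ are equal to $(\pi^{E\cap\td{U}}_{Z\cap U})^*(\iota_{Z\cap U}^{Z\cap S})^*(z)$, simply because $\pi$ commutes with the inclusions of strata; they are independent of $R$ and $T$. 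No Gysin push-forward into $E\cap\td{U}$ ever appears, so neither (\ref{appiitransverse}) nor (\ref{apppullbackperp}) nor the base-change identity (\ref{appbasechangeperp}) is needed, and your announced ``main obstacle'' --- reconciling a pull-back composite with a Gysin composite via a base-change square --- is a phantom: the step as described would fail only because there is no Gysin term to reconcile, and the correct statement is strictly easier than the one you set out to prove. Finally, you omit the degenerate case where $E\cap\td{U}=\varnothing$: there the left-hand sum is empty (no inclusion $R\hookrightpar U$ exists) while each right-hand term lands in the cohomology of the empty set, so both sides vanish; this has to be recorded before the functoriality argument applies.
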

		
		\begin{proof}
		Here are the details to add in the proof of Proposition~\ref{propcompd''}.
		\begin{enumerate}[--]
		\item The terms~$(\cdots)\otimes d''_{R,S}(X)$ cancel by the same argument as in the essential case. Thus it remains to show that for~$U$ fixed we have
		$$\sum_{S\stackrel{1}{\hookleftarrow}R\hookrightpar U}(\pi^{E\cap\td{U}}_{Z})^*(\iota^{R}_{Z})^*(\iota_R^S)^*(s)\otimes  d'_{R,U} d''_{R,S}(X)=\sum_{S\hookrightpar T\stackrel{1}{\hookleftarrow} U} (\iota^{E\cap\td{T}}_{E\cap\td{U}})^*(\pi^{E\cap\td{T}}_{Z})^*(\iota_{Z}^S)^*(s)\otimes d''_{U,T} d'_{S,T}(X).$$
		\item If~$S\cap\td{U}=\varnothing$ then the left-hand side is zero. For a diagram~$S\hookrightpar T\stackrel{1}{\hookleftarrow} U$ we have~$Z\cap U\subset Z\cap S$, hence~$Z\cap U=\varnothing$ and~$E\cap\td{U}=\varnothing$, thus the corresponding term in the right-hand side is zero.
		\item If~$S\cap\td{U}\neq\varnothing$, the same argument as in the essential case works. To prove the identity
		$$\sum_{S\stackrel{1}{\hookleftarrow}R\hookrightpar U} d'_{R,U} d''_{R,S}(X)=\sum_{S \hookrightpar T\stackrel{1}{\hookleftarrow} U}  d''_{U,T} d'_{S,T}(X)$$
		one has to use the K\"{u}nneth formula (Proposition~\ref{propkunneth}) in addition of the fact that~$A_{\bullet,\bullet}^{\leq S\cap U}(\B)$ is a bi-complex.
		\end{enumerate}
		\end{proof}
	
	\subsection{The general case:~$\Phi$ is a quasi-isomorphism}\label{parPhiqisgeneral}
	
		In this paragraph we prove the general case of the second point of Theorem~\ref{maintheoremtechnical} by reducing to the essential case, already proved in \S\ref{parPhiqisessential}.
		
		\begin{defi} Let~$P$ be a stratum of~$\B$ that is transverse to~$Z$; in particular,~$Z\pitchfork P\neq\varnothing$. Let~$S$ be a stratum such that~$Z\cap S\neq\varnothing$. Then by looking at a local chart around any point of~$Z\cap S$, one sees that we have a decomposition ~$S=S_Z\pitchfork P$ with~$S_Z\supset Z$ and~$P$ transverse to~$Z$. We call~$P$ the \textit{transverse direction} of~$S$.
		\end{defi}
		
		We let~$\B_P$ be the arrangement of hypersurfaces on~$P$ consisting in the intersections of~$P$ and the hypersurfaces~$K\in\B^{\leq Z}$. It is essential, with minimal stratum~$Z\pitchfork P$. The strata of~$\B_P$ are exactly the strata of~$\B$ with transverse direction~$P$. As the coloring is concerned, we ask that~$\chi(S_Z\pitchfork P)=\chi(S_Z)$ for every~$S_Z\supset Z$.
		
		 The Orlik--Solomon bi-complex of~$\B_P$ is related to the one of~$\B$ by
		 \begin{equation} \label{eqisoOStransverseP}
		 A_{\bullet,\bullet}^{\leq S_Z\pitchfork P}(\B_P)\cong A_{\bullet,\bullet}^{\leq S_Z}(\B).
		 \end{equation}
		 In particular, if~$Z$ is exact in~$\B$ then~$Z\pitchfork P$ is exact in~$\B_P$.
			
		Let~$S=S_Z\pitchfork P$ be a stratum with transverse direction~$P$. Combining the K\"{u}nneth formula (Proposition~\ref{propkunneth}) and (\ref{eqisoOStransverseP}), we get an isomorphism
		$$A_{i,j}^S(\B)\cong \bigoplus_{k+l=\mathrm{codim}(P)} A_{i-k,j-l}^{S_Z\pitchfork P}(\B_P)\otimes A_{k,l}(\B).$$
		and hence an isomorphism at the level of the Orlik--Solomon bi-complexes:
		$$\qD_{i,j}^S(\B)\cong \bigoplus_{k+l=\mathrm{codim}(P)} {}^{(q-2k)}D_{i-k,j-l}^{S_Z\pitchfork P}(\B_P) (-k)\otimes A_{k,l}^P(\B).$$

		Summing over all strata~$S\in\s_{i+j}(\B)$ and grouping together the strata having the same transverse direction~$P$, we get a decomposition:

		$$\qD_{i,j}(\B)=\left(\bigoplus_{\substack{S\in\s_{i+j}(\B)\\Z\cap S=\varnothing}} \qD_{i,j}^S(\B)\right) \oplus \left(\bigoplus_{\substack{P \perp Z \\ k+l=\mathrm{codim}(P)}} {}^{(q-2k)}D_{i-k,j-l}(\B_P) (-k)\otimes A_{k,l}^P(\B)\right)$$
		where~$P\perp Z$ means that we sum over all strata~$P$ that are transverse to~$Z$.
			
		Now it is clear that in the blown-up situation we have
			
		$$\qD_{i,j}(\td{\B})=\left(\bigoplus_{\substack{S\in\s_{i+j}(\B)\\Z\cap S=\varnothing}} \qD_{i,j}^{\td{S}}(\td{\B})\right) \oplus \left(\bigoplus_{\substack{P \perp Z \\ k+l=\mathrm{codim}(P)}} {}^{(q-2k)}D_{i-k,j-l}(\td{\B_P}) (-k)\otimes A_{k,l}^P(\B)\right)$$
			
		where~$\td{\B_P}$ is the blow-up of~$\B_P$ along~$Z\pitchfork P$.\\
		
		These decompositions are compatible with~$\Phi$ in the following sense:
		\begin{enumerate}[--]
		\item for~$S\in\s_{i+j}(\B)$ such that~$Z\cap S=\varnothing$,~$\Phi$ is an isomorphism~$\qD_{i,j}^S(\B) \cong \qD_{i,j}^{\td{S}}(\td{\B})$.
		\item for every~$P\perp Z$,~$\Phi:\qD_{i,j}(\B)\rightarrow \qD_{i,j}(\td{\B})$ restricts to 
		$$D_{i-k,j-l}(\B_P) (-k)\otimes A_{k,l}^P(\B) \rightarrow D_{i-k,j-l}(\td{\B_P}) (-k)\otimes A_{k,l}^P(\B)$$
		which is nothing but~$\Phi\otimes\mathrm{id}$.
		\end{enumerate}
		
		\begin{prop}
		If~$Z$ is exact, then~$\Phi:\qD_\bullet(\B)\rightarrow \qD_\bullet(\td{\B})$ is a quasi-isomorphism
		\end{prop}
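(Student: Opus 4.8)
The plan is to deduce this general statement from the essential case already established in \S\ref{parPhiqisessential}, by first isolating the strata disjoint from~$Z$ and then filtering what remains by the transverse direction, applying Lemma~\ref{lemqisfiltration} at each stage. Since~$\Phi$ is a morphism of bi-complexes (the general case of the first point, just proved in~\S\ref{parPhimorphismgeneral}), it is compatible with the column filtration~$F_p\qD_\bullet=\bigoplus_{j\leq p}\qD_{\bullet,j}$; hence by Lemma~\ref{lemqisfiltration} it suffices to prove that, for each fixed~$j$, the morphism of~$d'$-complexes~$\Phi_{\bullet,j}\colon\qD_{\bullet,j}(\B)\to\qD_{\bullet,j}(\td{\B})$ is a quasi-isomorphism. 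Fix such a~$j$.

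Next I would split off the strata disjoint from~$Z$. The components~$\qD_{i,j}^S(\B)$ indexed by strata~$S$ with~$Z\cap S\neq\varnothing$ span a~$d'$-subcomplex of~$\qD_{\bullet,j}(\B)$ (if~$S\stackrel{1}{\hookrightarrow}T$ and~$Z\cap S\neq\varnothing$ then~$Z\cap T\supseteq Z\cap S\neq\varnothing$), and likewise the components attached to the strata of~$\td{\B}$ meeting~$E$ span a~$d'$-subcomplex of~$\qD_{\bullet,j}(\td{\B})$; one reads off from formula~(\ref{eqdefPhi}) that~$\Phi_{\bullet,j}$ carries the former into the latter. On the two quotient complexes --- indexed respectively by the strata disjoint from~$Z$ and by their strict transforms --- $\Phi_{\bullet,j}$ is assembled from the componentwise isomorphisms~$\qD_{i,j}^S(\B)\cong\qD_{i,j}^{\td{S}}(\td{\B})$ coming from~$\pi\colon\td{X}\setminus E\stackrel{\cong}{\longrightarrow}X\setminus Z$, hence is an isomorphism of complexes. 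By the long exact cohomology sequences of the two mapping cones and the five lemma, I am reduced to showing that~$\Phi_{\bullet,j}$ is a quasi-isomorphism on the subcomplexes indexed by the strata meeting~$Z$, resp.~$E$.

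On these subcomplexes I would then use the filtration~$F_p$ given by~$\mathrm{codim}(P)\leq p$, where~$P$ denotes the transverse direction of a stratum meeting~$Z$ (and the analogous filtration downstairs). A parallel inclusion~$S\hookrightpar T$ preserves the transverse direction, while a non-parallel one~$S\hookrightperp T$ lowers its codimension by one, so both~$d'$ and~$\Phi_{\bullet,j}$ (whose off-diagonal terms run over parallel inclusions only) respect this filtration. Using the two displayed decompositions of~$\qD_{i,j}(\B)$ and~$\qD_{i,j}(\td{\B})$ above, the associated graded~$\mathrm{gr}_p^F\Phi_{\bullet,j}$ is, for each~$p$, the direct sum over the~$P\perp Z$ with~$\mathrm{codim}(P)=p$ and over~$0\leq k\leq p$ of the~$(j-p+k)$-th row of~${}^{(q-2k)}\Phi_{\B_P}\colon{}^{(q-2k)}D_{\bullet,\bullet}(\B_P)\to{}^{(q-2k)}D_{\bullet,\bullet}(\td{\B_P})$ --- up to a degree shift and a Tate twist by~$(-k)$ --- tensored with the fixed finite-dimensional space~$A_{k,p-k}^P(\B)$; here~$\B_P$ is the essential bi-arrangement on~$P$ with minimal stratum~$Z\pitchfork P$. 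Because of~(\ref{eqisoOStransverseP}), exactness of~$Z$ in~$\B$ forces exactness of~$Z\pitchfork P$ in~$\B_P$, so the essential case proved in~\S\ref{parPhiqisessential} (in particular Proposition~\ref{propPsiqis}, which treats one row at a time) shows that every such row is a quasi-isomorphism. Since tensoring over~$\Q$ with a finite-dimensional vector space and forming direct sums preserve quasi-isomorphisms, each~$\mathrm{gr}_p^F\Phi_{\bullet,j}$ is a quasi-isomorphism, and one last appeal to Lemma~\ref{lemqisfiltration} finishes the proof.

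The step I expect to be the real work is the identification in the previous paragraph: checking that once the transverse-direction--lowering part of~$d'$ is discarded in the associated graded, the~$P$-block is precisely a row of the geometric Orlik--Solomon complex of the essential bi-arrangement~$\B_P$, and that~$\Phi_{\bullet,j}$ restricts there to~${}^{(q-2k)}\Phi_{\B_P}\otimes\mathrm{id}$. This is exactly the information packaged in the two displayed decompositions and the two compatibility statements stated just before the proposition, so the write-up should amount to a careful bookkeeping of those identifications rather than to any genuinely new computation; the remaining ingredients --- the column filtration, the disjoint/meeting-$Z$ short exact sequences, and the transverse-direction filtration --- are all formal applications of Lemma~\ref{lemqisfiltration}.
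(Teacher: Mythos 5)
Your argument is correct and follows the paper's proof essentially verbatim: reduce to each row $j$ via the filtration on the lines, then filter by the codimension of the transverse direction so that the graded pieces become rows of the geometric Orlik--Solomon complexes of the essential bi-arrangements $\B_P$ (with $Z\pitchfork P$ exact) tensored with the fixed finite-dimensional spaces $A_{k,l}^P(\B)$, and invoke the essential case already proved. The only cosmetic differences are that the paper absorbs the strata disjoint from $Z$ into a top step ($p=\mathrm{dim}(X)+1$) of the same transverse-direction filtration rather than splitting them off by a cone/five-lemma argument, and that your line filtration should be written $\bigoplus_{j\geq p}\qD_{\bullet,j}$ rather than $\bigoplus_{j\leq p}\qD_{\bullet,j}$, since $d''$ increases $j$ --- a reindexing that affects nothing.
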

		
		\begin{proof}
		As in \S\ref{parPhiqisessential}, it is enough to prove that for every line~$j$, the morphism~$\Phi_{\bullet,j}:\qD_{\bullet,j}(\B)\rightarrow \qD_{\bullet,j}(\td{\B})$ is a quasi-isomorphism. 
		
		The index~$j$ being fixed, we define an increasing filtration~$F_p\Phi_{\bullet,j}:F_p\qD_{\bullet,j}(\B)\rightarrow F_p\qD_{\bullet,j}(\td{\B})$. By definition,~$F_p\qD_{\bullet,j}(\B)$ is the sum of the terms corresponding to~$\mathrm{codim}(P)\leq p$. We add the convention~$F_p\qD_{\bullet,j}(\B)=\qD_{\bullet,j}(\B)$ for~$p=\mathrm{dim}(X)+1$ to include the terms corresponding to~$Z\cap S=\varnothing$. We make the analogous definition for~$\qD_{\bullet,j}(\td{\B})$. 
		
		In view of Lemma~\ref{lemqisfiltration}, it is enough to show that for every~$p$, the morphism~$\mathrm{gr}^F_p\Phi_{\bullet,j}:\mathrm{gr}^F_p\qD_{\bullet,j}(\B)\rightarrow \mathrm{gr}^F_p\qD_{\bullet,j}(\td{\B})$ is a quasi-isomorphism. For~$p=\mathrm{dim}(X)+1$,~$\mathrm{gr}^F_p\Phi_{\bullet,j}$ is an isomorphism. For~$p\leq \mathrm{dim}(X)$ we get
		$$\mathrm{gr}^F_p\qD_{\bullet,j}(\B)=\bigoplus_{\substack{P \perp Z \\ \mathrm{codim}(P)=p \\ k+l=p}}{}^{(q-2k)}D_{\bullet-k,j-l}(\B_P) (-k)\otimes A_{k,l}^P(\B)$$
		and the differential on~$D_{\bullet-k,j-l}(\B_P) (-k)\otimes A_{k,l}^P(\B)$ is~$d'\otimes\mathrm{id}$. The same is true for
		$$\mathrm{gr}^F_p\qD_{\bullet,j}(\td{\B})=\bigoplus_{\substack{P \perp Z \\ \mathrm{codim}(P)=p \\ k+l=p}}{}^{(q-2k)}D_{\bullet-k,j-l}(\td{\B_P}) (-k)\otimes A_{k,l}^P(\B).$$
		Thus,~$\mathrm{gr}^F_p\Phi_{\bullet,j}$ is a quasi-isomorphism if and only if every~$\Phi:{}^{(q-2k)}D_{\bullet-k,j-l}(\B_P) \rightarrow {}^{(q-2k)}D_{\bullet-k,j-l}(\td{\B_P})$ is a quasi-isomorphism. Since the arrangements~$\B_P$ are essential with~$Z\pitchfork P$ exact, this follow from the essential case, already proved in \S\ref{parPhiqisessential}.
		\end{proof}
		
	\subsection{Working withouth the connectedness assumption}\label{parwithoutconnectedness}
	
	Let~$\B$ be a bi-arrangement of hypersurfaces in a complex manifold~$X$, and~$Z$ a good stratum of~$X$. If we do not assume (\ref{connectednessassumption}) that the intersection of strata are all connected, then it is still possible to define the morphisms~$\Phi$ as in~\ref{pardefPhi}.
	
	Let us fix a stratum~$S$ of~$\B$. For every~$S\stackrel{1}{\hookrightarrow}T$, we have a decomposition into connected components
	$$Z\cap T=\bigsqcup_{\alpha\in I_\parallel(T)}(Z\cap T)_\alpha \sqcup \bigsqcup_{\beta\in I_\perp(T)}(Z\cap T)_\beta$$
	where for each~$\alpha\in I_\parallel(T)$,~$(Z\cap T)_\alpha\subset S$, and for each~$\beta\in I_\perp(T)$,~$(Z\cap T)_\beta \not\subset S$.
	In the same fashion, we have a decomposition into connected components 
	$$E\cap\td{T}=\bigsqcup_{\alpha\in I_\parallel(T)}(E\cap \td{T})_\alpha \sqcup \bigsqcup_{\beta\in I_\perp(T)}(E\cap \td{T})_\beta$$
	and for each~$\alpha$ we have a morphism~$\pi_{T,\alpha}:(E\cap\td{T})_\alpha\rightarrow (Z\cap T)_\alpha$.
	
	We then define 
	$$\Phi(s\otimes X)=(\pi^{\td{S}}_S)^*(s)\otimes X +\sum_{\substack{S\stackrel{1}{\hookrightarrow}T \\\alpha\in I_\parallel(T)}}(\pi_{T,\alpha})^*(\iota^S_{(Z\cap T)_\alpha})^*(s)\otimes d'_{S,T}(X).$$
	
	We leave it to the reader to check that the proof of Theorem~\ref{maintheoremtechnical} can be adapted in that setting.

\appendix
	
	\section{Normal crossing divisors and relative cohomology}\label{parappA}
	
			In this appendix, we fix~$X$ a complex manifold,~$\L$ and~$\M$ two simple normal crossing divisors in~$X$ that do not share an irreducible component and such that~$\L\cup\M$ is a normal crossing divisor. We will denote by~$L_1,\ldots,L_l$ (resp.~$M_1,\ldots,M_m$) the irreducible components of~$\L$ (resp.~$\M$). For~$I\subset\{1,\ldots,l\}$ (resp.~$J\subset\{1,\ldots,m\}$), we will write~$L_I=\bigcap_{i\in I}L_i$ (resp.~$M_J=\bigcap_{j\in J}M_j$), with the convention~$L_\varnothing=M_\varnothing=X$. For every~$I$ and~$J$,~$L_I\cap M_J$ is a disjoint union of submanifolds of~$X$.
			
			\subsection{The spectral sequence}
		
			We let
			\begin{equation}\label{appmotiveNCD}
			H^\bullet(\L,\M)=H^\bullet(X\setminus\L,\M\setminus \M\cap\L)
			\end{equation}
			be the corresponding relative cohomology group. It is endowed with a canonical mixed Hodge structure if~$X$ is a complex variety and~$\L$,~$\M$ are complex subvarieties of~$X$.
		
			\begin{prop}\label{appspectralsequenceNCD}
			\begin{enumerate}
			\item There is a spectral sequence\footnote{Here,~$(-i)$ denotes the Tate twist of weight~$2i$. It is important in the algebraic case; otherwise it should be ignored.}
			\begin{equation}\label{appeqspectralsequence}
			E_1^{-p,q}(\L,\M)=\bigoplus_{\substack{i-j=p\\ |I|=i \\ |J|=j}} H^{q-2i}(L_I\cap M_J)(-i) \;\Longrightarrow \; H^{-p+q}(\L,\M).
			\end{equation}
			The differential~$d_1:E_1^{-p,q}\rightarrow E_1^{- p+1,q}$ is that of the total complex of a double complex, where
				\begin{enumerate}[--]
				\item the horizontal differential is the collection of the morphisms 
				$$H^{q-2i}(L_I\cap M_J)(-i)\rightarrow H^{q-2i+2}(L_{I\setminus \{r\}}\cap M_J)(-i+1)$$ 
				for every~$r\in I$, which are the Gysin morphisms of the inclusions~$L_I\cap M_J\hookrightarrow L_{I\setminus\{r\}}\cap M_J$, multiplied by the signs~$\sgn(\{r\},I\setminus\{r\})$;
				\item the vertical differential is the collection of the morphisms
				$$H^{q-2i}(L_I\cap M_J)(-i)\rightarrow H^{q-2i}(L_I\cap M_{J\cup\{s\}})(-i)$$ 
				for every~$s\notin J$, which are the pull-back morphisms of the inclusions~$L_I\cap M_{J\cup\{s\}}\hookrightarrow L_I\cap M_J$, multiplied by the signs~$\sgn(\{s\},J\setminus\{s\})$.
				\end{enumerate}
			\item If~$X$ is a smooth complex variety and~$\L$,~$\M$ are complex subvarieties of~$X$, then this is a spectral sequence in the category of mixed Hodge structures.
			\item If furthermore~$X$ is projective, then this spectral sequence degenerates at the~$E_2$ term and we have
			$$E_\infty^{-p,q}\cong E_2^{-p+q} \cong \gr_q^W H^{-p+q}(\L,\M).$$
			\end{enumerate}
			\end{prop}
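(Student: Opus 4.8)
The plan is to exhibit a finitely filtered complex of sheaves on $X$ whose hypercohomology is $H^\bullet(\L,\M)$ and whose weight spectral sequence is exactly $(\ref{appeqspectralsequence})$; in the algebraic case the same construction upgrades to a mixed Hodge complex, and parts (2)--(3) then follow by formal weight considerations. First I would handle the $\M$-direction. Set $j\colon U:=X\setminus\L\hookrightarrow X$ and $\M':=\M\cap U=\M\setminus\M\cap\L$, a normal crossing divisor in $U$ with irreducible components $M'_r=M_r\cap U$, so that by definition $H^\bullet(\L,\M)=H^\bullet(U,\M')$. From the Mayer--Vietoris resolution of the constant sheaf on a normal crossing divisor by the constant sheaves on its multiple intersections, $\Q_{\M'}\simeq\big[\bigoplus_{|J|=1}\Q_{M'_J}\to\bigoplus_{|J|=2}\Q_{M'_J}\to\cdots\big]$, together with the cone of $\Q_U\to\Q_{\M'}$, one gets $H^\bullet(\L,\M)=\mathbb{H}^\bullet(U,\mathcal C^\bullet)$ where $\mathcal C^s=\bigoplus_{|J|=s}(\iota_{M'_J})_*\Q_{M'_J}$ (with the convention $M'_\varnothing=U$) and the differential is the alternating sum of the restriction maps with signs $\sgn(\{s\},J\setminus\{s\})$. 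Pushing forward by $j$, $H^\bullet(\L,\M)=\mathbb{H}^\bullet(X,Rj_*\mathcal C^\bullet)$.

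Next I would resolve along $\L$. For each $J$ one has $Rj_*(\iota_{M'_J})_*\Q_{M'_J}=(\iota_{M_J})_*R(j_J)_*\Q_{M_J\setminus\L}$ with $j_J\colon M_J\setminus\L\hookrightarrow M_J$, and since $M_J\cap\L$ is a normal crossing divisor in the smooth $M_J$ with components $L_i\cap M_J$, the complex $R(j_J)_*\Q_{M_J\setminus\L}$ carries the standard increasing weight filtration built from the iterated residue (Gysin) triangles, with $\gr^W_a\cong\bigoplus_{|I|=a}(\iota_{L_I\cap M_J})_*\Q_{L_I\cap M_J}(-a)[-a]$. I would then filter $Rj_*\mathcal C^\bullet$ by declaring $W_p$ to be the part spanned by the contributions of the strata $L_I\cap M_J$ with $|I|-|J|\leq p$. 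Because every differential --- both the residue/Gysin maps (which lower $|I|$ by one) and the \v{C}ech restrictions (which raise $|J|$ by one) --- decreases $|I|-|J|$ by exactly one, the induced differential on $\gr^W_p$ vanishes, $\gr^W_p$ is (up to shift and Tate twist) the direct sum over $|I|-|J|=p$ of complexes computing $H^\bullet(L_I\cap M_J)(-i)$, the filtration is finite so the resulting spectral sequence converges, and it reads
\[
E_1^{-p,q}=\bigoplus_{\substack{i-j=p\\ |I|=i,\ |J|=j}} H^{q-2i}(L_I\cap M_J)(-i)\ \Longrightarrow\ H^{-p+q}(\L,\M),
\]
with $d_1$ the total differential of the double complex whose $I$-part is the signed Gysin maps of the $L_I\cap M_J\hookrightarrow L_{I\setminus\{r\}}\cap M_J$ and whose $J$-part is the signed pull-backs along $L_I\cap M_{J\cup\{s\}}\hookrightarrow L_I\cap M_J$. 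This is exactly the statement, proving (1).

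For (2) I would rerun the construction with Deligne's logarithmic mixed Hodge complexes on the strata $L_I\cap M_J$ in place of the constant sheaves; the result is a mixed Hodge complex computing $H^\bullet(\L,\M)$, so $d_1$ and the whole weight spectral sequence live in the category of mixed Hodge structures. For (3), when $X$ is smooth and projective each $L_I\cap M_J$ is smooth projective, hence each summand $H^{q-2i}(L_I\cap M_J)(-i)$ of $E_1^{-p,q}$ is pure of weight $q$; thus the $q$-th row of the $E_1$ page is pure of weight $q$, and a differential $d_r$ with $r\geq2$ would strictly change the weight, so it vanishes. Therefore the spectral sequence degenerates at $E_2$, and strictness of morphisms of mixed Hodge structures yields $E_\infty^{-p,q}\cong E_2^{-p,q}\cong\gr^W_q H^{-p+q}(\L,\M)$.

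The main obstacle is the middle step: organizing the two resolutions into one (bi)filtered complex so that the associated weight spectral sequence is \emph{literally} the double complex of Gysin and pull-back maps, with the precise signs $\sgn(\{r\},I\setminus\{r\})$ and $\sgn(\{s\},J\setminus\{s\})$. Concretely this amounts to checking, at the chain level, that the residue maps along the $L_i$'s anticommute with the restriction maps along the $M_j$'s --- a compatibility of Gysin and pull-back morphisms for transverse intersections, of the type collected in Appendix~\ref{parappB}. Equivalently, in the algebraic case one may try to realize the whole picture inside a single logarithmic complex on $X$ (with a twist encoding the ``relative to $\M$'' condition) and read off both filtrations there, in which case the technical heart is pinning down that complex and computing its weight-graded quotients. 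Everything else is routine bookkeeping of residues, Gysin morphisms and signs.
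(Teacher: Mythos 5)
Your proposal is correct and follows essentially the same route as the paper: resolve along $\M$ by the \v{C}ech/Mayer--Vietoris complex, resolve along $\L$ by Gysin/residue maps to obtain the double complex of shifted, twisted constant sheaves on the strata $L_I\cap M_J$, take the hypercohomology spectral sequence of the resulting (bi)filtered total complex, and conclude degeneration at $E_2$ by purity of the $E_1$ terms when $X$ is smooth projective. The only divergence is cosmetic: the paper realizes the Gysin maps at the sheaf level via $(\iota_{L_I\cap M_J})_!(\iota_{L_I\cap M_J})^!\Q\cong\Q[-2i]$ and obtains the Hodge-theoretic enhancement via mixed Hodge modules, whereas you use the weight filtration on $Rj_*\Q$ and Deligne's logarithmic mixed Hodge complexes --- which is precisely the alternative the paper itself records in Remark~\ref{appremhodge}.
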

			
			\begin{proof}
			\begin{enumerate}
			\item We will use the following notations :$j_U^Y:U\hookrightarrow Y$ for an open immersion,~$\mathbb{Q}_Y$ for the constant sheaf with stalk~$\Q$ on a space~$Y$, and~$d_Y$ for the complex dimension of~$Y$, when this makes sense. Let us write 
			$$\mathscr{F}=\mathscr{F}(\L,\M)=(j_{X\setminus\L}^X)_*(j_{X\setminus \L\cup\M}^{X\setminus \L})_!\Q_{X\setminus \L\cup\M}[d_X]\ ,$$
			viewed as an object of the bounded derived category~$\mathrm{D}^b_c(X)$ of constructible sheaves on~$X$. Then we have
			$$H^\bullet(\L,\M)=H^{\bullet-d_X}(X,\mathscr{F}(\L,\M)).$$
			We recall that for a complex manifold~$Y$, the objetct~$\mathbb{Q}_Y[d_Y]$ is in the abelian category~$\mathrm{Perv}(Y)\subset \mathrm{D}^b_c(Y)$ of perverse sheaves on~$Y$. We note that both~$j_{X\setminus\L}^X$ and~$j_{X\setminus \L\cup\M}^{X\setminus \L}$ are affine open immersions, so that the functors~$(j_{X\setminus\L}^X)_*$ and~$(j_{X\setminus \L\cup\M}^{X\setminus \L})_!$ preserve the categories of perverse sheaves. Let us write~$\mathscr{F}'=(j_{X\setminus \L\cup\M}^{X\setminus \L})_!\Q_{X\setminus \L\cup\M}[d_X]$, viewed as a perverse sheaf on~$X\setminus \L$. We have an exact sequence of sheaves
			$$0\rightarrow \mathscr{F}' \rightarrow \Q_{X\setminus \L} \rightarrow \bigoplus_{|J|=1} (\iota_{M_J\setminus M_J\cap \L}^{X\setminus \L})_*\Q_{M_J\setminus M_J\cap\L} \rightarrow \bigoplus_{|J|=2} (\iota_{M_J\setminus M_J\cap \L}^{X\setminus \L})_*\Q_{M_J\setminus M_J\cap\L} \rightarrow\cdots$$
			where the arrows are the alternating sums of the natural restriction morphisms. For every subset~$J\subset\{1,\ldots,m\}$ of cardinality~$j$, the object~$\mathbb{Q}_{M_J\setminus M_J\cap \L}[d_X-j]$ is a perverse sheaf. By cutting the above spectral sequence into short exact sequences and rotating the corresponding triangles in the derived category, we see that there is a finite decreasing filtration
			$$\mathscr{F}'=F^0\mathscr{F}'\supset F^1\mathscr{F}' \supset  F^2\mathscr{F}'\supset\cdots$$
			on~$\mathcal{F}'$ in the abelian category~$\mathrm{Perv}(X\setminus\L)$, such that the successive quotients are given by 
			$$\mathrm{gr}^j_F\mathscr{F}' \cong \bigoplus_{|J|=j}(\iota_{M_J\setminus M_J\cap\L}^{X\setminus\L})_*\Q_{M_J\setminus M_J\cap \L}[d_X-j]\ .$$ 
			Furthermore, the extension datum~$\mathrm{gr}^j_F\mathscr{F}'\rightarrow \mathrm{gr}^{j+1}_F\mathscr{F}'[1]$ is given by the alternating sum of the natural restriction morphisms
			$$(\iota_{M_J\setminus M_J\cap\L}^{X\setminus\L})_*\Q_{M_J\setminus M_J\cap \L}[d_X-j] \rightarrow (\iota_{M_{J\cup\{r\}}\setminus M_{J\cup\{r\}}\cap\L}^{X\setminus\L})_*\Q_{M_{J\cup\{r\}}\setminus M_{J\cup\{r\}}\cap \L}[d_X-j]\ .$$
			By applying the functor~$(j_{X\setminus\L}^X)_*$, this induces a finite decreasing filtration
			$$\mathscr{F}=F^0\mathscr{F}\supset F^1\mathscr{F} \supset  F^2\mathscr{F}\supset\cdots$$
			on~$\mathcal{F}$ in the abelian category~$\mathrm{Perv}(X)$, whose successive quotients are given by 
			$$\mathrm{gr}^j_F\mathscr{F} \cong \bigoplus_{|J|=j}(\iota_{M_J}^X)_*(j_{M_J\setminus M_J\cap\L}^{M_J})_*\Q_{M_J\setminus M_J\cap \L}[d_X-j]\ .$$ 
			Let us write~$\mathscr{F}(J):=(j_{M_J\setminus M_J\cap\L}^{M_J})_*\Q_{M_J\setminus M_J\cap \L}[d_X-j]$, viewed as a perverse sheaf on~$M_J$. By applying the same argument (dualized) as in the case of~$\mathscr{F}'$, we see that there is a finite increasing filtration
			$$F_0\mathscr{F}(J)\subset F_1\mathscr{F}(J)\subset F_2\mathscr{F}(J) \subset \cdots \subset \mathscr{F}(J)$$
			on~$\mathscr{F}(J)$, whose successive quotients are given by
			$$\mathrm{gr}_i^F\mathscr{F}(J) \cong \bigoplus_{|I|=i}(\iota_{L_I\cap M_J}^{M_J})_*\Q[d_X-i-j] \ .$$
			Furthermore, the extensions datum~$\mathrm{gr}_{i+1}^F\mathscr{F}(J)[-1]\rightarrow \mathrm{gr}_{i}^F\mathscr{F}(J)$ is given by the alternating sum of the natural Gysin morphisms
			$$(\iota_{L_I\cap M_J}^{M_J})_*\Q[d_X-i-j-2] \rightarrow (\iota_{L_{I\cup\{r\}}\cap M_J}^{M_J})_*\Q[d_X-i-j]$$
			This induces a filtration on the quotients~$\mathrm{gr}^j_F\mathscr{F}$; by pulling it back to a filtration on~$\mathscr{F}$ and forming the total filtration, one gets a finite increasing filtration
			$$\cdots \subset T_{-1}\mathscr{F} \subset T_0\mathscr{F}\subset T_1\mathscr{F} \subset \cdots \subset \mathscr{F}$$
			whose successive quotients are given by 
			$$\mathrm{gr}^T_p\mathscr{F}\cong \bigoplus_{\substack{i-j=p \\ |I|=i \\ |J|=j}} (\iota_{L_I\cap M_J}^{X})_*\Q_{L_I\cap M_J}[d_X-i-j]$$
			The hypercohomology spectral sequence is thus given by
			$$E_1^{-p,q}=\mathbb{H}^{-p+q}(X,\mathrm{gr}^T_p\mathscr{F})\cong\bigoplus_{\substack{i-j=p \\ |I|=i \\ |J|=j}} H^{d_X+q-2i}(L_I\cap M_J) \;\Longrightarrow\; H^{-p+q}(X,\mathscr{F})$$
			and the desired spectral sequence is obtained by shifting the degree~$q$ by~$d_X$.
			\item If we work in the category of mixed Hodge modules~\cite[\S 14]{peterssteenbrink}, then the above proof works and gives the compatibility of the spectral sequence with the mixed Hodge structures.
			\item If~$X$ is smooth and projective, then all~$L_I\cap M_J$ are (disjoint unions of) smooth projective varieties. Thus,~$E_1^{-p,q}$ is a pure Hodge structure of weight~$q$. The degeneration then comes from the fact that in the category of mixed Hodge structures, a morphism between two pure Hodge structures of different weights is zero.
			\end{enumerate}
			\end{proof}
		
			\begin{rem}
			In the case~$\M=\varnothing$, one recovers the spectral sequence
			$$E_1^{-p,q}=\bigoplus_{|I|=p} H^{q-2p}(L_I)(-p) \;\Longrightarrow \; H^{-p+q}(X\setminus \L)$$
			where the differential is the alternating sum of the Gysin morphisms of the inclusions~$L_{I}\hookrightarrow L_{I\setminus \{r\}}$. This spectral sequence was first studied by Deligne in the smooth and projective case~\cite[Corollary 3.2.13]{delignehodge2}. If~$\L$ is a smooth submanifold of~$X$ (i.e.~$l=1$), then this spectral sequence is nothing but the residue/Gysin long exact sequence:
			$$\cdots\rightarrow H^{k-2}(\L) (-1)\rightarrow H^k(X) \rightarrow H^k(X\setminus \L) \rightarrow \cdots$$ 
			In the case~$\L=\varnothing$, one recovers the spectral sequence
			$$E_1^{p,q}=\bigoplus_{|J|=p} H^q(M_J) \;\Longrightarrow\; H^{p+q}(X,\M)$$
			where the differential is the alternating sum of the pull-back morphisms of the inclusions~$M_{J\cup\{s\}}\hookrightarrow M_J$. If~$\M$ is a smooth submanifold of~$X$ (i.e.~$m=1$), then this spectral sequence is nothing but the long exact sequence in relative cohomology:
			$$\cdots \rightarrow H^k(X,\M) \rightarrow H^k(X) \rightarrow H^k(\M)\rightarrow \cdots$$
			\end{rem}		
		
			\begin{rem}\label{appremhodge}
			There is a way of proving the first and third points of Proposition~\ref{appspectralsequenceNCD} which does not make use of mixed Hodge modules, but only of mixed Hodge theory \textit{\`a la} Deligne~\cite{delignehodge2,delignehodge3}, i.e. with complexes of holomorphic differential forms. After tensoring by~$\C$,~$\mathscr{F}(\L,\M)$ is isomorphic to the total complex of the double complex 
			\begin{equation}\label{eqcomplexOmega}
			0\rightarrow \Omega^\bullet_X(\log\L) \rightarrow \bigoplus_{|J|=1}(\iota_{M_J}^X)_*\Omega^\bullet_{M_J}(\log\L\cap M_J)\rightarrow \bigoplus_{|J|=2}(\iota_{M_J}^X)_*\Omega^\bullet_{M_J}(\log\L\cap M_J)\rightarrow \cdots
			\end{equation}
			On each component~$\Omega^\bullet_{M_J}(\log\L\cap M_J)$ there is the filtration~$P$ by the order of the pole~\cite{delignehodge2} such that we have the Poincar\'e residue isomorphisms
			$$\gr_k^P\Omega^\bullet_{M_J}(\log\L\cap M_J) \cong \bigoplus_{|I|=k}(\iota_{L_I\cap M_J}^{M_J})_*\Omega^{\bullet-k}_{L_I\cap M_J}.$$
			Suitably shifted, this gives a filtration~$W$ on (\ref{eqcomplexOmega}) whose hypercohomology spectral sequence is the spectral sequence of Proposition~\ref{appspectralsequenceNCD} tensored with~$\C$. If~$X$ is projective, the formalism of mixed Hodge complexes~\cite{delignehodge3} allows one to prove that it is defined over~$\Q$ and compatible with the mixed Hodge structures.
			\end{rem}
		
		\subsection{Duality}		
		
			There is also the compactly-supported version of (\ref{appmotiveNCD})
			\begin{equation}\label{appmotivecompactNCD}
			H_c^\bullet(\L,\M)=H_c^\bullet(X\setminus\L,\M\setminus \L\cap \M).
			\end{equation}
			This has to be understood as the compactly supported cohomology groups of the sheaf~$\mathscr{F}(\L,\M)$ defined in the proof of Proposition~\ref{appspectralsequenceNCD}. If~$X$ is compact, then it is the same as (\ref{appmotiveNCD}).
		
			\begin{prop}\label{appproppoincareverdier}
			Let~$n=\mathrm{dim}_\C(X)$. Then~$H^\bullet(\L,\M)$ and~$H^\bullet(\M,\L)$ are dual to each other in the sense that we have a Poincar\'{e}--Verdier duality
			$$\left(H^k(\L,\M)\right)^\vee \cong H^{2n-k}_c(\M,\L)$$
			that is compatible with the mixed Hodge structures in the algebraic case. The corresponding spectral sequences of Proposition~\ref{appmotiveNCD} are also dual to each other.
			\end{prop}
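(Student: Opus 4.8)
The plan is to derive the statement from Poincar\'e--Verdier duality on the complex manifold $X$, using the sheaf-theoretic description of $H^\bullet(\L,\M)$ and $H_c^\bullet(\M,\L)$ introduced in the proof of Proposition~\ref{appspectralsequenceNCD}. Recall that $H^\bullet(\L,\M)=H^\bullet(X,\mathscr{F}(\L,\M))$ and $H_c^\bullet(\L,\M)=H_c^\bullet(X,\mathscr{F}(\L,\M))$, and that $\mathscr{F}(\L,\M)$ is quasi-isomorphic to the total complex $\mathscr{K}(\L,\M)$ of the double complex
$$\mathscr{K}_{i,j}(\L,\M)=\bigoplus_{|I|=i,\,|J|=j}(\iota_{L_I\cap M_J}^X)_*\Q_{L_I\cap M_J}[-2i],$$
whose differential $d'$ is the alternating sum of the Gysin morphisms~(\ref{eqGysinsheaves}) and whose differential $d''$ is the alternating sum of the restriction morphisms.

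First I would establish a canonical isomorphism $\mathbb{D}_X(\mathscr{F}(\L,\M))\cong\mathscr{F}(\M,\L)[2n]$, where $\mathbb{D}_X(-)=R\mathcal{H}om(-,\Q_X)[2n]$ is the Verdier duality functor and $n=\dim_\C X$. I would do this on the model $\mathscr{K}(\L,\M)$, termwise: for $Y=L_I\cap M_J$, a closed submanifold of codimension $i+j$, one has $(\iota_Y^X)_*=(\iota_Y^X)_!$, whence
$$\mathbb{D}_X\bigl((\iota_Y^X)_!\Q_Y[-2i]\bigr)\cong(\iota_Y^X)_*\,\mathbb{D}_Y(\Q_Y)[2i]\cong(\iota_Y^X)_*\Q_Y[2n-2j],$$
which is precisely the $(j,i)$-term of $\mathscr{K}(\M,\L)[2n]$ once $I$ is reinterpreted as a set of $\M$-indices and $J$ as a set of $\L$-indices. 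The one point that needs care is the behaviour of the differentials: the Verdier dual of the Gysin morphism~(\ref{eqGysinsheaves}) attached to a codimension-one inclusion is, up to shift and sign, the corresponding restriction morphism, since $(\iota_A^B)^!\Q_B\cong\Q_A[-2\,\mathrm{codim}]$. Hence $\mathbb{D}_X$ exchanges $d'$ and $d''$ and carries $\mathscr{K}(\L,\M)$ to $\mathscr{K}(\M,\L)[2n]$. Poincar\'e--Verdier duality $H^k(X,-)^\vee\cong H_c^{-k}(X,\mathbb{D}_X(-))$ then yields
$$\bigl(H^k(\L,\M)\bigr)^\vee\cong H_c^{-k}\bigl(X,\mathscr{F}(\M,\L)[2n]\bigr)=H_c^{2n-k}(\M,\L),$$
up to the Tate twist by $\Q(n)$ suppressed by the paper's conventions.

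For the mixed Hodge structures in the algebraic case, I would run the same argument in the bounded derived category of mixed Hodge modules on $X$~\cite{peterssteenbrink}: there $\mathscr{F}(\L,\M)$ admits a canonical lift, being obtained from the trivial Hodge module $\Q_X^H$ by the functors $Rj_*$, $j_!$ and the direct images along the closed immersions $\iota_{L_I\cap M_J}^X$; Verdier duality is an exact functor of this category, and $\mathbb{D}_X\Q_X^H\cong\Q_X^H(n)[2n]$. Propagating this through the computation gives the Hodge-theoretic duality. One may alternatively work with the mixed Hodge complexes of~\cite{delignehodge3} and the logarithmic model~(\ref{eqcomplexOmega}).

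Finally, the spectral sequence~(\ref{appeqspectralsequence}) for $(\L,\M)$ is the hypercohomology spectral sequence attached to the filtration of $\mathscr{K}(\L,\M)$ by the index $p=i-j$ (suitably shifted) whose associated graded pieces reproduce the $E_1$-page, and likewise for $(\M,\L)$. The termwise identification above carries this filtration on $\mathscr{K}(\L,\M)$ to the opposite filtration on $\mathscr{K}(\M,\L)[2n]$ (the index $i-j$ becoming $j-i$), and the hypercohomology spectral sequences of a filtered complex and of its Verdier dual are dual to one another; combined with Poincar\'e duality on the smooth projective strata $L_I\cap M_J$, this gives isomorphisms $E_r^{-p,q}(\L,\M)^\vee\cong E_r^{p,\,2n-q}(\M,\L)(n)$ compatible with the differentials and, in the algebraic case, with the mixed Hodge structures. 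I expect the main obstacle to be the bookkeeping in the key step: checking that $\mathbb{D}_X$ exchanges the Gysin and restriction differentials \emph{with the correct signs}, and that the shift, total-complex, and Tate-twist conventions all line up so that $\mathbb{D}_X\mathscr{K}(\L,\M)$ is literally $\mathscr{K}(\M,\L)[2n]$ rather than merely isomorphic to it up to an unspecified shift.
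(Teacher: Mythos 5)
Your proposal is correct and follows essentially the same route as the paper: one computes the Verdier dual of each term $(\iota_{L_I\cap M_J}^X)_*\Q_{L_I\cap M_J}[-2i]$ of the model $\mathscr{K}(\L,\M)$, obtains $\mathrm{D}\,\mathscr{K}_{i,j}(\L,\M)\cong\mathscr{K}_{j,i}(\M,\L)[2n]$ with $\mathrm{D}$ exchanging the Gysin and restriction differentials, and deduces the statement from Poincar\'e--Verdier duality, with the Hodge-theoretic refinement obtained by running the same computation in mixed Hodge modules. Your write-up only adds detail (notably on the sign and shift bookkeeping for the differentials, which the paper dismisses as "easily seen") to what is already the paper's argument.
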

			
			\begin{proof}
			Let~$\mathrm{D}$ denote the Verdier duality operator on~$\mathrm{Perv}(X)$. We have, using the notations of the proof of Proposition~\ref{appspectralsequenceNCD}:
			$$\mathrm{D}\mathscr{F}(\L,\M) \cong (j_{X\setminus \L}^X)_!(j_{X\setminus \L\cup\M}^{X\setminus \L})_* \Q_{X\setminus\L\cup\M}[d_X]\ .$$
			The natural morphism 
			$$(j_{X\setminus \L}^X)_!(j_{X\setminus \L\cup\M}^{X\setminus \L})_* \Q_{X\setminus\L\cup\M}[d_X] \rightarrow (j_{X\setminus \M}^X)_*(j_{X\setminus \L\cup\M}^{X\setminus \M})_! \Q_{X\setminus\L\cup\M}[d_X] = \mathscr{F}(\M,\L)$$
			is easily seen to be an isomorphism by working in local coordinates. This implies the duality statement. The duality between the spectral sequences follows from the compatibility between Verdier duality and the filtrations in the proof of Proposition \ref{appspectralsequenceNCD}.
			\end{proof}
			
		\subsection{Blow-ups}
		
			We study the functoriality of the spectral sequence (\ref{appeqspectralsequence}) with respect to the blow-up of a stratum. For simplicity we assume that all~$L_I\cap M_J$'s are connected. Let~$Z=L_{I_0}\cap M_{J_0}$ be a stratum, with~$I_0\neq\varnothing$ so that~$Z\subset \L$. Let~$\pi:\td{X}\rightarrow X$ be the blow-up along~$Z$,~$E=\pi^{-1}(Z)$ be the exceptional divisor. We set~$\td{\L}=E\cup\td{L}_1\cup\cdots\cup\td{L_l}$ and~$\td{\M}=\td{M}_1\cup\cdots\cup \td{M}_m$. We then have a natural isomorphism
			\begin{equation}\label{appeqblowup}
			\pi^*:H^\bullet(X\setminus \L,\M\setminus \M\cap\L)\stackrel{\cong}{\rightarrow} H^\bullet(\td{X}\setminus \td{\L},\td{\M}\setminus\td{\M}\cap\td{\L}).
			\end{equation}
			
			%The case~$\M=\varnothing$ of the following proposition has been worked out in details in~\cite[Theorem 5.5]{duponthypersurface}.
			
			\begin{prop}\label{apppropblowup}
			The spectral sequence (\ref{appeqspectralsequence}) is functorial with respect to the blow-up morphism (\ref{appeqblowup}) via a morphism of spectral sequences
			\begin{equation}\label{appPhiNCD}
			E_1^{-p,q}(\pi):E_1^{-p,q}(\L,\M)\rightarrow E_1^{-p,q}(\td{\L},\td{\M})
			\end{equation}
			given for~$s\in H^{q-2p}(L_I\cap M_J)(-p)$ by 
			$$s\mapsto \left(\pi_{L_I\cap M_J}^{\td{L}_I\cap\td{M}_J}\right)^*(s) + \sum_{i\in I\cap I_0} \sgn(\{i\},I\setminus\{i\})\, \left(\pi_{Z\cap L_{I\setminus\{i\}}\cap M_J}^{E\cap\td{L}_{I\setminus\{i\}}\cap\td{M}_J}\right)^*\left(\iota_{Z\cap L_{I}\cap M_J}^{L_I\cap M_J}\right)^*(s).$$
			\end{prop}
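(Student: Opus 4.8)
The plan is to lift the blow-up isomorphism (\ref{appeqblowup}) to an explicit morphism between the complexes of sheaves whose hypercohomology spectral sequences are (\ref{appeqspectralsequence}), and then read that morphism off on the $E_1$ page. Recall from the proof of Proposition~\ref{appspectralsequenceNCD} that $H^\bullet(\L,\M)=H^\bullet(X,\mathscr{K}(\L,\M))$, where $\mathscr{K}(\L,\M)$ is the total complex of the double complex of sheaves
$$\mathscr{K}_{i,j}(\L,\M)=\bigoplus_{|I|=i,\,|J|=j}(\iota_{L_I\cap M_J}^X)_*\Q_{L_I\cap M_J}[-2i],$$
with $d'$ the alternating sum (with signs $\sgn(\{r\},I\setminus\{r\})$) of the sheaf-level Gysin morphisms (\ref{eqGysinsheaves}) and $d''$ the alternating sum of restriction morphisms; the spectral sequence (\ref{appeqspectralsequence}) is the hypercohomology spectral sequence of $\mathscr{K}(\L,\M)$, whose $E_1$ term is the sum of the cohomology groups of the strata $L_I\cap M_J$. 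The same description holds downstairs for $\td{X}$, $\td{\L}=E\cup\bigcup_i\td{L}_i$, $\td{\M}=\bigcup_j\td{M}_j$, using the description of the normal crossing strata of $\td{\L}\cup\td{\M}$ from Lemma~\ref{lemblowuparrangement}: they are the strict transforms $\td{L}_I\cap\td{M}_J$, which is the blow-up $\pi_{L_I\cap M_J}^{\td{L}_I\cap\td{M}_J}$ of $L_I\cap M_J$ along $Z\cap L_I\cap M_J$, and the exceptional intersections $E\cap\td{L}_I\cap\td{M}_J$ (non-empty precisely when $L_I\cap M_J\not\subset Z$), fibred over $Z\cap L_I\cap M_J$ by a restriction of $\pi$.

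First I would construct a morphism of double complexes of sheaves $\Pi:\mathscr{K}_{\bullet,\bullet}(\L,\M)\to R\pi_*\mathscr{K}_{\bullet,\bullet}(\td{\L},\td{\M})$ whose component on the summand indexed by $(I,J)$, with $|I|=i$, is the sum of: (a) the canonical pull-back $(\iota_{L_I\cap M_J}^X)_*\Q[-2i]\to R\pi_*(\iota_{\td{L}_I\cap\td{M}_J}^{\td{X}})_*\Q[-2i]$ into the term indexed by the strict transform; and (b), for each index $\ell\in I\cap I_0$, the composite of restriction to $Z\cap L_I\cap M_J=Z\cap L_{I\setminus\{\ell\}}\cap M_J$ followed by pull-back along $\pi$, landing — with the sign $\sgn(\{\ell\},I\setminus\{\ell\})$ — in the term indexed by the exceptional stratum $E\cap\td{L}_{I\setminus\{\ell\}}\cap\td{M}_J$; note that this stratum has $|I\setminus\{\ell\}|+1=i$ indices on the $\L$-side, so the bidegrees and Tate twists match. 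On hypercohomology $\Pi$ should recover the blow-up isomorphism (\ref{appeqblowup}): over $X\setminus Z$, where $\pi$ is an isomorphism and the divisor data correspond, $\Pi$ reduces to the identity, while the extra terms (b) are supported on $Z$; after transporting along the isomorphisms $\mathscr{K}\cong\mathscr{F}$ established in the proof of Proposition~\ref{appspectralsequenceNCD}, $\Pi$ therefore lifts the natural adjunction morphism $\mathscr{F}(\L,\M)\to R\pi_*\mathscr{F}(\td{\L},\td{\M})$ that induces (\ref{appeqblowup}).

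The heart of the argument, and the step I expect to be the main obstacle, is checking that $\Pi$ is a morphism of complexes, i.e. that it commutes with $d'$ and with $d''$. Writing out both sides using the Poincar\'e residue / Gysin description of the maps, this unwinds into exactly the family of blow-up identities collected in Appendix~\ref{parappB} — the compatibility of pull-back with Gysin morphisms along a blow-up in its ``parallel'' and ``transverse'' forms, the projection formula, and the excess/Chern-class identities for the exceptional divisor — together with the combinatorial cancellations coming from the Koszul signs in the total complex and the identity that $\sgn(\{\ell\},I\setminus\{\ell\})$ is the coefficient of $e_{I\setminus\{\ell\}}$ in $d(e_I)$. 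This computation is the normal crossing, one-dimensional-$A_{i,j}^S$ instance of the proofs of Propositions~\ref{propcompd'} and~\ref{propcompd''} (and of Proposition~\ref{propOSbicomplexblowup}), so it is lighter than those, but it still requires a careful case analysis — according to whether an inclusion of strata is of parallel or transverse type with respect to $Z$ — and attentive sign bookkeeping. Once $\Pi$ is known to be a morphism of filtered complexes (for the column filtrations) lifting (\ref{appeqblowup}), passing to hypercohomology yields the desired morphism of spectral sequences, and evaluating $\Pi$ on the $E_1$ term — the cohomology groups of the strata — gives precisely the stated formula for $E_1^{-p,q}(\pi)$.
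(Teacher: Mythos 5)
Your proposal is correct in outline but follows a genuinely different route from the paper. The paper works on the de Rham side: it takes the canonical pull-back $\pi^*:\Omega_X^\bullet(\log\L)\rightarrow\Omega_{\td{X}}^\bullet(\log\td{\L})$ (resp.\ the mixed complexes of Remark~\ref{appremhodge} when $\M\neq\varnothing$), which by construction induces (\ref{appeqblowup}) on hypercohomology, and then reads off its effect on the weight-graded pieces from the single local identity $\pi^*(\omega_I)=\omega_I+\sum_i\sgn(\{i\},I\setminus\{i\})\,\omega_E\wedge\omega_{I\setminus\{i\}}$; compatibility with the abutment is automatic and the only work is one residue computation, at the cost of working a priori over $\C$ and invoking the mixed Hodge complex formalism to descend to $\Q$. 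You instead build an explicit morphism $\Pi$ of the $\Q$-linear Gysin double complexes $\mathscr{K}_{\bullet,\bullet}$ and verify it is a chain map; your indexing is right ($Z\cap L_I\cap M_J=Z\cap L_{I\setminus\{\ell\}}\cap M_J$ for $\ell\in I\cap I_0$, and the target stratum $E\cap\td{L}_{I\setminus\{\ell\}}\cap\td{M}_J$ carries $i$ components of $\td{\L}$, so bidegrees and shifts match), and the commutation with $d'$ and $d''$ is indeed the rank-one normal crossing instance of Propositions~\ref{propcompd'} and~\ref{propcompd''}. Two points deserve a word in your write-up. First, the Appendix~\ref{parappB} identities are stated in cohomology, whereas to get an honest morphism of spectral sequences you need $\Pi$ as a morphism of (filtered) complexes in the derived category; the identities such as (\ref{apppullbackpar}), (\ref{apppullbackperp}) and (\ref{appbasechangeperp}) do hold at that level, but this upgrade is exactly where the effort sits and should be stated rather than absorbed into the cohomological references. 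Second, your argument that $\Pi$ induces (\ref{appeqblowup}) because it is the identity over $X\setminus Z$ is complete only after observing that $R\pi_*\mathscr{F}(\td{\L},\td{\M})$ is pushed forward from $X\setminus Z$ (since $E\subset\td{\L}$), so that by adjunction a morphism into it is determined by its restriction to $X\setminus Z$. With those two remarks supplied, your argument is a valid, purely topological alternative to the paper's de Rham computation.
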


			\begin{proof}
			We sketch the proof for the case~$\M=\varnothing$ (see~\cite[Theorem 5.5]{duponthypersurface} for more details); the general case is similar if one uses the complexes of Remark~\ref{appremhodge}. In this special case, the spectral sequence is Deligne's spectral sequence
			\begin{equation}\label{appeqssDeligne}
			E_1^{-p,q}=\bigoplus_{|I|=p} H^{q-2p}(L_I)(-p) \;\Longrightarrow \; H^{-p+q}(X\setminus \L).
			\end{equation}
			One works over~$\C$ with the complex of logarithmic forms~$\Omega_X^\bullet(\log \L)$. By definition, we have a pull-back morphism
			$$\pi^*:\Omega_X^\bullet(\log \L)\rightarrow \Omega_{\td{X}}^\bullet(\log \td{\L}).$$
			The claim follows from the following local statement. In~$\C^n$ with coordinates~$(z_1,\ldots,z_n)$, let us write ~$\omega_i=\frac{dz_i}{z_i}$ and for~$I=\{i_1<\cdots<i_k\}$,~$\omega_I=\omega_{i_1}\wedge\cdots\wedge \omega_{i_k}$. In any standard affine chart~$\pi:\C^n\rightarrow \C^n$ of the blow-up of the linear space~$Z=\{z_1=\cdots=z_r=0\}$, one write~$z_E$ for the coordinate corresponding to the exceptional divisor, and~$\omega_E=\frac{dz_E}{z_E}$. One then has the formula
			$$\pi^*(\omega_I)=\omega_I+\sum_{\substack{i\in I\\ 1\leq i\leq r}} \sgn(\{i\},I\setminus\{i\})\, \omega_E\wedge\omega_{I\setminus\{i\}}.$$
			\end{proof}
			
			%Dually, if~$Z=L_{I_0}\cap M_{J_0}$ with~$J_0\neq\varnothing$, we get an isomorphism
			%$$\pi_*:H^\bullet(\td{X}\setminus \td{\L},\td{\M}\setminus\td{\M}\cap\td{\L})\stackrel{\cong}{\rightarrow} H^\bullet(X\setminus \L,\M\setminus \M\cap\L)$$
			%where~$\td{\L}=\td{L}_1\cup\cdots\cup\td{L_l}$ and~$\td{\M}=E\cup\td{M}_1\cup\cdots\cup \td{M}_m$, that may be described explicitly in terms of a morphism of spectral sequences.
	
	\section{Chern classes, blow-ups, and some cohomological identities}\label{parappB}
	
		\subsection{Chern classes of normal bundles}
		
			Let~$\iota_Z^X:Z\hookrightarrow X$ be the inclusion of a closed submanifold~$Z$ of codimension~$r$ of a complex manifold~$X$. We denote by~$N_{Z/X}$ the normal bundle of~$\iota_Z^X$ and by~$c_k(N_{Z/X})\in H^{2k}(Z)$,~$k=0,\ldots,r$ its Chern classes.
			
			For inclusions~$A\hookrightarrow B\hookrightarrow C$ we have a short exact sequence
			\begin{equation}\label{appsesnormalbundles}
			0\rightarrow N_{A/B} \rightarrow N_{A/C}\rightarrow \left(N_{B/C}\right)_{|A}\rightarrow 0
			\end{equation}
			which implies the following transitivity property of Chern classes:
			\begin{equation}\label{apptransChern}
			c_k(N_{A/C})=\sum_{j=0}^k c_j(N_{A/B})\,.\,c_{k-j}(N_{B/C})_{|A}.
			\end{equation}
			
			If~$A$ and~$B$ are two closed submanifolds of a complex manifold~$X$ that are transverse, and~$R$ a connected component of the intersection~$A\cap B$, we also have an isomorphism
			\begin{equation}\label{apptransnormalbundles}
			N_{R/X}\cong \left(N_{A/X}\right)_{|R}\oplus\left(N_{B/X}\right)_{|R}.
			\end{equation}
			By combining it with (\ref{appsesnormalbundles}) for~$R\hookrightarrow A\hookrightarrow X$, one gets an isomorphism
			\begin{equation}\label{appisonormalbundles}
			N_{R/A}\cong \left(N_{B/X}\right)_{|R}.
			\end{equation}
			
		\subsection{Gysin morphisms and pull-backs}
		
			Let~$\iota_Z^X:Z\hookrightarrow X$ be the inclusion of a closed submanifold~$Z$ of codimension~$r$ of a complex manifold~$X$. We have a pull-back morphism~$(\iota_Z^X)^*:H^\bullet(X)\rightarrow H^\bullet(Z)$ and a Gysin morphism~$(\iota_Z^X)_*:H^\bullet(Z)\rightarrow H^{\bullet+2r}(X)$. We have the projection formula
			\begin{equation}\label{appprojectionformula}
			(\iota_Z^X)_*(z.(\iota_Z^X)^*(x))=(\iota_Z^X)_*(z)\,.\,x.
			\end{equation}
			We have the following compatibilities:
			\begin{equation}\label{appiiChern}
			(\iota_Z^X)^*(\iota_Z^X)_*(z)=z\,.\,c_r(N_{Z/X});
			\end{equation}
			\begin{equation}\label{appiiclass}
			(\iota_Z^X)_*(\iota_Z^X)^*(x)=x\,.\,[Z]_X.
			\end{equation}
			Here~$N_{Z/X}$ is the normal bundle of~$Z$ inside~$X$, and~$c_k(N_{Z/X})\in H^{2k}(Z)$,~$k=0,\ldots,r$ are its Chern classes;~$[Z]_X\in H^{2r}(X)$ is the cohomology class of~$Z$ in~$X$.\\
			If~$A$ and~$B$ are two closed submanifolds of a complex manifold~$X$ that are transverse, then we have 
			\begin{equation}\label{appiitransverse}
			(\iota_A^X)^*\circ(\iota_B^X)_*=(\iota_{A\cap B}^A)_*\circ(\iota_{A\cap B}^B)^*.
			\end{equation}
			This includes the case~$A\cap B=\varnothing$ for which the right-hand side is~$0$, and the case where~$A\cap B$ is not connected for which the right-hand side is the sum of~$(\iota_R^A)_*\circ(\iota_R^B)^*$ for~$R$ a connected component of~$A\cap B$.
			
		\subsection{Blow-ups}
		
			Let~$X$ be a complex manifold and~$Z$ a closed submanifold of~$X$, of codimension~$r$. We let~$\pi:\td{X}\rightarrow X$ be the blow-up of~$X$ along~$Z$. We let~$\pi^E_Z:E\rightarrow Z$ be the morphism induced by~$\pi$, it is the projectified normal bundle of~$Z$ inside~$X$. For~$S$ a submanifold of~$X$, we denote by~$\td{S}$ its strict transform along~$\pi$, and~$\pi_S^{\td{S}}:\td{S}\rightarrow S$ the morphism induced by~$\pi$. It is the blow-up of~$S$ along~$Z\cap S$.\\
			
			Let~$L$ be a smooth hypersurface of~$X$ that contains~$Z$. We have the identity
			\begin{equation}\label{apppullbackpar}
			\pi^*\circ(\iota_L^X)_*=(\iota_{\td{L}}^{\td{X}})_*\circ(\pi_L^{\td{L}})^*+(\iota_E^{\td{X}})_*\circ(\pi_Z^E)^*\circ(\iota_Z^L)^*
			\end{equation}
			between morphisms~$H^\bullet(L)\rightarrow H^{\bullet+2}(\td{X})$. When applied to the element~$1\in H^0(L)$, one recovers
			\begin{equation}\label{apppullbackparclass}
			\pi^*([L])=[\td{L}]+[E].
			\end{equation}		
			We also have the following identity, for any~$z\in H^\bullet(Z)$:
			\begin{equation}\label{appidentitypar}
			(\iota_{E\cap\td{L}}^E)_*(\pi_Z^{E\cap\td{L}})^*(z)=(\pi_Z^E)^*(z)\,.\,\left((\pi_Z^E)^*c_1(N_{L/X})_{|Z}-c_1(N_{E/\td{X}})\right).
			\end{equation}
			
			\begin{proof}[Proof (of (\ref{appidentitypar}))]
			We have~$(\pi_Z^{E\cap\td{L}})^*=(\iota_{E\cap\td{L}}^E)^*\circ(\pi_Z^E)^*$, hence using (\ref{appiiclass}) we get
			$$(\iota_{E\cap\td{L}}^E)_*(\pi_Z^{E\cap\td{L}})^*(z)=(\pi_Z^E)^*(z)\,.\,[E\cap\td{L}]_E$$
			where~$[E\cap\td{L}]_E$ denotes the class of~$E\cap\td{L}$ in the cohomology of~$E$. Since~$\td{L}$ and~$E$ are transverse in~$\td{X}$, we may use (\ref{appiitransverse}) to get
			$$[E\cap\td{L}]_E=(\iota_{E\cap\td{L}}^E)_*(\iota_{E\cap\td{L}}^{\td{L}})^*(1)=(\iota_{E}^{\td{X}})^*(\iota_{\td{L}}^{\td{X}})_*(1)=(\iota_{E}^{\td{X}})^*([\td{L}]_{\td{X}}).$$
			Now using (\ref{apppullbackparclass}) we get 
			$$[\td{L}]_{\td{X}}=\pi^*([L]_X)-[E]$$
			and thus
			$$[E\cap\td{L}]_E= (\iota_{E}^{\td{X}})^*\pi^*[L]-(\iota_E^{\td{X}})^*[E]=(\pi_Z^E)^*(\iota_Z^X)^*[L]-c_1(N_{E/\td{X}}).$$
			The claim then follows from the computation~$(\iota_Z^X)^*[L]=(\iota_Z^L)^*(\iota_L^X)^*(\iota_L^X)_*(1)=c_1(N_{L/X})_{|Z}$ where we have used (\ref{appiiChern}).
			\end{proof}			
			
			Now if~$L$ is a smooth hypersurface of~$X$ such that~$Z$ and~$L$ are transverse in~$X$, we have the simpler identities
			\begin{equation}\label{apppullbackperp}
			\pi^*\circ(\iota_L^X)_*=(\iota_{\td{L}}^{\td{X}})_*\circ(\pi_L^{\td{L}})^*;
			\end{equation}
			\begin{equation}\label{apppullbackperpclass}
			\pi^*([L])=[\td{L}].
			\end{equation}		
			We also have 
			\begin{equation}\label{appbasechangeperp}
			(\pi_Z^E)^*\circ(\iota_{Z\cap L}^Z)_*=(\iota_{E\cap\td{L}}^E)_*\circ(\pi_{Z\cap L}^{E\cap\td{L}})^*.
			\end{equation}
			
		\subsection{The excess class~$\gamma$}\label{appexcessclass}
		
			Let~$X$ be a complex manifold and~$Z$ a closed submanifold of~$X$, of codimension~$r$. We let~$\pi:\td{X}\rightarrow X$ be the blow-up of~$X$ along~$Z$. The excess class of~$\pi$ is by definition
			\begin{equation}\label{appdefgamma}
			\gamma=c_{r-1}\left((\pi^E_Z)^*(N_{Z/X})/N_{E/\td{X}}\right) \in H^{2(r-1)}(E).
			\end{equation}
			It appears in the formula
			\begin{equation}\label{appeqgamma}
			\pi^*(\iota_Z^X)_*(z)=(\iota_E^{\td{X}})_*((\pi_Z^E)^*(z)\,.\,\gamma).
			\end{equation}			
			
			We have a short exact sequence
			\begin{equation}\label{appses}
			0\rightarrow H^{k-2r}(Z)(-r) \stackrel{\alpha}{\longrightarrow} H^{k-2}(E)(-1)\oplus H^k(X)\stackrel{\beta}{\longrightarrow} H^k(\td{X}) \rightarrow 0
			\end{equation}
			where~$\alpha$ and~$\beta$ are defined by~$\alpha(z)=\left((\pi_Z^E)^*(z)\,.\,\gamma,(\iota_Z^X)_*(z)\right)$ and~$\beta(e,x)=-(\iota_E^{\td{X}})_*(e)+\pi^*(x)$.\\
			
			If~$S$ is a submanifold of~$X$ such that (for simplicity)~$Z\cap S\neq\varnothing$ is connected, we let~$\gamma_S$ be the excess class of~$\pi_S^{\td{S}}:\td{S}\rightarrow S$. It lives in~$H^{2(r(S)-1)}(E\cap\td{S})$ where~$r(S)$ is the codimension of~$Z\cap S$ inside~$S$.\\
			Let~$L$ be a smooth hypersurface of~$X$ that contains~$Z$. We have the identity
			%\begin{equation}\label{apppullbackgammapar}
			%(\iota_{E\cap\td{L}}^E)^*(\gamma)=(\pi_Z^{E\cap\td{L}})^*(\iota_Z^L)^*(c_1(N_{L/X})).\gamma_L.
			%\end{equation}
			\begin{equation}\label{appGysingammapar}
			(\iota_{E\cap\td{L}}^E)_*(\gamma_L)=\gamma-(\pi_Z^E)^*c_{r-1}(N_{Z/L});
			\end{equation}

			%\begin{proof}[Proof (of (\ref{apppullbackgammapar}))]
			%Let us write~$\xi=-c_1(N_{E/\td{X}})$. We then have 
			%$$\gamma=\sum_{k=0}^{r-1}(\pi_Z^E)^*(c_{r-1-k}(N_{Z/X}))\xi^k$$
			%hence 
			%$$(\iota_{E\cap\td{L}}^E)^*\gamma=\sum_{k=0}^{r-1}(\iota_{E\cap\td{L}}^E)^*(\pi_Z^E)^*(c_{r-1-k}(N_{Z/X}))\left((\iota_{E\cap\td{L}}^E)^*\xi\right)^k.$$
			%We have~$(\iota_{E\cap\td{L}}^E)^*(\pi_Z^E)^*=(\pi_Z^{E\cap\td{L}})^*$. Using (\ref{appiiclass}) and (\ref{appiitransverse}) ($E$ and~$\td{L}$ are transverse in~$X$) we get
			%$$(\iota_{E\cap\td{L}}^E)^*\xi=-(\iota_{E\cap\td{L}}^E)^*(\iota_E^{\td{X}})^*(\iota_E^{\td{X}})_*(1)=-(\iota_{E\cap\td{L}}^{\td{L}})^*(\iota_{\td{L}}^{\td{X}})^*(\iota_E^{\td{X}})_*(1)=-(\iota_{E\cap\td{L}}^E)^*(\iota_{E\cap\td{L}}^E)_*(\iota_{E\cap\td{L}}^{\td{L}})^*(1)=\xi_L$$
			%where~$\xi_L=-c_1(N_{E\cap\td{L}/E})$. Hence we may rewrite
			%$$(\iota_{E\cap\td{L}}^E)^*\gamma=\sum_{k=0}^{r-1}(\pi_Z^{E\cap\td{L}})^*(c_{r-1-k}(N_{Z/X}))\xi_L^k.$$
			%Now using (\ref{apptransChern}) we get~$c_{r-1-k}(N_{Z/X})=c_{r-1-k}(N_{Z/L})+(\iota_Z^L)^*c_1(N_{L/X}).c_{r-2-k}(N_{Z/L})$ hence we get
			%$$(\iota_{E\cap\td{L}}^E)^*(\gamma)=c_{r-1}(\eta_L)+(\pi_Z^{E\cap\td{L}})^*(\iota_Z^L)^*(c_1(N_{L/X})).c_{r-2}(\eta_L)$$
			%where~$\eta_L=(\pi_Z^{E\cap\td{L}})^*(N_{Z/L})/N_{E\cap\td{L}/\td{L}}$. Since~$\eta_L$ has rank~$r-2$, we have~$c_{r-1}(\eta_L)=0$. The claim then follow since by definition~$\gamma_L=c_{r-2}(\eta_L)$.
			%\end{proof}		
			
			\begin{proof}[Proof (of (\ref{appGysingammapar}))]
			Let us write~$\xi=-c_1(N_{E/\td{X}})$ and~$\xi_L=-c_1(N_{E\cap\td{L}/E})$. We then have 
			$$\gamma=\sum_{k=0}^{r-1}(\pi_Z^E)^*(c_{r-1-k}(N_{Z/X}))\,.\,\xi^k \textnormal{ and } \gamma_L=\sum_{k=0}^{r-2}(\pi_Z^{E\cap\td{L}})^*(c_{r-2-k}(N_{Z/L}))\,.\,\xi_L^k.$$
			Using (\ref{appiiclass}) and (\ref{appiitransverse}) ($E$ and~$\td{L}$ are transverse in~$X$) we get
			$$(\iota_{E\cap\td{L}}^E)^*\xi=-(\iota_{E\cap\td{L}}^E)^*(\iota_E^{\td{X}})^*(\iota_E^{\td{X}})_*(1)=-(\iota_{E\cap\td{L}}^{\td{L}})^*(\iota_{\td{L}}^{\td{X}})^*(\iota_E^{\td{X}})_*(1)=-(\iota_{E\cap\td{L}}^E)^*(\iota_{E\cap\td{L}}^E)_*(\iota_{E\cap\td{L}}^{\td{L}})^*(1)=\xi_L.$$
			Repeated applications of the projection formula (\ref{appprojectionformula}) then give
			$$(\iota_{E\cap\td{L}}^E)_*(\gamma_L)=\sum_{k=0}^{r-2}(\iota_{E\cap\td{L}}^E)_*(\pi_Z^{E\cap\td{L}})^*(c_{r-2-k}(N_{Z/L}))\,.\,\xi^k.$$
			Using (\ref{appidentitypar}) we get 
			$$(\iota_{E\cap\td{L}}^E)_*(\pi_Z^{E\cap\td{L}})^*(c_{r-2-k}(N_{Z/L}))=(\pi_Z^E)^*(c_{r-2-k}(N_{Z/L})\,.\,c_1(N_{L/X})_{|Z})+(\pi_Z^E)^*(c_{r-2-k}(N_{Z/L}))\,.\,\xi.$$
			Replacing in the above sum and doing a change of summation index, one gets
			$$(\iota_{E\cap\td{L}}^E)_*(\gamma_L)=\sum_{k=0}^{r-1}(\pi_Z^E)^*(c_{r-2-k}(N_{Z/L})\,.\,c_1(N_{L/X})_{|Z}+c_{r-1-k}(N_{Z/L}))\,.\,\xi^k-(\pi_Z^E)^*(c_{r-1}(N_{Z/L}))$$
			Now using (\ref{apptransChern}) we get~$c_{r-2-k}(N_{Z/L})\,.\,c_1(N_{L/X})_{|Z}+c_{r-1-k}(N_{Z/L})=c_{r-1-k}(N_{Z/X})$, hence the claim.
			\end{proof}			
			
			%Now if~$L$ is a smooth hypersurface of~$X$ such that~$Z$ and~$L$ are transverse in~$X$, we have the simpler identity
			%\begin{equation}\label{apppullbackgammaperp}
			%(\iota_{E\cap\td{L}}^E)^*(\gamma)=\gamma_L.
			%\end{equation}
			
			%\begin{proof}[Proof (of (\ref{apppullbackgammaperp}))]
			%We proceed as in the proof above, using 
			%$$(\iota_{E\cap\td{L}}^{E})^*(\pi_Z^E)^*(c_{r-1-k}(N_{Z/X}))=(\pi_{Z\cap L}^{E\cap\td{L}})^*(\iota_{Z\cap L}^Z)^*(c_{r-1-k}(N_{Z/X}))=(\pi_{Z\cap L}^{E\cap\td{L}})^*(c_{r-1-k}(N_{Z\cap L/L}))$$
			%where the last equality uses (\ref{appisonormalbundles}).
			%\end{proof}

\bibliographystyle{alpha}
\bibliography{biblio}

\begin{thebibliography}{MSWZ12}

\bibitem[AM09]{aluffimarcollibanana}
P.~Aluffi and M.~Marcolli.
\newblock Feynman motives of banana graphs.
\newblock {\em Commun. Number Theory Phys.}, 3(1):1--57, 2009.

\bibitem[And09]{andregalois}
Y.~Andr{{\'e}}.
\newblock Galois theory, motives and transcendental numbers.
\newblock In {\em Renormalization and {G}alois theories}, volume~15 of {\em
  IRMA Lect. Math. Theor. Phys.}, pages 165--177. Eur. Math. Soc., Z{\"u}rich,
  2009.

\bibitem[Ap{\'e}79]{apery}
R.~Ap{\'e}ry.
\newblock Irrationalit{\'e} de $\zeta(2)$ et $\zeta(3)$.
\newblock {\em Ast{\'e}risque}, 61:11--13, 1979.

\bibitem[Arn69]{arnold}
V.~I. Arnol'd.
\newblock The cohomology ring of the group of dyed braids.
\newblock {\em Mat. Zametki}, 5:227--231, 1969.

\bibitem[BEK06]{blochesnaultkreimer}
S.~Bloch, H.~Esnault, and D.~Kreimer.
\newblock On motives associated to graph polynomials.
\newblock {\em Comm. Math. Phys.}, 267(1):181--225, 2006.

\bibitem[Blo12]{blochtreeterated}
S.~Bloch.
\newblock Motives, the fundamental group, and graphs.
\newblock {\em preprint}, 2012.

\bibitem[BR01]{ballrivoal}
K.~Ball and T.~Rivoal.
\newblock Irrationalit{\'e} d'une infinit{\'e} de valeurs de la fonction z\^eta
  aux entiers impairs.
\newblock {\em Invent. Math.}, 146(1):193--207, 2001.

\bibitem[Bri73]{brieskorn}
E.~Brieskorn.
\newblock Sur les groupes de tresses [d'apr{\`e}s {V}. {I}. {A}rnol' d].
\newblock In {\em S{\'e}minaire {B}ourbaki, 24{\`e}me ann{\'e}e (1971/1972),
  {E}xp. {N}o. 401}, pages 21--44. Lecture Notes in Math., Vol. 317. Springer,
  Berlin, 1973.

\bibitem[Bro09]{brownPhD}
F.~Brown.
\newblock Multiple zeta values and periods of moduli spaces
  {$\overline{\mathfrak{M}}_{0,n}$}.
\newblock {\em Ann. Sci. {\'E}c. Norm. Sup{\'e}r. (4)}, 42(3):371--489, 2009.

\bibitem[Bro12]{brownMTMZ}
F.~Brown.
\newblock Mixed {T}ate motives over {$\Bbb Z$}.
\newblock {\em Ann. of Math. (2)}, 175(2):949--976, 2012.

\bibitem[BS12]{brownschnetzK3}
F.~Brown and O.~Schnetz.
\newblock A {K}3 in {$\phi^4$}.
\newblock {\em Duke Math. J.}, 161(10):1817--1862, 2012.

\bibitem[BVGS90]{bvgs}
A.~A. Be{\u\i}linson, A.~N. Varchenko, A.~B. Goncharov, and V.~V. Shekhtman.
\newblock Projective geometry and {$K$}-theory.
\newblock {\em Algebra i Analiz}, 2(3):78--130, 1990.

\bibitem[Del71]{delignehodge2}
P.~Deligne.
\newblock Th{\'e}orie de {H}odge. {II}.
\newblock {\em Inst. Hautes {\'E}tudes Sci. Publ. Math.}, (40):5--57, 1971.

\bibitem[Del74]{delignehodge3}
P.~Deligne.
\newblock Th{\'e}orie de {H}odge. {III}.
\newblock {\em Inst. Hautes {\'E}tudes Sci. Publ. Math.}, (44):5--77, 1974.

\bibitem[Del89]{delignedroiteprojective}
P.~Deligne.
\newblock Le groupe fondamental de la droite projective moins trois points.
\newblock In {\em Galois groups over {${\bf Q}$} ({B}erkeley, {CA}, 1987)},
  volume~16 of {\em Math. Sci. Res. Inst. Publ.}, pages 79--297. Springer, New
  York, 1989.

\bibitem[DG05]{delignegoncharov}
P.~Deligne and A.~B. Goncharov.
\newblock Groupes fondamentaux motiviques de {T}ate mixte.
\newblock {\em Ann. Sci. {\'E}cole Norm. Sup. (4)}, 38(1):1--56, 2005.

\bibitem[Dor10]{doryngraphhypersurfaces}
D.~Doryn.
\newblock Cohomology of graph hypersurfaces associated to certain {F}eynman
  graphs.
\newblock {\em Commun. Number Theory Phys.}, 4(2):365--415, 2010.

\bibitem[Dup14a]{dupontdissection}
C.~Dupont.
\newblock The combinatorial {H}opf algebra of motivic dissection
  polylogarithms.
\newblock {\em Advances in Mathematics}, (264):646--699, 2014.

\bibitem[Dup14b]{dupontPhD}
C.~Dupont.
\newblock Periods of hyperplane arrangements and motivic coproduct.
\newblock {\em PhD Thesis, Universit{\'e} Paris 6 Pierre et Marie Curie}, 2014.

\bibitem[Dup15]{duponthypersurface}
C.~Dupont.
\newblock The {O}rlik-{S}olomon model for hypersurface arrangements.
\newblock {\em Ann. Inst. Fourier}, 65(6):2507--2545, 2015.

\bibitem[GM04]{goncharovmanin}
A.~B. Goncharov and Yu.~I. Manin.
\newblock Multiple {$\zeta$}-motives and moduli spaces
  {$\overline{\mathcal{M}}_{0,n}$}.
\newblock {\em Compos. Math.}, 140(1):1--14, 2004.

\bibitem[Gon02]{goncharovperiodsmm}
A.~B. Goncharov.
\newblock Periods and mixed motives.
\newblock {\em preprint: arXiv:math/0202154}, 2002.

\bibitem[Gon05]{goncharovgaloissym}
A.~B. Goncharov.
\newblock Galois symmetries of fundamental groupoids and noncommutative
  geometry.
\newblock {\em Duke Math. J.}, 128(2):209--284, 2005.

\bibitem[HMS11]{hubermullerstach}
A.~Huber and S.~M{{\"u}}ller-Stach.
\newblock On the relation between {N}ori motives and {K}ontsevich periods.
\newblock {\em preprint}, 2011.

\bibitem[Hub00]{huberrealization}
A.~Huber.
\newblock Realization of {V}oevodsky's motives.
\newblock {\em J. Algebraic Geom.}, 9(4):755--799, 2000.

\bibitem[Hub04]{huberrealizationcorrigendum}
A.~Huber.
\newblock Corrigendum to: ``{R}ealization of {V}oevodsky's motives'' [{J}.
  {A}lgebraic {G}eom. 9 (2000), no. 4, 755--799; mr1775312].
\newblock {\em J. Algebraic Geom.}, 13(1):195--207, 2004.

\bibitem[Kon99]{kontsevichoperadsmotives}
M.~Kontsevich.
\newblock Operads and motives in deformation quantization.
\newblock {\em Lett. Math. Phys.}, 48(1):35--72, 1999.
\newblock Mosh{{\'e}} Flato (1937--1998).

\bibitem[KZ01]{kontsevichzagier}
M.~Kontsevich and D.~Zagier.
\newblock Periods.
\newblock In {\em Mathematics unlimited---2001 and beyond}, pages 771--808.
  Springer, Berlin, 2001.

\bibitem[Lev93]{levinetatemotives}
M.~Levine.
\newblock Tate motives and the vanishing conjectures for algebraic
  {$K$}-theory.
\newblock In {\em Algebraic {$K$}-theory and algebraic topology ({L}ake
  {L}ouise, {AB}, 1991)}, volume 407 of {\em NATO Adv. Sci. Inst. Ser. C Math.
  Phys. Sci.}, pages 167--188. Kluwer Acad. Publ., Dordrecht, 1993.

\bibitem[Li09]{li}
L.~Li.
\newblock Wonderful compactification of an arrangement of subvarieties.
\newblock {\em Michigan Math. J.}, 58(2):535--563, 2009.

\bibitem[Loo93]{looijenga}
E.~Looijenga.
\newblock Cohomology of {${\mathscr{M}}_3$} and {${\mathscr{M}}^1_3$}.
\newblock In {\em Mapping class groups and moduli spaces of {R}iemann surfaces
  ({G}{\"o}ttingen, 1991/{S}eattle, {WA}, 1991)}, volume 150 of {\em Contemp.
  Math.}, pages 205--228. Amer. Math. Soc., Providence, RI, 1993.

\bibitem[Mar10]{marcollifeynmanintegralsmotives}
M.~Marcolli.
\newblock Feynman integrals and motives.
\newblock In {\em European {C}ongress of {M}athematics}, pages 293--332. Eur.
  Math. Soc., Z{\"u}rich, 2010.

\bibitem[MSWZ12]{mullerstachweinzierlzayadeh}
S.~M{{\"u}}ller-Stach, S.~Weinzierl, and R.~Zayadeh.
\newblock A second-order differential equation for the two-loop sunrise graph
  with arbitrary masses.
\newblock {\em Commun. Number Theory Phys.}, 6(1):203--222, 2012.

\bibitem[OS80]{orliksolomon}
P.~Orlik and L.~Solomon.
\newblock Combinatorics and topology of complements of hyperplanes.
\newblock {\em Invent. Math.}, 56(2):167--189, 1980.

\bibitem[OT92]{orlikterao}
P.~Orlik and H.~Terao.
\newblock {\em Arrangements of hyperplanes}, volume 300 of {\em Grundlehren der
  Mathematischen Wissenschaften [Fundamental Principles of Mathematical
  Sciences]}.
\newblock Springer-Verlag, Berlin, 1992.

\bibitem[Oxl11]{oxleymatroidtheory}
J.~Oxley.
\newblock {\em Matroid theory}, volume~21 of {\em Oxford Graduate Texts in
  Mathematics}.
\newblock Oxford University Press, Oxford, second edition, 2011.

\bibitem[PS08]{peterssteenbrink}
C.~A.~M. Peters and J.~H.~M. Steenbrink.
\newblock {\em Mixed {H}odge structures}, volume~52 of {\em Ergebnisse der
  Mathematik und ihrer Grenzgebiete. 3. Folge. A Series of Modern Surveys in
  Mathematics [Results in Mathematics and Related Areas. 3rd Series. A Series
  of Modern Surveys in Mathematics]}.
\newblock Springer-Verlag, Berlin, 2008.

\bibitem[Ter02]{terasoma}
T.~Terasoma.
\newblock Mixed {T}ate motives and multiple zeta values.
\newblock {\em Invent. Math.}, 149(2):339--369, 2002.

\bibitem[Zha04]{zhao}
J.~Zhao.
\newblock Motivic cohomology of pairs of simplices.
\newblock {\em Proc. London Math. Soc. (3)}, 88(2):313--354, 2004.

\bibitem[Zud01]{zudilin}
V.~V. Zudilin.
\newblock One of the numbers {$\zeta(5)$}, {$\zeta(7)$}, {$\zeta(9)$},
  {$\zeta(11)$} is irrational.
\newblock {\em Uspekhi Mat. Nauk}, 56(4(340)):149--150, 2001.

\end{thebibliography}

\end{document}